\documentclass[twoside,final]{amsart}

\usepackage[utf8]{inputenc}
\usepackage[T1]{fontenc}
\usepackage[english]{babel}

\usepackage{mathpazo}
\usepackage{mathptmx}
\usepackage{amsmath}
\usepackage{amssymb}
\usepackage{mathrsfs}
\usepackage{amsthm}
\usepackage{amsfonts}
\usepackage{fancyhdr}
\usepackage[all]{xy}
\usepackage{cancel}
\usepackage{amscd,amsxtra}
\usepackage{mathtools,mathrsfs,dsfont,xparse}
\usepackage{graphicx,epstopdf}
\usepackage{multirow}
\usepackage{cases}
\usepackage{enumerate}
\usepackage{multicol,bbm}

\usepackage[notcite, notref]{showkeys}

\usepackage{mathtools}

\mathtoolsset{showonlyrefs}

\usepackage[margin=1.08 in]{geometry}

\date{}

\usepackage[colorlinks=true, pdfstartview=FitV, linkcolor=blue,
            citecolor=blue, urlcolor=blue]{hyperref}

\allowdisplaybreaks[4]

\pagestyle{myheadings}
\numberwithin{equation}{section}

\newtheorem{Ass}{Assumption}

\newtheorem*{Thm*}{Theorem}

\usepackage{cjhebrew}

\newcommand{\E}{\mathbb{E}}

\newcommand{\al}{\alpha}

\newcommand{\T}{T}

\usepackage[numbers,sort&compress]{natbib}

{\theoremstyle {definition} \newtheorem {defi} {Definition} [section] }
{\theoremstyle {plain}  \newtheorem {Def} [defi] {Definition}}
{\theoremstyle {plain}  \newtheorem {Thm} [defi] {Theorem}}
{\theoremstyle {plain}  \newtheorem {Cor} [defi]{Corollary}}
{\theoremstyle {plain} \newtheorem {Prop} [defi]{Proposition}}
{\theoremstyle {plain} \newtheorem {Lem}[defi] {Lemma}}
{\theoremstyle {plain} }
{\theoremstyle {definition} \newtheorem {Rmk}[defi] {Remark}}
{\theoremstyle {definition} \newtheorem {claim}[defi] {Claim}}

{\theoremstyle {plain}  }
{\theoremstyle {plain}  }
{\theoremstyle {plain}  }
{\theoremstyle {plain} }
{\theoremstyle {plain} }
{\theoremstyle {plain} }
{\theoremstyle {plain} }

\def\E{{\Bbb{E}}}
\def\T{{\Bbb{T}}}
\def\P{{\Bbb{P}}}
\def\R{{\Bbb{R}}}
\def\C{{\Bbb{C}}}

\def\N{{\Bbb{N}}}
\def\n{{\N\backslash \{0\}}}

\def\i{{\textbf{i}}}

\def\dt{{\partial_t}}

\def\ggeq{{\ \gtrsim\ \ }}
\def\lleq{{\ \lesssim\ \ }}

\usepackage{tikz}
\usetikzlibrary{matrix}

\DeclareMathOperator{\re}{Re}

\DeclareDocumentCommand{\abs}{s m}{
  \operatorname{}
  \IfBooleanTF{#1}{#2}{\left|#2\right|}}

\DeclareDocumentCommand{\norm}{s m}{
  \operatorname{}
  \IfBooleanTF{#1}{#2} {\left\| #2\right\|}}

\DeclareDocumentCommand{\inner}{s m}{
  \operatorname{}
  \IfBooleanTF{#1}{#2}{\left \langle#2\right \rangle}}

\DeclareDocumentCommand{\parenthese}{s m}{
  \operatorname{}
  \IfBooleanTF{#1}{#2}{\left(#2\right)}}

\DeclareDocumentCommand{\square}{s m}{
  \operatorname{}
  \IfBooleanTF{#1} {#2}{\left[#2\right]}}

\DeclareDocumentCommand{\bracket}{s m}{
  \operatorname{}
  \IfBooleanTF{#1}{#2}{\left\{#2\right\}}}

\begin{document}

\author[Sy and Yu]{Mouhamadou Sy$^1$ and Xueying Yu$^2$}

\address{Mouhamadou Sy
\newline 
\indent Department of Mathematics, Imperial College London \indent 
\newline \indent  Huxley Building, London SW7 2AZ, United Kingdom,\indent }
\email{m.sy@imperial.ac.uk}
\thanks{$^1$ The first author's (M.S.) current address is Department of Mathematics, Imperial College London, United Kingdom. This work was written at Department of Mathematics, University of Virginia, Charlottesville, VA.}

\address{Xueying  Yu
\newline \indent Department of Mathematics, University of Washington\indent 
\newline \indent  C138 Padelford Hall Box 354350, Seattle, WA 98195,\indent }
\email{xueyingy@uw.edu}
\thanks{$^2$  The second author's (X.Y.) current address is Department of Mathematics, University of Washington, Seattle, WA. This work was written at Department of Mathematics, MIT, Cambridge, MA.}

\title[GWP for fractional NLS]{Global well-posedness and long-time behavior of the fractional NLS}

\begin{abstract}
In this paper, our discussion mainly focuses on equations with energy supercritical nonlinearities. We establish probabilistic global well-posedness (GWP) results for the cubic Schr\"odinger equation with any fractional power of the Laplacian in all dimensions. We consider both low and high regularities in the radial setting, in dimension $\geq 2$. In the high regularity result, an {\it Inviscid - Infinite dimensional (IID) limit} is employed while in the low regularity global well-posedness result, we make use of the Skorokhod representation theorem. The IID limit is presented in details as an independent approach that applies to a wide range of Hamiltonian PDEs. Moreover we discuss the adaptation to the periodic settings, in any dimension, for smooth regularities.

\noindent
\textbf{Keywords}: Fractional NLS, compact manifold, invariant measure, almost sure global well-posedness.\\

\noindent
\textbf{\it Mathematics Subject Classification (2020):} 35A01, 35Q55, 35R11, 60H15, 37K06, 37L50.\\
\end{abstract}

\maketitle

\setcounter{tocdepth}{1}
\tableofcontents

\parindent = 10pt     
\parskip = 8pt

\section{Introduction}
In this work, we consider the initial value problem of the cubic fractional nonlinear Schr\"odinger equation (FNLS)
\begin{align}
\dt u=-\i((-\Delta)^\sigma u+|u|^2u),\quad u\big|_{t=0}=u_0\label{fNLS}
\end{align}
on both the $d-$dimensional torus $\T^d$, $d\geq 1$, and the unit ball $B^d$, $d\geq 2$, supplemented with a radial assumption and a Dirichlet boundary condition
\begin{align}
u\Big|_{\partial B^d}=0.
\end{align}
Here $\sigma\in (0,1]$. When $\sigma=1$ we have the standard Schr\"odinger equation. The FNLS equation satisfies the following conservation laws referred as the mass and the energy
\begin{align}
M(u) &=\frac{1}{2}\|u\|_{L^2}^2,\\
E(u) &=\frac{1}{2}\|u\|_{\dot{H}^\sigma}^2+\frac{1}{4}\|u\|_{L^4}^4.
\end{align}
It also enjoys the scaling invariance given by
\begin{align}
v=\lambda^{\frac{\sigma}{2}}u(\lambda^\frac{1}{2}x,\lambda^\sigma t).
\end{align}
Consequently, its critical regularity is given by the Sobolev space $H^{s_c}$ where
\begin{align}
s_c=\frac{d}{2}-\sigma.
\end{align}
Data of regularity weaker than $s_c$ (resp. stronger than $s_c$) are called supercritical (resp. subcritical). The equation is called energy-supercritical (resp. energy-criticical, energy-subcritical) if $s_c>\sigma$ (resp. $s_c=\sigma$, $s_c<\sigma$); that is, $\sigma<\frac{d}{4}$ (resp. $\sigma=\frac{d}{4}$, $\sigma>\frac{d}{4}$). Since $\sigma\in (0,1)$, the cubic FNLS equations have energy-supercritical ranges in all dimensions $d\leq 3$, and is fully energy-supercritical in dimensions $d\geq 4.$ In this paper we address all the scenarios listed above. We construct global solutions by mean of an invariant measure argument and establish long-time dynamics properties.
\subsection{Motivation}
In recent decades, there has been of great interest in using fractional Laplacians to model
physical phenomena. The fractional quantum mechanics was introduced by Laskin \cite{laskin} as a generalization of the standard quantum mechanics. This generalization operates on the Feynman path integral formulation by replacing the Brownian motion with a general Levy flight. As a consequence,  one obtains fractional versions of the fundamental Schr\"odinger equation. That means the Laplace operator $(-\Delta)$ arising from the Gaussian kernel used in the standard theory is replaced by its fractional powers $(-\Delta)^\sigma$, where $0<\sigma<1$, as such operators naturally generate Levy flights.
Also, it turns out that the equation \eqref{fNLS} and its discrete versions are relevant in molecular biology as they have been proposed to describe the charge transport between base pairs in the DNA molecule where typical long range interactions occur \cite{dnadiscrete}. The continuum limit for discrete FNLS was studied rigorously first in \cite{kirkpatrick2013continuum}. See also \cite{grande2019continuum,grande2019space,hong2019strong} for recent works on the continuum limits.

The FNLS equation does not enjoy the strong dispersion estimates as the classical NLS does. The bounded domain setting naturally highlights this lack of dispersion. This makes both the well-posedness and the long-time behavior more difficult than the Euclidean setting on one hand, an the classical NLS on the other hand.   

The problem of understanding long-time behavior of dispersive equations on bounded domains is widely open. While on Euclidean spaces scattering turns out to apply to most of the defocusing contexts, on bounded domains one does not expect such scattering and there is no well-established general asymptotic theory.  It is however expected that solutions would generically exhibit weak turbulence\footnote{Here weak turbulence refers to an energy cascade from low to high frequencies as time evolves. The norms higher than the energy tend to blow up at infinity.}. See for example \cite{hani2015modified,hani2014long,guardia2014growth,
staffilani1997growth,bourgain1996growth,faou,
kuksin1997oscillations,CKSTTturb} and references therein for results in the direction of weak turbulence. A way to detect a weak turbulent behavior is to analyze the higher order Sobolev norms of the solutions and determine whether or not the quantity
\begin{align}
\limsup_{|t|\to\infty}\|u(t)\|_{H^s}\label{WT}
\end{align}
is infinite for some $s$.

In this paper, we prove probabilistic global well-posedness for \eqref{fNLS} by mean of invariant measures and discuss the long-time behavior by employing ergodic theorems. In particular, we show that a slightly different version of \eqref{WT} remains bounded almost surely with respect to non-trivial invariant measures of FNLS.

\subsection{History and related works}
Let us briefly present some works related to \eqref{fNLS}. The authors of \cite{tzi} employed a {\it high-low method}, originally due to Bourgain \cite{bourRef}, to prove global well-posedness for \eqref{fNLS} posed on the circle with $\sigma\in (\frac{1}{2},1)$ below the energy space. Local well-posedness on subcritical and critical regularities and global existence for small data was proved in \cite{fnls3}, the authors of \cite{fnls3} established also global well-posedness in the energy space for some powers $\sigma$ depending on the dimension, see also \cite{fnls2} where a convergence from the fractional Ginzburg-Landau to the FNLS was obtained. FNLS with Hartree type nonlinearity was studied in \cite{fnls1}. On the circle, Gibbs measures were constructed in \cite{sun2020gibbs} and the dynamics on full measure sets with respect to these measures was studied.
The authors of the present paper proved the deterministic global well-posedness below the energy space for FNLS posed on the unit disc by extending the I-method (introduced by Colliander, Keel, Staffilani, Takaoka and Tao \cite{CKSTTAlmost}) to the fractional context \cite{syyu2}.
\subsection{Invariant measure as a tool of globalization}
Bourgain \cite{bourg94} performed an ingenious argument based on the invariance of the Gibbs measure of the nonlinear one-dimensional NLS to prove global existence on a rich set of rough data. The Gibbs measure of the one-dimensional NLS is supported on the space $H^{\frac{1}{2}-}$ and was constructed by Lebowitz, Rose and Speer \cite{LRS}. The non existence of a conservation law at this level of regularity was a serious issue for the globalization. The method introduced by Bourgain to solve this issue was to derive individual bounds from the the statistical control given by the Fernique theorem. A crucial fact is that these bounds do not depend on the dimension of the Galerkin projections of the equation which are used in the construction of the Gibbs measure. With such estimates, the local solutions of the equation are globalized by an iteration argument based on comparison with the global solutions of the finite-dimensional projections. This is how an invariant measure can play the role of a conservation law. More precisely, we should say how the role played by a conservation law survive through the invariant measure; indeed such quantity are essentially constructed by the use of a conservation laws and can be seen as their statistical duals. This approach is widely exploited in different contexts. For instance, we refer the reader to the papers \cite{tzvNLS06,tzvNLS,bourbulNLS,btt18,dnygibbs,bourgNLS96} and references therein.

A second approach based on a fluctuation-dissipation method was introduced by Kuksin \cite{kuk_eul_lim} in the context of the two-dimensional Euler equation, and developed in the context of the cubic NLS by Kuksin and Shirikyan \cite{KS04}. This method uses a compactness argument and relies on stochastic analysis with an inviscid limit. The Hamiltonian equation is viewed as a limit as viscosity goes to 0 of an appropriately scaled fluctuation-dissipation equation that enjoys a stationary measure for any given vicosity. By compactness, this family of stationary measures admit at least one accumulation point as the viscosity vanishes, this turns out to be an invariant measure for the limiting equation. The scaling of the fluctuations with respect to the dissipation is such that one obtains estimates not depending on the viscosity. These estimates allow to perform a globalization argument. See \cite{sybo,sykg,fsSQG,KS12} for works related to this method. 

The two approaches discussed above were developed on various setting. However for energy supercritical equations, both approaches come across serious obstructions. The first author initiated a new approach that combines the two methods and applied it to the energy-supercritical NLS \cite{syNLS7}, this approach was developed further by the authors in \cite{syyu}. This approach utilizes a fluctuation-dissipation argument on the Galerkin approximations of the equation, we then have a double approximation: a finite-dimensional one and a viscous one. Two limits are considered in the following order: (i) an inviscid limit to recover the Galerkin projection of the Hamiltonian equation, (ii) then an infinite-dimensional limit to recover the equation itself. In the infinite-dimensional limit, a Bourgain type iteration is used. Overall, different difficulties arise in adapting the Bourgain strategy to a non Gaussian situation. That is why new ingredients were involved. This method is what we refer to {\it Inviscid-Infinite-dimensional limit}, or simply the  ``IID'' limit. See also the work \cite{lat} for a similar procedure. We perform in Section \ref{Sect:IID} a general and independent version of the IID limit.   
\subsection{Main results}
We set the regularity index for local well-posedness
\begin{align}\label{eq s*}
s_l (\sigma) = 
\begin{cases}
\frac{d}{2}, & \text{ if } \sigma \in (0,\frac{1}{2}),\\
\frac{d-1}{2}, & \text{ if } \sigma \in [\frac{1}{2} ,1),\\
\frac{d-2}{2}, & \text{ if } \sigma =1.
\end{cases}
\end{align}
Once we fix the dimension and the fractional powere in the FNLS equaation,  that is $d\geq 1$ and $\sigma\in(0,1]$, we define the following interval $I_g (d,\sigma)$, which is the range of globalization in Theorem \ref{asGWP:1}.
\begin{align}
   I_g:=I_g (d,\sigma) = \cup_{i=1}^3I_g^i(d,\sigma),
\end{align}
where $I_g^i:=I_g^i(d,\sigma)$, $i=1,2,3$ are defined as follows
\begin{align}\label{eq s_p}
\begin{cases}
I_g^1=(\frac{d}{2},\infty), & \text{for all $d\geq 1$ and $\sigma \in (0,1]$,}\\
I_g^2=(s_l(\sigma),1+\sigma], & \text{ if $d < 3+2\sigma$ and $\sigma\in[\frac{1}{2},1)$,}\\
I_g^3=(s_l(\sigma),2], & \text{ if $d \leq 5$ and $\sigma=1$.}
\end{cases}
\end{align}
See Remark \ref{RMK} below for a comment on the intervals $I_g$; it is a result of constraints imposed by the local well-posedness index above and statistical estimations on the dissipation of the energy in Section \ref{Sect:Pr as GWP 1}.
\begin{Thm}\label{asGWP:1}
Let $\sigma\in (0,1]$ and $d\geq 2$. Let $s\in I_g$,  and $\xi:\R\to\R$ be a one-to-one concave function. Then there is a probability measure $\mu=\mu_{\sigma,s,\xi,d}$  and a set $\Sigma=\Sigma_{\sigma,s,\xi,d}\subset H^s_{rad}(B^d)$ such that
\begin{enumerate}
\item $\mu(\Sigma)=1$;
\item The cubic FNLS is globally well-posed on $\Sigma$;
\item The induced flow $\phi_t$ leaves the measure $\mu$ invariant;
\item We have that
\begin{align}
\int_{L^2}\|u\|_{H^s}^2\mu(du)<\infty;\label{HS-Cont-1}
\end{align}
\item We have the bound
\begin{align}
\|\phi_tu_0\|_{H^{s-}}\leq C(u_0)\xi(1+\ln(1+|t|))\quad t\in\R;\label{Bound}
\end{align}
\item The set $\Sigma$ contains data of large size, namely for all $K>0$, $\mu(\{\|u\|_{H^s}>K\})>0$.
\end{enumerate}
\end{Thm}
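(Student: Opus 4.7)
The plan is to implement the Inviscid-Infinite-dimensional (IID) limit sketched in the introduction, combining a fluctuation-dissipation scheme on Galerkin approximations with a Bourgain-type iteration for globalization. Let $P_N$ denote the projector onto the first $N$ radial Dirichlet eigenmodes of $-\Delta$ on $B^d$. For each $N$ and each viscosity $\alpha>0$ I would consider a fluctuation-dissipation equation of the form
\begin{equation}
d u_N + \i\bigl((-\Delta)^\sigma u_N + P_N(|u_N|^2 u_N)\bigr) dt + \alpha L u_N\, dt = \sqrt{\alpha}\, dW_N,
\end{equation}
where $L$ is a positive self-adjoint operator and $W_N$ is a truncated cylindrical Wiener process whose covariance is calibrated so that the energy $E$ balances the input of noise in expectation. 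Since the underlying SDE lives in a finite-dimensional phase space with genuine dissipation, Krylov--Bogolyubov provides a stationary measure $\mu_{N,\alpha}$. The core estimate is then obtained by applying It\^o's formula to the energy (and, where needed, to higher nearly-conserved functionals) and using stationarity to extract
\begin{equation}
\int \|u\|_{H^s}^2\, d\mu_{N,\alpha}\leq C_s,
\end{equation}
with $C_s$ independent of $N,\alpha$. The three sub-intervals $I_g^i$ correspond exactly to the three regimes in which this balance can be closed compatibly with the local well-posedness threshold $s_l(\sigma)$, cf.~Remark~\ref{RMK}.

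\emph{Two successive limits.} The uniform moment bound yields tightness in a weaker topology. Passing $\alpha\to 0$ along a subsequence produces an invariant measure $\mu_N$ for the Galerkin flow (inviscid limit), and then $N\to\infty$ produces a candidate measure $\mu$ on $H^s_{rad}(B^d)$ which inherits the bound and hence satisfies \eqref{HS-Cont-1}. By lower-semicontinuity of Sobolev norms under weak convergence, the mass $\mu(\{\|u\|_{H^s}>K\})$ inherits the strict positivity already present at the level of $\mu_{N,\alpha}$ (whose noise covariance is nondegenerate on every mode), giving items (1), (3), (4) and~(6).

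\emph{Globalization and growth bound.} Local well-posedness at regularity $s_l(\sigma)$, hence for every $s\in I_g$, is available by standard fixed-point methods. For a threshold $K$ and a time window of length $\tau$, Chebyshev applied to the uniform moment bound, combined with invariance of $\mu_N$ under the Galerkin flow, controls the $\mu_N$-measure of data whose solution exceeds norm $K$ within time $\tau$, uniformly in $N$. A dyadic Bourgain-type iteration along expanding time windows, together with a Borel--Cantelli argument whose summability is engineered through the one-to-one concave function $\xi$, then produces a full $\mu$-measure set $\Sigma\subset H^s_{rad}(B^d)$ on which the flow extends globally and satisfies the slow growth \eqref{Bound}, proving (2) and (5). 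The main obstacle sits precisely here: unlike the Gibbs setting, $\mu$ is not explicit and admits no Gaussian reference, so every probabilistic tail estimate used in the iteration must be extracted from the stationary balance relation of the construction. Matching the available $H^s$ control with the local theory for a cubic nonlinearity is what forces the three regimes defining $I_g$, and makes the low-$\sigma$/high-dimension cases substantially more delicate than the $\sigma=1$ case.
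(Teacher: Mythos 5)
Your overall architecture (fluctuation--dissipation on Galerkin truncations, Krylov--Bogolyubov, an inviscid limit, then an infinite-dimensional limit with a Bourgain-type iteration) is indeed the paper's IID scheme, but three steps as you set them up would fail. First, you take the dissipation to be a fixed positive self-adjoint \emph{linear} operator $L$ and extract only the polynomial bound $\int\|u\|_{H^s}^2\,d\mu_{N,\alpha}\le C_s$. That suffices for item (4) but not for globalization: in the Bourgain iteration the exceptional set on a time window of length $e^j$ has measure of order $(e^j/T_0)\cdot\mu^N(\|u\|_{H^s}>\lambda_{i,j})$, and with only quadratic moments Chebyshev forces the thresholds $\lambda_{i,j}$ to grow polynomially in time; this neither sums to a full-measure set at the claimed thresholds nor yields the bound \eqref{Bound} for an \emph{arbitrary} one-to-one concave $\xi$ (note that your construction does not depend on $\xi$ at all, whereas $\mu_{\sigma,s,\xi,d}$ must). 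The paper's key device is to build the weight into the dissipation itself, $\mathcal{L}(u)=e^{\xi^{-1}(\|u\|_{H^{s-}})}(\cdots)$ as in \eqref{Lu} (and, on the low-regularity branches $I_g^2,I_g^3$, to include $\Pi^N|u|^2u$ so that the dissipation rate can be bounded below via the C\'ordoba--C\'ordoba inequality), so that stationarity yields the exponential moment $\int e^{\xi^{-1}(\|u\|_{H^{s-}})}\|u\|_{H^{s_0}}^2\,d\mu^N\le C$ uniformly in $N,\alpha$; this is exactly what produces the $\xi(1+\ln(1+|t|))$ growth and the summability $\kappa(i)\to0$ needed in the iteration.

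Second, your argument for item (6) --- that positivity of $\mu(\{\|u\|_{H^s}>K\})$ is ``inherited'' from the nondegenerate noise by lower semicontinuity --- goes in the wrong direction: Portmanteau gives $\liminf_N\mu^N(U)\ge\mu(U)$ for open $U$, so positivity at the level of $\mu_{N,\alpha}$ does not pass to the limit, and ruling out concentration on trivial data is precisely the delicate point. The paper instead proves the balance identity \eqref{Est:M}, $\int_{L^2}\mathcal{M}(u)\,\mu(du)=A_0/2$, whose proof (Proposition \ref{prop 4.4}) requires a frequency cutoff and the auxiliary control of $M(u)\mathcal{M}(u)$ because no $N$-uniform bound on $E(u)$ is available, and then gets data of arbitrarily large size by rescaling the noise and taking a convex combination of the resulting invariant measures (Remark \ref{Rmk:LargeD}). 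Third, on the branches $I_g^2$ and $I_g^3$ the regularity $s$ lies below $d/2$, so local well-posedness is not a ``standard fixed-point'' matter: the paper needs the bilinear Strichartz estimates on the ball and the nonlinear $X^{s,b}_{\sigma}$ estimates of Section \ref{Sect:LWP} (Lemma \ref{lem bilinear}, Proposition \ref{prop nonlinear est}), together with the Galerkin convergence (Proposition \ref{Prop:CvgLocSol}) and the uniform-in-data inviscid limit (Lemma \ref{Lem:lim:al}) that you pass over when asserting invariance of $\mu_N$ and $\mu$. As written, your proposal establishes a weaker statement (polynomial moments, no growth bound, no large-data property) rather than Theorem \ref{asGWP:1}.
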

\begin{Rmk}
Notice the strong bound obtained in \eqref{Bound}. This is to be compared with the bounds obtained in the Gibbs measures context which are of type $\sqrt{\ln(1+t)}$ (see for instance \cite{tzvNLS06,tzvNLS,tzvNLS06,bourg94}). 
\end{Rmk}

Theorem \ref{asGWP:1} is based on a deterministic local well-posedness. However, for $\sigma\leq \frac{1}{2}$ or $s\leq \frac{d-1}{2}$, we do not have such local well-posedness. We have a different result in this case. Set the probabilistic GWP interval for Theorem \ref{asGWP:2} below:
\begin{align}
    J_g =J^1_g\cup J^2_g
\end{align}
where
\begin{align}
\begin{cases}
J^1_g=(0,\sigma], & \sigma\in(1/2,1]\\
J^2_g =[\max(1/2,\sigma),1+\sigma], & \sigma\in(0,1].
\end{cases}   
\end{align}

\begin{Thm}\label{asGWP:2}
Consider $\sigma\in (0,1]$ and $s\in J_g$,  let $d\geq 2$. There is a measure $\mu=\mu_{\sigma,s,d}$ and a set $\Sigma=\Sigma_{\sigma,s,d}\subset H_{rad}^s(B^d)$ such that
\begin{enumerate}
\item $\mu(\Sigma)=1$;
\item The cubic FNLS is globally well-posed on $\Sigma$;
\item The induced flow $\phi_t$ leaves the measure $\mu$ invariant;
\item We have that
\begin{align}
\int_{L^2}\|u\|_{H^s}^2\mu(du)<\infty;\label{HS-Cont-2}
\end{align}
\item The set $\Sigma$ contains data of large size, namely for all $K>0$, $\mu(\{\|u\|_{H^s}>K\})>0$.
\end{enumerate}
\end{Thm}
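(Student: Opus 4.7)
The plan is to follow the same general architecture as for Theorem \ref{asGWP:1} through the stationary measures of the Galerkin-fluctuation-dissipation system, but since deterministic local well-posedness is unavailable for $s\in J_g$, I would replace the Bourgain-type iteration used in the IID construction of Section \ref{Sect:IID} by a compactness plus Skorokhod representation argument to build a global flow almost surely.

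First I would introduce the Galerkin truncations $\dt u_N=-\i P_N((-\Delta)^\sigma u_N+|u_N|^2u_N)$ of \eqref{fNLS} on radial functions of frequency $\leq N$, and then perturb each by a balanced fluctuation-dissipation pair: a viscous damping $-\alpha(-\Delta)^\sigma u_N$ together with an additive random forcing scaled so that the conservation laws $M$ and $E$ produce $\alpha$-uniform moment estimates. A finite-dimensional Krylov-Bogoliubov argument supplies a stationary measure $\mu_{N,\alpha}$, and taking the inviscid limit $\alpha\to 0$ produces an invariant measure $\mu_N$ for the Galerkin Hamiltonian flow, together with an $N$-uniform moment bound
\begin{align}
\int \|u\|_{H^s}^2\, \mu_N(du)\leq C.
\end{align}
The admissible range $J_g$ is exactly where this dissipation-type statistical estimate survives the combined Galerkin/viscous scheme, the two pieces $J^1_g$ and $J^2_g$ reflecting the two complementary regularity regimes in which the estimation is carried out.

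The heart of the argument is the infinite-dimensional limit. Tightness of $\{\mu_N\}$ on $H^s_{rad}(B^d)$ gives, via Prokhorov, an accumulation point $\mu$, and tightness in path space of the stationary processes $u_N(\cdot)$ follows from an Aldous-type equicontinuity bound derived from the Galerkin equations. Here I would invoke the Skorokhod representation theorem: along a suitable subsequence $N_k$ one obtains, on a new probability space, random processes $\tilde u_{N_k}(\cdot)$ equidistributed with $u_{N_k}(\cdot)$ and converging almost surely in some $C_t H^{s-\delta}_{rad}\cap L^q_t L^p_x$ topology to a limit $\tilde u(\cdot)$. A deterministic passage to the limit in the weak formulation identifies $\tilde u$ as a distributional solution of \eqref{fNLS} with time-marginal $\mu$, hence defines a flow $\phi_t$ on a set $\Sigma$ of full $\mu$-measure, and the stationarity of each $\tilde u_{N_k}$ transfers to invariance of $\mu$ under $\phi_t$. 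Items (1), (3), (4) follow directly, with \eqref{HS-Cont-2} obtained by Fatou, and (5) from the fact that the Gaussian component of $\mu_{N,\alpha}$ stays non-degenerate and charges arbitrarily large balls through both limits.

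The main obstacle I expect is precisely the passage to the limit in the cubic nonlinearity $|\tilde u_{N_k}|^2\tilde u_{N_k}$, which needs strong convergence in a space in which a cubic expression is continuous. In the low-$\sigma$, low-$s$ corners of $J_g$ the Sobolev embeddings leave very little room, so the Skorokhod topology and the auxiliary Strichartz-type space-time bounds used for the Galerkin processes must be tuned carefully; in particular the cutoff $\max(1/2,\sigma)$ in $J^2_g$ and the upper endpoint $1+\sigma$ are dictated by exactly this trade-off between making the nonlinearity pass to the limit and keeping the energy-dissipation moments $N$-uniform.
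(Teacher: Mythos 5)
Your skeleton for existence and invariance (fluctuation--dissipation on the Galerkin projections, inviscid limit, $N$-uniform moments, Prokhorov plus Skorokhod in path space, and a deterministic passage to the limit in the cubic term) is indeed the route the paper takes in Section \ref{Sect:Pr as GWP2}. But two essential parts of the theorem are missing or would fail as proposed. First, ``globally well-posed on $\Sigma$'' requires uniqueness and continuous dependence, and a distributional solution produced by a Skorokhod limit does not by itself define a flow $\phi_t$; your proposal never addresses this. The paper's key ingredient here is Proposition \ref{prop 6.4} (with Proposition \ref{prop 6.5}): uniqueness in $C_tL^2_x\cap L^2_tH^{\frac{1+}{2}}$ proved by the radial Sobolev inequality (Lemma \ref{lem radSob}) on the region $\{r\geq\epsilon\}$, Gronwall, and then an approximation argument $\epsilon\to 0$ near the origin. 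This is precisely what dictates the thresholds in $J_g$ (one needs $\sigma>\tfrac12$ or $s\geq\tfrac12$ so that the measure charges states with $H^{\frac12+}$ control on $u$ or on $|u|^2$); they are not imposed by the limit passage in the nonlinearity, as you suggest. Relatedly, your choice of dissipation, a plain viscosity $-\alpha(-\Delta)^\sigma u$, is not an innocuous simplification of \eqref{Lu}: via C\'ordoba--C\'ordoba it yields moments of $\|u\|_{H^{2\sigma}}^2$ and $\||u|^2\|_{\dot{H}^\sigma}^2$, but neither $\|\Pi^N|u|^2u\|_{L^2}^2$ nor $\|u\|_{H^s}^2$ for $s\in(2\sigma,1+\sigma]$. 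The first of these missing moments is exactly what the paper uses (estimate \eqref{Est:Recap}) to get the $H^1_t$ bound on the Duhamel term for path-space tightness and to identify the limit of the nonlinearity (Proposition \ref{Prop:Conv:nl}); there are no usable Strichartz-type space--time bounds in this radial bounded-domain, low-regularity setting to substitute for it, so the scheme as you set it up would not cover the claimed range $J_g$.

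Second, your justification of item (5) -- that ``the Gaussian component of $\mu_{N,\alpha}$ stays non-degenerate and charges arbitrarily large balls through both limits'' -- does not work: the forcing is scaled by $\sqrt{\alpha}$ and vanishes in the inviscid limit, and a weak limit of the stationary measures could a priori collapse onto trivial invariant states (the Dirac mass at $0$ is an invariant measure of FNLS). The paper excludes this, and produces data of arbitrarily large size, through the quantitative balance identity $\int_{L^2}\mathcal{M}(u)\,\mu(du)=\tfrac{A_0}{2}$ of Proposition \ref{prop 4.4} (equation \eqref{Est:M}), whose proof in this low-regularity regime is delicate (a frequency truncation at level $F$, an $H^{s-}$ localization, and the auxiliary moment $\int M(u)\mathcal{M}(u)\,\mu^N(du)\leq C$), followed by the noise-scaling and convex-combination construction of Remark \ref{Rmk:LargeD}. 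Without an identity of this type, your item (5) is unsupported; the Fatou argument you invoke only gives the upper bound \eqref{HS-Cont-2}, not any lower bound on the size of the support.
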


A common corollary to Theorems \ref{asGWP:1} and \ref{asGWP:2} is as follows
\begin{Cor}
For any $u_0\in \Sigma$, where $\Sigma$ is any of the sets constructed in Theorems \ref{asGWP:1} and \ref{asGWP:2}, there is a sequence of times $t_k\uparrow\infty$ such that
\begin{align}
\lim_{k\to\infty}\|u_0-\phi_{ t_k}u_0\|_{H^s}=0.
\end{align}
\end{Cor}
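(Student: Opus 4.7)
The plan is to derive this as a consequence of the Poincaré recurrence theorem, applied to the $\mu$-preserving discrete dynamical system $(H^s_{rad}(B^d), \mu, \phi_\tau)$ for an arbitrary fixed $\tau > 0$. Both Theorem \ref{asGWP:1} and Theorem \ref{asGWP:2} provide a Borel probability measure $\mu$ on the separable Polish space $H^s_{rad}(B^d)$, concentrated on $\Sigma$, together with a flow $\phi_t$ on $\Sigma$ leaving $\mu$ invariant. In particular $\phi_\tau$ is a measurable $\mu$-preserving self-map of $\Sigma$, which is all that classical recurrence requires.

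To produce the sequence $t_k$, I would first fix a countable basis $\{B_n\}_{n \in \mathbb{N}}$ of open $H^s$-balls for $H^s_{rad}(B^d)$, for instance balls centered at a countable dense subset with rational radii. For each index $n$ with $\mu(B_n) > 0$, Poincaré recurrence applied to $\phi_\tau$ yields a subset $B_n^\flat \subseteq B_n$ with $\mu(B_n \setminus B_n^\flat) = 0$ such that every $u \in B_n^\flat$ satisfies $\phi_{k\tau} u \in B_n$ for infinitely many $k \in \mathbb{N}$. Setting
\[
\Sigma' \;=\; \Sigma \setminus \bigcup_{n\,:\,\mu(B_n) > 0} (B_n \setminus B_n^\flat),
\]
one still has $\mu(\Sigma') = 1$. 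Given any $u_0 \in \Sigma'$ and any $m \in \mathbb{N}$, I would pick an index $n = n(m)$ with $u_0 \in B_n \subset B_{H^s}(u_0, 1/m)$, possible because $\{B_n\}$ is a basis; the recurrence of $u_0$ inside $B_n$ then yields arbitrarily large times $t$ with $\|\phi_t u_0 - u_0\|_{H^s} < 1/m$, and a standard diagonal extraction produces a sequence $t_k \to \infty$ such that $\|\phi_{t_k} u_0 - u_0\|_{H^s} \to 0$. Replacing the invariant set $\Sigma$ supplied by the two theorems with $\Sigma \cap \Sigma'$ (which retains every property listed there, since the modification is $\mu$-null) then delivers the corollary for every $u_0$ in the modified $\Sigma$.

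The only genuine subtlety — hardly an obstacle — is verifying that $\phi_\tau$ is a Borel self-map of $\Sigma$ with $\mu \circ \phi_\tau^{-1} = \mu$; this is already encoded in item (3) of the two theorems, together with the $H^s$-continuity of the globalized flow established during the construction of $\Sigma$. The argument is of course inherently non-quantitative: no information on the rate at which $t_k \to \infty$ or on the decay rate of $\|\phi_{t_k} u_0 - u_0\|_{H^s}$ is produced, which is consistent with the general nature of Poincaré recurrence and complements the logarithmic growth bound \eqref{Bound} obtained in Theorem \ref{asGWP:1}.
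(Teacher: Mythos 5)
Your argument is exactly the one the paper intends: the corollary is presented there as a direct application of the Poincar\'e recurrence theorem with no further detail, and your countable-basis recurrence for the time-$\tau$ map $\phi_\tau$, followed by removal of a $\mu$-null set and a diagonal extraction, is the standard way to make that precise. One small patch is needed: as written, the index $n(m)$ you select could correspond to a ball with $\mu(B_{n(m)})=0$ (this happens for points of $\Sigma$ lying outside $\supp\mu$), for which no recurrent subset $B_n^\flat$ was constructed; you should also delete from $\Sigma$ the $\mu$-null set $\bigcup\{B_n \,:\, \mu(B_n)=0\}$, after which every basis ball containing a point of the modified set has positive measure and the argument closes. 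With that adjustment, and with the understanding (implicit in the paper as well) that the statement is to be read after replacing $\Sigma$ by this full-measure subset, your proof is complete and follows the paper's approach.
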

The corollary above is a direct application of the Poincar\'e recurrence theorem and describes a valuable asymptotic property of the flow. Another corollary of interest is a consequence of the Birkhof ergodic theorem \cite{krengel2011ergodic}\footnote{Notice that we use here the  version of the Birkhof ergodic theorem (Theorem 2.3 in \cite{krengel2011ergodic}) that doesn't require ergodicity of the dynamics under the considered measure. Therefore, we then loose in the characterisation of the limits of the orbital averages. However the claim does not need such characterisation, it requires only the finiteness of the limits which holds true with the invariance property of the measure.}. From \eqref{HS-Cont-1} and \eqref{HS-Cont-2}, we have that for the data $u_0$ constructed in Theorems \ref{asGWP:1} and \ref{asGWP:2},
\begin{align}
\limsup_{T\to\infty}\frac{1}{T}\int_0^T\|\phi_t u_0\|_{H^s}^2dt<\infty.\label{MeanWT}
\end{align} 
The quantity above is slightly weaker than that given in \eqref{WT}. Even though \eqref{MeanWT} does not rule out weak turbulence for the concerned solutions, it gives to a certain extent an `upper bound' on the eventual energy cascade mechanism. The estimate \eqref{MeanWT} is especially important in the context of the data constructed in Theorem \ref{asGWP:1} where the regularity $H^s$ can be taken arbitrarily high.
\begin{Rmk}\label{RMK}
The intervals given in \eqref{eq s_p} can be explained as follows: the result of Theorem \ref{asGWP:1} requires a deterministic local well-posedness and a strong statistical estimate. This statistical estimate is obtained by using the dissipation operator $\mathcal{L}(u)$ defined in \eqref{Lu}. The operator $\mathcal{L}(u)$ is defined differently on low and high regularities. In fact the estimates in low regularities $\max(\sigma,\frac{1}{2})\leq s\leq 1+\sigma$ rely on a use of a C\'ordoba-C\'ordoba inequality (see Corollary \ref{Lem:Cor-Cor}) while the high regularity estimates $s>\frac{d}{2}$ use an algebra property. It is clear that the FNLS has a good local well-posedness on $H^s$ for $s>d/2$ for all $\sigma \in (0,1]$. This explains the globalization interval given in the first scenario in \eqref{eq s_p}. However, in low regularities the local well-posedness (LWP) is only valid for some indexes: (i) for $\sigma=1$, LWP holds for $s> \frac{d-2}{2}$, then we can globalize for $\max(\sigma,\frac{1}{2},\frac{d-2}{2})=\max(1,\frac{d-2}{2})<s\leq 1+\sigma$ which necessitates $d\leq 5$; hence the third scenario in \eqref{eq s_p}. (ii) For $\sigma \in [\frac{1}{2},1)$, LWP holds for $s>\frac{d-1}{2}$, which leads to the second scenario in \eqref{eq s_p}.\\
Let us also remark that for the classical cubic NLS ($\sigma=1$), an invariant measure was constructed in dimension $d\leq 4$ on the Sobolev space $H^2$ in \cite{KS04}. See also \cite{syNLS7} for higher dimensions and higher regularities invariant measures for the periodic classical NLS ($\sigma=1$).
\end{Rmk}

The complete description of the supports of fluctuation-dissipation measures is a very difficult open question. Only few results are known on substantially simple cases compared to nonlinear PDEs (see e.g. \cite{matpar,bed}). 
In the context of nonlinear PDEs,  it is traditional to ask about qualitative properties to exclude some trivial scenarios.  Without giving details of computation we refer to  \cite{armen_nondegcgl} and  Theorem $9.2$ and Corollary $9.3$ in \cite{syNLS7} as a justification of the following statement which is valid for the different settings presented in Theorems \ref{asGWP:1} and \ref{asGWP:2}. 
\begin{Thm}
The distributions via $\mu$ of the functionals $M(u)$ and $E(u)$ have densities with respect to the Lebesgue measure on $\Bbb R.$
\end{Thm}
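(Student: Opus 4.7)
The plan is to exploit the fluctuation--dissipation construction of $\mu$ as an iterated limit of stationary measures $\mu_{\nu,N}$ of Galerkin-truncated stochastic FNLS (cf.\ Section~\ref{Sect:IID}) and to transfer the density property from those finite-dimensional stationary laws to the limiting measure. At each $(\nu,N)$ level the dynamics is a non-degenerate diffusion on a finite-dimensional complex space, so a standard It\^o / Malliavin argument will yield densities for the laws of $M$ and $E$ under $\mu_{\nu,N}$; the delicate point is to keep control of those densities through the successive limits $\nu\to 0$ and $N\to\infty$.

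First, along the stationary process $u^{\nu,N}_t$ I would apply It\^o's formula to the real-valued functionals $F=M$ and $F=E$ to get the one-dimensional semimartingale representation
\begin{align*}
dF(u^{\nu,N}_t) = a_F(u^{\nu,N}_t)\,dt + \beta_F(u^{\nu,N}_t)\cdot dW^N_t,
\end{align*}
whose diffusion coefficient $\beta_F$ is the pairing of the Fr\'echet derivative of $F$ with the noise covariance. Because the noise is diagonal and non-degenerate on every Galerkin frequency and because $\nabla M(u)$ and $\nabla E(u)$ vanish only on a real-analytic set of zero $\mu_{\nu,N}$-measure, one has $|\beta_F(u)|>0$ almost surely; a Bouleau--Hirsch / Krylov-type argument in the spirit of \cite{armen_nondegcgl} and of Theorem~9.2 and Corollary~9.3 of \cite{syNLS7} then produces densities for $\mu_{\nu,N}\circ M^{-1}$ and $\mu_{\nu,N}\circ E^{-1}$ on $\mathbb{R}$.

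The main obstacle will be the passage to the limit, since absolute continuity is not stable under weak convergence of probability measures. To overcome this I would establish a uniform (in $\nu$ and $N$) decay estimate
\begin{align*}
\bigl|\widehat{\mu_{\nu,N}\circ F^{-1}}(\xi)\bigr|
 = \bigl|\mathbb{E}_{\mu_{\nu,N}}e^{i\xi F(u)}\bigr|
 \leq C\,(1+|\xi|)^{-\delta},
 \qquad F\in\{M,E\},
\end{align*}
for some $\delta>0$, obtained via a Girsanov shift in the noise direction that maximises the variation of $F$ (equivalently, an integration-by-parts on the It\^o increment over a short time window), combined with the uniform moment bounds for $\mu_{\nu,N}$ already supplied by the fluctuation--dissipation construction. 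Such a decay survives the weak limit and forces $\mu\circ M^{-1}$ and $\mu\circ E^{-1}$ to be absolutely continuous on $\mathbb{R}$ with $L^2$ densities.

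A last check concerns compatibility of the Galerkin projection with $M$ and $E$ in the limit: $M$ is a quadratic form preserved by $P_N$, and the quartic part of $E$ passes to the limit by Sobolev embedding and the second-moment control \eqref{HS-Cont-1}--\eqref{HS-Cont-2}. Assembling these three ingredients—non-degeneracy at the stochastic level, uniform Fourier decay, and stability of $M$, $E$ under the Galerkin and inviscid limits—yields that $\mu\circ M^{-1}$ and $\mu\circ E^{-1}$ are absolutely continuous with respect to the Lebesgue measure on $\mathbb{R}$, as asserted.
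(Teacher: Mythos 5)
The paper does not actually supply a proof of this theorem: it is stated ``without giving details of computation'', the justification being delegated to \cite{armen_nondegcgl} and to Theorem 9.2 and Corollary 9.3 of \cite{syNLS7}. The mechanism there is an It\^o (It\^o--Tanaka/local-time) identity applied to $\Phi(M(u))$ and $\Phi(E(u))$ along the \emph{stationary} solutions of the fluctuation--dissipation equation \eqref{SNLS1}: the dissipation term and the It\^o correction are both of size $\alpha$, so after taking expectations under $\mu^N_\alpha$ all factors of $\alpha$ cancel (cf.\ the computation leading to \eqref{Est:M:al:N}), and one obtains occupation-type bounds of the form $\mu^N_\alpha(\{F(u)\in\Gamma\})\leq C\abs{\Gamma}+(\text{small})$, with $C$ independent of $(\alpha,N)$, up to a non-degeneracy discussion for the diffusion coefficient. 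Bounds of this form are stable under weak convergence and yield the densities for $\mu\circ M^{-1}$ and $\mu\circ E^{-1}$. Your overall architecture (finite-dimensional non-degeneracy plus a quantitative estimate uniform in both approximation parameters) is the right one, and your finite-dimensional Bouleau--Hirsch step is plausible, but the quantitative estimate you propose for the passage to the limit does not work as stated.

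The gap is in the limiting step, in two respects. First, a uniform bound $\abs{\E e^{i\xi F(u)}}\leq C(1+\abs{\xi})^{-\delta}$ with an unspecified $\delta>0$ does not imply absolute continuity of the limit law: there exist singular probability measures on $\R$ (Salem-type measures carried by sets of Hausdorff dimension $<1$) whose characteristic functions decay like $\abs{\xi}^{-\delta}$ for any $\delta<1/2$; to get an $L^2$ density by Plancherel you need $\delta>1/2$, a far stronger claim that you never justify. Second, and more fundamentally, the proposed Girsanov shift cannot deliver any decay uniformly in $\alpha$: the noise in \eqref{SNLS1} has amplitude $\sqrt{\alpha}$, so shifting the driving path enough to move $M$ or $E$ by an amount of order one has Girsanov cost of order $1/\alpha$, and the integration-by-parts constants blow up exactly in the inviscid limit $\alpha\to 0$. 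This uniformity is the entire content of the theorem; in the cited references it is obtained not from Girsanov smoothing but from stationarity, which makes the $\alpha$'s cancel as described above. As written, your argument establishes densities for each $\mu^N_\alpha$ but provides no control that survives the iterated limits, so the conclusion for $\mu$ does not follow. (A minor additional point: in the low-regularity regime of Theorem \ref{asGWP:2} with $s<\sigma$ one should also specify in which sense $E(u)$ is finite $\mu$-almost surely before discussing the law of $E$.)
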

\begin{Rmk}
The equation \eqref{fNLS} admits planar waves as solutions, we cannot exclude the scenario that the measures constructed here are concentrated on the set of these trivial solutions. However, this property is very unlikely because of the scaling between the dissipation and the fluctuations which leaves a balance in the stochastic equation \eqref{SNLS1}: If the inviscid measures were concentrated on planar waves, the latter should be very close to the "complicated" solutions of \eqref{SNLS1} (these are not trivial at all because of the noise and the scaling); for small $\alpha$ and large $N$, this would result in a non-trivial and highly surprising attractivity of the planar waves in the dynamics of \eqref{fNLS}.
\end{Rmk}
\subsection{Adaptation to the periodic case}
\begin{Thm}
\begin{enumerate}
\item Let $d\geq 1$, $\sigma\in (0,1]$ and $s>\frac{d}{2}$, the result of \label{Adapt:1} Theorem \ref{asGWP:1} are valid on $\T^d$.
\item The result of Theorem \ref{asGWP:2} are valid on $\T^d$ for:\label{Adaapt:2}
\begin{enumerate}
\item $d=1,2$ and $s\in[\max(1/2,\sigma),1]$ for all $\sigma\in(0,1]$;
\item $d=3$ and $s\in[\max(1/2,\sigma),1]$ for all $\sigma\in(1/2,1]$;
\end{enumerate}
\end{enumerate}
\end{Thm}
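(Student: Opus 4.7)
The plan is to replay the arguments establishing Theorems \ref{asGWP:1} and \ref{asGWP:2}, substituting the ball $B^d$ with the torus $\T^d$ and the radial Dirichlet eigenbasis with the Fourier basis $\{e^{i k\cdot x}\}_{k\in\mathbb{Z}^d}$. The radial hypothesis on the ball is imposed solely to make the Dirichlet spectral problem tractable; on the torus there is no boundary, so the full Fourier basis plays the analogous role. The Galerkin projections become $E_N=\mathrm{span}\{e^{i k\cdot x}\,:\,|k|\le N\}$, and the fluctuation-dissipation equation \eqref{SNLS1} is set up on $E_N$ driven by a non-degenerate Fourier noise. The dissipation operator $\mathcal{L}(u)$ and its two variants for high versus low regularity are defined by the same formulas, interpreted via the Fourier spectral calculus of $(-\Delta)^\sigma$ on $\T^d$.

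For part (1), the route is the IID limit of Section \ref{Sect:IID}. The three inputs required are (a) deterministic local well-posedness of \eqref{fNLS} in $H^s(\T^d)$ for $s>d/2$, which is immediate from the Banach algebra property $\|uv\|_{H^s}\lesssim\|u\|_{H^s}\|v\|_{H^s}$ and a standard contraction argument; (b) uniform-in-$(N,\alpha)$ statistical estimates of the form $\int\|u\|_{H^s}^2\,d\mu_N^\alpha<\infty$, derived by It\^o calculus applied to the Galerkin SDE together with the algebra property and integration by parts; and (c) the Bourgain-type iteration that upgrades local to global. None of these three inputs depends on the shape of the domain beyond having a spectral expansion for $(-\Delta)^\sigma$, so every step of the proof of Theorem \ref{asGWP:1} transfers verbatim, including the logarithmic growth bound \eqref{Bound}.

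For part (2), we follow the Skorokhod-based argument of Theorem \ref{asGWP:2}: build the Galerkin fluctuation-dissipation measures, pass to the inviscid limit by the Skorokhod representation theorem to obtain a stationary measure for the truncated FNLS, and then couple this measure with a deterministic local flow on $\T^d$ to globalize. The key energy-dissipation estimate again rests on the C\'ordoba-C\'ordoba inequality of Corollary \ref{Lem:Cor-Cor}, which is insensitive to the domain once the fractional Laplacian is defined spectrally, so its Fourier version on $\T^d$ works identically. The ranges of $(d,\sigma,s)$ in (2a)--(2b) simply record where a deterministic local well-posedness for \eqref{fNLS} on $H^s(\T^d)$ is available in the window $[\max(1/2,\sigma),1]$: for $d=1,2$ one has an $X^{s,b}$-based local theory for every $\sigma\in(0,1]$, whereas for $d=3$ the weaker periodic dispersion forces $\sigma>1/2$. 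The reduction of the upper endpoint from $1+\sigma$ (as in $J^2_g$ for the ball) to $1$ mirrors the corresponding tightening of the Strichartz-type tools on the torus.

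The principal obstacle, and the reason for the constraints in part (2), is precisely the deterministic local theory on $\T^d$ at the prescribed regularity, which is the only point at which ball and torus genuinely differ in this framework. Once such a local theory is in place, the probabilistic machinery—the Galerkin fluctuation-dissipation measures, the Skorokhod or IID limit, and the globalization iteration—is domain-independent, and the adaptation reduces to checking that every estimate in the cited sections carries through term by term after the change of basis. This verification is routine though lengthy.
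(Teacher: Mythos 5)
Your part (1) is essentially the paper's argument: for $s>\frac d2$ a naive contraction based on the algebra property of $H^s(\T^d)$ (no Strichartz needed) gives the local theory, and the statistical estimates of Section \ref{Sect:Pr as GWP 1} together with the IID/Bourgain iteration are insensitive to the change of basis. That half is fine.

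Part (2), however, misidentifies the obstruction, and this is a genuine gap. Theorem \ref{asGWP:2} does not rest on any deterministic local well-posedness --- that is precisely why it reaches regularities below the LWP thresholds: existence and globalization come from the compactness/Skorokhod argument in the infinite-dimensional limit (Propositions \ref{Prop:ST:bound}, \ref{Prop:Compact}, \ref{Prop:Conv:nl}), not from coupling an invariant measure with a local flow, so your claim that the ranges in (2a)--(2b) ``record where an $X^{s,b}$ local theory is available'' (and that weak periodic dispersion forces $\sigma>\frac12$ in $d=3$) does not reflect the actual mechanism. The genuinely domain-sensitive step is the uniqueness and continuity argument of Propositions \ref{prop 6.4} and \ref{prop 6.5}, which on the ball relies on the radial Sobolev inequality \eqref{Radial:Sob}: radial $H^{\frac12+}$ control yields a pointwise bound on $|u|^2+|v|^2$ away from the origin, and this is what closes the Gronwall estimate for $\|u-v\|_{L^2}$. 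On $\T^d$ there is no radial gain, so one must instead control $|u|^2+|v|^2$ in $L^\infty$ through Sobolev embeddings of the quantities the measure actually controls ($u\in H^s$, $|u|^2\in \dot H^\sigma\cap\dot H^{s-\sigma}$); it is exactly this substitution that produces the dimensional restrictions in (2a)--(2b) and the failure for $d\geq 4$, where $\frac d2\geq 1+\sigma$ for every $\sigma\in(0,1]$, as the paper's remark explains. As written, your proposal leaves uniqueness of the constructed solutions on $\T^d$ unaddressed in the claimed ranges (and the deterministic local theory you invoke at, e.g., $d=2$, small $\sigma$, $s=\max(\frac12,\sigma)$ is neither available nor needed), so the adaptation of Theorem \ref{asGWP:2} is not established by your argument.
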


\begin{Rmk}
\begin{enumerate}
\item 
For the periodic extension of Theorem \ref{asGWP:1} given in item \eqref{Adapt:1}, we just notice that the smoothness of the regularities ($s>d/2$) allows a naive fixed point argument (without using Strichartz estimates) for a local well-posedness result and the calculation in Section \ref{Sect:Pr as GWP 1} can be used to obtain the exponential bounded needed in the Bourgain argument.

\item
The argument behind the extensions in item \eqref{Adaapt:2} above is essentially the fact that the uniqueness argument in Theorem \ref{asGWP:2} is based on  the radial Sobolev inequality which requires an $H^{\frac{1}{2}+}$ regularity. This regularity however is not enough for the periodic case if $d\geq 2.$ Nevertheless, for $d=1,2$ and $\frac{d}{2}<s\leq 1+\sigma$, we have uniqueness in $H^s(\T^d)$, thanks to the embedding $H^s\subset L^\infty$, and the same procedure gives the claim for $d=3.$ However, once $d\geq 4$, Theorem \ref{asGWP:2} fails on $\T^d$ because $\frac{d}{2}\geq 1+\sigma$ for all $\sigma\in (0,1]$.
\end{enumerate}
\end{Rmk}
\subsection{Comparison with \cite{syNLS7,syyu}}
The probabilistic technique employed in this paper is closed to \cite{syNLS7,syyu}. It is worth mentioning the novelty in the present work beside the fact that fractional NLS equations are much less understood than the standard NLS which motivated our interest to the problem considered here, and the presentation of the IID limit in a general form that is independent of characteristics of \eqref{fNLS}. A main difference 
from \cite{syNLS7,syyu} is the fact that the result of Theorem \ref{asGWP:2} goes below the energy space, and  $s$ can go all the way toward $0$. The major issue in achieving this is the uniqueness in low regularity. Our strategy is to control the gradient of the nonlinearity, that is the term $|u|^2$, and then to combine it with the radial Sobolev embedding and an approximation argument. This control is anticipated in the preparation of the dissipation operator.\\
Let us present the dissipation operator (see \eqref{Lu}) on which the low regularity result (see e.g.  Section \ref{Sect:Pr as GWP2}) is based, in particular:
\begin{align*}
\mathcal{L}(u)=e^{\xi^{-1}(\|u\|_{H^{s-}})}\left(\Pi^N|u|^2u+(-\Delta)^{s-\sigma}u\right)\quad 0< s\leq 1+\sigma,\ \ \sigma\in(0,1].
\end{align*}
The dissipation rate $\mathcal{E}(u)$ of the energy will be of regularity $s$ which, for $s<\sigma$,  is weaker than the energy.  Since $\mathcal{E}(u)$ is the highest regular quantity controlled, we are then in new ranges of regularity compared to \cite{syNLS7,syyu}. To deal with these ranges we need new inputs at different levels of the proof. For instance, the large data argument relies on the identity \eqref{Est:M} whose proof cannot be achieved by using the approach of \cite{syNLS7} and \cite{syyu} (where the dissipation was of positive order and then the dissipation rate of the energy was smoother than the energy itself which made it useful in the derivation of the identity concerning the dissipation rate of the mass). We,  instead, introduce a modified approach which use a careful cutoff on the frequencies.

It is worth mentioning the central quantity that we manage to control at the finite-dimensional level (the control is however uniform in the dimension) by the use of a fluctuation-dissipation strategy:
\begin{align}
e^{\xi^{-1}(\|u\|_{H^{s-}})}\left(\||u|^2\|^2_{\dot{H}^\sigma}+\|\Pi^N|u|^2u\|_{L^2}^2+\|u\|_{H^s}^2+\||u|^2\|^2_{\dot{H}^{s-\sigma}}\right).
\end{align}
Below is the role of each term in the quantity above:
\begin{itemize}
\item The term $\|u\|_{H^s}$ determines the minimal regularity of the measure
\item The term $\||u|^2\|^2_{\dot{H}^\sigma}+\||u|^2\|^2_{\dot{H}^{s-\sigma}}$ combined with the radial Sobolev inequality and an approximation argument allows to obtain uniqueness of solutions in Theorem \ref{asGWP:2} (see Subsection \ref{Subsection:Prob}). To this end we need either $\sigma>\frac{1}{2}$ or $s-\sigma>\frac{1}{2}$, this results in a control of gradient of the nonlinearity $|u|^2u$ far from the origin of the ball, and then an approximation argument introduce in our previous work \cite{syyu} is employed.
\item The term $\|\Pi^N|u|^2u\|_{L^2}^2$ combined with the Skorokhod representation theorem and a compactness argument allows to pass to the limit $N\to\infty$ and prove the existence of solutions.
\item The term $e^{\xi^{-1}(\|u\|_{H^{s-}})}$ is employed in the Bourgain argument to obtain, in particular,  the concave bounds on the solutions claimed in Theorem \ref{asGWP:1}. 
\end{itemize}

\subsection{Organization of the paper}
We present the {\it inviscid - infinite dimensional} (IID) limit in details in Section \ref{Sect:IID}. We consider a general Hamiltonian PDE and present the general framework of the IID limit, formulate assumptions and, based on them, prove the essential steps of the method. Section \ref{Sect:Pr as GWP 1} is devoted to fulfill the assumptions made in Section \ref{Sect:IID}. Section \ref{Sect:LWP} ends the fulfillment of the assumptions by establishing the local well-posedness one, hence the proof of Theorem \ref{asGWP:1} is complete. Section \ref{Sect:Pr as GWP2} is devoted to the proof of Theorem \ref{asGWP:2}.

\subsection*{Acknowledgement} 
X.Y. was funded in part by the Jarve Seed Fund and an AMS-Simons travel grant. Both authors are very grateful to the anonymous referees for valuable comments and suggestions.

\section{Preliminaries}\label{sec Preliminaries}
In this section we present some notations, functions spaces, properties the radial Laplacian and useful inequalities.
\subsection{Notation}
We define
\begin{align*}
\norm{f}_{L_t^q L_x^r (I \times D)} : = \square{\int_I \parenthese{\int_{D} \abs{f(t,x)}^r \, dx}^{\frac{q}{r}} dt}^{\frac{1}{q}},
\end{align*}
where $I$ is a time interval and $D$ is either the ball $B^d$ or the torus $\T^d$.

For $x\in \R$, we set $\inner{x} = (1 + \abs{x}^2)^{\frac{1}{2}}$. We adopt the usual notation that $A \lesssim  B$ or $B \gtrsim A$ to denote an estimate of the form $A \leq C B$ , for some constant $0 < C < \infty$ depending only on the {\it a priori} fixed constants of the problem. We write $A \sim B$ when both $A \lesssim  B $ and $B \lesssim A$.

For a real number $a$, we set $a-$ (resp. $a+$) to represent to numbers $a-\epsilon$ (resp. $a+\epsilon$) with $\epsilon$ small enough.

For a metric space $X$, we denote by $\mathfrak{p}(X)$ the set of probability measures on $X$ and $C_b(X)$ is the space of bounded continuous functions $f:X\to\R.$ If $X$ is a normed space, $B_R(X)$ represents the ball $\{u\in X\ |\ \|u\|_{X}\leq R\}$.

\subsection{Eigenfunctions and eigenvalues of the radial Dirichlet Laplacian on the ball}
From Section $2$ in \cite{atGRS}, one has the following bound for the eigenfunctions of the radial Laplacian  
\begin{align}\label{eq e_n bdd}
\norm{e_n}_{L_x^p(B^d)} & \lesssim 
\begin{cases}
1 , & \text{ if } 2 \leq p < \frac{2d}{d-1},\\
\ln (2+n)^{\frac{d-1}{2d}} & \text{ if } p=  \frac{2d}{d-1},\\
n^{ \frac{d-1}{2} -\frac{d}{p}} , & \text{ if } p>  \frac{2d}{d-1}.
\end{cases} 
\end{align}
We have also the asymptotics for the eigenvalues
\begin{align}
z_n\sim n. \label{eq z_n}
\end{align}
\subsection{$H_{rad}^s$ spaces}
Recall that $(e_n)_{n=1}^{\infty}$ form an orthonormal bases of the Hilbert space of $L^2$ radial functions on $B^d$. That is, 
\begin{align*}
\int e_n^2 \, dL = 1
\end{align*}
where $dL$ is the normalized Lebesgue measure on $B^d$. 
Therefore, we have the expansion formula for a function $u \in L^2 (B^d)$, 
\begin{align*}
u=\sum_{n=1}^{\infty} \inner{u , e_n} e_n .
\end{align*}
For $s \in \R$, we define the Sobolev space $H_{rad}^{s} (B^d)$ on the closed unit ball $B^d$ as 
\begin{align*}
H_{rad}^{s} (B^d) : = \bracket{ u = \sum_{n=1}^{\infty} c_n e_n, \, c_n \in \C : \norm{u}_{H^{s} (B^d)}^2 = \sum_{n=1}^{\infty} z_n^{2s} \abs{c_n}^2 < \infty } .
\end{align*}
We can equip $H_{rad}^{s} (B^d)$ with the natural complex Hilbert space structure. In particular, if $s =0$, we denote $H_{rad}^{0} (B^d)$ by $L_{rad}^2 (B^d)$. For $\gamma \in \R$, we define the map $\sqrt{-\Delta}^{\gamma}$ acting as isometry from $H_{rad}^{s} (B^d)$ and $H_{rad}^{s - \gamma} (B^d)$ by
\begin{align*}
\sqrt{-\Delta}^{\gamma} (\sum_{n=1}^{\infty} c_n e_n) = \sum_{n=1}^{\infty} z_n^{\gamma} c_n e_n .
\end{align*}
We denote $S_{\sigma}(t) = e^{- \i t (-\Delta)^{\sigma}}$ the flow of the linear Schr\"odinger equation with Dirichlet boundary conditions on the unit ball $B^d$, and it can be written as
\begin{align*}
S_{\sigma}(t)  (\sum_{n=1}^{\infty} c_n e_n) = \sum_{n=1}^{\infty} e^{-\i t z_n^{2 \sigma} } c_n e_n.
\end{align*}

\subsection{$X_{\sigma, rad}^{s,b}$ spaces}
Using again the $L^2$ orthonormal basis of eigenfunctions $\{ e_n\}_{n=1}^{\infty}$ with their eigenvalues $z_n^2$ on $B^d$, we define the $X^{s,b}$ spaces of functions on $\R \times B^d$ which are radial with respect to the second argument.
\begin{defi}[$X_{\sigma, rad}^{s,b}$ spaces]\label{defn Xsb}
For $s \geq 0$ and $b \in \R$,
\begin{align*}
X_{\sigma, rad}^{s,b} (\R \times B^d) = \{ u \in \mathcal{S}' (\R , L^2(B^d)) : \norm{u}_{X_{\sigma, rad}^{s,b} (\R \times B^d)} < \infty \} ,
\end{align*}
where 
\begin{align}\label{eq Xsb}
\norm{u}_{X_{\sigma, rad}^{s,b} (\R \times B^d)}^2 = \sum_{n=1}^{\infty} \norm{\inner{\tau + z_n^{2\sigma}}^b \inner{z_n}^{s} \widehat{c_n} (\tau) }_{L^2(\R_{\tau} ) }^2
\end{align}
and
\begin{align*}
u(t) = \sum_{n=1}^{\infty} c_n (t) e_n .
\end{align*}
Moreover, for $u \in X_{\sigma, rad}^{0, \infty} (\R \times B^d) =  \cap_{b \in \R} X_{\sigma, rad}^{0,b} (\R \times B^d)$ we define, for $s \leq 0$ and $b \in \R$, the norm $\norm{u}_{X_{\sigma, rad}^{s,b} (\R \times B^d)}$ by \eqref{eq Xsb}.
\end{defi}
Equivalently, we can write the norm \eqref{eq Xsb} in the definition above into
\begin{align*}
\norm{u}_{X_{\sigma, rad}^{s,b} (\R \times B^d)} = \norm{S_{\sigma} (-t) u}_{H_t^b H_x^s (\R \times B^d)} .
\end{align*}

For $T > 0$, we define the restriction spaces $X_{\sigma, T}^{s,b} (B^d)$ equipped with the natural norm
\begin{align*}
\norm{u}_{X_{\sigma, T}^{s,b} ( B^d)} = \inf \{ \norm{\tilde{u}}_{X_{\sigma, rad}^{s,b} (\R \times B^d)} : \tilde{u}\big|_{(-T,T) \times B^d} =u\} .
\end{align*}

\begin{Lem}[Basic properties of $X_{\sigma, rad}^{s,b}$ spaces]\label{lem X property1}
\begin{enumerate}
\item
We have the trivial nesting 
\begin{align*}
X_{\sigma, rad}^{s,b} \subset  X_{\sigma, rad}^{s' , b' } 
\end{align*}
whenever $s' \leq s$ and $b' \leq b$, and
\begin{align*}
X_{\sigma, T}^{s,b} \subset  X_{\sigma, T'}^{s,b} 
\end{align*}
whenever $T' \leq T$ .
\item
The $X_{\sigma, rad}^{s,b}$ spaces interpolate nicely in the $s, b$ indices.
\item
For $b > \frac{1}{2}$, we have the following embedding
\begin{align*}
\norm{u}_{L_t^{\infty} H_x^{s} (\R \times B^d) } \leq C \norm{u}_{X_{\sigma, rad}^{s,b} (\R \times B^d)}.
\end{align*}
\item An embedding that will be used frequently in this paper
\begin{align*}
X_{\sigma, rad}^{0, \frac{1}{4}} \hookrightarrow L_t^4 L_x^2 .
\end{align*}
\end{enumerate}
\end{Lem}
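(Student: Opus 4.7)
The plan is to prove the four items by unwinding the definition \eqref{eq Xsb}, which is just a weighted $\ell^2_n L^2_\tau$ norm of the Fourier coefficients of $S_\sigma(-t)u$, and then pulling back standard time--frequency facts through the isometry $u \mapsto S_\sigma(-t)u$.

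For item (1), both nestings are immediate from monotonicity. Since $\langle \tau + z_n^{2\sigma}\rangle, \langle z_n\rangle \geq 1$, decreasing $b$ or $s$ only shrinks the weight in \eqref{eq Xsb}, so the norm decreases; for the restriction spaces, any admissible extension of $u$ from $(-T,T) \times B^d$ is also an admissible extension from $(-T',T') \times B^d$ whenever $T' \leq T$, hence the infimum defining the $X_{\sigma, T'}^{s,b}$ norm is no larger. For item (2), the interpolation statement is a consequence of the identification $X_{\sigma, rad}^{s,b} \simeq \ell^2_n\bigl(\langle z_n\rangle^{s} L^2_\tau(\langle \tau + z_n^{2\sigma}\rangle^{b})\bigr)$ — a weighted mixed-norm space — so complex interpolation in the pair $(s,b)$ is standard (one may also phrase it as $X_{\sigma, rad}^{s,b} = \langle D_x\rangle^{-s} S_\sigma(t) H^b_t L^2_x$ and interpolate in the last two indices).

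For item (3), set $v(t,\cdot) := S_\sigma(-t) u(t,\cdot)$, so that $v$ has Fourier coefficients $\widehat{c_n}(\tau + z_n^{2\sigma})$ shifted to the origin and $\|u\|_{X_{\sigma, rad}^{s,b}} = \|v\|_{H^b_t H^s_x}$. Since $S_\sigma(t)$ is an isometry on $H^s_x$, it suffices to show $H^b_t H^s_x \hookrightarrow L^\infty_t H^s_x$ for $b>1/2$, which follows by writing $v(t,\cdot) = \int e^{it\tau} \widehat{v}(\tau,\cdot)\,d\tau$ and applying Cauchy--Schwarz in $\tau$ with the weight $\langle \tau\rangle^{\pm b}$, then taking the $H^s_x$ norm under the integral.

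For item (4), the same reduction $v(t) = S_\sigma(-t)u(t)$ gives $\|u\|_{L^4_t L^2_x} = \|v\|_{L^4_t L^2_x}$ and $\|u\|_{X_{\sigma, rad}^{0,1/4}} = \|v\|_{H^{1/4}_t L^2_x}$, so the claim reduces to $H^{1/4}_t L^2_x \hookrightarrow L^4_t L^2_x$. By Minkowski's inequality (valid since $4 \geq 2$) one has $\|v\|_{L^4_t L^2_x} \leq \|v\|_{L^2_x L^4_t}$, and for each fixed $x$ the one-dimensional Sobolev embedding $H^{1/4}(\R_t) \hookrightarrow L^4(\R_t)$ yields $\|v(\cdot,x)\|_{L^4_t} \lesssim \|v(\cdot,x)\|_{H^{1/4}_t}$; squaring and integrating in $x$ closes the estimate. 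The only mildly delicate point is getting the Sobolev embedding at the exact endpoint $b = 1/4$ rather than $1/4+$, which is exactly what the Minkowski-then-Sobolev order delivers; this is where I expect a careful reader to want the computation written out, but there is no real obstacle.
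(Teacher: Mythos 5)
Your proof is correct, and it follows the same route the paper has in mind: the paper defers items (1)--(3) to the cited reference \cite{an} and, for item (4), records exactly your reduction via the isometry $u\mapsto S_\sigma(\mp t)u$ to the bound $\norm{v}_{L^4_tL^2_x}\leq\norm{v}_{H^{1/4}_tL^2_x}$. You merely fill in the standard details (Cauchy--Schwarz in $\tau$ for $b>\frac12$, and Minkowski plus the one-dimensional Sobolev embedding $H^{1/4}(\R_t)\hookrightarrow L^4(\R_t)$ together with Fubini/Plancherel to identify $L^2_xH^{1/4}_t$ with $H^{1/4}_tL^2_x$), which is exactly what the cited argument amounts to.
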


Note that 
\begin{align*}
\norm{f}_{L_t^{4} L_x^2} = \norm{S_{\sigma}(t)  f}_{L_t^{4} L_x^2} \leq \norm{S_{\sigma}(t)  f}_{H_{t,rad}^{\frac{1}{4}} L_x^2}  = \norm{f}_{X_{\sigma, rad}^{0, \frac{1}{4}}} .
\end{align*}

\begin{Lem}\label{lem X property2}
Let $b,s >0$ and $u_0 \in H_{rad}^s (B^d)$. Then there exists $c >0$ such that for $0 < T \leq 1$,
\begin{align*}
\norm{S_{\sigma}(t)  u_0}_{X_{\sigma, rad}^{s,b} ((-T, T) \times B^d)} \leq c \norm{u_0}_{H^s}.
\end{align*} 
\end{Lem}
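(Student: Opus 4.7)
The plan is to reduce the restriction-space estimate to a global estimate via a fixed time cutoff. Fix once and for all a function $\eta \in C_c^\infty(\R)$ with $\eta \equiv 1$ on $[-1,1]$ and supported in $(-2,2)$. Since $0 < T \leq 1$, the function $\tilde{u}(t,x) := \eta(t)\, S_\sigma(t) u_0(x)$ is an extension of $S_\sigma(t) u_0$ from $(-T,T) \times B^d$ to $\R \times B^d$. Hence, by the definition of the restriction norm,
\begin{align*}
\|S_\sigma(t) u_0\|_{X_{\sigma, T}^{s,b}(B^d)} \leq \|\eta(t) S_\sigma(t) u_0\|_{X_{\sigma, rad}^{s,b}(\R \times B^d)},
\end{align*}
so it suffices to bound the right-hand side by $c\|u_0\|_{H^s}$ with $c$ depending only on $\eta$ and $b$.

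Next I would compute the $X^{s,b}$ norm of this extension directly from Definition \ref{defn Xsb}. Writing $u_0 = \sum_n c_n^0 e_n$, the coefficients of $\eta(t) S_\sigma(t) u_0$ in the basis $\{e_n\}$ are
\begin{align*}
c_n(t) = \eta(t)\, e^{-i t z_n^{2\sigma}}\, c_n^0,
\end{align*}
so their time Fourier transforms are $\widehat{c_n}(\tau) = c_n^0\, \widehat{\eta}(\tau + z_n^{2\sigma})$. Substituting into \eqref{eq Xsb} gives
\begin{align*}
\|\eta S_\sigma u_0\|_{X_{\sigma, rad}^{s,b}}^2 = \sum_n |c_n^0|^2 \langle z_n\rangle^{2s} \int_\R \langle \tau + z_n^{2\sigma}\rangle^{2b}\, |\widehat{\eta}(\tau + z_n^{2\sigma})|^2\, d\tau.
\end{align*}

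Now perform the change of variables $\tau' = \tau + z_n^{2\sigma}$ in each integral; the integral reduces to $\|\eta\|_{H^b(\R)}^2$, a finite constant $C_{\eta,b}$ independent of $n$ because $\eta$ is smooth and compactly supported. Therefore
\begin{align*}
\|\eta S_\sigma u_0\|_{X_{\sigma, rad}^{s,b}}^2 \leq C_{\eta,b} \sum_n \langle z_n\rangle^{2s} |c_n^0|^2 = C_{\eta,b}\, \|u_0\|_{H^s}^2,
\end{align*}
which yields the claimed inequality with $c = C_{\eta,b}^{1/2}$. There is no real obstacle here; the only subtlety is ensuring that the cutoff $\eta$ is chosen once and for all (so that $c$ does not depend on $T$), which is why we picked $\eta \equiv 1$ on $[-1,1]$ and exploited the hypothesis $T \leq 1$ to guarantee that $\tilde{u}$ extends the solution on the full interval $(-T,T)$.
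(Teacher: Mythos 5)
Your proof is correct and matches the standard argument: the paper does not prove this lemma itself but defers to \cite{an}, where the proof is exactly this time-cutoff computation (equivalently phrased via $\norm{u}_{X_{\sigma,rad}^{s,b}}=\norm{S_\sigma(-t)u}_{H_t^bH_x^s}$, since $S_\sigma(-t)\bigl(\eta(t)S_\sigma(t)u_0\bigr)=\eta(t)u_0$ factorizes the norm as $\norm{\eta}_{H^b_t}\norm{u_0}_{H^s_x}$). The only cosmetic point is that the paper's $H^s$ norm uses weights $z_n^{2s}$ while the $X^{s,b}$ norm uses $\inner{z_n}^{2s}$; these are equivalent because the Dirichlet eigenvalues satisfy $z_n\gtrsim 1$, so your final identity should strictly be an equivalence up to a constant, absorbed into $c$.
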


The proofs of Lemma \ref{lem X property1} and Lemma \ref{lem X property2}  can be found in \cite{an}.

We also recall the following lemma in \cite{BourExp, gi}
\begin{Lem}\label{lem Duhamel}
Let $0 < b' < \frac{1}{2}$ and $0 < b < 1-b'$. Then for all $f \in X_{\sigma, \delta}^{s, -b'} (B^d)$, we have the Duhamel term $w(t) = \int_0^t  S_{\sigma}(t-s) f(\tau) \, ds \in X_{\sigma, \delta}^{s,b} (B^d)$ and moreover
\begin{align*}
\norm{w}_{X_{\sigma, \delta}^{s,b} (B^d)} \leq C \delta^{1 +2 b - 4b'} \norm{f}_{X_{\sigma, \delta}^{s,-b'} (B^d)} .
\end{align*} 
\end{Lem}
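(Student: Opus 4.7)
The plan is to reduce the claim to a classical scalar-in-time antiderivative estimate by peeling off the spatial variable, and then to establish the scalar estimate by Fourier analysis with a high-low frequency split at scale $1/\delta$.

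First, I would expand $f(t)=\sum_n d_n(t)\,e_n$ in the Dirichlet eigenbasis, so that Duhamel's formula yields $w(t)=\sum_n w_n(t)\,e_n$ with $w_n(t) = \int_0^t e^{-i(t-s)z_n^{2\sigma}} d_n(s)\,ds$. Setting $h_n(s) := e^{is z_n^{2\sigma}} d_n(s)$, one has $e^{it z_n^{2\sigma}} w_n(t) = \int_0^t h_n(s)\,ds$. Since $\|u\|_{X_{\sigma,rad}^{s,b}} = \|S_\sigma(-t) u\|_{H^b_t H^s_x}$ and the eigenbasis $\{e_n\}$ is orthogonal in $H^s_{rad}(B^d)$ with $\|e_n\|_{H^s}= \langle z_n\rangle^s$, the $X^{s,b}$-norms decouple across modes as
\[
\|f\|_{X^{s,-b'}_{\sigma,rad}}^2 = \sum_n \langle z_n\rangle^{2s}\, \|h_n\|_{H^{-b'}_t}^2, \qquad \|w\|_{X^{s,b}_{\sigma,rad}}^2 = \sum_n \langle z_n\rangle^{2s}\, \Big\|\textstyle\int_0^{\cdot} h_n(s)\,ds\Big\|_{H^b_t}^2,
\]
with analogous identities for the restriction spaces obtained after passing to near-optimal extensions. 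It therefore suffices to establish, uniformly in scalar $g:\R\to\C$, the one-dimensional antiderivative estimate
\begin{equation}\label{oneDest}
\Big\|\int_0^t g(s)\,ds\Big\|_{H^b_t((-\delta,\delta))} \;\leq\; C\,\delta^{\,1+2b-4b'}\, \|g\|_{H^{-b'}_t((-\delta,\delta))}.
\end{equation}

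For \eqref{oneDest}, I would pick an extension $\tilde g \in H^{-b'}(\R)$ of $g$ with norm at most twice the restriction norm, and a bump $\chi \in C_c^\infty(\R)$ with $\chi \equiv 1$ on $[-1,1]$ and $\supp \chi \subset [-2,2]$. Then $W(t):= \chi(t/\delta)\int_0^t \tilde g(s)\,ds$ is an extension of $\int_0^t g$ to $\R$, so the needed bound reduces to $\|W\|_{H^b(\R)}\lesssim \delta^{1+2b-4b'}\|\tilde g\|_{H^{-b'}(\R)}$. I would then plug in the Fourier representation $\int_0^t \tilde g(s)\,ds=\int_\R \frac{e^{it\tau}-1}{i\tau}\,\widehat{\tilde g}(\tau)\,d\tau$ and split the $\tau$-integration at the threshold $|\tau|\sim 1/\delta$. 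On the low-frequency regime, the pointwise bound $|(e^{it\tau}-1)/\tau|\leq |t|\leq 2\delta$ combined with Sobolev inclusions into an appropriate $L^p$ on the (measure-$\delta$) support of $\chi(\cdot/\delta)$ produces the required power of $\delta$. On the high-frequency regime, I would separate $(e^{it\tau}-1)/\tau$ into $e^{it\tau}/\tau$ and $-1/\tau$, and use the scaling identity $\|\chi(t/\delta)\,e^{it\tau}\|_{H^b_t(\R)}\sim \delta^{1/2}\langle\delta\tau\rangle^b$, then dualize against $\langle\tau\rangle^{-b'}$.

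The main obstacle is the careful bookkeeping of the $\delta$-power $1+2b-4b'$: each of the $H^b$-cutoff on the output side, the Sobolev inclusion on a set of measure $\delta$, the $H^{-b'}$-cutoff on the input side, and the high-low threshold at $|\tau|\sim 1/\delta$ contributes a fractional power of $\delta$, and these must combine to yield exactly $1+2b-4b'$ in both regimes simultaneously. The computation is classical and follows the presentation of Ginibre-Velo \cite{gi} and Bourgain \cite{BourExp}.
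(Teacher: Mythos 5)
Your architecture is the classical one: diagonalize in the Dirichlet eigenbasis so the $X^{s,b}_{\sigma}$ norms decouple mode by mode, reduce to a scalar antiderivative estimate on $(-\delta,\delta)$, extend, multiply by $\chi(t/\delta)$, and split the Fourier integral at $|\tau|\sim 1/\delta$. That is exactly the Ginibre--Velo/Bourgain argument that the paper itself relies on (it gives no proof of Lemma \ref{lem Duhamel}, only the citations \cite{BourExp,gi}), and the reduction steps you describe (near-optimal extensions, the fact that for $|t|<\delta$ the Duhamel term depends only on $f|_{(-\delta,\delta)}$) are legitimate. The genuine gap is the final exponent, which you defer to ``classical bookkeeping.'' The classical computation yields $\bigl\|\chi(t/\delta)\int_0^t g\bigr\|_{H^b}\lesssim \delta^{1-b-b'}\|g\|_{H^{-b'}}$, not $\delta^{1+2b-4b'}$: for instance, in your high-frequency regime the term $-1/\tau$ contributes $\|\chi(\cdot/\delta)\|_{H^b}\cdot\bigl\|\langle\tau\rangle^{b'}\tau^{-1}\bigr\|_{L^2(|\tau|>1/\delta)}\sim \delta^{\frac12-b}\cdot\delta^{\frac12-b'}$, and no choice of threshold improves this. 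Since $1+2b-4b'=(1-b-b')+3(b-b')$, the bound you promise is strictly stronger than the classical one whenever $b>b'$, which is precisely the regime in which the lemma is applied ($b>\tfrac12>b'$ in Theorem \ref{thm LWP}). In fact it is false there: take $f(s,x)=e^{-\i s z_1^{2\sigma}}e_1(x)$ on $(-\delta,\delta)$, so that $w(t)=e^{-\i t z_1^{2\sigma}}\,t\,e_1(x)$; any extension argument with the embedding $H^b\hookrightarrow C^{0,b-\frac12}$ gives $\|w\|_{X^{s,b}_{\sigma,\delta}}\gtrsim \langle z_1\rangle^{s}\delta^{\frac32-b}$, while the extension $\chi(t/\delta)$ shows $\|f\|_{X^{s,-b'}_{\sigma,\delta}}\lesssim \langle z_1\rangle^{s}\delta^{\frac12+b'}$, and $\delta^{\frac32-b}\le C\,\delta^{1+2b-4b'}\,\delta^{\frac12+b'}$ for small $\delta$ forces $b\le b'$. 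So the step ``these must combine to yield exactly $1+2b-4b'$'' cannot be carried out; it is not a bookkeeping issue.

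To be fair, the exponent you tried to reproduce is the one printed in the lemma, which appears to be a conflation: when the authors actually apply the lemma in the proof of Theorem \ref{thm LWP} they use the factor $\delta^{1-b-b'}$, and the additional $\delta^{3(b-b')}$ arises separately from the embedding $X^{s,b}_{\sigma,\delta}\subset X^{s,b'}_{\sigma,\delta}$ applied to the three factors of the cubic nonlinearity, not from the linear Duhamel estimate. What your method proves, and what the paper needs, is the standard statement with $\delta^{1-b-b'}$ (for $0<b'<\tfrac12$, $0<b<1-b'$); I would rewrite your final step to target that exponent, carrying out the low-frequency part by Taylor-expanding $e^{\i t\tau}-1$ rather than only using the crude bound $|t|\le 2\delta$, so that the powers of $\delta$ close in both regimes.
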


\noindent From now on, for simplicity of notation, we write $H^s$ and $X_{\sigma}^{s,b}$ for the spaces $H_{rad}^s$  and $X_{\sigma, rad}^{s,b}$ defined in this subsection.

\subsection{Useful inequalities}

\begin{Lem}[C\'ordoba-C\'ordoba inequality\cite{cordoba2003pointwise,constantin2017remarks}]\label{Lem:Cor-Cor}
Let $D\subset\R^n$ be a bounded domain with smooth boundary (resp. the $n$-dimensional torus). Let $\Delta$ be the Laplace operator on $D$ with Dirichlet boundary condition (rep. periodic condition). Let $\Phi$ be a convex $C^2(\R,\R)$ satisfying $\Phi(0)=0$, and $\gamma\in [0,1]$. For any $f\in C^\infty(D,\R)$, the inequality
\begin{align}
\Phi'(f)(-\Delta)^\gamma f\geq (-\Delta)^\gamma\Phi(f)
\end{align}
holds pointwise almost everywhere in $D.$
\end{Lem}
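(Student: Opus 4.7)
The plan is to handle the endpoint cases $\gamma=0$ and $\gamma=1$ directly, then treat the nontrivial range $\gamma\in(0,1)$ via the heat-semigroup subordination representation of $(-\Delta)^\gamma$.

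For $\gamma=0$ the inequality reduces to $\Phi'(f)f\geq \Phi(f)$, which follows from the supporting-line inequality: convexity of $\Phi$ at the value $f(x)$, evaluated at $0$, gives $0=\Phi(0)\geq \Phi(f(x))+\Phi'(f(x))(0-f(x))$. For $\gamma=1$ the chain rule yields $-\Delta\Phi(f)=-\Phi''(f)|\nabla f|^2-\Phi'(f)\Delta f$, so that $\Phi'(f)(-\Delta f)-(-\Delta)\Phi(f)=\Phi''(f)|\nabla f|^2\geq 0$ again by convexity.

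For $\gamma\in(0,1)$ I would invoke the Balakrishnan representation
$$(-\Delta)^\gamma g=\frac{\gamma}{\Gamma(1-\gamma)}\int_0^\infty t^{-\gamma-1}\bigl(g-e^{t\Delta}g\bigr)\,dt,$$
which holds in the spectral sense on $D$ because, for each eigenvalue $\lambda_k>0$ of $-\Delta$, one has the elementary identity $\lambda_k^\gamma=\frac{\gamma}{\Gamma(1-\gamma)}\int_0^\infty t^{-\gamma-1}(1-e^{-t\lambda_k})\,dt$. Applying this formula to both $g=f$ and $g=\Phi(f)$, it suffices to show that for every $t>0$ the pointwise quantity
$$I_t(x):=\Phi'(f(x))\bigl[f(x)-e^{t\Delta}f(x)\bigr]-\bigl[\Phi(f(x))-e^{t\Delta}\Phi(f)(x)\bigr]$$
is nonnegative. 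Writing $e^{t\Delta}g(x)=\int_D p_t(x,y)\,g(y)\,dy$ for the (sub-)Markovian heat kernel $p_t$, and setting $\alpha_t(x):=\int_D p_t(x,y)\,dy\in[0,1]$, I would rearrange
$$I_t(x)=\bigl[\Phi'(f(x))f(x)-\Phi(f(x))\bigr]+\int_D p_t(x,y)\bigl[\Phi(f(y))-\Phi'(f(x))f(y)\bigr]\,dy.$$

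The convexity of $\Phi$ at $f(x)$ applied to $f(y)$ gives the supporting-line bound $\Phi(f(y))-\Phi'(f(x))f(y)\geq \Phi(f(x))-\Phi'(f(x))f(x)$, and integrating against $p_t(x,\cdot)$ produces
$$I_t(x)\geq \bigl(1-\alpha_t(x)\bigr)\bigl[\Phi'(f(x))f(x)-\Phi(f(x))\bigr].$$
Both factors on the right are nonnegative: $\alpha_t(x)\leq 1$ is the sub-Markov property of the Dirichlet heat semigroup on $D$ (with equality in the periodic case), and $\Phi'(f)f-\Phi(f)\geq 0$ is exactly the $\gamma=0$ inequality, which is where the hypothesis $\Phi(0)=0$ gets used. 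Thus $I_t(x)\geq 0$ pointwise, and after multiplying by $t^{-\gamma-1}$ and integrating in $t$ the conclusion follows.

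The main technical obstacle is the rigorous justification of the Balakrishnan identity and the interchange of the time integral with pointwise evaluation in the bounded-domain setting; this amounts to verifying absolute convergence via the spectral decomposition (using $f\in C^\infty$ with Dirichlet data to obtain rapid decay of the eigenfunction coefficients) together with the short-time and long-time behaviour of $g-e^{t\Delta}g$. The remaining ingredients—the supporting-line inequality and positivity/mass-contraction of $p_t$—are classical and apply uniformly in the torus and Dirichlet ball settings.
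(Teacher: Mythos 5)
Your argument is correct: the endpoint cases are elementary, and for $\gamma\in(0,1)$ the Balakrishnan identity $\lambda^{\gamma}=\frac{\gamma}{\Gamma(1-\gamma)}\int_0^\infty t^{-\gamma-1}(1-e^{-t\lambda})\,dt$ together with the supporting-line inequality and the sub-Markov property of the Dirichlet heat kernel gives exactly the pointwise bound $I_t(x)\geq(1-\alpha_t(x))\bigl[\Phi'(f(x))f(x)-\Phi(f(x))\bigr]\geq 0$. The paper does not prove this lemma at all — it quotes it from C\'ordoba--C\'ordoba and Constantin--Ignatova — and your heat-semigroup subordination proof is essentially the argument of the cited reference \cite{constantin2017remarks}, with the technical justification of the spectral representation (which you correctly flag) being the only point handled more carefully there.
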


\begin{Lem}[Complex C\'ordoba-C\'ordoba inequality]\label{lem CC}
Let $D$ and $\Delta$ be as in Lemma \ref{Lem:Cor-Cor}. For any $f\in C^\infty(D,\C)$, the inequality
\begin{align}
2\Re [f(-\Delta)^\gamma \bar{f}]\geq (-\Delta)^\gamma|f|^2
\end{align}
holds pointwise almost everywhere in $D.$
\end{Lem}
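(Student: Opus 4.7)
The plan is to reduce the complex inequality to the real-valued C\'ordoba-C\'ordoba inequality of Lemma \ref{Lem:Cor-Cor} by decomposing $f$ into its real and imaginary parts. Write $f = u + \mathrm{i} v$ with $u, v \in C^\infty(D, \R)$. Since $(-\Delta)^\gamma$ is a real operator, it commutes with complex conjugation, and a direct expansion of $f (-\Delta)^\gamma \bar{f}$ followed by taking the real part gives
\begin{align}
\Re\bigl[f(-\Delta)^\gamma \bar{f}\bigr] = u(-\Delta)^\gamma u + v(-\Delta)^\gamma v,
\end{align}
so the task reduces to proving the pointwise bound $2u(-\Delta)^\gamma u + 2v(-\Delta)^\gamma v \geq (-\Delta)^\gamma(u^2 + v^2)$.

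Next I would apply Lemma \ref{Lem:Cor-Cor} with the choice $\Phi(x) = x^2$, which is $C^2$, convex, and satisfies $\Phi(0) = 0$ with $\Phi'(x) = 2x$. Applying the real inequality separately to $u$ and to $v$ yields
\begin{align}
2u(-\Delta)^\gamma u \geq (-\Delta)^\gamma u^2, \qquad 2v(-\Delta)^\gamma v \geq (-\Delta)^\gamma v^2
\end{align}
pointwise a.e.\ in $D$. Summing these two inequalities and using the linearity of $(-\Delta)^\gamma$ to recombine the right-hand side as $(-\Delta)^\gamma(u^2 + v^2) = (-\Delta)^\gamma |f|^2$ gives exactly $2\Re[f(-\Delta)^\gamma \bar{f}] \geq (-\Delta)^\gamma |f|^2$.

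There is no real obstacle here; the only subtlety worth checking is that the splitting into real and imaginary parts is compatible with the boundary conditions (Dirichlet on the ball or periodic on the torus), so that $(-\Delta)^\gamma u$ and $(-\Delta)^\gamma v$ are individually well-defined with the same functional calculus used on $f$. This is immediate because the relevant spectral decompositions are built from real eigenfunctions, so $(-\Delta)^\gamma$ preserves the real/imaginary decomposition and the hypotheses of Lemma \ref{Lem:Cor-Cor} are met by each of $u$ and $v$.
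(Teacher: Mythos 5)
Your proof is correct and follows essentially the same route as the paper: decompose $f$ into real and imaginary parts, observe that $\Re[f(-\Delta)^\gamma\bar f]$ splits into the two real quadratic terms, and apply Lemma \ref{Lem:Cor-Cor} with a quadratic $\Phi$ (the paper uses $\Phi(x)=\tfrac{x^2}{2}$, you use $\Phi(x)=x^2$, which is the same argument up to normalization).
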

\begin{proof}[Proof of Lemma \ref{lem CC}]
Let us write $f=a+\i b$ with real valued functions $a$ and $b$. We have
\begin{align}
\Re[(a+\i b)(-\Delta)^{\gamma}(a-\i b)]=a(-\Delta)^{\gamma}a+b(-\Delta)^{\gamma}b.
\end{align}
Now we use Lemma \ref{Lem:Cor-Cor} with $\Phi(x)=\frac{x^2}{2}$, we arrive at
\begin{align}
\Re[(a+\i b)(-\Delta)^{\gamma}(a-\i b)]\geq \frac{1}{2}(-\Delta)^\gamma(a^2+b^2)=\frac{1}{2}(-\Delta)^\gamma|f|^2.
\end{align}
This finishes the proof of Lemma \ref{lem CC}.
\end{proof}
\begin{Cor}\label{Lem:Cplx:Cor}
Let $\gamma\in[0,1]$, we have, for $f\in C^\infty(D,\C)$, that
\begin{align}
\langle(-\Delta)^{\gamma}f,|f|^{2}f\rangle\geq \frac{1}{2}\|(-\Delta)^\frac{\gamma}{2}|f|^2\|_{L^2}^2.
\end{align}
\end{Cor}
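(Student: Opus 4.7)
The plan is to obtain the estimate by combining the pointwise complex Córdoba–Córdoba inequality of Lemma \ref{lem CC} with a nonnegative weight, and then integrating.

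First I would start from the pointwise inequality
\[
2\Re\bigl[f\,(-\Delta)^{\gamma}\bar f\bigr]\;\geq\;(-\Delta)^{\gamma}|f|^{2},
\]
which holds a.e.\ on $D$ by Lemma \ref{lem CC}. Since $|f|^{2}\geq 0$, multiplying this inequality pointwise by $|f|^{2}$ preserves the direction of the inequality, giving
\[
2\,|f|^{2}\,\Re\bigl[f\,(-\Delta)^{\gamma}\bar f\bigr]\;\geq\;|f|^{2}\,(-\Delta)^{\gamma}|f|^{2}\qquad \text{a.e. on }D.
\]

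Next I would integrate over $D$. For the left-hand side, I would rewrite
\[
2\,|f|^{2}\,\Re\bigl[f\,(-\Delta)^{\gamma}\bar f\bigr]
= 2\,\Re\bigl[\,|f|^{2}\,\bar f\,(-\Delta)^{\gamma}f\,\bigr],
\]
using that $(-\Delta)^{\gamma}$ commutes with complex conjugation, so that after integration the left-hand side becomes $2\Re\langle (-\Delta)^{\gamma}f,\,|f|^{2}f\rangle$ (which is the real quantity meant by $\langle(-\Delta)^{\gamma}f,|f|^{2}f\rangle$ in the statement). For the right-hand side, I would invoke the self-adjointness of $(-\Delta)^{\gamma/2}$ (a fractional Plancherel / integration by parts, valid because $|f|^{2}\in C^\infty(D,\mathbb R)$ and the Dirichlet calculus applies to the scalar function $|f|^2$) to identify
\[
\int_{D} |f|^{2}\,(-\Delta)^{\gamma}|f|^{2}\,dx \;=\; \bigl\|(-\Delta)^{\gamma/2}|f|^{2}\bigr\|_{L^{2}}^{2}.
\]

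Dividing by $2$ would then yield the claim. The only subtle point, and the one I would double-check, is the justification of the fractional integration by parts for the square $|f|^{2}$ on the bounded domain with Dirichlet calculus (and similarly for the term $\int f\,|f|^{2}\bar f\,(-\Delta)^{\gamma}$-type expressions on the torus); on $\mathbb T^{d}$ this is just Plancherel, while on $B^{d}$ one expands in the orthonormal basis $\{e_{n}\}$ of the Dirichlet radial Laplacian used in the preliminaries, so that $(-\Delta)^{\gamma}$ and $(-\Delta)^{\gamma/2}$ are defined via the functional calculus on $z_{n}^{2\gamma}$, for which self-adjointness and $\|(-\Delta)^{\gamma/2}g\|_{L^{2}}^{2}=\langle g,(-\Delta)^{\gamma}g\rangle$ are automatic. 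Everything else is a direct consequence of the pointwise inequality from Lemma \ref{lem CC}.
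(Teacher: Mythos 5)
Your argument is correct and is exactly the derivation the paper intends (the corollary is stated without proof as an immediate consequence of Lemma \ref{lem CC}): multiply the pointwise inequality by the nonnegative weight $|f|^2$, integrate, identify the left side with the real inner product $\langle(-\Delta)^{\gamma}f,|f|^{2}f\rangle$ used in the paper, and use the spectral functional calculus for the Dirichlet (or periodic) Laplacian to write $\int_D |f|^2(-\Delta)^{\gamma}|f|^2\,dx=\|(-\Delta)^{\gamma/2}|f|^2\|_{L^2}^2$. Your flagged subtlety about justifying this last identity via the eigenbasis $\{e_n\}$ is handled exactly as you suggest, so nothing is missing.
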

\begin{Lem}[Radial Sobolev lemma on the unit ball]\label{lem radSob}
Let $\frac{1}{2}<s<\frac{d}{2}$. Then for any $f\in H^s:=H_0^s(B^d)$\footnote{$H^s_0(B^d)$ is the Sobolev space of order $s$ of functions $f:B^d\to\C$ vanishing on the boundary of $B^d.$}, we have
\begin{align}
|f(r)|\lleq r^{s-\frac{d}{2}}\|f\|_{H^s}\quad \text{for all $r\in (0,1].$}\label{Radial:Sob}
\end{align}
\end{Lem}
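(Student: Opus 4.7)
The plan is to reduce the pointwise estimate to a trace inequality on a fixed reference annulus via a scaling argument, and then absorb the residual $L^2$ contribution using a Hardy inequality tailored to the Dirichlet space $H^s_0(B^d)$.

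First I would assume by density that $f$ is smooth and radial with vanishing trace on $\partial B^d$, and extend $f$ by zero to $\R^d$; this extension lies in $H^s(\R^d)$ precisely because $f\in H^s_0(B^d)$ with $\tfrac{1}{2}<s<\tfrac{d}{2}$. Fix $r\in(0,1]$ and rescale: set $g(y)=f(ry)$ for $y\in\R^d$. Since $f$ is radial, so is $g$, and in particular $g\equiv f(r)$ on the unit sphere $S^{d-1}$.

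Next, let $A=\{y\in\R^d:\tfrac{1}{2}<|y|<2\}$ be a fixed annulus containing $S^{d-1}$. Since $s>\tfrac{1}{2}$, the classical trace inequality on $A$ gives
\begin{align*}
|f(r)|^2\,|S^{d-1}|=\|g|_{S^{d-1}}\|_{L^2(S^{d-1})}^2\lesssim \|g\|_{L^2(A)}^2+[g]_{H^s(A)}^2,
\end{align*}
where $[\cdot]_{H^s(A)}$ denotes the Gagliardo seminorm. Setting $A_r=\{x\in\R^d:\tfrac{r}{2}<|x|<2r\}$, a direct change of variables yields the exact scalings $\|g\|_{L^2(A)}^2=r^{-d}\|f\|_{L^2(A_r)}^2$ and $[g]_{H^s(A)}^2=r^{2s-d}[f]_{H^s(A_r)}^2$. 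The Gagliardo piece is already of the right order since $[f]_{H^s(A_r)}^2\leq \|f\|_{H^s(\R^d)}^2$.

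The main obstacle is the $L^2$ term, which carries the inconvenient prefactor $r^{-d}$, worse than $r^{2s-d}$ as $r\to 0$. To absorb it I would invoke the Hardy inequality $\||x|^{-s}f\|_{L^2(\R^d)}\lesssim\|f\|_{H^s}$, valid for $f\in H^s_0(B^d)$ whenever $0<s<\tfrac{d}{2}$. Since $|x|\sim r$ on $A_r$, this gives
\begin{align*}
\|f\|_{L^2(A_r)}^2\leq (2r)^{2s}\int_{A_r}|x|^{-2s}|f(x)|^2\,dx\lesssim r^{2s}\|f\|_{H^s}^2,
\end{align*}
so $r^{-d}\|f\|_{L^2(A_r)}^2\lesssim r^{2s-d}\|f\|_{H^s}^2$. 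Combining both contributions produces $|f(r)|^2\lesssim r^{2s-d}\|f\|_{H^s}^2$, which is exactly \eqref{Radial:Sob}. The hypothesis $\tfrac{1}{2}<s<\tfrac{d}{2}$ appears naturally at two distinct points: the lower bound is needed both for the trace inequality and for the zero extension of $f$ across $\partial B^d$ to lie in $H^s(\R^d)$, while the upper bound is exactly the integrability threshold for the Hardy weight.
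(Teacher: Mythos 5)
Your route is genuinely different from the paper's: the paper does not reprove the pointwise bound from scratch, it quotes the radial Sobolev inequality on $\R^d$ (Cho--Ozawa) and transfers it to the ball through a bounded extension operator $\mathfrak{E}:H^s_0(B^d)\to H^s(\R^d)$ with $\mathfrak{E}f=f$ on $B^d$. Your scaling--trace--Hardy argument is instead a self-contained proof of the Euclidean estimate on the annuli you need, and for $\tfrac12<s<\min(1,\tfrac d2)$ --- in particular in the only regime where the paper actually invokes the lemma, namely $s=\tfrac12+$ in Section \ref{Sect:Pr as GWP2} --- every step checks out: the fixed-annulus trace inequality, the exact scalings $r^{-d}$ and $r^{2s-d}$ of the $L^2$ and Gagliardo pieces, and the fractional Hardy inequality with weight $|x|^{-s}$ for $0<s<\tfrac d2$.

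Over the full stated range $\tfrac12<s<\tfrac d2$, however, two steps break. First, for $s\geq 1$ the Gagliardo double integral $[g]_{H^s}$ is infinite for every non-constant function, so the trace inequality as you wrote it and the clean identity $[g]_{H^s(A)}^2=r^{2s-d}[f]_{H^s(A_r)}^2$ are no longer available; this already matters for $d\geq 3$, and any repair (seminorms on derivatives, interpolation) costs exactly the homogeneity your argument exploits. Second, the extension by zero of an $H^s_0(B^d)$ function (Dirichlet condition only) is bounded into $H^s(\R^d)$ only for $s<\tfrac32$: for $s\geq\tfrac32$ the jump of the normal derivative across $\partial B^d$ obstructs it, so for $d\geq 4$ your very first step fails on part of the range. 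Both defects disappear if, as in the paper, you use a bounded Stein-type extension instead of the zero extension --- your argument only needs an extension agreeing with $f$ on $B^d$ and controlled in $H^s(\R^d)$ near the annulus --- but as written the proposal establishes the lemma only for $\tfrac12<s<\min(1,\tfrac d2)$, not for the whole range claimed in the statement.
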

\begin{proof}[Proof of Lemma \ref{lem radSob}]
Let $\frac{1}{2}<s<\frac{d}{2}$. We have for any $f\in H^s(\R^d)$ (see \cite{choozawa}), that
\begin{align}
|f(r)|\lleq r^{s-\frac{d}{2}}\|f\|_{H^s(\R^d)}\quad \text{for all $r\in (0,\infty).$}
\end{align}
On the other hand, since $B^d$ is a regular domain, we have the  extension theorem (see for instance \cite{jonwall} and reference therein): there is a bounded operator $\mathfrak{E}:H^s_0(B^d)\to H^s(\R^d)$ satisfying the following
\begin{enumerate}
\item $\mathfrak{E}f(r)=f(r)\ \forall r\in [0,1]$, for any $f\in H^s_0(B^d)$; \label{loc:rs:1}
\item (continuity)
\begin{align}
\|\mathfrak{E}f\|_{H^s(\R^d)}\leq C\|f\|_{H^s_0(B^d)}.\label{los:rs:2}
\end{align}
\end{enumerate}
Therefore using \eqref{los:rs:2}, we notice that 
\begin{align}
|\mathfrak{E}f(r)|\lleq r^{s-\frac{d}{2}}\|\mathfrak{E}f\|_{H^s(\R^d)}\lleq r^{s-\frac{d}{2}}\|f\|_{H^s_0(B^d)}\quad \text{for all $r\in (0,\infty).$}
\end{align}
Then for $r\in (0,1]$, we use the item \eqref{loc:rs:1} above to arrive at
\begin{align}
|f(r)|\lleq r^{s-\frac{d}{2}}\|f\|_{H^s}.
\end{align}
Hence we finish the proof of Lemma \ref{lem radSob}.
\end{proof}

\section{Description of the Inviscid-Infinite dimensional (IID) limit}\label{Sect:IID}
The IID limit combines a fluctuation-dissipation argument and an abstract version of the Bourgain's invariant measure globalization \cite{bourg94}.

We consider a Hamiltonian equation
\begin{align}
\dt u=JH'(u)\label{Eq:Ham:Gen}
\end{align}
where $J$ is a skew-adjoint operator on suitable spaces, $H(u)$ is the hamiltonian function, $H'$ is the derivative with respect to $u$ (or a function of $u$, for instance to its complex conjugate). We can, for simplicity, assume the following form for the Hamiltonian
\begin{align}
H(u)=E_K(u)+E_p(u),
\end{align}
where $E_K$ and $E_p$ refer  to the kinetic energy of the system (having a quadratic power) and the potential energy, respectively. In the sequel we construct a general framework allowing to apply the Bourgain's invariant measure argument to the context of a general probability measure (not necessarily Gaussian based). We then present the strategy of the construction of the required measures.

\subsection{Abstract version of Bourgain's invariant measure argument}

Let $\Pi^N$ be the projection on the $N$-dimensional space $E^N$ spanned by the first $N$ eigenfunctions $\{e_1, \cdots, e_N\}$. Consider the Galerkin projections of \eqref{Eq:Ham:Gen}
\begin{align}
\dt u=\Pi^NJH'(u).\label{Eq:Ham:N}
\end{align}
The equation \eqref{Eq:Ham:Gen} will be seen as \eqref{Eq:Ham:N} with $N=\infty$.

\begin{Ass}[Uniform local well-posedness]\label{Ass1}
The equation \eqref{Eq:Ham:N} is uniformly (in $N$) locally wellposed in the Cauchy-Hadamard sense on some Sobolev space $H^s$. And there is a function $f$ independent of $N$, such that for  any given data $u_0\in \Pi^NH^s$, the time existence $T(u_0)$ of the corresponding solution $u$ is at least $f(\|u_0\|_{H^s})$. Also, the following estimate holds
\begin{align}
\|u(t)\|_{H^s}\leq 2\|u_0\|_{H^s}\quad \text{for all $|t|\leq T(u_0)$}.
\end{align}
We denote by $\phi^N_t$ the local flow of \eqref{Eq:Ham:N}, for $N=\infty$ we set $\phi_t=\phi^\infty_t$.\\
For $N<\infty$, we assume that $\phi^N_t$ is defined globally in time.
\end{Ass}
\begin{Rmk}
For the power-like nonlinear Schr\"odinger equation, $f(x)$ is of the form $x^r$ where $r$ is related to the power of the nonlinearity.
\end{Rmk}
\begin{Ass}[Convergence]\label{Ass2}
For any $u_0\in H^s$, any sequence $(u_0^N)_N$ such that 
\begin{enumerate}
\item for any $N$, $u_0^N\in \Pi^NH^s$;
\item $(u_0^N)$ converges to $u_0$ in $H^s$;
\end{enumerate}
 then $\phi^N_tu^N$ converges $\phi_tu$ in $C_tH^{s-}.$

\end{Ass}
\begin{Ass}[Invariant measure]\label{Ass3}
For any $N$, there is a measure $\mu_N$ invariant under the projection equations \eqref{Eq:Ham:N} and satisfying
\begin{enumerate}
\item There is an increasing one-to-one function $g:\R\to\R_+$ and a function $h:\R_+\to\R_+$ such that we have the uniform bound
\begin{align}
\int_{E^N}g(\|u\|_{H^{s}})h(\|u\|_{H^{s_0}})\mu^N(du)\leq C,
\end{align} 
where $C>0$ is independent of $N$ and $s_0$ is some fixed number. We then have
\begin{align}
\int_{E^N}e^{\ln(1+ g(\|u\|_{H^{s}}))}h(\|u\|_{H^{s_0}})\mu^N(du)\leq 1+C.\label{EST:EXP}
\end{align}
We set $\tilde{g}(\|u\|_{H^{s}})=\ln(1+ g(\|u\|_{H^{s}}));$\footnote{Remark that $\tilde{g}$ is increasing and one-to-one from $\R$ to $\R$.}
\item The following limit holds
\begin{align}
\lim_{i\to\infty}e^{-2i}\sum_{j\geq 0}\frac{e^{-j}}{f\circ\tilde{g}^{-1}(2(i+j))}=0.
\end{align}
We set the number
\begin{align}
\kappa(i):=e^{-2i}\sum_{j\geq 0}\frac{e^{-j}}{f\circ\tilde{g}^{-1}(2(i+j))}.
\end{align}
\end{enumerate}
\end{Ass}
\begin{Rmk}
For the case of Gibbs measures for NLS like equations, $g(x)$ is of the form $e^{ax^2},
\ a>0$ and $h(x)$ is a constant. 
\end{Rmk}
The Prokhorov theorem (Theorem \ref{Thm:Prokh}) combined with the estimate \eqref{EST:EXP} implies the existence of a probability measure $\mu\in\mathfrak{p}(H^s)$ as an accumulation point when $N\to\infty$ for the measures $\{\mu^N\}$.
\begin{Thm}\label{Thm:Gen:as}
Under the Assumptions \ref{Ass1}, \ref{Ass2}, and \ref{Ass3}, there is a set $\Sigma\subset H^s$ such that
\begin{enumerate}
\item $\mu(\Sigma) =1$
\item The equation \eqref{Eq:Ham:Gen} is GWP on $\Sigma$
\item The measure $\mu$ is invariant under the flow $\phi_t$ induced by the GWP;
\item For every $u_0\in\Sigma$, we have that
\begin{align}
\|\phi_t u_0\|_{H^{s}}\leq C(u_0)\tilde{g}^{-1}(1+\ln(1+|t|)).
\end{align}
\end{enumerate} 
\end{Thm}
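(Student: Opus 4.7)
The plan is a two-tier Bourgain-style argument: first globalize at the Galerkin level using the invariance of $\mu^N$ as a statistical surrogate for a conservation law, then transfer the resulting full-measure sets through the limit $N \to \infty$ via Assumption \ref{Ass2}. The three assumptions play the same structural roles as the corresponding ingredients in the classical Gibbs measure scheme \cite{bourg94}, with the exponential moment \eqref{EST:EXP} replacing the Gaussian/Fernique input.

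Applying a Chebyshev-type inequality to \eqref{EST:EXP} yields the uniform tail estimate
\begin{align*}
\mu^N(\{\|u\|_{H^s} > R\}) \lesssim e^{-\tilde g(R)}.
\end{align*}
For $i, j \geq 0$, set $R_{ij} := \tilde g^{-1}(2(i+j))$ and $\tau_{ij} := f(R_{ij})$, the latter being the local existence time guaranteed by Assumption \ref{Ass1} for data of $H^s$-norm at most $R_{ij}$. Introduce the Galerkin good sets
\begin{align*}
\Sigma^N_{ij} := \bigcap_{|k|\tau_{ij} \leq e^j}\!\{u_0 \in E^N : \|\phi^N_{k\tau_{ij}} u_0\|_{H^s} \leq R_{ij}\}, \qquad \Sigma^N_i := \bigcap_{j \geq 0}\Sigma^N_{ij}.
\end{align*}
Invariance of $\mu^N$ together with a union bound gives
\begin{align*}
\mu^N\bigl((\Sigma^N_{ij})^c\bigr) \lesssim \frac{e^j}{\tau_{ij}}\,e^{-2(i+j)} = \frac{e^{-2i}\,e^{-j}}{f\circ\tilde g^{-1}(2(i+j))},
\end{align*}
and summing in $j$ produces $\mu^N((\Sigma^N_i)^c) \lesssim \kappa(i)$, which tends to $0$ in $i$ by Assumption \ref{Ass3}(2). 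On $\Sigma^N_i$, Assumption \ref{Ass1} propagates the discrete endpoint control into the continuous bound $\|\phi^N_t u_0\|_{H^s} \leq 2R_{ij}$ for $|t| \leq e^j$, which after setting $j \sim \ln|t|$ produces the logarithmic growth expressed in $\tilde g^{-1}$.

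Next I would pass to the limit $N\to\infty$. Tightness of $\{\mu^N\}$ follows from \eqref{EST:EXP}, so Prokhorov supplies a subsequential weak limit $\mu$. Define the limiting analogues $\Sigma_{ij}$, $\Sigma_i$, $\Sigma := \bigcup_i \Sigma_i$ in $H^s$, with $\phi^N$ replaced by the full flow $\phi$ and the existence of the solution on the relevant time interval built into the definition. Using Assumption \ref{Ass2}, any $H^{s-}$-limit of a sequence $u_0^N \in \Sigma^N_{ij}$ (with $u_0^N \in \Pi^N H^s$) belongs to a slight enlargement of $\Sigma_{ij}$, in which $R_{ij}$ is replaced by $R_{ij}(1+\delta)$ and the endpoint norm is measured in $H^{s-}$. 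Combined with the Portmanteau theorem, this transfers $\mu^N((\Sigma^N_i)^c) \lesssim \kappa(i)$ to $\mu(\Sigma_i^c) \lesssim \kappa(i)$; letting $\delta \downarrow 0$ gives $\mu(\Sigma) = 1$. Global well-posedness on $\Sigma$ and the bound \textup{(4)} are built into the construction. Invariance of $\mu$ under $\phi_t$ then follows by testing the identity $\int \varphi \circ \phi^N_t\, d\mu^N = \int \varphi\, d\mu^N$ against $\varphi \in C_b(H^s)$ and using Assumption \ref{Ass2} together with dominated convergence on $\Sigma$.

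The main obstacle is the transfer step from $\Sigma^N_i$ to $\Sigma_i$: these sets are defined by nonlinear conditions on the full flow and are neither naturally open nor closed, while Assumption \ref{Ass2} only provides convergence in $C_t H^{s-}$. This forces the endpoint norm control to be measured in the slightly weaker space $H^{s-}$ (hence the form of \eqref{Bound} in Theorem \ref{asGWP:1}) and demands a careful enlargement-then-shrink procedure in order to use weak convergence of $\mu^N$ to $\mu$ on the appropriate closed sets.
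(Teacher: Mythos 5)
Your Galerkin-level construction (Chebyshev applied to \eqref{EST:EXP}, a union bound over time windows of length $f(\tilde g^{-1}(2(i+j)))$, summation in $j$ producing $\kappa(i)$, then $j\sim\ln|t|$) is exactly the paper's Proposition \ref{prop 3.2}, and your transfer scheme is in spirit the paper's as well: the paper defines the limiting sets by approximability ($\Sigma_{r,i}$ consists of limits of points of $\Sigma^{N_k}_{r,i}$), gets $\mu(\overline{\Sigma_{r,i}})\geq 1-Ca^{-1}\kappa(i)$ from Portmanteau, and then \emph{proves} global existence and the logarithmic bound on these sets (Proposition \ref{prop 3.4}) by iterating on windows of uniform length $T_R$ given by Assumption \ref{Ass1} together with the convergence of Assumption \ref{Ass2}; you instead build GWP into the definition of the limiting sets and must show that limits of good Galerkin data are good, which is the same iteration and should be spelled out rather than attributed to Assumption \ref{Ass2} alone. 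Note also that Assumption \ref{Ass2} requires the approximating data $u_0^N\in\Pi^N H^s$ to converge in $H^s$, while your transfer step takes $H^{s-}$-limits; the paper resolves precisely this tension by running the construction at a ladder of regularities $r<s$ (defining $\Sigma=\bigcap_k\Sigma_{l_k}$ with $l_k\uparrow s$) and restricting to $\mathrm{supp}(\mu)\subset H^s$, which is the rigorous version of your ``enlargement-then-shrink'' and is why the final bound is stated in $H^{s-}$.

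The genuine gap is your invariance argument. Passing to the limit in $\int\varphi(\phi^N_t u)\,\mu^N(du)=\int\varphi(u)\,\mu^N(du)$ cannot be done by ``Assumption \ref{Ass2} together with dominated convergence'': both the integrand and the measure vary with $N$, and dominated convergence is a fixed-measure statement; moreover Assumption \ref{Ass2} only yields convergence of $\phi^N_t u^N$ along sequences $u^N\in\Pi^N H^s$ converging in $H^s$ to a fixed $u$, so it cannot even be invoked pointwise under $\mu^N$ without first coupling the measures. The paper's proof of Theorem \ref{thm inv} supplies exactly the missing ingredient: by the Skorokhod representation theorem one realizes $u_0^N$ with law $\mu^N$ and $u_0$ with law $\mu$ on a single probability space with $u_0^N\to u_0$ almost surely; Assumption \ref{Ass2} then gives $\phi^N_t u_0^N\to\phi_t u_0$ almost surely in $H^{s-}$, hence the law of $\phi_t u_0$ is the weak limit of the laws of $\phi^N_t u_0^N$, i.e.\ of $\mu^N$, which is $\mu$. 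Without this coupling (or some substitute such as locally uniform convergence of the flows on compact sets of large $\mu^N$-measure, which Assumption \ref{Ass2} does not provide by itself), the limit in your identity is not justified. With the Skorokhod step inserted, your proposal coincides with the paper's proof.
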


The following is devoted to the proof of Theorem \ref{Thm:Gen:as}.
\subsubsection{Individual bounds}
Fix an arbitrary (small) number $a>0$, define the set
\begin{align}
E^N_a=\left\{u\in E^N\ |\ h(\|u\|_{H^{s_0}})\geq a\right\}.\label{Def:E^N_a}
\end{align}
\begin{Prop}\label{prop 3.2}
Let $r\leq s,\ N\geq 1,\ i\geq 1$, there exists a set $\Sigma^N_{r,i}\subset E^N_a$ such that
\begin{align}
\mu^N(E_a^N\backslash\Sigma^N_{r,i})\leq Ca^{-1}\kappa(i),\label{Est:Sigma^N}
\end{align}
and for any $u_0\in \Sigma^N_{r,i}$, we have
\begin{align}
\|\phi^N_tu_0\|_{H^r}\leq 2\tilde{g}^{-1}(1+i+\ln(1+|t|))\quad\text{for all $t\in\R$.}\label{Est:Bourg:N}
\end{align}
Moreover, we have the properties
\begin{align}
\Sigma^{N_1}_{s,i}\subset\Sigma^{N_2}_{s,i}\ \text{if $N_1\leq N_2$};\quad \Sigma^{N}_{s,i_1}\subset\Sigma^{N}_{s,i_2}\ \text{if $i_1\leq i_2$};\quad \Sigma^{N}_{s_1,i}\subset\Sigma^{N}_{s_2,i}\ \text{if  $s_1\geq s_2$}.\label{PROP-SIGM_N}
\end{align}
\end{Prop}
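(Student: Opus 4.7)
The plan is to adapt Bourgain's invariant-measure globalization \cite{bourg94} to the abstract setting above. The three ingredients are: (i) a Chebyshev-type tail bound extracted from the exponential moment \eqref{EST:EXP}; (ii) the $\phi^N_t$-invariance of $\mu^N$, which transports a pointwise-in-time control into a uniform-in-time control; and (iii) the uniform local well-posedness of Assumption \ref{Ass1}, which is iterated across short sub-intervals.

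Since $h(\|u\|_{H^{s_0}})\geq a$ on $E^N_a$, Chebyshev together with \eqref{EST:EXP} yields, for every $K>0$,
\begin{align*}
\mu^N\bigl(\{u\in E^N_a:\tilde g(\|u\|_{H^s})>K\}\bigr)\leq \frac{1+C}{a}\,e^{-K}.
\end{align*}
For each integer $j\geq 0$, cover $[-e^j,e^j]$ by $\lesssim e^j/T_j$ sub-intervals of length $T_j:=f(\tilde g^{-1}(2(i+j)))$, centered at points $\tau^j_k$, and set
\begin{align*}
B^N_{i,j,k}:=\{u\in E^N_a:\tilde g(\|\phi^N_{\tau^j_k} u\|_{H^s})>2(i+j)\},\qquad \Sigma^N_{r,i}:=E^N_a\setminus\bigcup_{j\geq 0,\,k}B^N_{i,j,k}.
\end{align*}
Using the invariance $(\phi^N_{-\tau^j_k})_*\mu^N=\mu^N$, together with the fact that in the target application $h(\|\cdot\|_{H^{s_0}})$ is itself a conservation law of \eqref{Eq:Ham:N} (so it passes through the change of variables), the preceding display gives $\mu^N(B^N_{i,j,k})\leq \frac{1+C}{a}e^{-2(i+j)}$.

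Off the bad sets, at every midpoint $\tau^j_k$ we have $\|\phi^N_{\tau^j_k} u_0\|_{H^s}\leq \tilde g^{-1}(2(i+j))$; Assumption \ref{Ass1} then guarantees a local existence time of length at least $T_j$ with the norm at most doubling, and chaining the sub-intervals gives $\|\phi^N_t u_0\|_{H^r}\leq \|\phi^N_t u_0\|_{H^s}\leq 2\tilde g^{-1}(2(i+j))$ for all $|t|\leq e^j$. On the dyadic shells $|t|\in[e^{j-1},e^j]$ this is exactly \eqref{Est:Bourg:N} up to a harmless rescaling of the argument of $\tilde g^{-1}$. Summing the measure estimates,
\begin{align*}
\mu^N(E^N_a\setminus\Sigma^N_{r,i})\lesssim \frac{1+C}{a}\sum_{j\geq 0}\frac{e^j}{T_j}\,e^{-2(i+j)}=\frac{1+C}{a}\,e^{-2i}\sum_{j\geq 0}\frac{e^{-j}}{f(\tilde g^{-1}(2(i+j)))}=\frac{1+C}{a}\,\kappa(i),
\end{align*}
which is \eqref{Est:Sigma^N}. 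The nesting properties \eqref{PROP-SIGM_N} are immediate from the construction: enlarging $i$ or lowering $r$ only weakens the pointwise bound in the definition of the bad sets, while the Galerkin flow $\phi^{N_1}_t$ embeds in $\phi^{N_2}_t$ whenever $N_1\leq N_2$.

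The main obstacle is the change of variables in the previous paragraph: the weight $h(\|u\|_{H^{s_0}})$ attached to the Chebyshev inequality must be transported cleanly by the flow, which forces $h$ to be (or be dominated by) a function of a conservation law of \eqref{Eq:Ham:N} --- in the FNLS applications this is the $L^2$ mass, i.e.\ $s_0=0$. A secondary point is bookkeeping: the tail level $2(i+j)$ is chosen precisely so that the sub-interval length $T_j=f(\tilde g^{-1}(2(i+j)))$ and the Chebyshev gain $e^{-2(i+j)}$ assemble into the series defining $\kappa(i)$ in Assumption \ref{Ass3}.
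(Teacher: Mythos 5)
Your construction is, in substance, the paper's own proof: the same grid of times with mesh $T_j=f(\tilde g^{-1}(2(i+j)))$ on the window of length $\sim e^j$, the same Chebyshev bound drawn from \eqref{EST:EXP} with the factor $a^{-1}$ coming from $h\geq a$ on $E^N_a$, the same transport of that bound by the invariance of $\mu^N$, a union bound that assembles into exactly the series $\kappa(i)$ of Assumption \ref{Ass3} (giving \eqref{Est:Sigma^N}), and the chaining of the doubling property of Assumption \ref{Ass1} between grid points followed by the dyadic choice of $j$ (giving \eqref{Est:Bourg:N}, with the same harmless rescaling inside $\tilde g^{-1}$ that the paper also absorbs). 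Thresholding $\|\phi^N_{\tau^j_k}u\|_{H^s}$ instead of the $H^r$ norm is acceptable for this proposition, but it makes your sets independent of $r$, so the third inclusion in \eqref{PROP-SIGM_N} becomes vacuous; the paper's $H^r$-ball definition of $B^N_{r,i,j}$ is the one actually exploited later (e.g.\ in Lemma \ref{Lem:Inv}), so it is better to keep the $r$-dependence.

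Two of your supporting claims are incorrect as statements about this paper. First, the argument does not ``force'' $h(\|\cdot\|_{H^{s_0}})$ to be a conservation law, and the paper never uses such a property: its proof defines $B^N_{r,i,j}\subset E^N_a$, sets $\Sigma^N_{r,i,j}=\bigcap_k\phi^N_{-kT_0}(B^N_{r,i,j})$, and estimates the complement using only Chebyshev on $E^N_a$ together with the invariance of $\mu^N$. Moreover, in the FNLS application the weight is $h(\|u\|_{H^{s_0}})=\|u\|_{H^{s_0}}^2$ with $s_0$ at the top regularity (this is precisely what produces \eqref{Est:exp:N} and then \eqref{HS-Cont-1}), not the conserved $L^2$ mass; so ``$s_0=0$'' misreads the setting — mass conservation enters the abstract scheme only later, as Assumption \ref{Ass:2nd:CL} for Lemma \ref{Lem:Inv}. (You are right that the Galerkin FNLS conserves mass, so a mass-based weight could be substituted, but that is a modification of the hypotheses, not the paper's argument.) Second, your justification of $\Sigma^{N_1}_{s,i}\subset\Sigma^{N_2}_{s,i}$ — that ``the Galerkin flow $\phi^{N_1}_t$ embeds in $\phi^{N_2}_t$'' — is false: for a nonlinear $H'$ the $N_1$-truncated flow is not the restriction of the $N_2$-truncated flow to $E^{N_1}$, since the $N_2$-system generates modes outside $E^{N_1}$. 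The paper obtains this nesting from the definition of the sets themselves (monotonicity $E^{N_1}_a\subset E^{N_2}_a$ and $B^{N_1}_{r,i,j}\subset B^{N_2}_{r,i,j}$, the sets depending only on $g$ and $h$), not from any relation between the two flows, and your proof should do the same.
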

\begin{proof}[Proof of Proposition \ref{prop 3.2}]
It suffices to consider $t>0$. We define the sets
\begin{align}
B^N_{r,i,j}:=\{u\in E_a^N\ |\ \|u\|_{H^r}\leq \tilde{g}^{-1}(2(i+j))\}.\label{Def:B^N_r,i,j}
\end{align}
Let $T_0$ be the local time existence on $B^N_{r,i,j}$. From the local well-posedness, we have  for $t\leq T_0=f(\tilde{g}^{-1}(2(i+j)))$ (see Assumption \ref{Ass1}), that
\begin{align}
\phi^N_tB^N_{r,i,j}\subset\{u\in E_a^N\ |\ \|u\|_{H^r}\leq 2\tilde{g}^{-1}(2(i+j))\}.
\end{align}
Now, let us define the set
\begin{align}
\Sigma^N_{r,i,j}:=\bigcap_{k=0}^{[\frac{e^j}{T_0}]}\phi^N_{-kT_0}(B^N_{r,i,j}).\label{Def:Sigma^N_r,i,j}
\end{align}
We use \eqref{EST:EXP} and the Chebyshev inequality to obtain the following
\begin{align}
\mu^N(E_a^N\backslash\Sigma^N_{r,i,j})=\mu^N\left(\left\{u\in E^N_a\ |\ \|u\|_{H^r}\geq \tilde{g}^{-1}(2(i+j))\right\}\right) &\lleq e^{-2(i+j)}[\frac{e^j}{T_0}]\int_{E^N_a}e^{\tilde{g}\left(\|u\|_{H^r}\right)}\mu^N(du)\\
&\lleq e^{-2(i+j)}[\frac{e^j}{T_0}]a^{-1}\int_{L^2}e^{\tilde{g}(\|u\|_{H^r})}h(\|u\|_{H^{s_0}})\mu^N(du)\\
&\lleq a^{-1}[\frac{e^j}{T_0}]e^{-2(i+j)}\\
&\lleq a^{-1}\frac{e^{-2i}e^{-j}}{f\circ\tilde{g}^{-1}(2(i+j))}.
\end{align}
Let us finally define
\begin{align}
\Sigma^N_{r,i}=\bigcap_{j\geq 0}\Sigma^N_{r,i,j}.\label{Def:Sigma^N_r,i}
\end{align}
We then obtain
\begin{align}
\mu^N(E^N_a\backslash\Sigma^N_{s,i})\leq Ca^{-1}e^{-2i}\sum_{j\geq 0}\frac{e^{-j}}{f\circ\tilde{g}^{-1}(i+j)}= Ca^{-1}\kappa(i).
\end{align}
We claim that for $u_0\in \Sigma^N_{r,i,j}$, the following estimate holds
\begin{align}
\|\phi^N_tu_0\|_{H^r}\leq 2\tilde{g}^{-1}(2(i+j))\quad\text{for all $t\leq h(j)$}.
\end{align}
Indeed we write $t=kT_0+\tau$ where $k$ is an integer in $[0,\frac{e^j}{T_0}]$ and $\tau\in [0,T_0]$. We have
\begin{align}
\phi^N_tu_0=\phi^N_{\tau+kT_0}u_0=\phi^N_\tau(\phi^N_{kT_0}u_0).
\end{align}
Now, recall that $u_0\in \phi^N_{-kT_0}(B^N_{r,i,j})$, then $u_0\in\phi^N_{-kT_0}w$ where $w\in B^N_{r,i,j}$. Then $\phi^N_tu_0=\phi^N_\tau w$.

For $t>0$, there $j\geq 0$ such that
\begin{align}
e^{j-1}\leq 1+t\leq e^j.
\end{align}
Then
\begin{align}
j\leq 1+\ln(1+t).
\end{align}
We arrive at
\begin{align}
\|\phi^N_tu_0\|_{H^r}\leq 2\tilde{g}^{-1}(1+i+\ln(1+t)).
\end{align}
The properties \eqref{PROP-SIGM_N} follow easily from the definition of the sets $\Sigma^N_{r,i}$.  Remark that the definition of $\Sigma^N_{r,i}$ does not depend on the measure $\mu^N$ but just on the functions $h$ and $g$,  as can be noticed in \eqref{Def:E^N_a}, \eqref{Def:B^N_r,i,j}, \eqref{Def:Sigma^N_r,i,j} and \eqref{Def:Sigma^N_r,i}.  If $N_1\leq N_2$, then we see trivially that $B^{N_1}_{r,i,j}\subset B^{N_2}_{r,i,j}$, it then follows from \eqref{Def:Sigma^N_r,i,j} and \eqref{Def:Sigma^N_r,i} that $\Sigma^{N_1}_{r,i}\subset \Sigma^{N_2}_{r,i}$. Using the fact that $g^{-1}$ is increasing, we obtain $\Sigma^N_{s,i_1}\subset \Sigma^N_{s,i_2}$ for $i_1\leq i_2$. Similarly, the inequality $\|u\|_{H^{s_2}}\leq \|u\|_{H^{s_1}}$ for $s_1\geq s_2$ leads to remaining inclusion.
The proof of Proposition  \ref{prop 3.2} is finished.
\end{proof}
\subsubsection{Statistical ensemble}
We want to pass to the limit $N\to\infty$ in the sets $\Sigma^N_{r,i}$. We introduce the following limiting set:
\begin{align}
\Sigma_{r,i}:=\left\{u\in H^r\ |\ \exists (u^{N_k})_k,\ \lim_{k\to\infty}u^{N_k}=u,\ \text{where $u^{N_k}\in\Sigma^{N_k}_{s,i}$}\right\}.
\end{align}
We then define the following set
\begin{align}
\Sigma_r=\bigcup_{i\geq 1}\overline{\Sigma_{r,i}}.
\end{align}
Let $(l_k)$ be a sequence such that $\lim_{k\to\infty}l_k=s$, the statistical ensemble is given by
\begin{align}
\Sigma=\bigcap_{k\in l_k}\Sigma_{k}.
\end{align}

\begin{Prop}\label{Meas:Sig}
The set $\Sigma$ constructed above is of full $\mu$-measure:
\begin{align}
\mu(\Sigma)=1.
\end{align}
\end{Prop}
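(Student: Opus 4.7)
My plan would proceed via a Portmanteau-type argument coupled with the individual bounds from Proposition \ref{prop 3.2}. First, since $\Sigma = \bigcap_k \Sigma_{l_k}$ is a countable intersection, I would reduce to showing $\mu(\Sigma_r) = 1$ for each fixed $r = l_k \leq s$. The monotonicity $\Sigma^N_{s,i_1} \subset \Sigma^N_{s,i_2}$ for $i_1 \leq i_2$ from \eqref{PROP-SIGM_N} is preserved when passing to $H^r$-limits and closures, so $\overline{\Sigma_{r,i}}$ is increasing in $i$, and continuity of $\mu$ from below reduces the claim further to $\lim_{i\to\infty}\mu(\overline{\Sigma_{r,i}}) = 1$.

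Next, I would invoke Prokhorov's theorem to extract a subsequence $\mu^{N_k} \rightharpoonup \mu$ weakly in $\mathfrak{p}(H^r)$; tightness of $\{\mu^N\}$ in $H^r$ follows from the uniform moment bound \eqref{EST:EXP} together with the compact embedding $H^s \hookrightarrow H^r$ for $r < s$. For fixed $i$ and $a > 0$, I would introduce the closed $H^r$-sets
\begin{align}
A_M := \overline{\bigcup_{k \geq M} \Sigma^{N_k}_{s,i}}.
\end{align}
These are decreasing in $M$, and for every $u \in \bigcap_M A_M$ one can, for each $M$, pick $u_{k_M} \in \Sigma^{N_{k_M}}_{s,i}$ with $k_M \geq M$ and $\|u_{k_M} - u\|_{H^r} < 1/M$; after passing to a strictly increasing sub-subsequence in $k_M$ one obtains the approximating sequence required in the definition of $\Sigma_{r,i}$, so $\bigcap_M A_M \subset \overline{\Sigma_{r,i}}$. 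The Portmanteau inequality applied to the closed sets $A_M$, combined with the inclusion $\Sigma^{N_k}_{s,i} \subset A_M$ for $k \geq M$ and the bound \eqref{Est:Sigma^N}, yields
\begin{align}
\mu(A_M) \geq \limsup_{k\to\infty} \mu^{N_k}(A_M) \geq \limsup_{k\to\infty} \mu^{N_k}(E^{N_k}_a) - C a^{-1} \kappa(i).
\end{align}
Letting $M \to \infty$ and using continuity of $\mu$ from above gives
\begin{align}
\mu(\overline{\Sigma_{r,i}}) \geq \limsup_{k\to\infty} \mu^{N_k}(E^{N_k}_a) - C a^{-1} \kappa(i).
\end{align}

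The hard part will be the joint calibration of $a$ and $i$ at the end. I need to choose $a = a(i) \to 0$ slowly enough that $a(i)^{-1}\kappa(i) \to 0$ (feasible since $\kappa(i) \to 0$ by Assumption \ref{Ass3}(2)), while simultaneously maintaining $\limsup_k \mu^{N_k}(E^{N_k}_{a(i)}) \to 1$, i.e., uniform-in-$N$ inner tightness of the non-degeneracy level sets $\{h(\|u\|_{H^{s_0}}) \geq a\}$. This is precisely where the weight $h$ in Assumption \ref{Ass3} plays its role: one controls $\mu^N(\{h(\|u\|_{H^{s_0}}) < a\})$ through the joint integrability \eqref{EST:EXP} and a suitable non-degeneracy of $h$ near zero (for instance, if $h$ is continuous and $h(0)>0$, this is immediate; otherwise one trades a power of $a$ against $g$). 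Once this calibration is in hand, taking $i \to \infty$ gives $\mu(\overline{\Sigma_{r,i}}) \to 1$, hence $\mu(\Sigma_r) = 1$ for every $r = l_k$, and finally $\mu(\Sigma) = 1$ by the countable-intersection reduction.
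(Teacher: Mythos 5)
Your skeleton is the same as the paper's: reduce to $\mu(\Sigma_r)=1$ by countability of the intersection, use the monotonicity in $i$ from \eqref{PROP-SIGM_N} together with continuity of $\mu$ from below, and lower-bound $\mu(\overline{\Sigma_{r,i}})$ through the Portmanteau theorem (Theorem \ref{Thm:Portm}) and the individual bound \eqref{Est:Sigma^N} of Proposition \ref{prop 3.2}, with the limit measure $\mu$ coming from Prokhorov's theorem. The auxiliary sets $A_M=\overline{\bigcup_{k\geq M}\Sigma^{N_k}_{s,i}}$ are a correct but unnecessary detour: since any $u\in\Sigma^{N}_{s,i}$ is the limit of the constant sequence, one has $\Sigma^{N_k}_{s,i}\subset\Sigma_{r,i}$ for every $k$, so Portmanteau can be applied once to the single closed set $\overline{\Sigma_{r,i}}$, which is exactly what the paper does; your $\bigcap_M A_M$ argument reaches the same inequality with more work.

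The genuine divergence is your retention of the factor $\mu^{N_k}(E^{N_k}_a)$ and the proposed calibration $a=a(i)$, and this is where your plan has a gap. You are right, as a matter of bookkeeping, that \eqref{Est:Sigma^N} only controls the complement of $\Sigma^{N}_{r,i}$ \emph{inside} $E^N_a$; but the mechanism you suggest for the missing piece cannot work: \eqref{EST:EXP} is an upper bound on $\int e^{\tilde g(\|u\|_{H^s})}h(\|u\|_{H^{s_0}})\,\mu^N(du)$ and therefore yields no lower bound on $\mu^N(E^N_a)=\mu^N\bigl(h(\|u\|_{H^{s_0}})\geq a\bigr)$; moreover in the application $h(x)=x^2$, so $h(0)=0$ and your ``immediate'' case is not available. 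A family $\mu^N$ concentrated near $\{h=0\}$ is compatible with Assumptions \ref{Ass1}--\ref{Ass3} as stated, so the uniform-in-$N$ non-concentration you need must come from additional information of the type \eqref{Est:M}/\eqref{Est:M:N}, not from \eqref{EST:EXP}. For comparison, the paper's own proof performs no such calibration: it keeps $a$ fixed and passes from $\mu^{N_k}(E^{N_k}_a\setminus\Sigma^{N_k}_{s,i})\leq Ca^{-1}\kappa(i)$ directly to $\mu^{N_k}(\Sigma^{N_k}_{s,i})\geq 1-Ca^{-1}\kappa(i)$, i.e.\ it tacitly treats the bound of Proposition \ref{prop 3.2} as if taken over all of $E^N$; so the step at which your proposal is incomplete is precisely the step the paper elides rather than resolves, and you should not expect to close it from Assumptions \ref{Ass1}--\ref{Ass3} alone.
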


\begin{proof}[Proof of Proposition \ref{Meas:Sig}]
Let us first observe that
\begin{align*}
\Sigma_{r,i}^N \subset \Sigma_{r,i},
\end{align*}
because for any $u \in \Sigma_{r,i}^N $ the constant sequence $u_N = u$ converges to $u$, and then $u \in \Sigma_{r,i}$.\\
Now by the Portmanteau theorem (see Theorem \ref{Thm:Portm}) and the use of the bound \eqref{Est:Sigma^N} we have that
\begin{align*}
\mu(\overline{\Sigma_{r,i}} ) \geq \lim_{N_k \to \infty} \mu^{N_k} (\overline{\Sigma_{r,i}}) \geq \lim_{N_k \to \infty} \mu^{N_k} (\overline{\Sigma^{N_k}_{r,i}}) \geq \lim_{N_k \to \infty} \mu^{N_k} (\Sigma_{s,i}^{N_k}) \geq \lim_{N_k \to \infty} (1 - Ca^{-1}\kappa(i))  = 1 - Ca^{-1}\kappa(i).
\end{align*}
On the other hand, from the properties \eqref{PROP-SIGM_N}, we have that $(\overline{\Sigma_{r,i}})_{i \geq 1}$ is non-decreasing.
In particular, 
\begin{align*}
\mu (\cup_{i \geq 1} \overline{\Sigma_{r,i}}) = \lim_{i \to \infty} \mu (\overline{\Sigma_{r,i}}).
\end{align*}
Therefore
\begin{align*}
\mu (\cup_{i \geq 1} \overline{\Sigma_{r,i}}) = \lim_{i \to \infty} \mu (\overline{\Sigma_{r,i}}) \geq \lim_{i \to \infty} (1 - Ca^{-1}\kappa(i))  =1.
\end{align*}
Since $\mu$ is a probability measure, we obtain
\begin{align*}
1 \geq \mu(\Sigma_r) \geq 1.
\end{align*}
This finishes the proof of Proposition \ref{Meas:Sig}.
\end{proof}

\subsubsection{Globalization}

\begin{Prop}\label{prop 3.4}
Let $r\leq s$. For any $u_0\in \Sigma_r$, there is a unique global solution $u\in C_tH^s$ such that
\begin{align}
\|\phi_t u_0\|_{H^{s}}\leq 2\tilde{g}^{-1}(1+i+\ln(1+|t|))\quad \forall t\in\R.\label{Est:Bourg}
\end{align}
\end{Prop}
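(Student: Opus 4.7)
The plan is to pass to the limit in the finite-dimensional estimates of Proposition~\ref{prop 3.2} for each initial datum in $\Sigma_r$. Since $\Sigma_r = \bigcup_{i \geq 1} \overline{\Sigma_{r,i}}$, I fix $u_0 \in \Sigma_r$ and choose $i$ with $u_0 \in \overline{\Sigma_{r,i}}$. A diagonal argument shows that $\Sigma_{r,i}$ is already $H^r$-closed (given $v_n \in \Sigma_{r,i}$ with $v_n \to v$ in $H^r$, one selects, for each $n$, a member $u_n \in \Sigma^{N_n}_{s,i}$ approximating $v_n$ to within $1/n$ in $H^r$ with $N_n$ strictly increasing, and obtains $u_n \to v$). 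Hence $u_0$ is itself the $H^r$-limit of a sequence $u_0^{N_k} \in \Sigma^{N_k}_{s,i}$, and we may work directly with this sequence.

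Proposition~\ref{prop 3.2} gives the uniform bound $\|\phi^{N_k}_t u_0^{N_k}\|_{H^s} \leq 2\tilde{g}^{-1}(1+i+\ln(1+|t|))$ for all $t \in \R$ and all $k$. In particular $(u_0^{N_k})$ is bounded in $H^s$, and the weak-$H^s$ limit of any subsequence must coincide with its strong $H^r$-limit $u_0$, giving $u_0 \in H^s$ with $\|u_0\|_{H^s} \leq 2\tilde{g}^{-1}(1+i)$. Interpolating this uniform $H^s$-bound against the strong $H^r$-convergence promotes $u_0^{N_k} \to u_0$ to strong convergence in $H^{s'}$ for every $r \leq s' < s$, which is what is needed to apply Assumption~\ref{Ass2}. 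This produces the local flow $\phi_t u_0$ (whose existence on some interval $[-T,T]$ is provided by Assumption~\ref{Ass1}) together with the convergence $\phi^{N_k}_\cdot u_0^{N_k} \to \phi_\cdot u_0$ in $C([-T,T]; H^{s-})$.

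The Bourgain-type estimate then propagates to the limit by lower semicontinuity: for each fixed $t$, the $H^s$-bounded sequence $\phi^{N_k}_t u_0^{N_k}$ converges to $\phi_t u_0$ in $H^{s-}$, hence, along a further subsequence, weakly in $H^s$, which yields $\|\phi_t u_0\|_{H^s} \leq 2\tilde{g}^{-1}(1+i+\ln(1+|t|))$. Since this a priori bound is finite for every $t \in \R$, Assumption~\ref{Ass1}, which bounds the local existence time from below by $f$ of the $H^s$-norm, precludes finite-time blow-up and allows the local solution to be continued to all of $\R$, producing the global flow $\phi_t u_0 \in C_t H^s$ with the desired bound. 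Uniqueness is nothing more than the Cauchy--Hadamard clause of Assumption~\ref{Ass1} applied on successive short intervals.

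I expect the main obstacle to be the reconciliation of topologies: Assumption~\ref{Ass2} is formulated for data converging in $H^s$, whereas our approximating sequence converges a priori only in the weaker norm $H^r$. The uniform $H^s$-bound of Proposition~\ref{prop 3.2} combined with interpolation is precisely the tool that bridges this gap and justifies invoking the convergence assumption. A secondary bookkeeping point is the handling of subsequences across time: strong $H^{s-}$ convergence on compact time intervals already identifies $\phi_t u_0$ uniquely and independently of the extraction, so the weak-$H^s$ bound at each $t$ is read off on a common subsequence without conflict and the claimed pointwise-in-time estimate follows for all $t \in \R$.
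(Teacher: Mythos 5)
Your proposal is correct and follows essentially the same route as the paper's proof: approximate $u_0$ by elements of the sets $\Sigma^{N_k}_{s,i}$, invoke the uniform logarithmic bounds \eqref{Est:Bourg:N} of Proposition \ref{prop 3.2}, pass to the limit through Assumption \ref{Ass2} and lower semicontinuity of the $H^s$-norm, and iterate the local theory of Assumption \ref{Ass1} on intervals of length bounded below by $f$ of the slowly growing bound. Your diagonal argument showing $\Sigma_{r,i}$ is $H^r$-closed and your interpolation bridge for the data topology are harmless refinements of steps the paper treats more loosely (it defers the boundary case $u_0\in\partial\Sigma_{r,i}$ to a limiting argument and applies Assumption \ref{Ass2} to data converging only in $H^r$), so in substance the two arguments coincide.
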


\begin{proof}[Proof of Proposition \ref{prop 3.4}]
For $u_0\in\Sigma_r\cap\text{supp}(\mu)$ (recall that $\mu(\Sigma_r)=1$), we have that
\begin{enumerate}
\item $u_0\in H^s$ since $\text{supp}(\mu)\subset H^s$;
\item there is $i\geq 1$ such that $u_0\in\overline{\Sigma_{r,i}}$.
\end{enumerate}
We will consider that $u_0\in\Sigma_{r,i}$, the case $u_0\in\partial \Sigma_{r,i}$ can be obtained by a limiting argument (see \cite{syyu}). By construction of $\Sigma_{r,i}$, there is $N_k\to \infty$ such that
\begin{align}
u_0=\lim_{N_k\to\infty}u^{N_k},\quad u^{N_k}\in\Sigma^{N_k}_{r,i}.
\end{align}
Now using \eqref{Est:Bourg:N}, we have
\begin{align}
\|\phi^{N_k}_tu^{N_k}\|_{H^r}\leq 2\tilde{g}^{-1}(1+i+\ln(1+|t|)),
\end{align}
this gives in particular that, at $t=0$,
\begin{align}
\|u^{N_k}\|_{H^r}\leq 2\tilde{g}^{-1}(1+i).
\end{align}
A passage to the limit $N_k\to\infty$ shows that
\begin{align}
\|u_0\|_{H^r}\leq 2\tilde{g}^{-1}(1+i).
\end{align}
Now for any fixed $T>0$, set $b=2\tilde{g}^{-1}(1+i+\ln(1+|T|))$ and $R=b+1$, so that $u^{N_k},\ u_0\in B_R(H^r)$. Let $T_R$ be the local existence time of $B_R$. We have that $\phi_t$ and $\phi^{N_k}_tu^{N_k}$ exist for $|t|\leq T_R$. Using Assumption \ref{Ass2}, we have for $r<s$, 
\begin{align}
\|\phi_tu_0\|_{L^\infty_tH^r}\leq b\leq R.
\end{align}  
This bound tells us that after the time $T_R$ the solution stays in the same ball as the initial data. We can then iterate on intervals $[nT_R,(n+1)T_R]$ still consuming the fixed time $T$. Since this time is arbitrary, we conclude to the global existence. The estimate \eqref{Est:Bourg} follows easily a passage to the limit from \eqref{Est:Bourg:N}.
Now the proof of Proposition \ref{prop 3.4} is finished. 
\end{proof}
\begin{Rmk}
It follows from the proof above that, if $u_0\in \Sigma_r$ we have that
\begin{align}
\lim_{k\to\infty}\|\phi_tu_0-\phi_t^{N_k}u^{N_k}\|_{H^r}=0,\label{Conv:flows}
\end{align}
where $u^{N_k}\in \Sigma^{N_k}_{r,i}$ and $\lim_{k\to\infty}\|u_0-u^{N_k}\|_{H^r}=0.$
\end{Rmk}

\subsubsection{Invariance results}

\begin{Thm}\label{thm inv}
The measure $\mu$ is invariant under $\phi_t$.
\end{Thm}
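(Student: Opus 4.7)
The plan is to derive invariance of $\mu$ under $\phi_t$ by a weak-convergence plus Skorokhod argument that transfers the invariance of each $\mu^{N_k}$ under $\phi^{N_k}_t$ to the limit. It suffices to show, for every fixed $t \in \R$ and every $f \in C_b(H^{s'})$ with $s'<s$, that
\begin{align*}
\int f(\phi_t u)\, \mu(du) = \int f(u)\, \mu(du),
\end{align*}
because $\phi_t$ is defined on the full-measure set $\Sigma$ by Proposition \ref{prop 3.4} and Proposition \ref{Meas:Sig}.

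First, by Assumption \ref{Ass3} and the compact Sobolev embedding $H^s\hookrightarrow H^{s'}$ on radial functions, the family $\{\mu^{N_k}\}$ is tight in $H^{s'}$. Combined with $\mu^{N_k}\rightharpoonup\mu$, the Skorokhod representation theorem produces a common probability space $(\Omega,\mathcal{F},\P)$ carrying random variables $\tilde u^{N_k}$ and $\tilde u$ with laws $\mu^{N_k}$ and $\mu$ respectively, such that $\tilde u^{N_k}\to \tilde u$ almost surely in $H^{s'}$. Applying Fatou's lemma to \eqref{EST:EXP} furnishes a uniform $H^s$-bound on $\tilde u$, so the convergence can be upgraded (possibly along a further subsequence) to a topology compatible with the hypothesis of Assumption \ref{Ass2}.

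Second, on the full-probability event on which $\tilde u\in\Sigma$ and the initial-data convergence holds in the required norm, Assumption \ref{Ass2} delivers
\begin{align*}
\phi^{N_k}_t\tilde u^{N_k}\longrightarrow \phi_t\tilde u\quad\text{in }H^{s'}\text{ as }k\to\infty.
\end{align*}
For any $f\in C_b(H^{s'})$, dominated convergence then gives $\E[f(\phi^{N_k}_t\tilde u^{N_k})]\to \E[f(\phi_t\tilde u)]$. The left-hand side equals $\int f\circ\phi^{N_k}_t\, d\mu^{N_k}=\int f\, d\mu^{N_k}$ by invariance of $\mu^{N_k}$, which tends to $\int f\, d\mu$ by weak convergence, while the right-hand side is $\int f\circ\phi_t\, d\mu$. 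Hence $\int f\, d\mu=\int f\circ\phi_t\, d\mu$ for all $f\in C_b(H^{s'})$, which is exactly $(\phi_t)_*\mu=\mu$.

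The principal technical obstacle is matching the topology produced by Skorokhod with the one required by Assumption \ref{Ass2}: Skorokhod guarantees a.s.\ convergence only in the space in which tightness was established (here $H^{s'}$), whereas Assumption \ref{Ass2} is phrased for $H^s$ data convergence. The uniform exponential moment \eqref{EST:EXP} supplies the missing compactness: interpolating with the $H^{s'}$ convergence against the uniform $H^s$ control yields, along a further subsequence, strong convergence in $H^{s-\delta}$ for arbitrarily small $\delta>0$, and since Assumption \ref{Ass2} only outputs $C_tH^{s-}$ convergence it absorbs this small loss. An essentially equivalent route is to bypass Skorokhod and instead approximate $u_0\in\Sigma$ directly through the sets $\Sigma^{N_k}_{s,i}$ built in Proposition \ref{prop 3.2}, using \eqref{Conv:flows} as the passage-to-the-limit statement; either way the identity above is obtained and the theorem follows.
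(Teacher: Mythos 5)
Your proposal follows essentially the same route as the paper's proof: Skorokhod representation applied to the weak convergence $\mu^{N_k}\rightharpoonup\mu$, Assumption \ref{Ass2} to obtain almost sure convergence $\phi^{N_k}_t u^{N_k}_0\to\phi_t u_0$, and then the invariance of each $\mu^{N_k}$ combined with $\mu^{N_k}\rightharpoonup\mu$ to identify the law of $\phi_t u_0$ with $\mu$ (you phrase this by testing against $f\in C_b(H^{s'})$, the paper by comparing distributions of $H^{s-}$-valued random variables). The topology-matching issue you flag (Skorokhod yields convergence only in the space where tightness holds, while Assumption \ref{Ass2} is stated for $H^s$ data) is equally glossed over in the paper's own proof, so your additional discussion is a refinement of, not a departure from, the same argument.
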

\begin{proof}[Proof of Theorem \ref{thm inv}]
It follows from the Skorokhod representation theorem (Theorem \ref{Thm:Skorokhod}) the existence of  $u^N_0$ a random variable distributed by $\mu^N$ and $u_0$ a one distributed by $\mu$ such that
\begin{align}
u^N_0\to u_0\quad\text{ as $N\to\infty,\ \ $ almost surely.}
\end{align}
Combining this with the Assumption \ref{Ass2}, we then have the almost sure convergence
\begin{align}
\phi^N_tu^N_0(\omega)\to\phi_tu_0(\omega) \quad \text{on $H^{s-}$}.\label{Pointwise:conv}
\end{align} 
 For any fixed $t$, we know that the distribution $\mathcal{D}(\phi^N_tu_0^N)$ is $\mu^N$, by the invariance of $\mu^N$ under \eqref{Eq:Ham:N}. Let us look at $\phi^N_tu_0^N$ and $\phi_t u_0$ as $H^{s-}-$valued random variables. We then have the following two results:
\begin{enumerate}
\item The pointwise convergence in \eqref{Pointwise:conv} implies that $\mu^N$  weakly converges to $\mathcal{D}(\phi_tu_0)$ for any fixed $t$, hence the invariance of $\mathcal{D}(\phi_tu_0)$ because $\mu^N$ does not depend on $t$;
\item We already know that  $\mu^N$ converges  weakly to $\mu$.
\end{enumerate}
The proof of Theorem \ref{thm inv} is complete.
\end{proof}

\begin{Ass}[Second conservation law]\label{Ass:2nd:CL}
The equation \eqref{Eq:Ham:Gen} admits a conservation law of the form
\begin{align}
M(u)=\frac{1}{2}\|u\|_{H^\gamma}^2,
\end{align}
where $\gamma<s.$
\end{Ass}

\begin{Thm}\label{Thm:Set:inv}
Under the Assumption \ref{Ass:2nd:CL} we have the following
\begin{align}
\phi_t\Sigma =\Sigma.
\end{align}
\end{Thm}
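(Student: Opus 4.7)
My plan is to establish both inclusions $\phi_t\Sigma\subseteq\Sigma$ and $\Sigma\subseteq\phi_t\Sigma$, where only the forward direction is substantive since the reverse follows from $\phi_{-t}$ being the inverse of $\phi_t$ (applying the forward inclusion at time $-t$ yields $\phi_{-t}\Sigma\subseteq\Sigma$, and applying $\phi_t$ to both sides gives $\Sigma\subseteq\phi_t\Sigma$). So I focus on $\phi_t\Sigma\subseteq\Sigma$.

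Fix $u_0\in\Sigma=\bigcap_k\Sigma_{l_k}$. For each $k$, $u_0\in\overline{\Sigma_{l_k,i_k}}$ for some $i_k\geq 1$; as in Proposition~\ref{prop 3.4} I treat the case $u_0\in\Sigma_{l_k,i_k}$, postponing the boundary case to a routine approximation argument. By definition there exist $N_j\uparrow\infty$ and $u^{N_j}\in\Sigma^{N_j}_{s,i_k}$ with $u^{N_j}\to u_0$ in $H^{l_k}$. Set $v^{N_j}:=\phi^{N_j}_tu^{N_j}$. Assumption~\ref{Ass2} gives $v^{N_j}\to\phi_tu_0$ in $H^{s-}$, hence in $H^{l_k}$ since $l_k<s$. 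It therefore suffices to exhibit an integer $i_k'=i_k'(i_k,t)$ such that $v^{N_j}\in\Sigma^{N_j}_{s,i_k'}$ for every $j$; this will place $\phi_tu_0$ in $\Sigma_{l_k,i_k'}\subset\Sigma_{l_k}$ for each $k$, hence in $\Sigma$.

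To verify $v^{N_j}\in\Sigma^{N_j}_{s,i_k'}$ I must check the two components of that set's definition (recall \eqref{Def:E^N_a}, \eqref{Def:B^N_r,i,j}, \eqref{Def:Sigma^N_r,i,j}, \eqref{Def:Sigma^N_r,i}): (i) $v^{N_j}\in E^{N_j}_a$, and (ii) $\|\phi^{N_j}_{mT_0}v^{N_j}\|_{H^s}\leq\tilde g^{-1}(2(i_k'+j'))$ for every $j'\geq 0$ and every integer $m\in[0,e^{j'}/T_0]$. Point (i) is exactly where Assumption~\ref{Ass:2nd:CL} enters: the Galerkin truncation of \eqref{Eq:Ham:Gen} is itself Hamiltonian and preserves $M$, so choosing $s_0=\gamma$ in Assumption~\ref{Ass3} gives $\|v^{N_j}\|_{H^{s_0}}=\|u^{N_j}\|_{H^{s_0}}$, hence $h(\|v^{N_j}\|_{H^{s_0}})\geq a$ inherits from $u^{N_j}\in E^{N_j}_a$. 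Point (ii) uses the global bound \eqref{Est:Bourg:N} for $u^{N_j}\in\Sigma^{N_j}_{s,i_k}$ together with a time shift:
\begin{align}
\|\phi^{N_j}_{mT_0}v^{N_j}\|_{H^s}=\|\phi^{N_j}_{t+mT_0}u^{N_j}\|_{H^s}\leq 2\tilde g^{-1}\!\bigl(1+i_k+\ln(1+|t+mT_0|)\bigr).
\end{align}
Using $\ln(1+|t+mT_0|)\leq\ln(1+|t|)+\ln(1+mT_0)\leq\ln(1+|t|)+j'$ for $mT_0\leq e^{j'}$, monotonicity of $\tilde g^{-1}$ lets me absorb the constant factor $2$ and the term $1+\ln(1+|t|)$ into a sufficiently large integer $i_k'=i_k'(i_k,t)$, yielding (ii).

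The one point that genuinely requires care is the $u_0\in\partial\Sigma_{l_k,i_k}$ case, handled by the same limiting device as in Proposition~\ref{prop 3.4}: approximate $u_0$ in $H^{l_k}$ by a sequence inside the interior, apply the argument above to each, and pass to the limit using Assumption~\ref{Ass2} and the continuity of $\phi_t$ on $H^{l_k}$ (which itself follows from local well-posedness and the global bound just derived). The main obstacle is thus a bookkeeping one, namely verifying that $\phi^{N_j}_tu^{N_j}$ lies in the correctly indexed statistical level set, and the nontrivial analytic input is precisely Assumption~\ref{Ass:2nd:CL}: without the second conservation law there is no reason for the ``lower-bound'' set $E^N_a$ to be preserved by the flow, and the Chebyshev-based measure estimate underlying the whole construction would collapse under the time shift.
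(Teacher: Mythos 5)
The substantive gap is in your step (ii), at the point where you claim that ``monotonicity of $\tilde g^{-1}$ lets me absorb the constant factor $2$ \ldots into a sufficiently large integer $i_k'$''. Membership of $\phi^{N_j}_{mT_0}v^{N_j}$ in $B^{N_j}_{s,i_k',j'}$ (see \eqref{Def:B^N_r,i,j}) requires the bound $\tilde g^{-1}\bigl(2(i_k'+j')\bigr)$ \emph{without} a prefactor, whereas \eqref{Est:Bourg:N} only gives you $2\,\tilde g^{-1}\bigl(1+i_k+\ln(1+|t|)+j'\bigr)$, and the choice of $i_k'$ must be uniform in $j'\geq 0$. Monotonicity alone does not convert a multiplicative $2$ into an additive shift of the argument: in the intended application $\tilde g^{-1}$ behaves like the concave function $\xi$ (e.g.\ $\xi(y)=y^{\beta}$ with $\beta<1$, or logarithmic), for which the required inequality $2\,\tilde g^{-1}(c+j')\leq\tilde g^{-1}\bigl(2(i_k'+j')\bigr)$ forces $i_k'$ to grow linearly in $j'$ and therefore fails for any fixed $i_k'$. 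In other words, you cannot place $\phi^{N_j}_t u^{N_j}$ back into a set $\Sigma^{N_j}_{s,i'}$ at the \emph{same} regularity index $s$ by index bookkeeping alone; this factor $2$ (coming from the doubling in Assumption \ref{Ass1}) is precisely the obstruction the paper's Lemma \ref{Lem:Inv} is built to remove.

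The paper's mechanism is different from yours in exactly this spot: it uses Assumption \ref{Ass:2nd:CL} not to preserve $E^N_a$ (your only use of it), but to interpolate $\|\phi^N_{t+t_1}u_0\|_{H^{r_1}}\leq\|\phi^N_{t+t_1}u_0\|_{H^\gamma}^{1-\theta}\|\phi^N_{t+t_1}u_0\|_{H^{r}}^{\theta}$, where the conserved $H^\gamma$-norm supplies a $j$-independent factor; since the large term enters only with exponent $\theta<1$, the factor $2$ can then be absorbed by enlarging $i_1$ uniformly in $j$. The price is a strict loss of regularity, $\phi^N_t\Sigma^N_{s,i}\subset\Sigma^N_{r_1,i+i_1}$ with $r_1<r\leq s$, which is harmless only because $\Sigma$ is defined as the countable intersection $\bigcap_k\Sigma_{l_k}$ with $l_k\uparrow s$, so the image $\phi_t u_0$ is recovered in every $\Sigma_{l_k}$ by starting from a slightly higher index in the intersection. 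Your outline (finite-dimensional shift of the statistical sets, passage to the limit via Assumption \ref{Ass2}, boundary case by approximation, and the reverse inclusion via $\phi_{-t}$) matches the paper's architecture, but as written the key uniform-in-$j'$ absorption is false, and without the interpolation step using the second conservation law the argument does not close.
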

The proof of Theorem \ref{Thm:Set:inv} relies on the lemma \ref{Lem:Inv} below. This lemma roughly states that the evolution under $\phi^N_t$ of a set $\Sigma^N_{r,i}$ is a set of the same kind, giving then the essential mechanism needed for the invariance. The remaining part relies on the definition of the set $\Sigma$ and on a limiting argument.  
\begin{Lem}\label{Lem:Inv}
Let $r\leq s$, for any $\gamma<r_1<r$ and any $t$, there exists $i_1:=i_1(t)$ such that for any $i$, if $u_0\in\Sigma^N_{s,i}$ then $\phi^N_tu_0\in\Sigma^N_{r_1,i+i_1}$.
\end{Lem}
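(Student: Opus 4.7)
Setting $v_0 := \phi^N_t u_0$, my plan is to show directly that $v_0 \in \Sigma^N_{r_1, i+i_1}$ by unpacking the definition: for each $j \geq 0$ and each $k \in \{0, 1, \ldots, [e^j/T_0^{(j)}]\}$, with $T_0^{(j)} = f(\tilde{g}^{-1}(2(i+i_1+j)))$, I need to verify $\phi^N_{kT_0^{(j)}} v_0 \in B^N_{r_1, i+i_1, j}$. Using the group property $\phi^N_{kT_0^{(j)}} v_0 = \phi^N_{t+kT_0^{(j)}} u_0$, this reduces to checking two conditions: (a) membership in $E^N_a$, and (b) the $H^{r_1}$-norm bound $\|\phi^N_{t+kT_0^{(j)}} u_0\|_{H^{r_1}} \leq \tilde{g}^{-1}(2(i+i_1+j))$.

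First I would handle (a) by invoking Assumption \ref{Ass:2nd:CL}: conservation of the $H^\gamma$-norm makes the set $E^N_a$ flow-invariant (upon identifying $s_0 = \gamma$ in Assumption \ref{Ass3}). Since $u_0 \in \Sigma^N_{s,i} \subset E^N_a$, every point on its orbit, in particular $\phi^N_{t+kT_0^{(j)}} u_0$, remains in $E^N_a$. For (b), from $u_0 \in \Sigma^N_{s,i}$ and Proposition \ref{prop 3.2} I have the orbital bound $\|\phi^N_\tau u_0\|_{H^s} \leq 2\tilde{g}^{-1}(1+i+\ln(1+|\tau|))$ valid for all $\tau \in \R$. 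Combining this with the embedding $\|\cdot\|_{H^{r_1}} \leq \|\cdot\|_{H^s}$ (valid since $r_1 \leq s$ and the Dirichlet eigenvalues satisfy $z_n \geq 1$), substituting $\tau = t + kT_0^{(j)}$, and using $kT_0^{(j)} \leq e^j$ so that $\ln(1+|t+kT_0^{(j)}|) \leq \ln(1+|t|)+j+\ln 2$, I would arrive at
\begin{align*}
\|\phi^N_{kT_0^{(j)}} v_0\|_{H^{r_1}} \leq 2\tilde{g}^{-1}\bigl(1+\ln 2 + i + j + \ln(1+|t|)\bigr).
\end{align*}

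The main obstacle, and the reason the statement is nontrivial, is then the choice of $i_1 = i_1(t)$: it must be made so that the right-hand side above does not exceed $\tilde{g}^{-1}(2(i+i_1+j))$ uniformly in $i \geq 1$ and $j \geq 0$. This amounts to absorbing both the multiplicative factor $2$ (arising from the local well-posedness estimate) and the additive $\ln(1+|t|)$ (from the time shift) into the index increment, possibly with the additional help of interpolation between the conserved $H^\gamma$-norm and the orbital $H^s$-bound (justifying the hypothesis $\gamma < r_1$). The feasibility of this uniform absorption is the key technical point; it relies on the growth of $\tilde{g}$ being compatible with the summability $\kappa(i) \to 0$ in Assumption \ref{Ass3}, and it is precisely this balance that enables the flow-invariance of the statistical ensemble $\Sigma$ (Theorem \ref{Thm:Set:inv}) to follow via a limiting argument on $N$.
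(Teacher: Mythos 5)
Your setup is right as far as it goes: unpacking $\Sigma^N_{r_1,i+i_1}$ into the sets $B^N_{r_1,i+i_1,j}$, using the group property, noting that the conserved $H^\gamma$-norm keeps the orbit in $E^N_a$, and shifting the time in the orbital bound \eqref{Est:Bourg:N} to get $\|\phi^N_{kT_0^{(j)}}v_0\|_{H^{r_1}}\leq 2\tilde{g}^{-1}\bigl(1+\ln 2+i+j+\ln(1+|t|)\bigr)$ are all consistent with the paper. But the proof stops exactly where the real work begins, and the fallback you lean on would fail. The additive contribution $1+\ln 2+\ln(1+|t|)$ can indeed be absorbed into $i_1(t)$, but the multiplicative factor $2$ cannot: the target is $\tilde{g}^{-1}(2(i+i_1+j))$ \emph{without} a prefactor, and since $\tilde{g}^{-1}$ is only assumed increasing (in the application it is essentially the concave function $\xi$, e.g.\ logarithmic), there is no fixed shift $c$ with $2\tilde{g}^{-1}(x)\leq\tilde{g}^{-1}(x+c)$ uniformly in $x$; any $i_1$ achieving this would have to grow with $i+j$, contradicting the requirement that $i_1$ depend only on $t$. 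So the route that uses only the embedding $\|\cdot\|_{H^{r_1}}\leq\|\cdot\|_{H^s}$ cannot close, and it never genuinely uses the hypothesis $\gamma<r_1<r$, which should be a warning sign.

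What the paper actually does at this point is the step you only mention as a possibility. From $\|u_0\|_{H^\gamma}\leq\|u_0\|_{H^r}\leq 2\tilde{g}^{-1}(1+i)$ and Assumption \ref{Ass:2nd:CL}, the $H^\gamma$-norm along the whole orbit is bounded by $2\tilde{g}^{-1}(1+i)$, a quantity \emph{independent of $j$ and of the time shift}. Then, since $\gamma<r_1<r$, interpolation gives
\begin{align}
\|\phi^N_{t+t_1}u_0\|_{H^{r_1}}\leq \|\phi^N_{t+t_1}u_0\|_{H^\gamma}^{1-\theta}\,\|\phi^N_{t+t_1}u_0\|_{H^{r}}^{\theta},
\end{align}
and the exponent $\theta<1$ on the large (shifted orbital) factor is precisely what creates the room to discard the constant $2$ by enlarging $i_1$, yielding $\|\phi^N_{t+t_1}u_0\|_{H^{r_1}}\leq\tilde{g}^{-1}(2(i+i_1+j))$ for all $t_1\leq e^j$, hence $\phi^N_t u_0\in\Sigma^N_{r_1,i+i_1}$. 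In other words, the strict loss of regularity from $r$ to $r_1$ is not decorative: it is the mechanism that removes the factor $2$, and without carrying out this conservation-plus-interpolation argument your proposal has a genuine gap at its central step.
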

\begin{proof}[Proof of Lemma \ref{Lem:Inv}]
We can assume without loss of generality that $t\geq 0$. For $u_0\in \Sigma^N_{r,i}$, we have, by definition, that
\begin{align}
\|\phi^N_{t_1}u_0\|_{H^r}\leq 2\tilde{g}^{-1}(i+j)\quad \forall t_1\leq e^j.
\end{align}
Let us pick a function $i_1:=i_1(t)$ satisfying $e^{j}+t\leq e^{i_1+j}$ for any $j\geq 0.$ Then
\begin{align}
\|\phi^N_{t_1+t}u_0\|_{H^r}\leq 2\tilde{g}^{-1}(i+i_1+j)\quad \forall t+t_1\leq e^{j+i_1}.
\end{align}
But for $t+t_1\leq e^{j+i_1}$, one readily has $t_1\leq e^{j+i_1}-t$. On the other hand we have, from the definition of $i_1$, that $e^{j+i_1}-t\geq e^{j}$. From these two facts, we obtain
 \begin{align}
 \|\phi^N_{t_1+t}u_0\|_{H^r}\leq 2\tilde{g}^{-1}(i+i_1+j)\quad \forall t_1\leq e^j.
 \end{align}
 Since $u_0\in\Sigma^N_{i,s}$, using the estimate \eqref{Est:Bourg:N}, we have
 \begin{align}
 \|\phi^N_{t}u_0\|_{H^r}\leq 2\tilde{g}^{-1}(1+i+\ln(1+|t|)).
 \end{align}
 Hence, using Assumption \ref{Ass:2nd:CL}, we have
 \begin{align}
 \|u_0\|_{H^\gamma}\leq\|u_0\|_{H^r}\leq 2\tilde{g}^{-1}(1+i).
 \end{align}
 Using the conservation of the $H^\gamma-$norm (Assumption \ref{Ass:2nd:CL}), we obtain
 \begin{align}
 \|\phi^N_{t+t_1}u_0\|_{H^\gamma}\leq 2\tilde{g}^{-1}(1+i).
 \end{align}
 For $\gamma<r_1<r$, by interpolation there is $0<\theta<1$ such that
\begin{align}
\|\phi^N_{t+t_1}u_0\|_{H^{r_1}}\leq \|\phi^N_{t+t_1}u_0\|_{H^\gamma}^{1-\theta}\|\phi^N_{t+t_1}u_0\|_{H^{s_1}}^{\theta}\leq 2(\tilde{g}(1+i))^{1-\theta}(\tilde{g}^{-1}(i+i_1+j))^{\theta}.
\end{align}
As $\tilde{g}^{-1}$ is increasing, we take $i_1$ large enough to obtain the bound
\begin{align}
\|\phi^N_{t+t_1}u_0\|_{H^{r_1}}\leq \tilde{g}^{-1}(i+i_1+j) \quad \forall t_1\leq e^j.
\end{align}
Thus $\phi^N_{t+t_1}u_0\in B^N_{r_1,i+i_1,j}$ for all $t_1\leq e^{j}$, for all $j\geq 0.$ We then have $\phi^N_{t}u_0\in\Sigma^N_{r_1,i+i_1}.$ Then the proof of Lemma \ref{Lem:Inv} is finished.
\end{proof}
\begin{Rmk}
In some situation, Assumption \ref{Ass:2nd:CL} is not necessary. If the control on the $H^s$-norm provides a control on the energy, we can proof Lemma \ref{Lem:Inv} without any supplementary conservation law.
\end{Rmk}

Now let us finish the proof of Theorem \ref{Thm:Set:inv}. Since any $\Sigma_r$ is of full $\mu-$measure and the intersection is countable, we obtain the first statement.

Let us take $u_0\in \Sigma$, then $u_0$ belong to each $\Sigma_r$, $r\in l.$ First, consider $u_0\in \Sigma_{i_r}$. There is $i\geq 1$ such that $u_0\in \Sigma_{i_r}$, therefore $u_0$ is the limit of a sequence $(u_{0}^N)$ such that $u_{0}^N\in \Sigma_{i,r}^N$ for every $N$. Now thanks to Lemma \ref{Lem:Inv}, there is $i_1:=i_1(t)$ such that $\phi^N_t(u_{0}^N)\in \Sigma_{i+i_1,r_1}^{N}$.
Using the convergence \eqref{Conv:flows}, we see that $\phi^t(u_0)\in \Sigma_{i+i_1,r_1}$. Now if $u_0\in \partial\Sigma_{i,r}$, there is $(u_0^k)_k\subset\Sigma_{i,r}$ that converges to $u_0$ in $H^r$.  Since we showed that $\phi_t\Sigma_{i,r}\subset\Sigma_{i+i_1,r_1}$ and $\phi_t(\cdot) $ is continuous, we see that $\phi_t(u_0)=\lim_{k}\phi_t(u_0^k)\in \overline{\Sigma_{i+i_1,r_1}}$. One arrives at $\phi_t\overline{\Sigma_{i,r}}\subset\overline{\Sigma_{i+i_1,r_1}}\subset \Sigma_{r_1}$. Hence $\phi_t\Sigma\subset\Sigma.$\\
Now, let $u$ be in $\Sigma$. Since $\phi_t$ is well-defined on $\Sigma$ we can set  $u_0=\phi_{-t}u$, we then have $u=\phi_tu_0$, hence $\Sigma\subset\phi_{t}\Sigma.$ 
\subsection{Construction of the required invariant measures}\label{Subsection: Constr}
Here we present a strategy to achieve the fulfillment of Assumption \ref{Ass3}. It is based on a fluctuation-dissipation method.  

Let us introduce a probabilistic setting and some notations that are going to be used from this section on. First, $(\Omega, \mathcal{F}, \P)$ is a complete probability space. If $E$ is a Banach space, we can define random variables $X:\Omega\to E$ as Bochner measurable functions with respect to $\mathcal{F}$ and $\mathcal{B}(E)$, where $\mathcal{B}(E)$ is the Borel $\sigma-$algebra of $E.$

For every positive integer $N$, we denote by $W^N$ an $N-$dimensional Brownian motion with respect to the filtration $(\mathcal{F}_t)_{t\geq 0}$.

Throughout the sequel, we use $\langle \cdot,\cdot\rangle$ as the real dot product in $L^2$:
\begin{align*}
\langle u,v\rangle =\re\int u\bar{v} \, dx, \quad \text{for all $u,v\in L^2$}.
\end{align*}
 Ocassionally this notation is used to write a duality bracket when the context is not confusing.

We set the fluctuation-dissipation equation
\begin{align}
\dt u=\Pi^NJH'(u)-\al\mathcal{L}(u)+\sqrt{\al}dW^N,\label{Eq:Ham:N:al}
\end{align}
where $\mathcal{L}(u)$ is a dissipation operator.
\begin{Ass}[Exponential moment]\label{Ass:diss:ineq}
We assume that $\mathcal{L}(u)$ satisfies the dissipation inequality
\begin{align}
\mathcal{E}(u)=E'(u,\mathcal{L}(u))\geq e^{\xi^{-1}(\|u\|_{H^s})}\|u\|_{H^{s_0}}^2,
\end{align}
where $\xi^{-1}(x)\geq x$ is a one-to-one convex function from $\R_+$ to $\R_+$.\\
We assume also that the (local existence) function $f$ defined in Assumption \ref{Ass1} is of the form $Cx^{-r}$ for some $r>0$.
\end{Ass}
\begin{Ass}\label{Ass:stGWP}
The equation \eqref{Eq:Ham:N:al} is stochastically globally well-posed on $E^N,$ that is:\\
For every $\mathcal{F}_0$-measurable random variable $u_0$ in $E_N$, we have
\begin{enumerate}
\item
for $\mathbb{P}$-almost all $\omega \in \Omega$, \eqref{Eq:Ham:N:al} with initial condition $u(0) = \Pi^N  u_0^{\omega}$ has a unique global solution $u_{\alpha}^N (t; u_0^{\omega})$;

\item
if $u_{0,n}^{\omega} \to u_0^{\omega}$ then $u_{\alpha}^N (\cdot; u_{0,n}^{\omega}) \to u_{\alpha}^N(\cdot; u_0^{\omega})$ in $C_t E^N$;

\item
the solution $u^N_\al$  is adapted to $(\mathcal{F}_t)$.
\end{enumerate}
\end{Ass}
Under Assumption \eqref{Ass:stGWP}, we can introduce the transition probability 
\begin{align}
T_{t,\al}^N(v,\Gamma)=\P(u^N_\al(t;v)\in\Gamma)\quad\text{where $v\in L^2$ and $\Gamma\in$ Bor($L^2$)}.
\end{align}
We then define the Markov operators
\begin{align}
P^N_{t,\al}f(v) &=\int_{L^2}f(w)T^N_{t,\al}(v,dw)\quad L^\infty(L^2,\R)\to L^\infty(L^2,\R)\\
P^{N*}_{t,\al}\lambda(\Gamma) &=\int_{L^2}\lambda(dw)T^N_{t,\al}(w,\Gamma)\quad \mathfrak{p}(L^2)\to \mathfrak{p}(L^2).
\end{align}
Let us remark that 
\begin{align}
P^N_{t,\al}f(v)=\E f(u^N_\al(t,v)).\label{Rem:SG}
\end{align}
Using the continuity of the solution $u^N_\al(t,\cdot)$ in the second variable (the initial data), we observe the Feller property:
\begin{align}
P^N_{t,\al}C_b(L^2,\R)\subset C_b(L^2,\R).
\end{align}
Here $C_b(L^2,\R)$ is the space of continuous bounded functions$f:L^2\to\R$.

Let us also define the Markov groups associated to \eqref{Eq:Ham:N}, recall that in Assumption \eqref{Ass1} we assume that $\phi^N_t$ exists globally in $t.$ The associated Markov groups are the following
\begin{align}
P^N_tf(v) &=f(\phi^N_t\Pi^Nv)\quad C_b(L^2,\R)\to C_b(L^2,\R)\\
P^{N*}_t\lambda(\Gamma) &=\lambda(\phi^N_{-t}(\Gamma))\quad \mathfrak{p}(L^2)\to \mathfrak{p}(L^2).
\end{align}
Combining the It\^o formula, the Krylov-Bogoliubov argument (Lemma \ref{Lem:KBarg}), the Prokhorov theorem (Theorem \ref{Thm:Prokh}) and Lemma \ref{Lem:MeanMeas} we obtain the following:
\begin{Thm}
Under  Assumptions \ref{Ass:diss:ineq} and \ref{Ass:stGWP}, let $N\geq 1$, there is an stationary measure $\mu^N_\al$ for \eqref{Eq:Ham:N:al} such that
\begin{align}
\int_{L^2}e^{\xi^{-1}(\|u\|_{H^s})}\|u\|_{H^{s_0}}^2\mu^N_\al(du) \leq \int_{L^2}\mathcal{E}(u)\mu^N_\al(du)\leq C,\label{Est:E:N:al}
\end{align}
where $C$ does not depend on $(N,\al)$.\\
There is a subsequence $\{\al_k\}$ such that
\begin{align}
\lim_{k\to\infty} \mu_{\al_k}^N &=\mu^N \quad\text{weakly, for any $N$.}
\end{align}
\end{Thm}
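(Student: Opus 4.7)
The statement has two parts: existence of a stationary measure $\mu^N_\al$ with the bound \eqref{Est:E:N:al} for each $(N,\al)$, and weak convergence along a subsequence $\al_k\to 0$. Both rely on the fluctuation-dissipation scaling in \eqref{Eq:Ham:N:al}, where the noise amplitude $\sqrt{\al}$ is tuned against the dissipation amplitude $\al$ so that the resulting estimates become independent of $\al$.

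\emph{Step 1: Construction of $\mu^N_\al$ and the uniform bound.} I would apply the Itô formula to $E(u)$ along a solution of \eqref{Eq:Ham:N:al}. Since $\Pi^N J H'(u)$ is Hamiltonian and therefore preserves $E$, only the dissipation and noise contribute, yielding
\begin{align*}
dE(u^N_\al) = -\al\,\mathcal{E}(u^N_\al)\,dt + \sqrt{\al}\,\langle E'(u^N_\al), dW^N\rangle + \tfrac{\al}{2}\,\mathrm{tr}(E''(u^N_\al)Q)\,dt,
\end{align*}
where $Q$ is the covariance of $W^N$. Integrating on $[0,T]$, taking expectations, dividing by $\al T$ and sending $T\to\infty$ gives a uniform-in-$(N,\al)$ bound on $\tfrac{1}{T}\E\!\int_0^T \mathcal{E}(u^N_\al)\,ds$, provided the Itô trace is controlled — the two $\al$'s cancel, which is the point of the scaling. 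By Assumption \ref{Ass:diss:ineq}, $\mathcal{E}(u)$ dominates $e^{\xi^{-1}(\|u\|_{H^s})}\|u\|_{H^{s_0}}^2$, and the compact embedding $H^{s_0}\hookrightarrow L^2$ then forces tightness of the time-averaged measures $\mu_T := \tfrac{1}{T}\int_0^T P^{N*}_{t,\al}\delta_{v_0}\,dt$ in $L^2$. The Feller property \eqref{Rem:SG} together with the Krylov--Bogoliubov argument (Lemma \ref{Lem:KBarg}) then produces a weak accumulation point $\mu^N_\al$ which is stationary for $P^{N*}_{t,\al}$; the bound \eqref{Est:E:N:al} transfers to $\mu^N_\al$ via Lemma \ref{Lem:MeanMeas} and the lower semicontinuity of $\mathcal{E}$.

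\emph{Step 2: Inviscid limit.} The bound \eqref{Est:E:N:al} together with $\xi^{-1}(x)\geq x$ gives a uniform-in-$\al$ bound on $\int e^{\|u\|_{H^s}}\,\mu^N_\al(du)$; Chebyshev and the compact embedding $H^s \hookrightarrow L^2$ then yield tightness of $\{\mu^N_\al\}_\al$ in $L^2$. Prokhorov's theorem (Theorem \ref{Thm:Prokh}) extracts a subsequence $\al_k\to 0$ with $\mu^N_{\al_k}\to\mu^N$ weakly in $\mathfrak{p}(L^2)$. Lower semicontinuity of $u\mapsto e^{\xi^{-1}(\|u\|_{H^s})}\|u\|_{H^{s_0}}^2$ on $L^2$ (it is a supremum of bounded continuous functionals, since both Sobolev norms are $L^2$-lower semicontinuous and the outer composition with the increasing continuous map $x\mapsto e^{\xi^{-1}(x)}$ preserves the property) transfers the bound to $\mu^N$ by Fatou's lemma.

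The main obstacle is uniformity in $N$ of the Itô correction $\E\,\mathrm{tr}(E''(u)Q)$. Since $E''(u)$ contains a kinetic quadratic piece and a $|u|^2$ piece coming from $\|u\|_{L^4}^4$, the trace depends both on the spectrum of $Q$ (which must decay fast enough to make the sum convergent as $N\to\infty$) and on polynomial moments of $u$. Assumption \ref{Ass:diss:ineq} is engineered precisely so that these polynomial moments are absorbed by $\mathcal{E}(u)$: the exponential weight $e^{\xi^{-1}(\|u\|_{H^s})}$ dominates any polynomial term, which is what closes the Gronwall-type estimate in Step 1 and yields a constant $C$ independent of $(N,\al)$.
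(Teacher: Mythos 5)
Your proposal is correct and follows essentially the same route as the paper, which establishes this theorem precisely by combining the It\^o formula (exploiting the $\sqrt{\al}$-noise versus $\al$-dissipation cancellation), the Krylov--Bogoliubov argument of Lemma \ref{Lem:KBarg} together with the Feller property, Lemma \ref{Lem:MeanMeas} to transfer the uniform bound to the stationary measure, and the Prokhorov theorem to extract the inviscid subsequence $\al_k\to 0$. One minor remark: \eqref{Est:E:N:al} does not literally give a uniform bound on $\int e^{\|u\|_{H^s}}\mu^N_\al(du)$ (the region where $\|u\|_{H^{s_0}}$ is small is not controlled by that integrand), but since the exponential weight is $\geq 1$ the bound already yields $\int\|u\|_{H^{s_0}}^2\,\mu^N_\al(du)\leq C$ uniformly in $\al$, which is all the tightness step of your Step 2 requires, so the argument stands as written.
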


\begin{Ass}[Inviscid limit]\label{Ass:unif:conv}
The following convergence holds
\begin{align}
\lim_{k\to\infty}P^{N*}_{t,\al_k}\mu_{\al_k}^N &=P^{N*}_t\mu^N \quad\text{weakly, for any $N$.}
\end{align}
\end{Ass}

\begin{Thm}\label{thm 3.11}
Under Assumptions \ref{Ass:diss:ineq} and \ref{Ass:unif:conv},
the measures $\mu^N$ are invariant under \eqref{Eq:Ham:N} respectively. They satisfy
\begin{align}
\int_{L^2}e^{\xi^{-1}(\|u\|_{H^s})}\|u\|_{H^{s_0}}^2\mu^N(du)\leq\int_{L^2}\mathcal{E}(u)\mu^N(du)\leq C,\label{Est:exp:N}
\end{align}
where $C$ is independent of $N$.\\
Moreover, we have that
\begin{align}
\lim_{i\to\infty}e^{-2i}\sum_{j\geq 0}\frac{e^{-j}}{f\circ\xi(2(i+j))}=0.\label{Fulfill.limit}
\end{align}
\end{Thm}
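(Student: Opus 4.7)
\textbf{Proof proposal for Theorem \ref{thm 3.11}.}

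The statement has three conclusions---invariance, the uniform exponential moment bound, and the summability limit \eqref{Fulfill.limit}---and I would handle them separately. The first two come essentially for free from the inviscid passage to the limit, while the third is a direct calculation based on the explicit form of $f$ assumed in \ref{Ass:diss:ineq}.

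For the invariance, the plan is to start from the stationarity identity $P^{N*}_{t,\al}\mu^N_\al = \mu^N_\al$, valid for every $\al > 0$ by construction of $\mu^N_\al$. Passing to the limit along the subsequence $\al_k \downarrow 0$, the left-hand side tends weakly to $P^{N*}_t\mu^N$ by Assumption \ref{Ass:unif:conv}, whereas the right-hand side tends weakly to $\mu^N$ by the convergence of the stationary measures themselves. This yields $P^{N*}_t\mu^N = \mu^N$, i.e., invariance of $\mu^N$ under the Galerkin flow $\phi^N_t$.

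For the moment bound, set $F(u) = e^{\xi^{-1}(\|u\|_{H^s})}\|u\|_{H^{s_0}}^2$. The difficulty is that $F$ is unbounded on $L^2$, so weak convergence cannot be applied to it directly. I would introduce the bounded continuous truncations $F_K(u) = \min(F(u), K)$, apply weak convergence of $\mu^N_{\al_k} \to \mu^N$ together with \eqref{Est:E:N:al} to get $\int F_K\, d\mu^N = \lim_k \int F_K\, d\mu^N_{\al_k} \leq C$ uniformly in $K$, and then conclude by monotone convergence as $K \to \infty$. The same argument applied to $\mathcal{E}(u)$ (which dominates $F$ by Assumption \ref{Ass:diss:ineq}) delivers the companion estimate for $\int \mathcal{E}\,d\mu^N$.

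For \eqref{Fulfill.limit}, substituting $f(x) = Cx^{-r}$ yields $1/(f\circ\xi(2(i+j))) = C^{-1}\xi(2(i+j))^r$. The hypothesis $\xi^{-1}(y)\geq y$ forces $\xi(y)\leq y$, so this quantity is bounded by $C^{-1}2^r(i+j)^r \leq C_r(i^r+j^r)$, and
\[
e^{-2i}\sum_{j\geq 0}\frac{e^{-j}}{f\circ\xi(2(i+j))} \leq C_r\, e^{-2i}\Bigl(i^r\sum_{j\geq 0}e^{-j} + \sum_{j\geq 0}e^{-j}j^r\Bigr),
\]
which is a polynomial in $i$ times $e^{-2i}$ and therefore tends to $0$ as $i\to\infty$. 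The main obstacle is really the moment bound: one has to extract the inequality from weak convergence through the monotone-truncation device, since the integrand is unbounded and only lower semi-continuous on the ambient $L^2$ topology; the other two pieces are routine once Assumptions \ref{Ass:diss:ineq} and \ref{Ass:unif:conv} are in force.
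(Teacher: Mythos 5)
Your proposal is correct and follows essentially the same route as the paper: invariance is obtained by passing to the inviscid limit in the stationarity identity $P^{N*}_{t,\al_k}\mu^N_{\al_k}=\mu^N_{\al_k}$ via Assumption \ref{Ass:unif:conv}, the uniform bound \eqref{Est:exp:N} comes from the weak convergence $\mu^N_{\al_k}\rightharpoonup\mu^N$ together with \eqref{Est:E:N:al} (the paper invokes lower semicontinuity of $\mathcal{E}$ directly; your truncation-plus-monotone-convergence device is just the standard proof of that inequality, with the small caveat that $F_K$ is only lower semicontinuous on $L^2$, so strictly one gets $\int F_K\,d\mu^N\leq\liminf_k\int F_K\,d\mu^N_{\al_k}$, which suffices, or one notes that all measures involved are supported on the finite-dimensional $E^N$ where $F_K$ is continuous), and \eqref{Fulfill.limit} follows from the same computation using $f(x)=Cx^{-r}$ and $\xi(y)\leq y$. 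No changes are needed.
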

\begin{proof}[Proof of Theorem \ref{thm 3.11}]
For simplicity we write the measure $\mu^N_{\al_k}$ constructed above as $\mu^N_k$.
The invariance of $\mu^N$ under \eqref{Eq:Ham:N} follows from the following diagram
\begin{align}
\hspace{10mm}
\xymatrix{
  P_{t,k}^{N*}\mu_k^N \ar@{=}[r]^{(I)} \ar[d]^{(III)} & \mu_k^N \ar[d]^{(II)} \\
    P^{N*}_{t}\mu^N \ar@{=}[r]^{(IV)} & \mu^N
  }
\end{align}
The step $(I)$ is the stationarity of $\mu^N_k$ under \eqref{Eq:Ham:N:al}, the step $(II)$ describes the weak convergence of $(\mu^N_k)$ to $\mu^N$. Therefore, with the help of the weak convergence in $(III)$ (Assumption \ref{Ass:unif:conv}), we obtain the desired invariance in $(IV)$. 

As for the proof of \eqref{Est:exp:N}, we first remark that the weak convergence $\mu^N_k\to\mu^N$ combined with the lower semi-continuity of $\mathcal{E}(u)$ yields
\begin{align}
\int_{L^2}e^{\xi^{-1}(\|u\|_{H^s})}\|u\|_{H^{s_0}}^2\mu^N(du)\leq\int_{L^2}\mathcal{E}(u)\mu^N(du)\leq C,\label{Est:E:N}
\end{align}
with $C$ is the same constant in \eqref{Est:E:N:al}, in particular $C$ does not depend on $N.$\\
Now using the form of $f$, we have that
\begin{align}
f\circ\xi(i+j)=C(\xi(i+j))^{-r}.
\end{align}
Now recall that $\xi^{-1}\geq x$, then $\xi\leq x^{-1}$. So, we obtain $f\circ\xi(2(i+j))\ggeq (i+j)^{-r}$, which leads readily to \eqref{Fulfill.limit}. Then the proof of Theorem \ref{thm 3.11} is finished.
\end{proof}
From now, Assumption \ref{Ass3} is disintegrated into Assumptions \ref{Ass:diss:ineq} to \ref{Ass:unif:conv}.
\section{Proof of Theorem \ref{asGWP:1}: Probabilistic part}\label{Sect:Pr as GWP 1}
In this section, we present the probabilistic part of the proof of Theorem \ref{asGWP:1}, that is, the fulfillment of  Assumptions \ref{Ass:2nd:CL} to \ref{Ass:unif:conv}. First, we remark that Assumption \ref{Ass3} was decomposed in the strategy given in Subsection \ref{Subsection: Constr} into Assumptions \ref{Ass:diss:ineq} to \ref{Ass:unif:conv}. Assumptions \ref{Ass1} and \ref{Ass2} are of deterministic type and will be given in Section \ref{Sect:LWP}. We also prepare, simultaneously, the setting for the proof of Theorem \ref{asGWP:2}. Assumption \ref{Ass:2nd:CL} is given by the existence of a second conservation law (the $L^2$-norm).

Let $N$ be a positive integer, consider the Galerkin approximation
\begin{align}
\dt u =-\i((-\Delta)^\sigma u +\Pi^N \abs{u}^{2} u). \label{fNLS-Glk}
\end{align}
We define a Brownian motion following the case we set the equation on the unit ball $B^d$:
\begin{align}
W^N (t,x) = 
\sum_{n=1}^N a_n e_n (|x|) \beta_n (t),
\label{Noise}
\end{align}
where $(e_n)_n$ is the sequence of eigenfunctions of radial Dirichlet Laplacian $-\Delta$ on $B^d$.
Here $(\beta_n (t))$ is a fixed sequence of independent one dimensional Brownian motions with filtration $(\mathcal{F}_t)_{t\geq 0}$. The numbers $(a_m)_{m\geq 1}$ are complex numbers such that $|a_m|$ decreases sufficiently fast to $0.$ More precisely we assume that
\begin{align}
A_r := \sum_{n \geq 1} z_n^{2r}|a_n|^2 < + \infty\quad\text{for $r\leq \sigma$,}\label{Noise:Cond}
\end{align}
where $(z_n^2)$ are the eigenvalues of $-\Delta$ associated to $(e_n)$. Set 
\begin{align*}
A_r^N = \sum_{n=1}^N z_n^{2r}|a_n|^2.
\end{align*}

Let us introduce the following fluctuation-dissipation model of \eqref{fNLS-Glk}. For $\sigma\in (0,1]$, we define a dissipation operator  as follows
\begin{align}
\mathcal{L}(u)=\begin{cases}(-\Delta)^{s-\sigma}u+e^{\xi^{-1}(\|u\|_{H^{s-}})}u, & \text{if $s>\frac{d}{2}$}\\
e^{\xi^{-1}(\|u\|_{H^{s-}})}\left[(-\Delta)^{s-\sigma}u+\Pi^N|u|^2u\right] &\text{if $0< s\leq 1+\sigma$} 
\end{cases}\label{Lu}
\end{align}
where $\xi:\R_+\to\R_+$ is any one-to-one increasing concave function. We assume that $\xi^{-1}(x)\geq x.$ We then write the stochastic equation
\begin{align}\label{SNLS1}
du = -\i ((-\Delta)^\sigma u +\Pi^N \abs{u}^{2} u) \, dt - \alpha \mathcal{L}(u) \, dt +  \sqrt{\alpha} \, d W^N
\end{align}
where $\alpha \in (0,1)$.
\begin{Rmk}
The setting of Theorem \ref{asGWP:2} is contained in the case $0<s\leq 1+\sigma:$
\begin{align}
\mathcal{L}(u)=e^{\xi^{-1}(\|u\|_{H^{s-}})}\left[(-\Delta)^{s-\sigma}u+\Pi^N|u|^2u\right].
\end{align}
Therefore the estimates obtained using that form of $\mathcal{L}(u)$ will also serve in the proof of Theorem \ref{asGWP:2} (see Section \ref{Sect:Pr as GWP2}).
\end{Rmk}
\subsection{Fulfillment of Assumption \ref{Ass:diss:ineq} (The dissipation inequality and time of local existence)}
For the polynomial nature of the time of local existence we refer to Theorem \ref{thm LWP}.

For the dissipation inequality, we set $\mathcal{E}(u):=E'(u,\mathcal{L}(u))$. We have the following.
 
\begin{enumerate}
\item For $s>\frac{d}{2}$,
\begin{align}
\mathcal{E}(u) &= \|u\|_{H^s}^2+\langle (-\Delta)^{s-\sigma}u,\Pi^N|u|^2u\rangle +e^{\xi^{-1}(\|u\|_{H^{s -}})}\left(\|u\|_{L^4}^4 +\|u\|_{H^\sigma}^2\right).
\end{align}
Using Cauchy-Schwarz and Agmon's inequalities and the algebra property of $H^{s-}$ for $s>\frac{d}{2}$, we obtain
\begin{align}
|\langle(-\Delta)^{s-\sigma}u,\Pi^N|u|^{2}u\rangle| \leq\|u\|_{H^{s-\sigma}}\||u|^2u\|_{H^{s-\sigma}}\leq C\|u\|_{L^\infty}\||u|^2u\|_{H^{s-\sigma}} &\leq C\|u\|_{L^2}^{1-\frac{d}{2s}}\|u\|_{H^s}^{\frac{d}{2s}}\||u|^2u\|_{H^{s-\sigma}}\leq C\|u\|_{H^s}\|u\|_{H^{s-}}^3 \\
&\leq \tilde{C}+\frac{1}{2}\|u\|_{H^s}^2\|u\|_{H^{s-}}^{6}
\leq \tilde{C}+\frac{1}{2}e^{\xi^{-1}(\|u\|_{H^{s -}})}\|u\|_{H^s}^2.
\end{align}
Remark that the $\tilde{C}$ is an absolute constant. Overall,
\begin{align}
\mathcal{E}(u)\geq \|u\|_{H^{s}}^2+\frac{1}{2}e^{\xi^{-1}(\|u\|_{H^{s -}})}\left(\|u\|_{L^4}^4 +\|u\|_{H^s}^2\right)-\tilde{C}.\label{Est_low_tildeC}
\end{align}
\item For $0< s\leq 1+\sigma$,
\begin{align}
\mathcal{E}(u)=e^{\xi^{-1}(\|u\|_{H^{s -}})}\left(\|u\|_{H^s}^2 +\langle(-\Delta)^{s-\sigma}u,\Pi^N|u|^{2}u\rangle +\langle\Pi^N|u|^2u,(-\Delta)^\sigma u\rangle +\|\Pi^N|u|^2u\|_{L^2}^2\right).
\end{align}
We are able to give a useful estimate of $\mathcal{E}(u)$ in the following two cases:
\begin{enumerate}
\item For $\max(\frac{1}{2},\sigma)\leq s\leq 1+\sigma:$

In order the treat the term $\langle(-\Delta)^{s-\sigma}u,\Pi^N|u|^{2}u\rangle$ and $\langle\Pi^N|u|^2u,(-\Delta)^\sigma u\rangle$, we use the C\'ordoba-C\'ordoba inequality ( see Corollary \ref{Lem:Cplx:Cor}) since $s-\sigma\in [0,1]$, $s$ being in $[\max(\frac{1}{2},\sigma),1+\sigma]$. We have the following
\begin{align}
\langle(-\Delta)^{s-\sigma}u,|u|^{2}u\rangle &\geq \frac{1}{2}\|(-\Delta)^\frac{s-\sigma}{2}|u|^2\|_{L^2}^2,\\
\langle(-\Delta)^{\sigma}u,|u|^{2}u\rangle &\geq \frac{1}{2}\|(-\Delta)^\frac{\sigma}{2}|u|^2\|_{L^2}^2.
\end{align}
 We finally obtain that
\begin{align}
 \mathcal{E}(u) &\geq \frac{1}{2}e^{\xi^{-1}(\|u\|_{H^{s -}})}\left( \||u|^2\|_{\dot{H}^\sigma}^2+\|\Pi^N|u|^2u\|_{L^2}^2+\|u\|_{H^s}^2+\||u|^2\|_{\dot{H}^{s-\sigma}}^2\right).  \label{Est:Ener:low} 
\end{align}

\item For $0<s\leq \sigma:$ Since $\sigma$ still lies in $(0,1]$, we can use the C\'ordoba-C\'ordoba inequality to obtain
\begin{align}
\langle(-\Delta)^{\sigma}u,|u|^{2}u\rangle &\geq \frac{1}{2}\|(-\Delta)^\frac{\sigma}{2}|u|^2\|_{L^2}^2.
\end{align}
However, we cannot use the C\'ordoba-C\'ordoba inequality for $(-\Delta)^{s-\sigma}$ in the term $\langle(-\Delta)^{s-\sigma}u,|u|^{2}u\rangle $ since $s\leq \sigma$.  We instead employ a direct estimation (using in particular the Sobolev embedding $L^4=W^{0,4}\subset W^{s-\sigma,4}$):
\begin{align}
\langle(-\Delta)^{s-\sigma}u,|u|^{2}u\rangle &\geq-\|(-\Delta)^{s-\sigma}u\|_{L^4}\|u\|_{L^4}^3\geq -C\|u\|_{L^4}^4.
\end{align}
We arrive at
\begin{align}
 \mathcal{E}(u) &\geq \frac{1}{2}e^{\xi^{-1}(\|u\|_{H^{s -}})}\left( \||u|^2\|_{\dot{H}^\sigma}^2+\|\Pi^N|u|^2u\|_{L^2}^2+\|u\|_{H^s}^2-C\|u\|_{L^4}^4\right).  
\end{align}
Now,  let us set
\begin{align}
\mathcal{M}(u):=M'(u,\mathcal{L}(u)),
\end{align}
where $M(u)=\frac{1}{2}\|u\|_{L^2}^2.$
We obtain that
\begin{align}
\mathcal{M}(u)=e^{\xi^{-1}(\|u\|_{H^{s-}})}\left(\|u\|_{L^4}^4+\|u\|_{H^{s-\sigma}}^2\right).
\end{align}
This gives us that
\begin{align}
 \mathcal{E}(u) &\geq \frac{1}{2}e^{\xi^{-1}(\|u\|_{H^{s -}})}\left( \||u|^2\|_{\dot{H}^\sigma}^2+\|\Pi^N|u|^2u\|_{L^2}^2+\|u\|_{H^s}^2\right)-\frac{C}{2}\mathcal{M}(u).  \label{Est:Ener:lowbis} 
\end{align}
\end{enumerate}
\end{enumerate}

\begin{Rmk}
Notice that for $\sigma<\frac{1}{2}$, we do not have an estimate of $\mathcal{E}(u)$ for $s\in (\sigma,d/2)$.  So we construct global solutions in that case using the arguments in Section \ref{Sect:Pr as GWP2}.  Surprisingly enough,  we construct solutions, for such $\sigma$,  in lower regularities $s\in (0,\sigma)$ since in that cases the need estimates are obtained through the control of $\mathcal{E}(u)$ established above (see Section \ref{Sect:Pr as GWP2}).
\end{Rmk}

For any $\sigma\in (0,1]$, Assumption \ref{Ass:diss:ineq} is now fulfilled for $s\in (d/2,\infty)\cup[\max(1/2,\sigma),1+\sigma]\cup[0,\sigma]$. Notice that the term $-\frac{C}{2}\mathcal{M}(u)$ in \eqref{Est:Ener:lowbis} does not have any obstruction because it is controlled under expectation (see \eqref{Est:M:al:N} for details of the It\^o estimate on $M(u)$). It's expectation is just added the to right hand side of \eqref{Est:E:N:al} and similarly for absolute $\tilde{C}$ in \eqref{Est_low_tildeC}.

In particular, we notice that
\begin{align}\label{Est_bound:on:E}
 \|u^{2}\|_{\dot{H}^\sigma}^2+\|u\|_{H^s}^2+\|\Pi^N|u|^2u\|_{L^2}^2\leq (\|u^{2}\|_{\dot{H}^\sigma}^2+\|u\|_{H^s}^2+\|\Pi^N|u|^2u\|_{L^2}^2)e^{\xi^{-1}(\|u\|_{H^{s -}})}\leq 2\mathcal{E}(u)+C\mathcal{M}(u),
\end{align}
where $C=C(s,\sigma)$ does not depend on $(u,N,\al)$.

\subsection{Fulfillment of Assumption \ref{Ass:stGWP} (The stochastic GWP)} For simplicity we present the proof of stochastic GWP in the case $0\leq s\leq 1+\sigma$, we then consider the corresponding dissipation $\mathcal{L}(u)$. The other case involves a simpler dissipation operator and can be treated similarily.

We follow the well-known Da Prato-Debussche decomposition\cite{da2002two} (also known as Bourgain decomposition \cite{bourgNLS96}). Let us consider the following linear stochastic equation
\begin{align}
dz=-\i(-\Delta)^\sigma zdt+\sqrt{\al}dW^N, \quad z\Big|_{t=0}=0.
\end{align}
This equation admits a unique solution known as stochastic convolution:
\begin{align}
z(t)=\sqrt{\al}\int_0^te^{-\i(t-s)(-\Delta)^\sigma}dW^N(s).\label{St:Conv}
\end{align}
It follows the It\^o theory that $z(t)$ is an $\mathcal{F}_t-$martingale. The Doob maximal inequality combined with the It\^o isometry shows that, for any $T>0$
\begin{align}
\E\sup_{t\in [0,T]}\|z(t)\|_{L^2}\leq C_T\sqrt{\al}.\label{Est:on:z}
\end{align} 
We notice that $z$ exists globally in $t$ for almost all $\omega\in\Omega$. Let $u_0$ be $E^N-$valued $\mathcal{F}_0-$independent random variable. Fix $\omega$ such $z^\omega$ exists globally in time, and consider the nonlinear deterministic equation
\begin{align}
\dt v = F(v,z),\quad v(t=0)=u_0^\omega,\label{Eq:nonlin}
\end{align}
where $F(v,z)=-\i((-\Delta)^\sigma v+|v+z|^2(v+z))-\al e^{\xi^{-1}(\|v+z\|_{H^{s-}})}[\Pi^N|v+z|^2(v+z)+(-\Delta)^{s-\sigma}(v+z)]$.

We remark that for a global solution $v$ to \eqref{Eq:nonlin}, we have that $u=v+z$ is a global solution to \eqref{SNLS1} supplemented with the initial condition $u(t=0)=u_0^\omega$.

The local existence for \eqref{Eq:nonlin} follows from the classical Cauchy-Lipschitz theorem since $F(v,z)$ is smooth. Let us show that this local solution is in fact global. By an standard iteration argument, it suffices to show that the $L^2$-norm does not blow up in finite time. We have that
\begin{align}
\dt \|v\|_{L^2}^2=-2\i\langle v,\Pi^N|v+z|^2(v+z)\rangle-2\al e^{\xi^{-1}(\|v+z\|_{H^{s-}})}[\langle v,\Pi^N|v+z|^2(v+z)\rangle+ \langle v,(-\Delta)^{s-\sigma}(v+z)\rangle].
\end{align}
By adding $z-z$ and using Cauchy-Schwarz, we obtain
\begin{align}
\dt \|v\|_{L^2}^2&\leq \frac{C_N\|z\|_{L^2}^4}{\al}+e^{\xi^{-1}(\|v+z\|_{H^{s-}})}\left(-\|v\|_{H^{s-\sigma}}^2+\tilde{C}_{N}(\|z\|_{L^2}^2+\|z\|_{L^4}^4)\right).
\end{align}
We then have two complementary cases for a $t\in [0,T]$:
\begin{enumerate}
\item either $\|v\|_{H^{s-\sigma}}^2\leq \tilde{C}_N(\|z\|_{L^2}^2+\|z\|_{L^4}^4)$; in this case 
\begin{align}
\dt \|v\|_{L^2}^2 \leq C^0_\al(\omega,T), 
\end{align}
\item or $\|v\|_{H^{s-\sigma}}^2> \tilde{C}_N(\|z\|_{L^2}^2+\|z\|_{L^4}^4)$; in this case 
\begin{align}
e^{\xi^{-1}(\|v+z\|_{H^{s-}})}\left(-\|v\|_{H^{s-\sigma}}^2+\tilde{C}_N\|z\|_{L^2}^2\right)\leq 0,
\end{align}
and
\begin{align}
\dt \|v\|_{L^2}^2 \leq C^1_\al(\omega,T).
\end{align}
Taking $C_\al=\max(C^0_\al,C^1_\al)$, we obtain the wished finiteness of $\|v\|_{L^2}$.
\end{enumerate}
Now, using the mean value theorem and the Gronwall lemma, we find for $u$ and $v$ two solutions to \eqref{SNLS1} that
\begin{align}
\|u(t)-v(t)\|_{L^2}\leq \|u(0)-v(0)\|_{L^2}e^{\sup_{x\in D}\int_0^1|f'(ru+(1-r)v)|dr},
\end{align} 
where $f(u)=F(u,0).$\\
Standard arguments show that the constructed solution $u$ is adapted to $\sigma(u_0,\mathcal{F}_t).$
\subsection{Fulfillment of Assumption \ref{Ass:unif:conv} (Zero viscosity limit, $\alpha\to 0$)}
The difficulty in the convergence of Assumption \ref{Ass:unif:conv} is the fact that both $P^{N*}_{t,\al_k}$ and $\mu^N_{\al_k}$ depend on $\al_k$, we then need some uniformity in the convergence $u^N_{\al_k}(t,\cdot)\to \phi^N_t(\cdot)$ as $\al_k\to 0$. The following lemma gives the needed uniform convergence. To simplify the notation, we use the abuse of notation: $P^{N*}_{t,\al_k}=:P^{N*}_{t,k}$, $\mu^N_{\al_k}=:\mu^N_{k}$ and $u_{\al_k}=:u_k$.
\begin{Lem}\label{Lem:lim:al}
Let $T>0$. For any $R>0$, any $r>0$,
\begin{align}
\sup_{u_0\in B_R(L^2)}\sup_{t\in [0,T]}\E\left(\|u_k(t,\Pi^N u_0)-\phi^N_t \Pi^Nu_0\|_{L^2}1_{S_{r,k}(t)}\right)\to 0,\quad \text{as $k\to\infty$},
\end{align}
where 
\begin{align}
S_{r,k}(t)=\left\{\omega\in\Omega\ |\ \text{max}\left(\|z_{\al_k}^\omega(t)\|_{L^2},\sqrt{\al_k}\left|\sum_{m=1}^N\int_0^ta_m\langle u,e_m\rangle d\beta_s^\omega\right|\right)\leq r\right\}.
\end{align}
\end{Lem}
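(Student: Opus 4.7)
The plan is to subtract the two evolutions, apply It\^o's formula to $\|u_k(t,\Pi^N u_0) - \phi^N_t \Pi^N u_0\|_{L^2}^2$, and close a Gronwall estimate pathwise on the event $S_{r,k}(t)$, exploiting the fact that $N$ is fixed so that all Sobolev norms on the finite-dimensional space $E^N = \Pi^N L^2$ are equivalent with constants depending on $N$.

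First, I would derive a uniform (in $k$ and in $\omega \in S_{r,k}(t)$) bound on $\|u_k(t,\Pi^N u_0)\|_{L^2}$ over $[0,T]$. Applying It\^o to $\|u_k\|_{L^2}^2$, the skew-adjoint dispersive term and the Hamiltonian cubic term contribute zero, the dissipation contributes the nonpositive quantity $-2\al_k \operatorname{Re}\langle u_k, \mathcal{L}(u_k)\rangle$ (cf.\ the estimates leading to \eqref{Est:Ener:low}), the It\^o correction adds $\al_k A^N_0 t$, and the stochastic integral is dominated on $S_{r,k}(t)$ by a constant times $r$ via the second condition in the definition of $S_{r,k}$. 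This yields a bound $M = M(R,T,r,N)$ on $\|u_k(t)\|_{L^2}$ uniform in $k$ and in $\omega \in S_{r,k}(t)$.

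Next, setting $w_k(t) := u_k(t,\Pi^N u_0) - \phi^N_t \Pi^N u_0$, I would apply It\^o to $\|w_k\|_{L^2}^2$. The dispersive term cancels by skew-adjointness. The cubic difference $\Pi^N(|u_k|^2 u_k - |\phi^N_t u_0|^2 \phi^N_t u_0)$ factors as a product of $w_k$ with a quadratic polynomial in $u_k$ and $\phi^N_t u_0$; its pairing with $w_k$ is bounded by $C(M,N)\|w_k\|_{L^2}^2$, using the equivalence of norms on $E^N$ and the fact that $\|\phi^N_t u_0\|_{L^2} = \|\Pi^N u_0\|_{L^2} \leq R$ by mass conservation for \eqref{Eq:Ham:N}. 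The term $-2\al_k \operatorname{Re}\langle w_k, \mathcal{L}(u_k)\rangle$ is controlled by $\al_k C(M,N)$, because on $E^N$ the exponential factor $e^{\xi^{-1}(\|u_k\|_{H^{s-}})}$ is bounded in terms of $M$ and $N$ via the equivalence $\|u_k\|_{H^{s-}} \leq C_N \|u_k\|_{L^2} \leq C_N\sqrt{M}$. The It\^o correction contributes $\al_k A^N_0$, and the stochastic integral $2\sqrt{\al_k}\operatorname{Re}\langle w_k, dW^N\rangle$ is pathwise controlled on $S_{r,k}(t)$ by the second defining condition (the deterministic part $\phi^N_t \Pi^N u_0$ of $w_k$ not contributing to the stochastic integral). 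Gronwall's inequality then produces
\begin{align*}
\|w_k(t)\|_{L^2}^2 \leq C(M,N,T)\bigl(\al_k + \sqrt{\al_k}\,r\bigr)
\end{align*}
pathwise on $S_{r,k}(t)$, uniformly in $u_0 \in B_R(L^2)$ and $t \in [0,T]$.

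Integrating against $\P$ restricted to $S_{r,k}(t)$ yields
\begin{align*}
\E\bigl(\|w_k(t)\|_{L^2}\,1_{S_{r,k}(t)}\bigr) \leq \sqrt{C(M,N,T)\bigl(\al_k + \sqrt{\al_k}\,r\bigr)} \xrightarrow[k\to\infty]{} 0,
\end{align*}
uniformly in $u_0 \in B_R(L^2)$ and $t \in [0,T]$, as required. The main obstacle is the treatment of the dissipation operator $\mathcal{L}$, whose exponential factor $e^{\xi^{-1}(\|u_k\|_{H^{s-}})}$ is a priori unbounded: the resolution relies crucially on the fact that the whole argument takes place at fixed $N$, so that the uniform $L^2$-bound from the first step, together with the equivalence of norms on $E^N$, yields the required pathwise control of every Sobolev norm of $u_k$ on $S_{r,k}(t)$.
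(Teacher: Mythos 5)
Your first step (It\^o on $\|u_k\|_{L^2}^2$, dropping the nonnegative dissipation and using the second defining condition of $S_{r,k}(t)$ to control the noise term) coincides with the paper's, but the second step contains a genuine gap. When you apply It\^o to $\|w_k\|_{L^2}^2$ with $w_k=u_k-\phi^N_t\Pi^Nu_0$, the martingale term is $2\sqrt{\al_k}\sum_{m\le N}a_m\int_0^t\langle w_k,e_m\rangle\,d\beta_m$, i.e.\ its integrand is $w_k$, not $u_k$. Splitting $w_k=u_k-\phi^N_s\Pi^Nu_0$, the event $S_{r,k}(t)$ bounds only the part with integrand $u_k$ (and only at the terminal time $t$); the remaining term $\sqrt{\al_k}\sum_{m\le N}a_m\int_0^t\langle \phi^N_s\Pi^Nu_0,e_m\rangle\,d\beta_m$ is a Wiener integral of a deterministic integrand, which is a nondegenerate Gaussian random variable: it does not vanish, and it is not bounded on $S_{r,k}(t)$. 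So your parenthetical claim that the deterministic part of $w_k$ ``does not contribute to the stochastic integral'' is false, and the pathwise Gronwall bound $\|w_k(t)\|_{L^2}^2\le C(M,N,T)(\al_k+\sqrt{\al_k}\,r)$ on $S_{r,k}(t)$ is not justified. A related difficulty is that your Gronwall coefficient $C(M,N)$ (coming from the cubic term and from $e^{\xi^{-1}(\|u_k\|_{H^{s-}})}$) requires $\|u_k(\tau)\|_{L^2}\le M$ for all intermediate times $\tau\in[0,t]$, whereas the event constrains the relevant quantities only at time $t$.

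This is precisely the difficulty the paper's argument is built to avoid: it uses the Da Prato--Debussche splitting $u_k=v_k+z_k$, with $z_k$ the stochastic convolution \eqref{St:Conv}, and compares $v_k$ with the deterministic Galerkin solution. The resulting error equation contains no stochastic integral at all, so the Gronwall step \eqref{ConvGENERALEst} is purely deterministic for each fixed $\omega$; randomness enters only through $\|z_k\|$, which is handled by \eqref{Est:on:z} and the first condition of $S_{r,k}$, and through the terminal-time mass bound \eqref{ControlTronqueeStoch} furnished by the second condition. If you insist on the direct comparison, the extra Wiener integral can be treated in expectation (Doob plus the It\^o isometry give a contribution of order $\sqrt{\al_k}$, using $\|\phi^N_s\Pi^Nu_0\|_{L^2}\le R$), but then you cannot run a purely pathwise Gronwall on the event, since the cubic and exponential coefficients are unbounded off $S_{r,k}(t)$; some decomposition of the noise or a stopping-time argument is needed, which is essentially what the paper's splitting accomplishes.
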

We postpone the proof of Lemma \ref{Lem:lim:al}. We remark that, using the Ito isometry and the Chebyshev inequality we have
\begin{align}
\E(1-1_{S_{r,k}}(t))\leq \frac{Ct}{r^2}\label{Est:on:S_r,k}
\end{align}
where $C$ does not depend on $(r,k,t)$.

Let $f:L^2\to\R$ be a bounded Lipschitz function. Without any loss of generality we can assume that $f$ is bounded by $1$ and its Lipschitz constant is also $1$. We have
\begin{align}
\langle P^{N*}_{t,k}\mu^N_k,f\rangle - \langle P^{N*}_{t}\mu^N,f \rangle &= \langle\mu^N_k,P^{N}_{t,k}f\rangle - \langle\mu^N,P^N_tf\rangle\\
& =\langle\mu^N_k,(P^N_{t,k}-P^N_t)f\rangle -\langle\mu^N-\mu^N_k,P^N_tf\rangle\\
&=A-B.
\end{align}
We see that $B\to 0$ as $k\to\infty$ according to the weak convergence $\mu^N_k\to\mu^N.$ \\
Using the boundedness of $f$ we obtain
\begin{align}
|A|\leq \int_{B_R(L^2)}|P^N_{t,k}f(w)-P^N_tf(w)|\mu^N_k(dw)+2\mu^N_k(L^2\backslash B_R(L^2))=:A_1+A_2.
\end{align}
Now from \eqref{Est:E:N:al},
\begin{align}
\int_{L^2}\|u\|_{L^2}^2\mu^N_k(du)\leq \int_{L^2}\mathcal{E}(u)\mu^N_k(du)\leq  C,
\end{align}
where $C$ does not depend on $(N,\al).$ Combining this with the Chebyshev inequality we obtain that
\begin{align}
A_2\leq \frac{2C}{R^2}.
\end{align}
Using the boundedness and Lipschitz properties of $f$, we obtain
\begin{align}
A_1\leq  A_{1,1}+A_{1,2}
\end{align}
where
\begin{align}
A_{1,1} &=\int_{B_R(L^2)}\|\E u_k^N(t,\Pi^Nw)-\phi_tw\|_{L^2}1_{S_r}\mu^N_k(dw)\\
A_{1,2} &= 2\int_{L^2\backslash B_R(L^2)}\E(1-1_{S_r})\mu^N_k(dw).
\end{align}
Using the \eqref{Est:on:S_r,k} we obtain that
\begin{align}
A_{1,2}\leq \frac{2Ct}{r^2}.
\end{align}
We finally obtain
\begin{align}
|A|\leq A_{1,1}+ \frac{2Ct}{r^2}+\frac{2C}{R^2}.
\end{align}
We pass to the limit on $k$ first, by applying apply Lemma \ref{Lem:lim:al}, we obtain $A_{1,1}\to 0$. Then we take the limits $r\to\infty$ and $R\to\infty$, we arrive at the conclusion of Assumption \ref{Ass:unif:conv}. 

Now let us prove Lemma \eqref{Lem:lim:al}:
\begin{proof}[Proof of Lemma \ref{Lem:lim:al}]
Let $u_0\in B_R(L^2)$ and $u_k$ and  $u$ be the solutions of \eqref{SNLS1} (with viscosity $\alpha_k$) and \eqref{fNLS-Glk} starting at $u_0$, respectively. Recall that $u_k$ can be decomposed as $u_k=v_k+z_k$ where $v_k$ is the solution of \eqref{Eq:nonlin} with initial datum $u_0$ and $z_k$ is given by \eqref{St:Conv} with $\alpha_k$. In order to prove Lemma \ref{Lem:lim:al}, it suffice to show that
\begin{align}
\sup_{u_0\in B_R}\sup_{t\in [0,T]}\E\left(\|v_k-u\|_{L^2}1_{S_{r,k}(t)}\right)\to 0, \quad\text{as $k\to\infty$}.
\end{align} 
Indeed, we already have by \eqref{Est:on:z} that $\E\sup_{t\in [0,T]}\|z_k(t)\|_{L^2}\to 0$ as $k\to\infty$. Set $w_k=v_k-u$. We will treat the case $s\leq 1+\sigma$ which is more delicate. We then consider the equation satisfied by $w_k$:
\begin{align}
\dt w_k=\i\Delta w_k +\Pi^N[w_kf(v_k,u,z_k)+z_kg(v_k,z_k)]-\al_k e^{\xi^{-1}(\|v_k+z_k\|_{H^{s-}})}(-\Delta)^{s-\sigma}(v_k+z_k),
\end{align}
where $f$ and $g$ is a homogeneous of degree $2$ complex polynomial. We claim that $\lim_{k\to\infty}\|w_k\|_{L^2}=0$ almost surely. Indeed, by taking the dot product with $w_k$, we obtain after the use of the Gronwall inequality,
\begin{align}
\sup_{t\in[0,T]}\|w_k\|_{L^2}^2\leq e^{T(1+z_N^2+\|f(v_k,z_k)\|_{L^\infty})}\left(\int_0^T\|z\|_{L^2}^2dt+\alpha_kT\right)\left(1+e^{C(N)(\xi^{-1}(\|v_k\|_{L^\infty_tL^2_x})+\xi^{-1}(\|z_k\|_{L^\infty_tL^2_x}))}\right).\label{ConvGENERALEst}
\end{align}
We pass to the limit $k\to\infty$ with the use of \eqref{Est:on:z} to obtain the claim.

Now, writing the It\^o formula for $\|u_k\|_{L^2}^2$, we have
\begin{align*}
\|u_k\|_{L^2}^2+2\al_k\int_0^t\mathcal{M}(u_k) \, d\tau =\|\Pi^Nu_0\|_{L^2}^2+\al_k \frac{A_0^N}{2}t+2\sqrt{\al_k}\sum_{m\leq N}a_m\int_0^t\langle u_k,e_m\rangle \, d\beta_m,
\end{align*}
where
\begin{align*}
\mathcal{M}(u)=M'(u,\mathcal{L}(u))\geq 0.
\end{align*}
Since $\al_k\leq 1,$ we have that, on the set $S_{r,k}(t),$
\begin{align*}
\|u_k\|_{L^2}^2\leq \|\Pi^Nu_0\|_{L^2}^2+C(r,N)t,
\end{align*}
where $C(r,N)$ does not depend on $k.$ Hence we see that, on $S_{r,k}(t)$,
\begin{align*}
\|w_k\|_{L^2}\leq \|v_k\|_{L^2}+\|z_k\|_{L^2}\leq \|u_k\|_{L^2}+2\|z_k\|_{L^2}\leq \|u_0\|_{L^2}+3C(r,N)t.
\end{align*}
In particular, we have the following two estimates:
\begin{align}
\sup_{k\geq 1}\sup_{u_0\in B_R}\|w_k\|_{L^\infty_tL^2_x}\Bbb 1_{S_{r,k}}\leq R+3C(r,N)T,\label{ControlTronqueeStoch}
\end{align}
Hence coming back to \eqref{ConvGENERALEst} and using the (deterministic) conservation $\|u(t)\|_{L^2}=\|P_N u_0\|_{L^2}$ and the estimate \eqref{ControlTronqueeStoch}, we obtain
\begin{align*}
\sup_{u_0\in B_R}\|w_k\|_{L^\infty_tL^2_x}^2\Bbb 1_{S_r}\leq A(R,N,r,T)(\alpha_k+\|z_k\|_{L^1_tL^2_x}).
\end{align*}
Therefore, using again the bound \eqref{Est:on:z}, we obtain the almost sure convergence $\|z_k\|_{L^2}\to 0$ (as $k\to\infty$, up to a subsequence), we obtain then the almost sure convergence
\begin{align*}
\lim_{k\to\infty}\sup_{u_0\in B_R}\|w_k\|_{L^\infty_tL^2_x}^2\Bbb 1_{S_r}=0.
\end{align*}
Now, we use \eqref{ControlTronqueeStoch} and the Lebesgue dominated convergence theorem to obtain
\begin{align*}
\E\sup_{u_0\in B_R}\|w_k\|_{L^\infty_tL^2_x}\Bbb 1_{S_r}\to 0,\quad as\ k\to\infty.
\end{align*}
The proof  of Lemma \ref{Lem:lim:al} is finished.
\end{proof}

\subsection{Estimates}\label{Estimates}
Let us recall that
\begin{align}
\mathcal{M}(u):=M'(u,\mathcal{L}(u))=\langle u,\mathcal{L}(u)\rangle.
\end{align}
\begin{Prop}\label{prop 4.4}
We have that
\begin{align}
\int_{L^2}\mathcal{M}(u)\mu(du)=\frac{A_0}{2}.\label{Est:M}
\end{align}
\end{Prop}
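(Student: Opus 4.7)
The plan is to derive the identity first at the approximate level via Itô's formula applied to the stochastic stationary measures $\mu^N_\alpha$, and then to pass successively to the inviscid limit $\alpha\to 0$ and to the infinite-dimensional limit $N\to\infty$. Specifically, if $u_\alpha^N$ solves \eqref{SNLS1} with initial law $\mu^N_\alpha$, then applying Itô's formula to $M(u)=\frac{1}{2}\|u\|_{L^2}^2$ and observing that the Hamiltonian drift contributes zero (both $\re\langle u,-i(-\Delta)^\sigma u\rangle$ and $\re\langle u,-i\Pi^N|u|^2u\rangle$ vanish since $\|(-\Delta)^{\sigma/2}u\|_{L^2}^2$ and $\|u\|_{L^4}^4$ are real), that the dissipation contributes $-2\alpha\mathcal{M}(u)\,dt$, and that the noise contributes an Itô correction $\alpha A_0^N\,dt$ together with a zero-mean martingale, one obtains
\begin{align*}
d\|u\|_{L^2}^2=\bigl(-2\alpha\mathcal{M}(u)+\alpha A_0^N\bigr)\,dt+2\sqrt{\alpha}\sum_{n\leq N}a_n\re\langle u,e_n\rangle\,d\beta_n.
\end{align*}
Taking expectation under stationarity (so that $\E\|u(t)\|_{L^2}^2$ is constant in $t$) and dividing by $\alpha$ yields the identity $\int_{L^2}\mathcal{M}(u)\,\mu^N_\alpha(du)=A_0^N/2$.

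It then remains to preserve this identity through the two limits. At fixed $N$, weak convergence $\mu^N_{\alpha_k}\to\mu^N$, continuity of $\mathcal{M}$ on the finite-dimensional space $E^N$, and uniform integrability extracted from \eqref{Est:E:N:al} and \eqref{Est_bound:on:E} give $\int\mathcal{M}\,d\mu^N=A_0^N/2$. For the limit $N\to\infty$ the right-hand side tends to $A_0/2$ by \eqref{Noise:Cond}. For the left-hand side, I would follow the authors' announced modified approach and introduce a frequency cutoff $\Pi^K$, writing $\mathcal{M}=\mathcal{M}_K+(\mathcal{M}-\mathcal{M}_K)$, where $\mathcal{M}_K$ replaces the nonlinear factor of $\mathcal{L}$ by its $\Pi^K$-projection. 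The truncated piece is continuous and exponentially dominated uniformly in $N$ by \eqref{Est:exp:N}, so the Portmanteau theorem yields $\int\mathcal{M}_K\,d\mu^N\to\int\mathcal{M}_K\,d\mu$; the tail $\mathcal{M}-\mathcal{M}_K$ is bounded uniformly in $N$ using \eqref{Est_bound:on:E} and goes to zero as $K\to\infty$.

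The main obstacle is the passage $N\to\infty$ on the left-hand side: $\mathcal{M}$ is neither bounded nor obviously continuous on $L^2$, featuring the exponential weight $e^{\xi^{-1}(\|u\|_{H^{s-}})}$ together with the super-energy terms $\|\Pi^N|u|^2u\|_{L^2}^2$ and $\|u\|_{L^4}^4$ in the low-regularity range. The cutoff strategy isolates a weakly continuous portion and absorbs the remainder into the uniform dissipation moment; the delicate point, anticipated by the authors' comment on the modified approach, is to ensure that the high-frequency remainder in $\mathcal{L}$ decays uniformly in $N$ rather than only pointwise, which requires revisiting the dissipation-rate computation with an additional spectral truncation tailored to the summability \eqref{Noise:Cond}.
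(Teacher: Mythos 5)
Your first two steps (It\^o's formula for $M(u)$ under the stationary measure $\mu^N_\alpha$, then the inviscid limit at fixed $N$) follow the paper, and your treatment of the high-frequency tail $\mathcal{M}-\mathcal{M}_K$ is essentially the paper's estimate of its term $ii$: the tail expectation is $O(K^{-\sigma})$ uniformly in $N$ thanks to the uniform dissipation moments. The genuine gap is in the passage $N\to\infty$ for the truncated piece. You assert that $\mathcal{M}_K$ is ``continuous and exponentially dominated uniformly in $N$ by \eqref{Est:exp:N}, so the Portmanteau theorem yields $\int\mathcal{M}_K\,d\mu^N\to\int\mathcal{M}_K\,d\mu$.'' This inference is not valid: Portmanteau (and weak convergence generally) only gives convergence of integrals of \emph{bounded} continuous functions, and $\mathcal{M}_K$ is still unbounded --- it retains the weight $e^{\xi^{-1}(\|u\|_{H^{s-}})}$, the quartic term, and $\|u\|_{H^{s-\sigma}}^2$. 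To upgrade weak convergence to convergence of these integrals you need uniform (in $N$) integrability of $\mathcal{M}_K$ under $\{\mu^N\}$, and a uniform \emph{first}-moment bound such as \eqref{Est:exp:N} or the pointwise comparison \eqref{Est_bound:on:E} does not provide equi-integrability. This is exactly the point the paper resolves by a mechanism absent from your proposal: a second It\^o computation, applied to $M^2(u)$, which yields the auxiliary uniform bound $\int_{L^2}M(u)\mathcal{M}(u)\,\mu^N_\alpha(du)\leq C$ and then $\int_{L^2}M(u)\mathcal{M}(u)\,\mu^N(du)\leq C$ (see \eqref{Est:MM:N:al} and \eqref{Est:MM:N}), together with a second, \emph{amplitude} cutoff $\chi_R=1_{\{\|u^N\|_{H^{s-}}\geq R\}}$. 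On $\{\chi_R=1\}$ the low-frequency contribution is bounded by $F^{\max(\frac d2+,\,s-\sigma)+s}R^{-2}\,\E\,M(u^N)\mathcal{M}(u^N)\lesssim F^{\cdots}R^{-2}$, while on $\{\chi_R=0\}$ the integrand is bounded and one can use Skorokhod plus dominated convergence; the limits are then taken in the order $N\to\infty$, $R\to\infty$, $F\to\infty$, giving the hard inequality $\frac{A_0}{2}\leq\int\mathcal{M}\,d\mu$ (the reverse inequality being lower semicontinuity). Without the $M\mathcal{M}$ moment and the amplitude cutoff, your frequency truncation alone does not close the argument.

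Relatedly, you have misplaced the delicate point: you say the difficulty is ``to ensure that the high-frequency remainder in $\mathcal{L}$ decays uniformly in $N$ rather than only pointwise,'' but that remainder is the easy part (it is controlled uniformly by the dissipation moment, with a negative power of the cutoff frequency); the real difficulty is the low-frequency piece on the region where $\|u^N\|_{H^{s-}}$ is large, i.e.\ precisely the uniform-integrability issue above, and it is not repaired by ``an additional spectral truncation tailored to \eqref{Noise:Cond}.'' A secondary, smaller issue: in the inviscid limit $\alpha\to0$ at fixed $N$ you again invoke uniform integrability ``extracted from \eqref{Est:E:N:al} and \eqref{Est_bound:on:E}''; these are first-moment and pointwise bounds and do not by themselves give it --- the natural source is again the uniform bound on $\int M(u)\mathcal{M}(u)\,d\mu^N_\alpha$ from the It\^o formula for $M^2$ (or, at fixed $N$, a superlinear comparison of $\mathcal{E}$ against $\mathcal{M}$ on $E^N$, which you would have to prove). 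In short, the missing ingredient throughout is the second-moment estimate obtained from It\^o on $M^2$, which is the core of the paper's ``modified approach.''
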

\begin{proof}[Proof of Proposition \ref{prop 4.4}]
We present the more difficult case in which  the dissipation is given by 
\begin{align}
\mathcal{L}(u)=e^{\xi^{-1}(\|u\|_{H^{s-}})}\left(\Pi^N|u|^2u+(-\Delta)^{s-\sigma}u\right), \quad\text{where $0\leq s\leq 1+\sigma$}.
\end{align}
In this case we have 
\begin{align}
\mathcal{M}(u)=e^{\xi^{-1}(\|u\|_{H^{s-}})}\left(\|u\|_{L^4}^4+\|u\|_{H^{s-\sigma}}^2\right).
\end{align}
The other case can be proved using a similar procedure.
  We split the proof in different steps:
\begin{enumerate}
\item {\bfseries Step 1: Identity for the $(\mu^N_\al$).}\\
Applying the It\^o formula to $M(u)$, where $u$ is the solution to \eqref{SNLS1}, we obtain
\begin{align}
dM(u)=\left[-\alpha \mathcal{M}(u)dt+\frac{\al}{2}A_0^N\right]dt+\sqrt{\al}\sum_{m=0}^Na_m\langle u,e_m\rangle d\beta_m.
\end{align}
Integrating in $t$ and in ensemble with repect to $\mu^N_\al$, we obtain
\begin{align}
\int_{L^2}\mathcal{M}(u)\mu^N_\al(du)=\frac{A_0^N}{2},\label{Est:M:al:N}
\end{align}
this identity used the invariance $\mu^N_\al.$\\
Let us now establish an auxiliary bound. Apply the It\^o formula to $M^2(u)$
\begin{align}
dM^2(u)=\left[\al M(u)\left(-\mathcal{M}(u)+\frac{A_0^N}{2}\right)+\frac{\al}{2}\sum_{m\leq N}\langle u,e_m\rangle^2\right]dt+2\sqrt{\al}M(u)\sum_{m\leq N}\langle u,e_m\rangle d\beta_m.
\end{align}
Integrating in $t$ and with respect respect to $\mu^N_\al$ we arrive at
\begin{align}
\int_{L^2}M(u)\mathcal{M}(u)\mu^N_\al(du) \leq C,\label{Est:MM:N:al}
\end{align}
where $C$ does not depend in $(N,\al)$. The estimate above is obtained after the following remark
\begin{align}
\int_{L^2}M(u)\mu^N_\al(du)\leq \int_{L^2}\mathcal{E}(u)\mu^N_\al(du)\leq C_1
\end{align} 
where $C_1$ is independent of $(N,\al)$.
\item {\bfseries Step 2: Identity for the $(\mu^N$).}
By usual arguments, we obtain the following estimates for $(\mu^N)$:
\begin{align}
\int_{L^2}\mathcal{M}(u)\mu^N(du) &=\frac{A_0^N}{2};\label{Est:M:N}\\
\int_{L^2}M(u)\mathcal{M}(u)\mu^N(du) &\leq C.\label{Est:MM:N}
\end{align}
We do not give details of proof of the identities. In the next step we will prove more delicate estimates whose proof is highly more difficult as the passage to the limit cannot use any finite-dimensional advantage.
 
\item {\bfseries Step 3: Identity for the $\mu$.}
In this part of the proof we perform the passage to the limit $N\to\infty$ in \eqref{Est:M:N} in order to obtain the identity \eqref{Est:M}.\\
The inequality 
\begin{align}
\int_{L^2}\mathcal{M}(u)\mu(du)\leq \frac{A_0}{2}
\end{align}
can be obtained by invoking lower semi-continuity of $\mathcal{M}$. The other way around, the analysis is more challenging. In \cite{syyu} a similar identity was established with the use of an auxiliary estimate on a quantity of type $E(u)\mathcal{E}(u)$. But in our context, such an estimate is not available. Indeed, in order to obtain it, we shall apply the It\^o formula on $E^2$, the dissipation will be $E(u)\mathcal{E}(u)$; but other terms including $A_\sigma^NE(u)$ have to be controlled in expectation. However, we do not have an $N-$independent control on $\int_{L^2}E(u)\mu^N(du)$ because this term is smoother than $\mathcal{E}(u)$ for $s<\sigma$. Therefore the latter cannot be exploited. We remark, on the other hand, that this is why the measure $\mu$ is expected to be concentrated on regularities lower than the energy space $H^\sigma$. Now, without a control on $E(u)\mathcal{E}(u)$, we can only handle the weaker quantity $M(u)\mathcal{M}(u)$ from \eqref{Est:MM:N:al}. So our proof here will be more tricky than that in \cite{syyu}.\\ 
Coming back to the proof of the remaining inequality, we write for some fixed frequency $F$, the frequency decomposition (setting $\Pi^{>F}=1-\Pi^F$)
\begin{align}
\frac{A_0^N}{2} &\leq \E e^{\xi^{-1}(\|u^N\|_{H^{s-}})}\left(\|\Pi^F|u^N|^2\|_{L^2}^2+\|\Pi^{F}u^N\|_{H^{s-\sigma}}^2\right)+\E e^{\xi^{-1}(\|u^N\|_{H^{s-}})}\left(\|\Pi^{> F}|u^N|^2\|_{L^2}^2+\|\Pi^{> F}u^N\|_{H^{s-\sigma}}^2\right)
\\
&=:i+ii,
\end{align}
where $u^N$ is distributed by $\mu^N$. Let us follow the following sub-steps:
\begin{enumerate}
\item First, $ii$ can be estimated by using the control on $\mathcal{E}$ \eqref{Est:Ener:low} as follows
\begin{align}
ii &\lleq F^{-\sigma}\left(\E e^{\xi^{-1}(\|u^N\|_{H^{s-}})}(\|\Pi^{> F}|u^N|^2\|_{H^\sigma}^2+ \|u^N\|_{\dot{H}^s}^2)\right)
\lleq F^{-\sigma}\E\mathcal{E}(u^N)\lleq F^{-\sigma}C.
\end{align}

\item As for $i$, we can split it by using localization in $H^{s-}$ (notice that by using the Skorokhod theorem $(u^N)$ converges almost surely on $H^{s-}$ to some $H^{s-}-$valued random variable $u$). For any fixed $R>0$ we set $\chi_R=1_{\{\|u^N\|_{H^{s-}}\geq R\}}$. We have
\begin{align}
&\E e^{\xi^{-1}(\|u^N\|_{H^{s-}})}(\|\Pi^{F}|u^N|^2\|_{L^2}^2 + \|\Pi^Fu^N\|_{H^{s-\sigma}}^2)\chi_R\\
&\lleq \E e^{\xi^{-1}(\|u^N\|_{H^{s-}})}(\|\Pi^{F}|u^N|^2\|_{H^{\frac{d}{2}+}}^2 + \|\Pi^Fu^N\|_{H^{s-\sigma}}^2)\chi_R\\
&\lleq  \E e^{\xi^{-1}(\|u^N\|_{H^{s-}})}(\|u^N\|_{H^{\frac{d}{2}+}}^4 + \|\Pi^Fu^N\|_{H^{s-\sigma}}^2)\chi_R\\
 &\lleq F^{\frac{d}{2}+}\E e^{\xi^{-1}(\|u^N\|_{H^{s-}})}(\|u^N\|_{L^2}^4+ \|u^N\|_{H^{s-\sigma}}^2)\chi_R\nonumber\\
&\lleq F^{\max(\frac{d}{2}+, s-\sigma)}R^{-2}\E e^{\xi^{-1}(\|u^N\|_{H^{s-}})}\|u^N\|_{H^{s-}}^2\left(\|u^N\|_{L^2}^4+ \|u^N\|_{H^{s-\sigma}}^2\right)\nonumber\\
&\lleq F^{\max(\frac{d}{2}+, s-\sigma)+s}R^{-2}\E \|u^N\|_{L^2}^2e^{\xi^{-1}(\|u^N\|_{H^{s-}})}\left[\|u^N\|_{L^4}^4+ \|u^N\|_{H^{s-\sigma}}^2\right].
\end{align}
Let us  use the estimate  \eqref{Est:M:N} and \eqref{Est:MM:N}  to arrive at
\begin{align}
\E e^{\xi^{-1}(\|u^N\|_{H^{s-}})}\left(\|\Pi^F|u^N|^2\|_{L^2}^2+\|\Pi^Fu^N\|_{H^{s-\sigma}}^2\right)\chi_R &\lleq F^{\max(\frac{d}{2}+, s-\sigma)+s}R^{-2}\E M(u^N)\mathcal{M}(u^N)\\
&\lleq F^{\max(\frac{d}{2}+, s-\sigma)+s}R^{-2}C,
\end{align}
where $C$ does not depend on $N$.
 \item On the other hand, we have the following convergence, thanks to the Lebesgue dominated convergence theorem,
\begin{align}
\lim_{N\to\infty} \E e^{\xi^{-1}(\|u^N\|_{H^{s-}})}\left(\|\Pi^F|u^N|^2\|_{L^2}^2+ \|\Pi^Fu^N\|_{H^{s-\sigma}}^2\right)(1-\chi_R)=\E e^{\xi^{-1}(\|u\|_{H^{s-}})}\left(\|\Pi^F|u|^2\|_{L^2}^2+ \|\Pi^Fu\|_{H^{s-\sigma}}^2\right)(1-\chi_R).
\end{align}
\end{enumerate}
Gathering all this, we obtain, after the limit $N\to\infty$, that
\begin{align}
\frac{A_0}{2}\leq \E e^{\xi^{-1}(\|u\|_{H^{s-}})}\left(\|\Pi^F|u|^2\|_{L^2}^2+ \|\Pi^Fu\|_{H^{s-\sigma}}^2\right)(1-\chi_R)+ F^{-\sigma} C_1+F^{\max(\frac{d}{2}+, s-\sigma)+s}R^{-2}C_2.
\end{align}
We let $R\to\infty$, then $F\to\infty$ and obtain
\begin{align}
\frac{A_0}{2}\leq e^{\xi^{-1}(\|u\|_{H^{s-}})}\E\left(\|u\|_{L^4}^4+ \|u\|_{H^{s-\sigma}}^2\right).
\end{align}
\end{enumerate}
The proof of Proposition \ref{prop 4.4} is finished.
\end{proof}
\begin{Rmk}\label{Rmk:LargeD}
The identity \eqref{Est:M} is crucial for establishing the non-degeneracy properties of the measure $\mu.$ It trivially rule out the Dirac measure at $0$, notice that the Dirac at $0$ is a trivial invariant measure for FNLS.\\
Also, by considering a noise $\kappa dW$, \eqref{Est:M} becomes
\begin{align}
\E\mathcal{M}(u)=\frac{\kappa A_0}{2}.\label{Kappa}
\end{align}
Such scaled noises provide invariant measures $\mu_\kappa$ for FNLS, all satisfying \eqref{Kappa}. Define a cumulative measure $\mu^*$ by a convex combination
\begin{align}
\mu^*=\sum_{j=0}^\infty\rho_j\mu_{\kappa_j}
\end{align}
where $\kappa_j\uparrow\infty$ and $\sum_0^\infty\rho_j=1$. The measure $\mu^*$ is invariant for FNLS. Moreover, for any $K>0$, we can find a positive $\mu^*$-measure set of data whose $H^s$-norms are larger than $K.$  
\end{Rmk}

In order to finish the proof of Theorem \ref{asGWP:1}, we present in the section below the fulfillment of Assumptions \ref{Ass1} and \ref{Ass2}.

\section{End of the proof of Theorem \ref{asGWP:1}: Local well-posedness}\label{Sect:LWP}
In this section, we present a deterministic local well-posedness result for $\sigma \in [\frac{1}{2} ,1]$ in \eqref{fNLS}, which heavily replies on a bilinear Strichartz estimate obtained in Subsection \ref{ssect bilinear}. See also Section 3 in \cite{syyu2}. We also show a convergence from Galerkin approximations of FNLS to FNLS.

\begin{Thm}[Deterministic local well-posedness on the unit ball]\label{thm LWP}
The fractional NLS \eqref{fNLS} with $\sigma \in [\frac{1}{2} ,1]$ is  locally well-posed for radial data $u_0 \in H_{rad}^s (B^d)$, $s> s_l (\sigma)$, where $s_l(\sigma)$ is defined as in \eqref{eq s*}. More precisely, let us first fix $s > s_l (\sigma)$ (defined as in \eqref{eq s*}), and for every $R> 0$, we set $\delta = \delta(R) = c R^{-2s}$ for some $c \in (0,1]$. Then there exists $b> \frac{1}{2}$ and $C , \widetilde{C} >0$ such that every $u_0 \in H_{rad}^s (B^d)$ satisfying $\norm{u_0}_{H_{rad}^s (B^d)} \leq R$, there exists a unique solution of \eqref{fNLS} in $X_{\sigma, rad}^{s,b} ([-\delta, \delta] \times B^d)$ with initial condition $u(0) = u_0$. 
Moreover,
\begin{align*}
\norm{u}_{L_t^{\infty} H_x^{s} ([-\delta, \delta] \times B^d) } \leq C \norm{u}_{X_{\sigma, rad}^{s,b}([-\delta,\delta] \times B^d)} \leq  \widetilde{C} \norm{u_0}_{H_{rad}^s (B^d)} . 
\end{align*}
\end{Thm}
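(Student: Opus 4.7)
The plan is a standard contraction mapping argument in the Bourgain space $X_{\sigma,rad}^{s,b}([-\delta,\delta] \times B^d)$ applied to the Duhamel formulation
\[
\Phi(u)(t) = S_{\sigma}(t) u_0 - i \int_0^t S_{\sigma}(t-\tau) |u|^2 u(\tau)\, d\tau.
\]
I will pick $b' \in (0,1/2)$ just below $1/2$ and $b \in (1/2, 1-b')$ just above $1/2$, so that $1+2b-4b' > 0$ is a small but genuine gain in Lemma \ref{lem Duhamel}. The linear piece is immediately controlled via Lemma \ref{lem X property2} by $\|S_\sigma(t) u_0\|_{X_{\sigma,\delta}^{s,b}} \leq c \|u_0\|_{H^s}$, and the Duhamel piece by Lemma \ref{lem Duhamel} as $\|\int_0^t S_\sigma(t-\tau) F(\tau)\, d\tau\|_{X_{\sigma,\delta}^{s,b}} \lesssim \delta^{1+2b-4b'} \|F\|_{X_{\sigma,\delta}^{s,-b'}}$.

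The central analytic step is a trilinear estimate of the form
\[
\||u|^2 u\|_{X_{\sigma,\delta}^{s,-b'}} \lesssim \|u\|_{X_{\sigma,\delta}^{s,b}}^3,
\]
valid for radial functions whenever $s > s_l(\sigma)$. To prove it I would dualize to reducing to an estimate of $\int u_1 \bar u_2 u_3 \bar w \, dt\, dx$, expand each factor in the eigenbasis $(e_n)$, and decompose dyadically in the eigenvalue index $z_n \sim n$ (cf.\ \eqref{eq z_n}). The key input is the bilinear Strichartz-type estimate from Subsection \ref{ssect bilinear}, which for two dyadic pieces of frequency $\sim N$ and $\sim M$ with $M \leq N$ gives a bound of the shape $\|uv\|_{L^2_{t,x}} \lesssim M^{s_l(\sigma)+} \|u\|_{X^{0,b}} \|v\|_{X^{0,b}}$, together with the $L^4_t L^2_x$ embedding from Lemma \ref{lem X property1}(4). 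Summing the dyadic pieces, the loss $M^{s_l(\sigma)+}$ on the lowest factor is absorbed provided the two higher factors carry regularity strictly above $s_l(\sigma)$, which is exactly the threshold $s > s_l(\sigma)$; the resonances caused by the dispersion relation $\tau + z_n^{2\sigma}$ are handled thanks to $b > 1/2$, after which no modulation gain is necessary.

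Combining the three estimates gives $\|\Phi(u)\|_{X_{\sigma,\delta}^{s,b}} \leq C_0 R + C_1 \delta^{1+2b-4b'} \|u\|_{X_{\sigma,\delta}^{s,b}}^3$ and a parallel difference bound. Choosing $\delta = c R^{-2s}$ with $c$ small enough that $C_1 \delta^{1+2b-4b'} (2 C_0 R)^2 \leq \frac{1}{2}$ shows that $\Phi$ stabilises the ball of radius $2 C_0 R$ in $X_{\sigma,\delta}^{s,b}$ and acts as a strict contraction there; the exponent $-2s$ is recovered by matching $2 \cdot (1+2b-4b')^{-1}$ to $2s$ via the freedom in $(b,b')$. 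Uniqueness in that ball and the continuity bounds follow, and the $L^\infty_t H^s_x$ inequality is just Lemma \ref{lem X property1}(3).

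The main obstacle is the trilinear estimate: on a bounded domain with Dirichlet Laplacian and fractional dispersion $e^{-it z_n^{2\sigma}}$, there is no clean Fourier-multiplier or Euclidean rescaling to appeal to, so everything has to be extracted from the eigenvalue asymptotics $z_n \sim n$, the radial $L^p$ bounds \eqref{eq e_n bdd}, and the bilinear Strichartz estimate in Subsection \ref{ssect bilinear}. The fact that the sharp regularity threshold $s_l(\sigma)$ in \eqref{eq s*} matches the one claimed in the theorem is a direct image of this bilinear estimate, and the three regimes of $\sigma$ in \eqref{eq s*} arise from the three regimes of dispersion strength available in that bilinear bound.
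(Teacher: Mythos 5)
Your overall route --- a contraction in $X_{\sigma,\delta}^{s,b}$ based on Lemma \ref{lem X property2}, the Duhamel gain of Lemma \ref{lem Duhamel}, and a trilinear estimate $\norm{\abs{u}^2u}_{X_{\sigma}^{s,-b'}}\lesssim \norm{u}_{X_{\sigma}^{s,b}}^3$ proved by duality, dyadic decomposition in the eigenvalue index and the bilinear estimates of Subsection \ref{ssect bilinear} --- is exactly the paper's (Proposition \ref{prop nonlinear est} feeding the fixed point in the proof of Theorem \ref{thm LWP}). But your sketch of the trilinear estimate has a genuine gap: you never treat the regime in which the dual (output) frequency $N_0$ dominates the largest input frequency $N_1$. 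On the ball there is no exact frequency-convolution constraint: the product $e_{n_1}e_{n_2}e_{n_3}$ has nonzero overlap with $e_{n_0}$ even when $n_0\gg n_1$, so after dualizing you must sum terms of size $N_0^{s}N_1^{-s}(N_2N_3)^{-s+\frac{d-1}{2}+\varepsilon}$ over all quadruples, and the sum over $N_0\gg N_1$ diverges unless the bilinear input is supplemented by decay in $N_0/N_1$. The paper produces precisely such a gain, $(N_1/N_0)^2$, in the second localized estimate \eqref{eq nonlinear est2}: one writes $u_0=-\frac{\Delta}{N_0^2}Tu_0$ with the frequency-normalizing operators $T,V$, moves $\Delta$ onto $u_1u_2u_3$ via Green's theorem (boundary terms vanish for Dirichlet data), and re-runs the bilinear/interpolation argument on $(\Delta u_1)u_2u_3$ and $(\nabla u_1)\cdot(\nabla u_2)\,u_3$; this is what makes the case $N_0\geq cN_1$ of Proposition \ref{prop nonlinear est} summable. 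Without this step (or some substitute quantifying almost-orthogonality of eigenfunction products in $N_0/N_1$) your dyadic summation does not close.

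A secondary imprecision: you assert that with $b>\frac{1}{2}$ ``no modulation gain is necessary,'' but the contraction requires a positive power of $\delta$, hence the nonlinear estimate must land in $X_{\sigma}^{s,-b'}$ with $b'<\frac{1}{2}$ and, as in Lemma \ref{lem nonlinear est}, with the inputs measured in $X_{\sigma}^{0,b'}$ as well; the paper achieves this by interpolating the bilinear bound (transfer principle, Proposition \ref{prop bilinear}, which needs $b_0>\frac{1}{2}$) against the $L_t^4L_x^2$/Bernstein bound from Lemma \ref{lem X property1}. You list both ingredients, so the fix is only to make that interpolation explicit. Also note the loss $(N_2N_3)^{\frac{d-1}{2}+\varepsilon}$ sits on the two lowest input frequencies, not just the lowest; this does not affect the threshold $s>s_l(\sigma)$, and your matching of $\delta\sim R^{-2s}$ through the choice of $(b,b')$ agrees with the paper's $\delta_1,\delta_2$ in \eqref{eq T_R}.
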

\begin{Rmk}
The function $f$ in Assumption \ref{Ass1} is found to be equal to $\delta(x)=cx^{-2s}$.
\end{Rmk}

\subsection{Bilinear Strichartz estimates}\label{ssect bilinear}
In this subsection, we prove the bilinear estimates that will be used in the rest of this section. The proof is adapted from \cite{an} with two dimensional modification and a different counting lemma. 
\begin{Lem}[Bilinear estimates for fractional NLS]\label{lem bilinear}
For $\sigma \in [\frac{1}{2} ,1]$,  $j  =1,2$, $N_j >0$ and $u_j \in L_{rad}^2 (B^d)$ satisfying 
\begin{align*}
\mathbf{1}_{\sqrt{-\Delta} \in [N_j ,  2N_j]} u_j = u_j , 
\end{align*}
we have the following bilinear estimates. 
\begin{enumerate}
\item
The bilinear estimate without derivatives.\\
Without loss of generality, we assume $N_1 \geq N_2 $, then for any $\varepsilon >0 $
\begin{align}\label{eq bilinear1}
\norm{S_{\sigma}(t)  u_1  \, S_{\sigma}(t)  u_2}_{L_{t,x}^2 ((0,1) \times B^d)} \lesssim  N_2^{\frac{d-1}{2} + \varepsilon}   \norm{u_1}_{L_x^2 (B^d)} \norm{u_2}_{L_x^2 (B^d)} .
\end{align}

\item
The bilinear estimate with derivatives.\\
Moreover, if $u_j \in H_0^1 (B^d)$, then for any $\varepsilon >0 $
\begin{align}\label{eq bilinear2}
\norm{ \nabla S_{\sigma}(t)  u_1  \,  S_{\sigma}(t)  u_2}_{L_{t,x}^2 ((0,1) \times B^d)} \lesssim N_1N_2^{\frac{d-1}{2} +\varepsilon}  \norm{u_1}_{L_x^2 (B^d)} \norm{u_2}_{L_x^2 (B^d)}.
\end{align}
\end{enumerate}
\end{Lem}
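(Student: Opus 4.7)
The approach is to adapt to the fractional dispersion relation $z_n \mapsto z_n^{2\sigma}$ the bilinear $L^2_{t,x}$ argument of Anton~\cite{an} for the standard Laplacian on the ball. The restriction $\sigma \in [\tfrac{1}{2},1]$ will enter crucially in the counting step.

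First, I would expand in the eigenbasis,
\begin{align}
S_\sigma(t) u_1 \cdot S_\sigma(t) u_2 = \sum_{n,m} c_n^{(1)} c_m^{(2)} \, e^{-i t (z_n^{2\sigma} + z_m^{2\sigma})} \, e_n(x) e_m(x),
\end{align}
where $(n,m)$ are restricted by $z_n \in [N_1, 2N_1]$ and $z_m \in [N_2, 2N_2]$. Inserting a smooth temporal cutoff $\chi \in C^\infty_c(\mathbb{R})$ with $\chi \equiv 1$ on $[0,1]$, Plancherel in $t$ combined with the rapid decay of $\hat{\chi}$ concentrates the contribution at time-frequency $\tau$ on those $(n,m)$ with $z_n^{2\sigma}+z_m^{2\sigma} = \tau + O(1)$. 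Splitting $\tau \in \mathbb{R}$ into unit intervals $[k,k+1)$ yields the almost-orthogonal decomposition
\begin{align}
\| S_\sigma u_1 \cdot S_\sigma u_2 \|_{L^2_{t,x}((0,1)\times B^d)}^2 \lesssim \sum_{k\in\mathbb{Z}} \Big\| \sum_{(n,m)\in A(k)} c_n^{(1)} c_m^{(2)} e_n e_m \Big\|_{L^2_x}^2,
\end{align}
with $A(k) := \{(n,m) : z_n^{2\sigma}+z_m^{2\sigma} \in [k,k+1)\}$.

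Next comes the counting lemma: by the mean-value theorem and \eqref{eq z_n}, the separation of consecutive values of $z_n^{2\sigma}$ at scale $N_1$ is at least $\sim N_1^{2\sigma-1}\gtrsim 1$ because $\sigma\geq\tfrac{1}{2}$. Hence for each fixed $m \in [N_2,2N_2]$ at most $O(1)$ indices $n$ satisfy the slab condition, so $|A(k)|\lesssim N_2$ uniformly in $k$. Applying Cauchy--Schwarz in $(n,m) \in A(k)$, invoking the $L^4$ eigenfunction bounds from \eqref{eq e_n bdd}, and summing in $k$ then recovers \eqref{eq bilinear1}; the $N_2^{\varepsilon}$ loss absorbs the logarithmic factors arising at the borderline exponent $p=\tfrac{2d}{d-1}$ and at the endpoint $\sigma=\tfrac{1}{2}$.

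For \eqref{eq bilinear2}, I would reduce to \eqref{eq bilinear1} by observing that $\nabla e_n$ decomposes in the radial Bessel framework with amplitude $\lesssim z_n \sim N_1$ times a profile of the same $L^p$ scaling as $e_n$ (a direct consequence of $J_\nu'(x) = \tfrac{1}{2}(J_{\nu-1}(x) - J_{\nu+1}(x))$). Equivalently, $\sqrt{-\Delta}$ acts as a multiplier of size $\sim N_1$ on $\mathbf{1}_{[N_1,2N_1]}(\sqrt{-\Delta}) u_1$, and $\nabla (-\Delta)^{-1/2}$ is handled by a standard square-function argument in the radial setting; either route inserts the expected factor $N_1$.

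The main obstacle is obtaining the $N_2^{(d-1)/2+\varepsilon}$ dependence \emph{uniformly} in $N_1\geq N_2$: a naive H\"older $L^4\times L^4$ inside each slab, without exploiting orthogonality, produces an $N_1$-dependent factor that is too weak. The required bilinear improvement comes exactly from the almost-orthogonality of the resonance set $\{z_n^{2\sigma}+z_m^{2\sigma}\approx \tau\}$, and it is here that $\sigma \geq \tfrac{1}{2}$ is indispensable: for $\sigma < \tfrac{1}{2}$ the separation degenerates and $|A(k)|$ would scale like $N_2 \cdot N_1^{1-2\sigma}$, contaminating the estimate with an unwanted $N_1$-dependence. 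This is also why the companion low-regularity result Theorem~\ref{asGWP:2}, which covers $\sigma<\tfrac{1}{2}$, must forgo a deterministic local theory and instead proceeds via compactness and Skorokhod.
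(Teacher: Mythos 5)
Your skeleton is essentially the paper's: eigenfunction expansion, a smooth temporal cutoff plus Plancherel (the paper handles the off-diagonal $\tau,\tau'$ interactions through the decay of $\widehat{\eta}$ and Schur's test, which is your ``almost-orthogonal decomposition''), a counting bound $\#\Lambda_{N_1,N_2,\tau}=\mathcal{O}(N_2)$ that uses $\sigma\geq\tfrac12$ exactly as you do (the paper phrases it via concavity of $x\mapsto x^{1/(2\sigma)}$, you via the mean-value gap $\sim N_1^{2\sigma-1}\gtrsim 1$; these are equivalent), Cauchy--Schwarz inside each resonant slab, and for \eqref{eq bilinear2} a reduction that the paper itself omits as ``similar''. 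Your remark on the degeneration $N_2N_1^{1-2\sigma}$ for $\sigma<\tfrac12$ also matches the paper's stated reason for excluding that range.

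There is, however, one concrete gap for $d\geq 3$: the estimate of $\norm{e_{n_1}e_{n_2}}_{L^2(B^d)}$. ``Invoking the $L^4$ eigenfunction bounds'' amounts to the symmetric H\"older $\norm{e_{n_1}}_{L^4}\norm{e_{n_2}}_{L^4}\lesssim (N_1N_2)^{\frac{d-2}{4}}$, which reintroduces a power of $N_1$; and, contrary to your closing diagnosis, the slab orthogonality together with the $\mathcal{O}(N_2)$ count does not repair this, since inserting it into the Cauchy--Schwarz step still gives $N_2^{1/2}(N_1N_2)^{\frac{d-2}{4}}$ instead of $N_2^{\frac{d-1}{2}+\varepsilon}$. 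The paper's Claim \ref{claim bilinear1} instead uses the asymmetric pairing
\begin{align}
\norm{e_{n_1}e_{n_2}}_{L^2(B^d)}\lesssim \norm{e_{n_1}}_{L^{\frac{2d}{d-1}-}(B^d)}\,\norm{e_{n_2}}_{L^{2d+}(B^d)}\lesssim N_2^{\frac{d-2}{2}+},
\end{align}
placing the \emph{high}-frequency eigenfunction at the exponent $\tfrac{2d}{d-1}$, where \eqref{eq e_n bdd} is uniform in $n_1$ (the borderline logarithm being absorbed into $\varepsilon$), so that all polynomial growth falls on $N_2$. Thus the $N_1$-uniformity of \eqref{eq bilinear1} rests on two independent ingredients --- the counting lemma and this asymmetric eigenfunction estimate --- and your argument supplies only the first; it is only in $d=2$, where $L^4=L^{\frac{2d}{d-1}}$, that your pairing coincides with the correct one.
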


\begin{Rmk}
Notice that in \eqref{eq bilinear1} and \eqref{eq bilinear2}, the upper bounds are independent on the fractional power $\sigma$. This is because a counting estimate in Claim \ref{claim bilinear1} does not see difference on $\sigma$. Hence the local well-posedness index is uniform for $\sigma \in [\frac{1}{2},1)$.
\end{Rmk}

\begin{Lem}[Bilinear estimates for classical NLS]\label{lem bilinear'}
Under the same setup as in Lemma \ref{lem bilinear}, the bilinear estimate analogue is given by
\begin{align*}
\norm{S_{1}(t)  u_1  \, S_{1}(t)  u_2}_{L_{t,x}^2 ((0,1) \times B^d)} & \lesssim  N_2^{\frac{d-2}{2} + \varepsilon}   \norm{u_1}_{L_x^2 (B^d)} \norm{u_2}_{L_x^2 (B^d)} \\
\norm{ \nabla S_{1}(t)  u_1  \,  S_{1}(t)  u_2}_{L_{t,x}^2 ((0,1) \times B^d)} & \lesssim N_1N_2^{\frac{d-2}{2} +\varepsilon}  \norm{u_1}_{L_x^2 (B^d)} \norm{u_2}_{L_x^2 (B^d)}.
\end{align*}
\end{Lem}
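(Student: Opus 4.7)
The plan is to run the proof of Lemma~\ref{lem bilinear} almost verbatim, with only the counting step modified to take advantage of the stronger spectral separation available when $\sigma=1$. Expanding each factor in the radial Dirichlet basis, $u_j = \sum_{n \sim N_j} c_n^j e_n$, I would write
\begin{align*}
S_1(t) u_1 \cdot S_1(t) u_2 \;=\; \sum_{n \sim N_1} \sum_{m \sim N_2} c_n^1 c_m^2 \, e^{-\i t (z_n^2 + z_m^2)} \, e_n(x)\, e_m(x),
\end{align*}
localize to $t \in (0,1)$ by a smooth cutoff, and apply Plancherel in time together with the almost-orthogonality argument of \cite{an} to reduce the square of the $L^2_{t,x}$-norm to a sum over integers $\lambda$ of $\|B_\lambda\|_{L^2_x}^2$, where $B_\lambda$ groups the pairs $(n,m)$ with $z_n^2 + z_m^2 \in [\lambda,\lambda+1)$.

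The decisive step is the counting estimate analogous to Claim~\ref{claim bilinear1}. From the Weyl asymptotics $z_n \sim n$ in \eqref{eq z_n} one gets $z_{n+1}^2 - z_n^2 \sim n$, so that, with the convention $N_1 \geq N_2$, each fixed $n \sim N_1$ admits only $O(1)$ indices $m \sim N_2$ satisfying $|z_n^2 + z_m^2 - \lambda|\leq 1$. Compared with the fractional regime, where the slower growth $z_{n+1}^{2\sigma} - z_n^{2\sigma} \sim n^{2\sigma-1}$ only separates frequencies at scale $n^{2\sigma-1}$ and produces the exponent $\frac{d-1}{2}$, this extra power of savings is exactly what lowers the loss to $\frac{d-2}{2}$ in the classical case. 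Applying Cauchy--Schwarz inside each block $B_\lambda$, inserting the $L^4$ eigenfunction bounds from \eqref{eq e_n bdd}, and summing the almost-orthogonal blocks yields the analogue of \eqref{eq bilinear1}. The derivative inequality then drops out by pulling the factor $\|\nabla e_n\|_{L^2} = z_n \sim N_1$ out of the first slot.

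The main technical obstacle will be implementing the unit-scale Plancherel step rigorously, since the phases $z_n^2$ are real numbers and cannot be bundled by ordinary Fourier coefficients. Following \cite{an}, I would convolve the time cutoff with a rapidly decaying window to concentrate each exponential near the nearest integer, incurring an $\varepsilon$-loss that is absorbed into the stated exponent. Beyond this, no new ingredient is required; the proof is structurally identical to that of Lemma~\ref{lem bilinear}, and the local well-posedness threshold $s_l(1) = \frac{d-2}{2}$ in \eqref{eq s*} is exactly the one recovered from the improved counting.
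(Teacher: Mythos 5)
There is a genuine gap in the counting step, which is exactly the point where the improvement from $\frac{d-1}{2}$ to $\frac{d-2}{2}$ has to come from. Your separation argument (for fixed $n\sim N_1$, the spacing $z_{m+1}^2-z_m^2\sim N_2$ leaves only $O(1)$ admissible $m\sim N_2$ in a unit window) gives the bound $\#\Lambda_{N_1,N_2,\tau}=O(N_1)$, and running it in the other direction (fix $m\sim N_2$, count $n\sim N_1$) gives $O(N_2)$ --- which is precisely what the paper's Claim \ref{claim bilinear1} already establishes for \emph{all} $\sigma\in[\frac12,1]$ via the convexity estimate $(\tau+\frac12-z_{n_2}^{2\sigma})^{\frac{1}{2\sigma}}-(\tau-\frac12-z_{n_2}^{2\sigma})^{\frac{1}{2\sigma}}\leq 1$. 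So your reading that the fractional exponent $\frac{d-1}{2}$ stems from ``weaker separation at scale $n^{2\sigma-1}$'' is not what happens in the paper: the $O(1)$-per-row count already holds there, and the exponent $\frac{d-1}{2}$ comes from the $\sim N_2$ choices of the low-frequency index, not from a failure of separation. Feeding a block cardinality of size $O(N_2)$ (or $O(N_1)$) into the Schur/Cauchy--Schwarz step together with $\norm{e_{n_1}e_{n_2}}_{L^2}^2\lesssim N_2^{d-2+}$ reproduces $N_2^{\frac{d-1}{2}+\varepsilon}$ at best; no rearrangement of the Cauchy--Schwarz inside the blocks squeezes out the extra half power. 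To get $N_2^{\frac{d-2}{2}+\varepsilon}$ one needs the essentially sharper count $\#\Lambda_{N_1,N_2,\tau}=O\parenthese{N^{\varepsilon}}$, and that is an arithmetic fact, not a spacing fact: it uses the precise McMahon asymptotics of the Bessel zeros, $z_n=\pi\parenthese{n+c_d}+O(1/n)$, so that $z_{n_1}^2+z_{n_2}^2$ lies within $O(1)$ of $\pi^2$ times a sum of two squares of shifted integers, and then a divisor-type bound on the number of lattice points $(n_1,n_2)$ with $n_1^2+n_2^2$ in a window of bounded length. This structure is destroyed for $\sigma<1$ (which is why the fractional lemma cannot do better), and it is the content of the argument in \cite{an} that the paper invokes; the paper itself does not reprove Lemma \ref{lem bilinear'} but explicitly defers it to that reference.

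Two smaller remarks: the time-localization/Plancherel step you flag as the main obstacle is in fact routine and handled in the paper by the cutoff $\eta$, the decay $\widehat{\eta}\parenthese{\cdot}\lesssim (1+\abs{\tau-\tau'}^2)^{-1}$ and Schur's test, so no new difficulty arises there; and the derivative estimate does follow from the first one by extracting the factor $z_{n_1}\sim N_1$, as you say. The proof as proposed, however, only re-derives the fractional exponent $\frac{d-1}{2}$ for $\sigma=1$ and does not prove the stated lemma; to fix it you must replace the separation count by the number-theoretic counting argument of \cite{an}.
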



Notice that the proof of Lemma \ref{lem bilinear'} in fact can be extended from \cite{an}, hence we will only focus on the proof of Lemma \ref{lem bilinear} and the proof of the local theory for fractional NLS in the rest of this section.

\begin{Prop}[Lemma 2.3 in \cite{bgtBil}: Transfer principle]\label{prop bilinear}
For any $b > \frac{1}{2}$ and for $j =1,2$, $N_j >0$ and $f_j \in X_{\sigma}^{0,b} (\R \times B^d)$ satisfying 
\begin{align*}
\mathbf{1}_{\sqrt{-\Delta} \in [N_j , 2 N_j]} f_j = f_j ,
\end{align*}
one has the following bilinear estimates. 
\begin{enumerate}
\item
The bilinear estimate without derivatives.

Without loss of generality, we assume $N_1 \geq N_2$, then for any $\varepsilon >0 $
\begin{align}\label{eq bilinear1'}
\norm{ f_1 f_2}_{L_{t,x}^2 (\R \times B^d)} \lesssim  N_2^{\frac{d-1}{2} + \varepsilon}  \norm{f_1}_{X_{\sigma}^{0,b} (\R \times B^d)} \norm{f_2}_{X_{\sigma}^{0,b} (\R \times B^d)} .
\end{align}

\item
The bilinear estimate with derivatives.

Moreover, if $f_j \in H_0^1 (B^d)$, then for any $\varepsilon >0 $
\begin{align}\label{eq bilinear2'}
\norm{ \nabla f_1  f_2}_{L_{t,x}^2 (\R \times B^d)} \lesssim N_1  N_2^{\frac{d-1}{2} + \varepsilon}  \norm{f_1}_{X_{\sigma}^{0,b} (\R \times B^d)} \norm{f_2}_{X_{\sigma}^{0,b} (\R \times B^d)}.
\end{align}
\end{enumerate}
\end{Prop}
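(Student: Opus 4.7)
The proposition is an instance of the standard Burq--Gérard--Tzvetkov transfer principle for $X^{s,b}$ spaces (see \cite[Lemma 2.3]{bgtBil}); my plan is to upgrade the linear bilinear estimates of Lemma~\ref{lem bilinear} to the $X^{0,b}_\sigma$ setting in three steps.

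\textbf{Step 1 (Fourier representation).} I first write any $f \in X^{0,b}_\sigma(\R \times B^d)$ as a continuous superposition of modulated free Schr\"odinger evolutions. Expanding $f(t,x)=\sum_n c_n(t)e_n(x)$, Fourier-inverting in time, and performing the substitution $\lambda = \tau + z_n^{2\sigma}$ yields the representation
\begin{equation*}
f(t,x) = \frac{1}{2\pi}\int_{\R} e^{it\lambda}\,[S_\sigma(t) G_\lambda](x)\,d\lambda,\qquad G_\lambda(x) := \sum_n \widehat{c_n}(\lambda-z_n^{2\sigma})\,e_n(x),
\end{equation*}
together with the Plancherel identity
\begin{equation*}
\|f\|_{X^{0,b}_\sigma}^2 = \int_\R \langle\lambda\rangle^{2b}\|G_\lambda\|_{L^2(B^d)}^2\,d\lambda.
\end{equation*}
The dyadic frequency localization $\mathbf{1}_{\sqrt{-\Delta}\in [N,2N]}f = f$ is inherited by each $G_\lambda$, so in what follows every $G_{j,\lambda_j}$ is localized to frequencies $[N_j,2N_j]$.

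\textbf{Step 2 (Reduction to Lemma~\ref{lem bilinear} plus Cauchy--Schwarz).} Inserting the representations into the product $f_1 f_2$ and applying Minkowski's inequality (the unimodular phase $e^{it(\lambda_1+\lambda_2)}$ is harmless),
\begin{equation*}
\|f_1 f_2\|_{L^2_{t,x}(\R\times B^d)} \lesssim \iint \bigl\|[S_\sigma(t) G_{1,\lambda_1}]\,[S_\sigma(t) G_{2,\lambda_2}]\bigr\|_{L^2_{t,x}}\,d\lambda_1\,d\lambda_2.
\end{equation*}
For each fixed pair $(\lambda_1,\lambda_2)$, Lemma~\ref{lem bilinear} --- lifted from the unit interval $(0,1)$ to $\R$ via a smooth time cutoff --- bounds the integrand by $N_2^{(d-1)/2+\varepsilon}\|G_{1,\lambda_1}\|_{L^2}\|G_{2,\lambda_2}\|_{L^2}$. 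Writing $1 = \langle\lambda_j\rangle^b\langle\lambda_j\rangle^{-b}$ and applying Cauchy--Schwarz in each $\lambda_j$, the hypothesis $b>1/2$ guarantees $\int_\R\langle\lambda\rangle^{-2b}\,d\lambda<\infty$, and the remaining weighted $L^2_{\lambda}$-integrals exactly reconstitute $\|f_1\|_{X^{0,b}_\sigma}\|f_2\|_{X^{0,b}_\sigma}$, proving \eqref{eq bilinear1'}.

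\textbf{Step 3 (Derivative version) and main obstacle.} For \eqref{eq bilinear2'}, the same strategy applies after replacing $G_{1,\lambda_1}$ by $\nabla G_{1,\lambda_1}$ in the representation of $\nabla f_1$; the dyadic localization to $[N_1,2N_1]$ then supplies the bound $\|\nabla G_{1,\lambda_1}\|_{L^2}\lesssim N_1\|G_{1,\lambda_1}\|_{L^2}$, which produces the expected $N_1$ factor after Cauchy--Schwarz. The only delicate point of the whole argument is precisely the extension of Lemma~\ref{lem bilinear} from $(0,1)$ to $\R$ used in Step~2: the estimate is \emph{false} for a single linear solution $S_\sigma(t)u$ on $\R$ (naively tiling $\R$ by unit intervals produces a divergent sum), so one must first localize via a smooth time cutoff. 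The cost of this cutoff is then absorbed by the $\langle\lambda\rangle^{-b}$-decay exploited in the Cauchy--Schwarz step, which is precisely the mechanism underlying the BGT transfer principle.
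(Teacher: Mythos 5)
The paper offers no proof of this proposition at all—it is quoted as Lemma 2.3 of \cite{bgtBil} (the Burq--G\'erard--Tzvetkov transfer principle)—so the comparison is with the standard BGT argument, whose core mechanism (the representation of an $X_{\sigma}^{0,b}$ function as a superposition of modulated free evolutions, plus Cauchy--Schwarz in the modulation variables using $b>\tfrac12$) you have correctly reproduced in Step 1. The genuine gap is in the order of operations in Step 2. After you apply Minkowski in $t$ over all of $\R$ and discard the phase $e^{it(\lambda_1+\lambda_2)}$, you need, for a.e.\ fixed $(\lambda_1,\lambda_2)$, the bound $\norm{S_{\sigma}(t)G_{1,\lambda_1}\,S_{\sigma}(t)G_{2,\lambda_2}}_{L^2_{t,x}(\R\times B^d)}\lesssim N_2^{\frac{d-1}{2}+\varepsilon}\norm{G_{1,\lambda_1}}_{L^2}\norm{G_{2,\lambda_2}}_{L^2}$. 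This is false: the product of two nonzero free evolutions is almost periodic in $t$, so its $L^2_t(\R)$ norm is infinite—a point you yourself concede in Step 3. The repair you gesture at (``lift Lemma \ref{lem bilinear} to $\R$ via a smooth cutoff, absorb the cost in the $\langle\lambda\rangle^{-b}$ decay'') cannot work where you place it: once Minkowski has been applied globally in time, the integrand is literally $+\infty$, and the $\langle\lambda_j\rangle^{-2b}$ integrability is already fully spent in reconstituting the $X_{\sigma}^{0,b}$ norms; it is not available to restore any time localization.

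The correct structure is to localize in time \emph{before} using the representation: write $\norm{f_1f_2}_{L^2(\R\times B^d)}^2=\sum_{n\in\mathbb{Z}}\norm{f_1f_2}_{L^2(I_n\times B^d)}^2$ with $I_n=[n,n+1)$, insert cutoffs $\psi_n=\psi(\cdot-n)$, run your Step 1--2 argument on $\psi_nf_1,\ \psi_nf_2$ (so that only the unit-interval estimate of Lemma \ref{lem bilinear}, time-translated, is ever invoked), and then sum using the almost-orthogonality $\sum_{n}\norm{\psi_nf}_{X_{\sigma}^{0,b}}^2\lesssim\norm{f}_{X_{\sigma}^{0,b}}^2$ for $\tfrac12<b\le 1$, which must be stated and proved (or cited). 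A secondary flaw is in Step 3: you should apply part (2) of Lemma \ref{lem bilinear} directly with datum $G_{1,\lambda_1}$, since the gradient passes under the $\lambda$-integral onto $\nabla\bigl[S_{\sigma}(t)G_{1,\lambda_1}\bigr]$; your proposed substitution of $G_{1,\lambda_1}$ by $\nabla G_{1,\lambda_1}$ implicitly commutes $\nabla$ with $S_{\sigma}(t)$, which fails for the Dirichlet Laplacian on the ball, and moreover $\nabla G_{1,\lambda_1}$ is neither radial nor spectrally localized, so the bound $\norm{\nabla G}_{L^2}\lesssim N_1\norm{G}_{L^2}$ cannot be fed into part (1).
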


\begin{proof}[Proof of Lemma \ref{lem bilinear}]
First  we write
\begin{align*}
u_1 = \sum_{n_1 \sim N_1} c_{n_1}   e_{n_1}(r) ,  \quad u_2 = \sum_{n_2 \sim N_2} d_{n_2}   e_{n_2}(r) 
\end{align*}
where $c_{n_1} = (u_1 , e_{n_1})_{L^2}$ and $d_{n_2} = (u_2 , e_{n_2})_{L^2}$. Then 
\begin{align*}
S_{\sigma}(t)  u_1 = \sum_{n_1 \sim N_1} e^{-\i t z_{n_1}^{2 \sigma} } c_{n_1}  e_{n_1}(r) , \quad S_{\sigma}(t)  u_2= \sum_{n_2 \sim N_2} e^{-\i t z_{n_2}^{2 \sigma} } d_{n_2}  e_{n_2}(r) 
\end{align*}
Therefore, the bilinear objects that one needs to estimate are the $L_{t,x}^2$ norms of 
\begin{align*}
E_0(N_1, N_2) & = \sum_{n_1 \sim N_1} \sum_{n_2 \sim N_2}  e^{-\i t  (z_{n_1}^{2\sigma} +z_{n_2}^{2\sigma}) }  (c_{n_1} d_{n_2}) ( e_{n_1} e_{n_2}) ,\\
E_1(N_1, N_2) & = \sum_{n_1 \sim N_1} \sum_{n_2 \sim  N_2}  e^{-\i t  (z_{n_1}^{2\sigma} +z_{n_2}^{2\sigma}) }  (c_{n_1} d_{n_2} )(\nabla e_{n_1}  e_{n_2} ).
\end{align*}
Let us focus on \eqref{eq bilinear1} first. 
\begin{align}\label{eq bi2}
(\text{LHS  of } \eqref{eq bilinear1})^2 & = \norm{E_0(N_1, N_2) }_{L^2 ((0, 1) \times B^d)}^2  = \int_{\R \times B^d} \abs{\sum_{n_1 \sim N_1} \sum_{n_2 \sim N_2}  e^{-\i t  (z_{n_1}^{2\sigma} +z_{n_2}^{2\sigma}) }  (c_{n_1} d_{n_2}) ( e_{n_1} e_{n_2}) }^2 \, dx dt
\end{align}
Here we employ a similar argument used in the proof of Lemma 2.6 in \cite{sun2020gibbs}. We fix $\eta \in C_0^{\infty} ((0,1))$, such that $\eta \big|_{I} \equiv 1$  where $I$ is a slight enlargement of $(0,1)$. Thus we continue from \eqref{eq bi2}
\begin{align}\label{eq bi3}
\eqref{eq bi2} & \leq \int_{\R \times B^d} \eta(t) \abs{ \sum_{n_1 \sim N_1} \sum_{n_2 \sim N_2}  e^{-\i t  (z_{n_1}^{2\sigma} +z_{n_2}^{2\sigma}) }  (c_{n_1} d_{n_2}) ( e_{n_1} e_{n_2})}^2 \, dxdt \notag\\
& = \int_{\R \times B^d} \eta(t) \abs{\sum_{\tau}   \sum_{ (n_1 ,n_2) \in \Lambda_{N_1 ,N_2, \tau} }  e^{-\i t  (z_{n_1}^{2\sigma} +z_{n_1}^{2\sigma}) }  (c_{n_1} d_{n_2}) ( e_{n_1} e_{n_2})}^2 \, dxdt
\end{align}
where
\begin{align*}
\#  \Lambda_{N_1 ,N_2, \tau} = \# \{ (n_1 ,n_2) \in \N^2 : n_1 \sim N_1 , n_2 \sim N_2 , \abs{z_{n_1}^{2\sigma} +z_{n_2}^{2\sigma} - \tau}\leq \frac{1}{2} \} .
\end{align*}
By expanding the square above and using Plancherel, we have
\begin{align}\label{eq bi4}
\eqref{eq bi3} & = \int_{\R \times B^d} \eta(t) \sum_{\tau , \tau'} \sum_{\substack{ (n_1 ,n_2) \in \Lambda_{N_1 ,N_2, \tau}\\ (n_1' ,n_2') \in \Lambda_{N_1' ,N_2', \tau'} }}    e^{\i t  (z_{n_1'}^{2\sigma} +z_{n_2'}^{2\sigma} - z_{n_1}^{2\sigma} -z_{n_2}^{2\sigma}) }  (c_{n_1} d_{n_2})  (\overline{c_{n_1'} d_{n_2'}})  ( e_{n_1} e_{n_2})( e_{n_1'} e_{n_2'}) \, dxdt \notag\\
& =   \sum_{\tau , \tau'} \sum_{\substack{ (n_1 ,n_2) \in \Lambda_{N_1 ,N_2, \tau}\\ (n_1' ,n_2') \in \Lambda_{N_1' ,N_2', \tau'} }}   \widehat{\eta}((z_{n_1’}^{2\sigma} +z_{n_1'}^{2\sigma}) - (z_{n_1}^{2\sigma} +z_{n_2}^{2\sigma} ))    (c_{n_1} d_{n_2}) (\overline{c_{n_1'} d_{n_2'} }) \int_{ B^d} ( e_{n_1} e_{n_2}) ( e_{n_1'} e_{n_2'}) \, dx \notag\\
& \lesssim  \sum_{\tau ,\tau'} \frac{1}{1+ \abs{\tau -\tau'}^2}  \sum_{\substack{n_1 \sim N_1 , n_2 \sim N_2 \\n_1' \sim N_1' , n_2' \sim N_2'}}  \mathbf{1}_{\Lambda_{N_1 ,N_2, \tau}}  \mathbf{1}_{\Lambda_{N_1' ,N_2', \tau'}} (c_{n_1} d_{n_2}) (\overline{c_{n_1'} d_{n_2'}})  \norm{ e_{n_1} e_{n_2}}_{L^2 (B^d)}  \norm{ e_{n_1'} e_{n_2'}}_{L^2 (B^d)}
\end{align}
Then by Schur's test, we arrive at
\begin{align}\label{eq bi5}
\eqref{eq bi4} & \lesssim  \sum_{\tau \in \N} \parenthese{ \sum_{(n_1 ,n_2) \in \Lambda_{N_1, N_2, \tau}}  \abs{c_{n_1} d_{n_2}}  \norm{ e_{n_1} e_{n_2}}_{L^2 (B^d)} }^2 \notag\\
& \lesssim    \sum_{\tau \in \N}  \# \Lambda_{N_1 ,N_2, \tau}  \sum_{(n_1 ,n_2) \in \Lambda_{N_1, N_2, \tau}}  \abs{c_{n_1} d_{n_2}}^2  \norm{ e_{n_1} e_{n_2}}_{L^2 (B^d)}^2
\end{align}

We claim that
\begin{claim}\label{claim bilinear1}
\begin{enumerate}
\item
$\# \Lambda_{N_1, N_2, \tau}  = \mathcal{O}(N_2)$ ;
\item
$\norm{ e_{n_1} e_{n_2} }_{L^2(B^d)}^2 \lesssim  N_2^{d-2+}$ .
\end{enumerate}
\end{claim}

Assuming Claim \ref{claim bilinear1}, we see that
\begin{align*}
\eqref{eq bi5} & \lesssim   \sum_{\tau \in \N}   N_2^{d-1+}  \sum_{(n_1 ,n_2) \in \Lambda_{N_1, N_2, \tau}}  \abs{c_{n_1} d_{n_2}}^2  \lesssim N_2^{d-1 + \varepsilon}  \norm{u_1}_{L^2 (B^d)}^2 \norm{u_2}_{L^2 (B^d)}^2.
\end{align*}
Therefore, \eqref{eq bilinear1} follows.

Now we are left to prove Claim \ref{claim bilinear1}. 
\begin{proof}[Proof of Claim \ref{claim bilinear1}]
In fact, {(2)} is due to H\"older inequality and the logarithmic bound on the $L^p$ norm of $e_n$ in \eqref{eq e_n bdd}
\begin{align*}
\norm{ e_{n_1} e_{n_2} }_{L^2(B^d)} \lesssim \norm{ e_{n_1}  }_{L^{\frac{2d}{d-1} -}(B^d)} \norm{ e_{n_2} }_{L^{2d +} (B^d)} \lesssim n_2^{\frac{d-1}{2} - \frac{d}{2d} +} = n_2^{\frac{d-2}{2} +} .
\end{align*}

For {(1)}, we have that for fixed $\tau \in \N$ and fixed $n_2 \sim N_2$
\begin{align*}
\abs{z_{n_1}^{2\sigma} +z_{n_2}^{2\sigma} - \tau}\leq \frac{1}{2}  \implies z_{n_1} \in [(\tau -\frac{1}{2} - z_{n_2}^{2\sigma})^{\frac{1}{2\sigma}}  , (\tau + \frac{1}{2} - z_{n_2}^{2\sigma})^{\frac{1}{2\sigma}} ]
\end{align*}
There are at most 1 integer $z_{n_1}$ in this interval, since by convexity
\begin{align*}
(\tau + \frac{1}{2} - z_{n_2}^{2\sigma})^{\frac{1}{2\sigma}} - (\tau -\frac{1}{2} - z_{n_2}^{2\sigma})^{\frac{1}{2\sigma}}  \leq 1^{\frac{1}{2\sigma}}  =1
\end{align*}
Then
\begin{align*}
\#  \Lambda_{N_1 ,N_2, \tau} = \# \{ (n_1 ,n_2) \in \N^2 : n_1 \sim N_1 , n_2 \sim N_2 , \abs{z_{n_1}^{2\sigma} +z_{n_2}^{2\sigma} - \tau}\leq \frac{1}{2} \} \sim \mathcal{O} (N_2)
\end{align*}
We finish the proof of Claim \ref{claim bilinear1}.
\end{proof}

The estimation of  \eqref{eq bilinear2} is similar, hence omitted. 

The proof of Lemma \ref{lem bilinear} is complete now.
\end{proof}

\subsection{Nonlinear estimates}\label{subsec nonlinear}
In the local theory, we need a nonlinear estimate of the following form
\begin{align*}
\int_{\R} \int_{\Theta} u_0 u_1 u_2  u_{3} \, dx dt \leq c \norm{u_0}_{X_{\sigma}^{-s,b'} (\R \times B^d)} \prod_{j=1}^{3} \norm{u_j}_{X_{\sigma}^{s,b'} (\R \times B^d)} ,
\end{align*}
where $u_j \in \{ u, \bar{u} \}$ for $j = 1 , 2, 3$. 

To this end, we first study the nonlinear behavior of all frequency localized $u_j$'s based on the bilinear estimates that we obtained, then sum over all frequencies. 

For $j \in \{ 0, 1,2, 3\}$, let $N_j = 2^k, k \in \N$. We denote 
\begin{align*}
u_j = \sum_{N_j \leq \inner{z_n} < 2N_j} P_{z_n} u_j.
\end{align*}
Also we denote by $\underline{N} = (N_0, N_1 , N_2, N_3)$ the quadruple of $2^k$ numbers, $k \in \N$, and
\begin{align*}
L(\underline{N}) = \int_{\R \times B^d} \prod_{j=0}^{3} u_{j} \, dx dt .
\end{align*}

\begin{Lem}[Localized nonlinear estimates]\label{lem nonlinear est}
Assume that $N_1 \geq N_2 \geq N_3 $.  Then there exists $0 < b' < \frac{1}{2}$ such that one has
\begin{align}
\abs{L(\underline{N})} & \lesssim  (N_2 N_3)^{\frac{d-1}{2} + \varepsilon} \prod_{j=0}^{3} \norm{u_{j}}_{X_{\sigma}^{0, b'}} , \label{eq nonlinear est1} \\
\abs{L(\underline{N})} & \lesssim (\frac{N_1}{N_0})^2 (N_2 N_3)^{ \frac{d-1}{2} + \varepsilon} \prod_{j=0}^{3}  \norm{u_{j}}_{X_{\sigma}^{0, b'}} . \label{eq nonlinear est2}
\end{align}
\end{Lem}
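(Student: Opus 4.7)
\medskip

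My plan is to obtain the first estimate by a straightforward Cauchy--Schwarz pairing combined with the bilinear Strichartz estimates of Lemma \ref{lem bilinear} transferred to $X_\sigma^{0,b}$ via Proposition \ref{prop bilinear}, and the second estimate by first using integration by parts against the Dirichlet Laplacian to trade frequencies before invoking the same bilinear tool. The final technical step is to lower the Bourgain index from $b>\tfrac{1}{2}$ down to some $b'<\tfrac{1}{2}$ with only an $\varepsilon$-loss in the exponent.

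For the first estimate, I would pair $u_0$ with $u_2$ and $u_1$ with $u_3$, writing
\begin{align*}
|L(\underline{N})| \le \norm{u_0 u_2}_{L^2_{t,x}(\R \times B^d)} \norm{u_1 u_3}_{L^2_{t,x}(\R \times B^d)}.
\end{align*}
Since $N_1 \ge N_3$ and $N_1 \ge N_2$, Proposition \ref{prop bilinear} yields $\norm{u_1 u_3}_{L^2_{t,x}} \lesssim N_3^{(d-1)/2+\varepsilon}\norm{u_1}_{X^{0,b}_\sigma}\norm{u_3}_{X^{0,b}_\sigma}$, while $\norm{u_0 u_2}_{L^2_{t,x}} \lesssim (\min(N_0,N_2))^{(d-1)/2+\varepsilon}\norm{u_0}_{X^{0,b}_\sigma}\norm{u_2}_{X^{0,b}_\sigma} \le N_2^{(d-1)/2+\varepsilon}\norm{u_0}_{X^{0,b}_\sigma}\norm{u_2}_{X^{0,b}_\sigma}$. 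Multiplying gives \eqref{eq nonlinear est1}, modulo the passage from $b$ to $b'$.

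For the second estimate, which is only informative when $N_0 > N_1$, I would exploit the self-adjointness of the Dirichlet Laplacian: since each $e_n$ and in particular the products $u_j$ vanish on $\partial B^d$, integration by parts gives
\begin{align*}
L(\underline{N}) = \int_\R \int_{B^d} (-\Delta)^{-1} u_0 \cdot (-\Delta)(u_1 u_2 u_3)\, dx\, dt.
\end{align*}
Since $u_0$ is spectrally localized at $z_n \sim N_0$, the operator $(-\Delta)^{-1}$ gains a factor $N_0^{-2}$ in the $X^{0,b}_\sigma$ norm. Expanding the Laplacian via the Leibniz rule produces three ``pure Laplacian'' terms $(-\Delta u_i)\,u_j u_k$ and three ``gradient--gradient'' terms $\nabla u_i \cdot \nabla u_j\, u_k$; each factor $-\Delta u_i$ (resp.~$\nabla u_i$) contributes a power of $N_i^2$ (resp.~$N_i$) on its frequency-localized component, so every term is controlled in absolute value by $N_1^2$ times the original frequencies. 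Pairing again by Cauchy--Schwarz so that the derivative factors (bounded by $N_1$) land with $u_3$ or $u_2$ and applying the bilinear estimate with derivative \eqref{eq bilinear2'} of Proposition \ref{prop bilinear} yields $(N_1^2/N_0^2)\cdot N_2^{(d-1)/2+\varepsilon} N_3^{(d-1)/2+\varepsilon}\prod \norm{u_j}_{X^{0,b}_\sigma}$, which is exactly \eqref{eq nonlinear est2}.

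The main obstacle will be the descent from $b>\tfrac{1}{2}$ (required by Proposition \ref{prop bilinear}) to some $b'<\tfrac{1}{2}$ (required by the Duhamel bound Lemma \ref{lem Duhamel}). I plan to handle this by dyadically decomposing each $u_j$ in the modulation variable $\tau+z_n^{2\sigma}$: for a piece of modulation size $M$ the $X^{0,b}_\sigma$ norm at $b>\tfrac{1}{2}$ costs $M^{b-b'}$ relative to the $X^{0,b'}_\sigma$ norm, while $M^{-(b-b')}$ is absorbed by a geometric sum after one extracts an $\varepsilon$-slack from the $N_j^{(d-1)/2+\varepsilon}$ factors above. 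A second, mildly technical point is the treatment of the gradient--gradient cross terms, for which one should redistribute the derivative so that Lemma \ref{lem bilinear}(2) can be applied with the derivative on the higher-frequency factor; this incurs no extra loss because $N_1 \ge N_2 \ge N_3$ makes $N_1$ the natural recipient of any stray derivative.
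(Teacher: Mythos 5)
Your overall skeleton matches the paper: the first bound comes from the Cauchy--Schwarz pairing $\norm{u_0u_2}_{L^2_{t,x}}\norm{u_1u_3}_{L^2_{t,x}}$ plus the transferred bilinear estimates, and the second from rewriting $u_0$ through the (inverse) Dirichlet Laplacian, moving $\Delta$ onto $u_1u_2u_3$ by Green's theorem, and treating the dominant terms $(\Delta u_1)u_2u_3$ and $(\nabla u_1)\cdot(\nabla u_2)u_3$ with the derivative bilinear estimate; this is exactly the paper's $T,V$ trick, and your bookkeeping $N_1N_2/N_0^2\le (N_1/N_0)^2$ is fine.

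The genuine gap is in the final step, the descent from the index $b_0>\tfrac12$ (forced by Proposition \ref{prop bilinear}) to some $b'<\tfrac12$. Your proposed mechanism --- dyadic decomposition in the modulation variable, paying $M^{b-b'}$ per piece and absorbing it ``by a geometric sum after extracting an $\varepsilon$-slack from the $N_j^{\frac{d-1}{2}+\varepsilon}$ factors'' --- does not close as stated: the factor $M^{b-b'}$ \emph{grows} in $M$, and the $\varepsilon$-room you have is in the spatial frequencies $N_j$, which place no upper bound whatsoever on the modulation size of a given piece. Without a second estimate that gains negative powers of the modulation (equivalently, an estimate valid at a small $b$-index, with a worse frequency dependence, to use on high-modulation pieces), the sum over modulation scales simply diverges. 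The paper supplies precisely this missing ingredient: it proves a companion bound via H\"older, Bernstein and the embedding $X^{0,\frac14}_{\sigma}\hookrightarrow L^4_tL^2_x$, namely $\abs{L(\underline{N})}\lesssim (N_2N_3)^{\frac d2}\prod_j\norm{u_j}_{X^{0,\frac14}_\sigma}$ (and its analogue with the extra $N_1N_2$ for the gradient--gradient term), and then interpolates in the $(s,b)$ indices between this $b=\tfrac14$ bound and the $b_0>\tfrac12$ bilinear bound. Interpolation is what simultaneously lowers the index below $\tfrac12$ and keeps the exponent at $\frac{d-1}{2}+\varepsilon'$; it is also the step that makes the Duhamel gain $\delta^{1+2b-4b'}$ in the local theory available. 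To repair your argument you should either adopt this interpolation, or make your modulation splitting honest by proving a high-modulation crude bound (Cauchy--Schwarz in the high-modulation factor plus $L^4_tL^2_x\times L^4_tL^\infty_x$ H\"older on the rest) and balancing it against the bilinear bound on the low-modulation range --- which amounts to the same two ingredients.
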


\begin{Rmk}
Lemma \ref{lem nonlinear est} will play an important role in the local theory. Moreover, the first estimate \eqref{eq nonlinear est1} will be used in the case $ N_0 \leq c N_1$ while the second one will be used in the case $N_0 \geq c N_1 $. The proof of Lemma \ref{lem nonlinear est} is adapted from Lemma 2.5 in \cite{an}. We briefly present the proof, since a treatment in this proof will be used in the next section.
\end{Rmk}

\begin{proof}[Proof of Lemma \ref{lem nonlinear est}]
We start with \eqref{eq nonlinear est1}. 

On one hand, by H\"older inequality, Bernstein inequality and Lemma \ref{lem X property1}, we write
\begin{align*}
\abs{L(\underline{N})} & \lesssim \norm{u_{0}}_{L_t^{4} L_x^2} \norm{u_{1}}_{L_t^{4} L_x^2} \norm{u_{2}}_{L_t^{4} L_x^{\infty}} \norm{u_{3}}_{L_t^{4} L_x^{\infty}}   \lesssim (N_2 N_{3})^{\frac{d}{2}} \prod_{j=0}^{3} \norm{u_{j}}_{X_{\sigma}^{0, \frac{1}{4}}} .
\end{align*}
On the other hand, we can estimate $\abs{L(\underline{N})} $ using Proposition  \ref{prop bilinear}. That is,
\begin{align*}
\abs{L(\underline{N})} & \lesssim \norm{u_{0}  u_{2} }_{L_{t,x}^2 } \norm{u_{1}  u_{3} }_{L_{t,x}^2 } \lesssim (N_2 N_3)^{\frac{d-1}{2} +\varepsilon}  \prod_{j=0}^{3} \norm{u_{j}}_{X_{\sigma}^{0, b_0}}  ,
\end{align*}
where $b_0 > \frac{1}{2}$.

Interpolation between the two estimates above implies
\begin{align*}
\abs{L(\underline{N})} & \lesssim (N_2 N_3)^{ \frac{d-1}{2} + \varepsilon'}  \prod_{j=0}^{3} \norm{u_{j}}_{X_{\sigma}^{0, b'}} ,
\end{align*}
where $b' \in (0, \frac{1}{2})$ and  $\varepsilon' $ is a small positive power after interpolation. This finishes the computation of \eqref{eq nonlinear est1}.

For \eqref{eq nonlinear est2}, we will use a trick to introduce a Laplacian operator into the integral. This is the treatment that we mentioned earlier that will be used in the next section. 

First recall Green's theorem,
\begin{align*}
\int_{B^d} \Delta f g - f \Delta g \, dx = \int_{\mathbb{S}^{d-1}} \frac{\partial f}{\partial v} g - f \frac{\partial g}{\partial v} \, d \sigma .
\end{align*}
Note that
\begin{align*}
-\Delta e_k = z_k^2 e_k ,
\end{align*}
where $ z_k^2$'s are the eigenvalues defined in \eqref{eq z_n}. Then we write
\begin{align*}
u_{0} = -\frac{\Delta}{N_0^2} \sum_{z_{n_0} \sim N_0} c_{n_0} (\frac{N_0}{z_{n_0}})^2 e_{n_0} .
\end{align*}
Define
\begin{align*}
T u_{0 } & = \sum_{z_{n_0} \sim N_0} c_{n_0} (\frac{N_0}{z_{n_0}})^2 e_{n_0} , \qquad V u_{0}  = \sum_{z_{n_0} \sim N_0} c_{n_0} (\frac{z_{n_0}}{N_0})^2 e_{n_0} .
\end{align*}
It is easy to see that for all $s$
\begin{align*}
TV  u_{0} & = VT u_{0} = u_{0} ,\\
\norm{T u_{0} }_{H_x^s} & \sim \norm{ u_{0} }_{H_x^s} \sim \norm{V u_{0} }_{H_x^s} .
\end{align*} 
Using this notation, we write
\begin{align*}
u_{0} = -\frac{\Delta}{N_0^2} T u_{0} 
\end{align*}
and
\begin{align*}
L(\underline{N}) = \frac{1}{N_0^2} \int_{\R \times B^d} T u_{0} \Delta (\prod_{j=1}^{3} u_{j}) .
\end{align*}
By the product rule and the assumption that $N_1 \geq N_2 \geq N_3$, we only need to consider the two largest cases of $\Delta ( u_1  u_{2} u_{3}) $. They are
\begin{enumerate}
\item
$(\Delta u_{1}) u_{2} u_{3}$
\item
$(\nabla u_{1}) \cdot (\nabla u_{2})  u_{3} $ .
\end{enumerate}
We denote
\begin{align*}
J_{11} (\underline{N}) & = \int_{\R \times B^d} T u_{0} (\Delta u_{1}) u_{2} u_{3} , \\
J_{12} (\underline{N}) & =\int_{\R \times B^d} T u_{0} (\nabla u_{1}) \cdot (\nabla u_{2})u_{3}  .
\end{align*}
Using $\Delta u_i = -N_i^2 V u_i$, we obtain
\begin{align*}
\frac{1}{N_0^2} \abs{J_{11} (\underline{N})} \lesssim (\frac{N_1}{N_0})^2 (N_2 N_3)^{\frac{d-1}{2} + \varepsilon}  \prod_{j=0}^{3} \norm{u_{j}}_{X_{\sigma}^{0, b'}} .
\end{align*}
Now for $\abs{J_{12} (\underline{N})} $, we estimate it in a similar fashion that we did in \eqref{eq nonlinear est1}. On one hand, by H\"older inequality, Bernstein inequality and Lemma \ref{lem X property1}, we have
\begin{align*}
\abs{J_{12} (\underline{N})} & \lesssim N_1 N_2 (N_2 N_{3})^{\frac{d}{2}} \prod_{j=0}^{3} \norm{u_{j}}_{X_{\sigma}^{0, \frac{1}{4}}} .
\end{align*}
On the other hand, using Proposition \ref{prop bilinear}, we get
\begin{align*}
\abs{J_{12} (\underline{N})} & \lesssim \norm{\nabla u_{1}  u_{3} }_{L_{t,x}^2 } \norm{ \nabla u_{2} Tu_{0}  }_{L_{t,x}^2 }  \lesssim N_1 N_2 (N_2 N_3)^{\frac{d-1}{2} + \varepsilon}  \prod_{j=0}^{3} \norm{u_{j}}_{X_{\sigma}^{0, b_0}} .
\end{align*}
Interpolation between the two estimates above implies
\begin{align*}
\frac{1}{N_0^2} \abs{J_{12} (\underline{N})} \lesssim  \frac{N_1 N_2}{N_0^2} (N_2 N_3)^{\frac{d-1}{2} + \varepsilon'}  \prod_{j=0}^{3} \norm{u_{j}}_{X_{\sigma}^{0, b'}}. 
\end{align*}
Similarly  $\varepsilon'$ is a small positive power after interpolation.
The proof of Lemma \ref{lem nonlinear est} is complete.
\end{proof}

\begin{Prop}[Nonlinear estimates]\label{prop nonlinear est}
For $\sigma \in [\frac{1}{2} ,1]$, $s > s_{l}(\sigma)$ as in \eqref{eq s*}, there exist $b, b' \in \R$ satisfying
\begin{align*}
0 < b' < \frac{1}{2} < b, \quad b + b' < 1 ,
\end{align*}
such that for every triple $(u_1, u_2 , u_3)$ in $X_{\sigma}^{s,b} (\R \times B^d)$,
\begin{align*}
\norm{\abs{u}^2 u}_{X_{\sigma}^{s, -b'} (\R \times B^d)} \lesssim   \prod_{j=1}^3 \norm{u}_{X_{\sigma}^{s,b} (\R \times B^d)}^3  .
\end{align*} 
\end{Prop}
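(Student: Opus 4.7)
The plan is to reduce the trilinear $X_\sigma^{s,b}$-estimate to the localized dyadic bounds of Lemma \ref{lem nonlinear est} through duality and Littlewood--Paley decomposition. Using the duality $(X_\sigma^{s,-b'})^{*} = X_\sigma^{-s,b'}$, the claim is equivalent to
\begin{equation*}
\left| \int_\R \int_{B^d} u_0 u_1 u_2 u_3 \, dx \, dt \right| \lesssim \|u_0\|_{X_\sigma^{-s,b'}} \prod_{j=1}^3 \|u_j\|_{X_\sigma^{s,b}},
\end{equation*}
where $u_j \in \{u,\bar{u}\}$ and $u_0$ is an arbitrary test function. I would decompose each $u_j$ into Littlewood--Paley pieces $P_{N_j} u_j$ at dyadic scales $N_j$, order the inputs $N_1 \geq N_2 \geq N_3$ after relabelling, and recognize the spatio-temporal integral as the quantity $L(\underline{N})$ bounded in Lemma \ref{lem nonlinear est}.

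Next I would split the dyadic sum into two regimes depending on the output frequency $N_0$. In the low-output regime $N_0 \leq cN_1$, I apply \eqref{eq nonlinear est1} and convert the $X_\sigma^{0,b'}$ weights appearing in the lemma to the prescribed regularities via $\|P_{N} u_0\|_{X_\sigma^{0,b'}} \sim N^{s} \|P_{N} u_0\|_{X_\sigma^{-s,b'}}$ and $\|P_{N} u_j\|_{X_\sigma^{0,b'}} \sim N^{-s} \|P_{N} u_j\|_{X_\sigma^{s,b'}}$ for the inputs; each dyadic piece then acquires the coefficient
\begin{equation*}
\bigl(N_0/N_1\bigr)^{s} (N_2 N_3)^{\frac{d-1}{2} - s + \varepsilon}.
\end{equation*}
Since $s > s_l(\sigma)$ (using Lemma \ref{lem bilinear'} in place of Lemma \ref{lem bilinear} when $\sigma = 1$, which replaces $(d-1)/2$ by $(d-2)/2$), the exponent on $N_2 N_3$ is strictly negative, and a standard Cauchy--Schwarz argument over $N_2 \geq N_3$ together with the constraint $N_0 \lesssim N_1$ completes the summation, after using the embedding $X_\sigma^{s,b} \hookrightarrow X_\sigma^{s,b'}$ to recover the required right-hand side. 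In the high-output regime $N_0 \geq cN_1$, the loss $(N_0/N_1)^{s}$ from the same normalization is absorbed by the extra factor $(N_1/N_0)^2$ afforded by \eqref{eq nonlinear est2}; when $s$ is large, the inversion trick $u_0 = -\Delta T u_0 / N_0^{2}$ used in the proof of Lemma \ref{lem nonlinear est} can be iterated to extract an arbitrarily high power $(N_1/N_0)^{2m}$, at the cost of distributing additional Laplacians among the high-frequency factors.

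The pair $(b, b')$ is fixed by the interpolation between the trivial embedding $X_\sigma^{0, 1/4} \hookrightarrow L_{t}^4 L_x^2$ and the $X_\sigma^{0, b_0}$-bilinear bound of Proposition \ref{prop bilinear} with $b_0 > 1/2$: choosing $b' < 1/2$ close to $1/2$ and $b > 1/2$ close to $1/2$ with $b + b' < 1$ preserves a strictly positive $\varepsilon'$-gain after interpolation, which is precisely what drives the dyadic summation. The main obstacle I foresee is the bookkeeping in the high-output regime, where one must verify that the iterated Green's identity produces no obstructive boundary contribution (thanks to the Dirichlet condition satisfied by every $e_n$) and that the resulting distribution of Laplacians among the remaining factors still interpolates compatibly with the bilinear estimate \eqref{eq bilinear2}; the rest is a routine orchestration of Lemma \ref{lem nonlinear est}.
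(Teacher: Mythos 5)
Your argument is essentially the paper's own proof: dualize against $u_0\in X_\sigma^{-s,b'}$, decompose dyadically, split into the regimes $N_0\leq cN_1$ and $N_0\geq cN_1$, apply the two localized estimates \eqref{eq nonlinear est1} and \eqref{eq nonlinear est2} of Lemma \ref{lem nonlinear est} after converting the $X_\sigma^{0,b'}$ weights into the prescribed regularities, and close the dyadic sums by Cauchy--Schwarz together with the embedding $X_{\sigma}^{s,b}\subset X_{\sigma}^{s,b'}$. Your two extra remarks --- using the $\sigma=1$ bilinear bound of Lemma \ref{lem bilinear'} to reach $s>s_l(1)=\frac{d-2}{2}$, and iterating the $-\Delta/N_0^2$ Green's-formula trick to gain $(N_1/N_0)^{2m}$ when $s\geq 2$ in the high-output regime --- are sensible completions of details the paper leaves implicit, but they do not change the route.
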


\begin{proof}[Proof of Proposition \ref{prop nonlinear est}]
Based on Lemma \ref{lem nonlinear est}, we only need to consider $L= \sum_{\underline{N}} L(\underline{N})$. By symmetry, we can reduce the sum into the following two cases:
\begin{enumerate}
\item
$ N_0 \leq c N_1$
\item
$ N_0 \geq c N_1$.
\end{enumerate}

Case 1: $ N_0 \leq c N_1$.

Using Lemma \ref{lem nonlinear est} and Cauchy–Schwarz inequality, we obtain
\begin{align*}
\abs{\sum L(\underline{N})} & \lesssim \sum_{ \substack{ N_1 \geq N_2 \geq N_3 \\ N_0 \leq c N_1}}   (N_2 N_3)^{\frac{d-1}{2} + \varepsilon}  \norm{v_{0}}_{X_{\sigma}^{0, b'}}  \prod_{j=1}^{3} \norm{u_{j}}_{X_{\sigma}^{0, b'}} \\
& \lesssim \sum_{ \substack{ N_1 \geq N_2 \geq N_3  \\ N_0 \leq c N_1}}   \frac{N_0^s}{N_1^s}  (N_2 N_3)^{-s + \frac{d-1}{2}  + \varepsilon}  \norm{v_{0}}_{X_{\sigma}^{-s,b'}} \prod_{j=1}^{2q+1} \norm{u_{j}}_{X_{\sigma}^{s, b'}} \\
& \lesssim \norm{v}_{X_{\sigma}^{-s,b'}}  \norm{u}_{X_{\sigma}^{s, b'}}^3   ,
\end{align*}
where $s > s_l(\sigma) $. For the last inequality, we sum from the smallest index $N_{3}$ to the largest one  using Cauchy–Schwarz inequality. Then by the embedding $X_{\sigma}^{s,b} \subset  X_{\sigma}^{s , b' }  $ and duality, we have the estimate in Case 1.

Case 2: $ N_0 \geq c N_1$.

Similarly, by Lemma \ref{lem nonlinear est} and Cauchy–Schwarz inequality again, we have
\begin{align*}
\abs{\sum L(\underline{N})} & \lesssim \sum_{ \substack{ N_1 \geq N_2 \geq N_3  \\ N_0 \geq c N_1}}    (\frac{N_1}{N_0})^2 (N_2 N_3)^{\frac{d-1}{2} + \varepsilon} \norm{v_0}_{X_{\sigma}^{0, b'}}   \prod_{j=1}^{3} \norm{u_{j}}_{X_{\sigma}^{0, b'}} \\
& \lesssim \sum_{ \substack{ N_1 \geq N_2 \geq N_3  \\ N_0 \geq c N_1}}   (\frac{N_1}{N_0})^{2-s} \frac{N_0^s}{N_1^s}  (N_2 N_3)^{-s + \frac{d-1}{2} + \varepsilon}  \norm{v_{0}}_{X_{\sigma}^{-s,b'}} \prod_{j=1}^{3} \norm{u_{j}}_{X_{\sigma}^{s, b'}} \\
& \lesssim \norm{v_0}_{X_{\sigma}^{-s,b'}}  \norm{u}_{X_{\sigma}^{s, b'}}^3  ,
\end{align*}
where $s >  s_l(\sigma)$. Then by the embedding  $X_{\sigma}^{s,b} \subset  X_{\sigma}^{s , b' }  $  and duality, we have the estimate in Case 2.
The proof of Proposition \ref{prop nonlinear est} is complete.
\end{proof}

\subsection{Local well-posedness}
We now turn to the proof of the main result in this section:  local well-posedness.
\begin{proof}[Proof of Theorem \ref{thm LWP}]
Let $u_0 \in H^{s}$. We first define a map
\begin{align*}
F (u)(t) : = S_{\sigma}(t)  u_0  - \i \int_0^t S_\sigma(t-s)  \abs{u}^{2} u \, ds.
\end{align*}
We prove  this locally well-posedness theory using a standard fixed point argument. 
Let $R_0>0$ and $u_0 \in H^s (B^d)$ with $\norm{u_0}_{H^s} \leq R_0$. We show that there exists $R>0 $ and $0 < \delta= \delta(R_0) <1 $ such that $F$ is a contraction mapping from $B(0, R) \subset X_{\sigma, \delta}^{s,b} (B^d)$ onto itself.

Define $R = 2 c_0 R_0$. 
\begin{enumerate}
\item $F$ is a self map from $B(0, R) \subset X_{\sigma,\delta}^{s,b} (B^d)$ onto itself. 

For $\delta< 1$, by Lemma \ref{lem Duhamel}, Proposition \ref{prop nonlinear est} and Sobolev embedding
\begin{align*}
\norm{F (u)}_{X_{\sigma,\delta}^{s,b} (B^d)} & \leq c_0 \norm{u_0}_{H^s} + c_1 \delta^{1- b - b'} \norm{\abs{u}^{2} u}_{X_{\sigma,\delta}^{s, -b'}(B^d)} \\
& \leq  c_0 \norm{u_0}_{H^s} + c_2 \delta^{1- b - b'} \delta^{3(b-b')} \norm{u}_{X_{\sigma,\delta}^{s, b}(B^d)}^{3} \\
& \leq c_0 \norm{u_0}_{H^s} + c_2 \delta^{1 +2 b - 4b'} R^{3}.
\end{align*}
Taking $\delta_1 =  (\frac{c_0}{c_2 R^{2}})^{\frac{1}{1 +2 b - 4b'}} <1$ such that $ c_2 \delta_1^{1 +2 b - 4b'} R^{3} = c_0 R$, we see that $F$ is a self map.

\item $F$ is a contraction mapping.

Similarly, by Lemma \ref{lem Duhamel} and Proposition \ref{prop nonlinear est}
\begin{align}\label{eq LWP1}
\norm{F (u) - F (v)}_{X_{\sigma,\delta}^{s,b}(B^d)} & \leq c_3 \delta^{1 +2 b - 4b'} (\norm{u}_{X_{\sigma,\delta}^{s, b}(B^d)}^{2} + \norm{v}_{X_{\sigma,\delta}^{s, b}(B^d)}^{2}) \norm{u-v}_{X_{\sigma,\delta}^{s, b}(B^d)} \notag \\
& \leq c_4 \delta^{1 +2 b - 4b'} R^{2} \norm{u-v}_{X_{\sigma,\delta}^{s, b}(B^d)} .  
\end{align}
Taking $\delta_2 = (\frac{1}{2c_4 R^{2}})^{\frac{1}{1 +2 b - 4b'}}$ such that $c_4 \delta_2^{1 +2 b - 4b'} R^{2} = \frac{1}{2}$, we can make $F$ a contraction mapping.

Now we choose 
\begin{align}\label{eq T_R}
\delta_R = \min\{\delta_1, \delta_2 \}.
\end{align}
As a consequence of the fixed point argument, we have
\begin{align*}
\norm{u}_{X_{\sigma,\delta}^{s,b}(B^d)} \leq 2 c_0 \norm{u_0}_{H^s} . 
\end{align*}

\item Stability.

If $u$ and $v$ are two solutions to \eqref{fNLS} with initial data $u(0) =u_0$ and $v(0) = v_0$ respectively. Then by \eqref{eq LWP1} and the choice of $\delta_R$ \eqref{eq T_R}
\begin{align*}
\norm{u-v}_{X_{\sigma,\delta}^{s,b}(B^d)} & \leq c_0 \norm{u_0 -  v_0}_{ H^s} + c_4 \delta^{1 +2 b - 4b'} R^{2}  \norm{u-v}_{X_{\sigma,\delta}^{s, b}(B^d)} ,
\end{align*}
which implies
\begin{align*}
\norm{u-v}_{X_{\sigma,\delta}^{s, b}(B^d)} \leq c \norm{u_0 -  v_0}_{H^s} . 
\end{align*}
\end{enumerate}
Now the proof of Theorem \ref{thm LWP} is finished. 
\end{proof}

\begin{Rmk}
For finite $N$ let us denote $\Pi^N L^2$ by $E_N$ and the local flow on $E_N = \Pi^N L^2$ by $\phi_t^N$. We verify easily that the local existence time obtained in Theorem \ref{thm LWP} is valid for the Galerkin approximations for the same radius $R$ of the balls. We set $\phi_t$ to be the local flow constructed in Theorem \ref{thm LWP} in $X_{\sigma}^{s,b}$. 
\end{Rmk}

\begin{Prop}[Convergence of Galerkin projections to FNLS]\label{Prop:CvgLocSol}
Let $s>s_l(\sigma)$ as in \eqref{eq s*}, $u_0 \in H^{s}$ and $(u_0^N) $ be a sequence that converges to $u_0 $ in $H^{s}$, where $u_0^N \in E_N$. Then for any $r\in(s_l(\sigma),s)$
\begin{align*}
\norm{\phi_t^N u_0^N - \phi_t u_0}_{X^{r,b}_{\sigma, T}} \to 0 \quad \text{ as } N \to \infty.
\end{align*} 
\end{Prop}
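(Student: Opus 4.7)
Set $u^N := \phi_t^N u_0^N$ and $u := \phi_t u_0$. Since $u_0^N \to u_0$ in $H^s$, the sequence $(\|u_0^N\|_{H^s})$ is bounded, so I pick $R$ larger than $2c_0 \sup_N \|u_0^N\|_{H^s}$ and $\delta := \delta(R)$ from Theorem \ref{thm LWP}. By the remark following Theorem \ref{thm LWP}, the Galerkin flow enjoys the same local estimate, so for $N$ large
\begin{equation*}
\|u\|_{X^{s,b}_{\sigma,\delta}} \leq R, \qquad \|u^N\|_{X^{s,b}_{\sigma,\delta}} \leq R.
\end{equation*}
Fix $r \in (s_l(\sigma), s)$ and let $w^N := u^N - u$. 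The strategy is a Duhamel comparison at level $r$, with the $\Pi^N$-projection error treated by the regularity gap $s - r > 0$.

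\textbf{Duhamel estimate.} Splitting
\begin{equation*}
\Pi^N |u^N|^2 u^N - |u|^2 u \;=\; \Pi^N\bigl(|u^N|^2 u^N - |u|^2 u\bigr) \;+\; (\Pi^N - I)|u|^2 u,
\end{equation*}
and writing Duhamel, Lemmas \ref{lem X property2}, \ref{lem Duhamel} together with Proposition \ref{prop nonlinear est} applied at regularity $r$ (valid since $r > s_l(\sigma)$) and the contractivity of $\Pi^N$ on $X_\sigma^{r,-b'}$ yield
\begin{equation*}
\|w^N\|_{X^{r,b}_{\sigma,\delta}} \leq c\,\|u_0^N - u_0\|_{H^r} + C\,\delta^{1+2b-4b'}\bigl(\|u^N\|^2_{X^{r,b}_{\sigma,\delta}}+\|u\|^2_{X^{r,b}_{\sigma,\delta}}\bigr)\|w^N\|_{X^{r,b}_{\sigma,\delta}} + C\,\delta^{1+2b-4b'}\|(\Pi^N - I)|u|^2 u\|_{X^{r,-b'}_{\sigma,\delta}}.
\end{equation*}
By the choice of $R$ and $\delta$ the factor in front of the middle term is at most $\frac12$ (the same algebra as in the fixed-point step of Theorem \ref{thm LWP}), so we can absorb that term into the left hand side.

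\textbf{Projection error.} Proposition \ref{prop nonlinear est} applied at regularity $s$ gives $|u|^2 u \in X^{s,-b'}_{\sigma,\delta}$ with norm $\lesssim \|u\|^3_{X^{s,b}_{\sigma,\delta}} \leq R^3$. Expanding an extension $f$ of $|u|^2 u$ in the eigenbasis $(e_n)$ as $f = \sum c_n(t) e_n$,
\begin{equation*}
\|(\Pi^N - I)f\|^2_{X^{r,-b'}_\sigma} = \sum_{n > N} \frac{\langle z_n\rangle^{2r}}{\langle z_n\rangle^{2s}} \bigl\|\langle \tau + z_n^{2\sigma}\rangle^{-b'}\langle z_n\rangle^{s}\widehat{c_n}(\tau)\bigr\|^2_{L^2_\tau} \leq \langle z_{N+1}\rangle^{-2(s-r)} \|f\|^2_{X^{s,-b'}_\sigma},
\end{equation*}
which tends to $0$ as $N \to \infty$ since $s > r$ and $z_n \sim n$ by \eqref{eq z_n}. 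Taking the infimum over extensions gives $\|(\Pi^N - I)|u|^2 u\|_{X^{r,-b'}_{\sigma,\delta}} \to 0$. Combined with $\|u_0^N - u_0\|_{H^r} \leq \|u_0^N - u_0\|_{H^s} \to 0$, this proves the claim on $[-\delta,\delta]$.

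\textbf{Main obstacle and extension.} The essential point is that the projection error is estimated in the rougher norm $X_\sigma^{r,-b'}$, exploiting strictly that $r < s$ so that the spectral tail in \eqref{eq z_n} gives the decay $\langle z_{N+1}\rangle^{-(s-r)}$; the argument would collapse at $r = s$. To reach an arbitrary time $T$ inside the existence interval of $u$, I would iterate the above on successive intervals of length $\delta$: the $H^s$-bound on $u^N$ remains uniformly bounded on $[-\delta,\delta]$ by $R$, and at $t = \delta$ one restarts with new data $u^N(\delta)$ that is bounded in $H^s$ (hence the local existence time stays bounded below), and convergent in $H^r$ to $u(\delta)$. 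The iteration is closed by a standard continuity-in-time bootstrap that exploits the uniform bound $\|u\|_{L^\infty_{[0,T]} H^s} < \infty$ inherited from the local theory for $u$.
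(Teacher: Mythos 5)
Your proof is correct and follows essentially the same route as the paper: a Duhamel comparison with the decomposition $\Pi^N(|u^N|^2u^N-|u|^2u)+(\Pi^N-I)|u|^2u$, the nonlinear estimate of Proposition \ref{prop nonlinear est} together with Lemma \ref{lem Duhamel} to absorb the Lipschitz term via the choice of the local time, and the spectral tail decay (using $r<s$ and $z_n\sim n$) to kill the projection error. Your explicit computation of the tail bound in the $X^{r,-b'}_\sigma$ norm and the iteration to a general time $T$ are just slightly more detailed versions of the paper's argument, not a different approach.
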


\begin{proof}[Proof of Proposition \ref{Prop:CvgLocSol}]
Since $u_0^N \to u_0$, there exists an $R$ such that $u_0^N \in B_R(H^{\gamma})$. Take the same $T_R$ as in \eqref{eq T_R}. Set $w_0^N = u_0^N - u_0$ and $w^N = \phi_t^N u_0^N - \phi_t u_0$, then we write
\begin{align*}
w^N & = S_\sigma(t) w_0^N - \i \int_0^t  S_\sigma(t-s) (\Pi^N \abs{\phi_t^N u_0^N}^{2}\phi_t^N u_0^N - \abs{\phi_t u_0}^{2}\phi_t u_0 ) \, ds\\
& = S_\sigma(t) w_0^N -  \i \int_0^t  S_\sigma(t-s) \Pi^N \parenthese{ \abs{\phi_t^N u_0^N}^{2}\phi_t^N u_0^N  - \abs{\phi_t u_0}^{2}\phi_t u_0} - (1-\Pi^N ) (\abs{\phi_t u_0}^{2}\phi_t u_0 ) \, ds .
\end{align*}
Using H\"older inequality, Lemma \ref{lem Duhamel}, Proposition \ref{prop nonlinear est} and the choice of $T_R$ as in \eqref{eq T_R}, we have
\begin{align*}
\norm{w^N}_{X^{r,b}_{\sigma , T}} & \leq \norm{w_0^N}_{H^s} + C \norm{w^N}_{X^{r,b}_{\sigma , T}} (\norm{u^N}_{X^{r,b}_{\sigma , T}}^{2} + \norm{u}_{X^{r,b}_{\sigma , T}}^{2} ) +   \norm{(1-\Pi^N) \abs{u}^{2} u}_{X^{r,b}_{\sigma , T}} \\
& \leq \norm{w_0^N}_{H^s} + C\norm{w^N}_{X^{r,b}_{\sigma , T}} (\norm{u^N}_{X^{r,b}_{\sigma , T}}^{2} + \norm{u}_{X^{r,b}_{\sigma , T}}^{2} ) + C_1  z_N^{\frac{r-s}{2}} \norm{u}_{X^{s,b}_{\sigma , T}}^{3} .
\end{align*}
Therefore,
\begin{align*}
\norm{w^N}_{X^{r,b}_{\sigma , T}} & \leq \norm{w_0^N}_{H^s} + \frac{1}{2} \norm{w^N}_{X^{r,b}_{\sigma , T}} + C_2 z_N^{\frac{\sigma-s}{2}}\\
\norm{w^N}_{X^{r,b}_{\sigma , T}} & \leq 2  \norm{w_0^N}_{H^s} + 2C_2 z_N^{\frac{r-s}{2}}.
\end{align*}
Recall that $z_N^2 \sim N^2$, then 
\begin{align*}
\norm{w^N}_{X^{r,b}_{\sigma , T}} \to 0 \quad \text{ as } N \to \infty.
\end{align*}
Hence the proof of Proposition \ref{Prop:CvgLocSol} is finished.
\end{proof}
\begin{Rmk}
In Section \ref{Sect:Pr as GWP 1}, we fulfilled Assumptions \ref{Ass:2nd:CL} to \ref{Ass:unif:conv} for any $\sigma\in (0,1]$. However, Theorem \ref{thm LWP} covers only the range $\sigma\in [\frac{1}{2},1]$ and the given regularities. 
The case $\sigma\in(0,\frac{1}{2})$ was not treated in Theorem \ref{thm LWP} because of the convexity needed in the proof of Claim \ref{claim bilinear1}. Nevertheless, in smooth regularities $s>\frac{d}{2}$ a same local well-posedness as in Theorem \ref{thm LWP} for $\sigma\in(0,\frac{1}{2})$ holds true. We will not present the details of such easy argument. We also readily have  the convergence established in Proposition \eqref{Prop:CvgLocSol} for such values of $\sigma$. This ends the proof of Theorem \ref{asGWP:1}.
\end{Rmk}

\section{Proof of Theorem \ref{asGWP:2}}\label{Sect:Pr as GWP2}
In this section we present the proof of Theorem \ref{asGWP:2}. We consider then $\sigma\in (0,1]$ and $\max(\sigma,\frac{1}{2})\leq s\leq 1+\sigma$, or $\sigma\in(\frac{1}{2},1]$ and $s\in(0,\sigma]$, with $d\geq 2$.
Recall that from \eqref{Est:Ener:low}
\begin{align}
\int_{L^2}\|u\|_{H^s}^2+\||u|^2\|_{\dot{H}^{\sigma}}^2+\||u|^2u\|_{L^2}^2\mu^N(du)\leq C,\label{Est:Recap}
\end{align}
where $C$ does not depend on $N$.
\subsection{Existence of global solutions and invariance of the measure}\label{Subsection:Prob}
For an arbitrary positive integer $k$, let us consider the spaces
\begin{align}
X_k &=L^2([0,k],H^s)\cap(H^1([0,k],H^{s-2\sigma})+H^1([0,k],L^2))=:X_k^1+X_k^2;\\
Y_k &= L^2([0,k],H^{s-})\cap  C([0,k],H^{(s-2\sigma)-}).
\end{align}
Combining standard embedding results we obtain the  compact embedding $X_k\subset\subset Y_k$.

Denote by $\nu^N_k$ the laws of the processes $(u^N(t))_{t\in [0,k)}$ that are seen as random variables valued in $C([0,k],H^{(s-2\sigma)-})$. We see, using the invariance, the relation between $\mu^N$ and $\nu^N_k$: 
\begin{align}
\mu^N =\nu^N_k\Big|_{t=t_0}\quad \text{for any $t_0\in [0,k]$}.\label{Ms:Rest:ppt:N}
\end{align}

\begin{Prop}\label{Prop:ST:bound}
We have the estimate
\begin{align}
\int_{X_k}\|u\|_{X_k}^2\nu^N_k(du)\leq C,
\end{align}
where $C$ does not depend on $N$.
\end{Prop}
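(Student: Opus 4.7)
The plan is to bound the two summands arising from $X_k=L^2([0,k],H^s)\cap(X_k^1+X_k^2)$ separately, using the stationarity \eqref{Ms:Rest:ppt:N} of the time-marginal of $\nu^N_k$ together with the uniform moment bound \eqref{Est:Recap}.

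First I handle the $L^2([0,k],H^s)$ part. Since $\nu^N_k$ is the law of a stationary process whose marginal at every time is $\mu^N$, Fubini gives
\begin{align*}
\int_{X_k}\int_0^k\|u(t)\|_{H^s}^2\,dt\,\nu^N_k(du)=\int_0^k\int_{L^2}\|v\|_{H^s}^2\,\mu^N(dv)\,dt\leq kC,
\end{align*}
with $C$ as in \eqref{Est:Recap}, independent of $N$.

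For the time-derivative part, I use that, $\nu^N_k$-almost surely, the trajectory solves the Galerkin equation \eqref{fNLS-Glk}, i.e. $\partial_t u=-\i(-\Delta)^\sigma u-\i\Pi^N|u|^2u$. This dictates the pathwise splitting $u=u_1+u_2$ with
\begin{align*}
u_1(t)=u(0)-\i\int_0^t(-\Delta)^\sigma u(s)\,ds,\qquad u_2(t)=-\i\int_0^t\Pi^N|u|^2u(s)\,ds.
\end{align*}
The boundedness of $(-\Delta)^\sigma\colon H^s\to H^{s-2\sigma}$ and the fact that $\Pi^N$ is an $L^2$-contraction yield the pointwise bounds $\|\partial_tu_1\|_{H^{s-2\sigma}}\lesssim\|u\|_{H^s}$ and $\|\partial_tu_2\|_{L^2}\leq\||u|^2u\|_{L^2}$, together with the mild estimates $\|u_1(t)\|_{H^{s-2\sigma}}\lesssim\|u(0)\|_{H^s}+t^{1/2}\|u\|_{L^2_tH^s_x}$ and $\|u_2(t)\|_{L^2}\leq t^{1/2}\||u|^2u\|_{L^2_tL^2_x}$. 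Integrating in $t$, taking expectation under $\nu^N_k$, and reducing each time-integral to a $\mu^N$-integral by Fubini and \eqref{Ms:Rest:ppt:N} makes \eqref{Est:Recap} directly applicable to both $\|u\|_{H^s}^2$ and $\||u|^2u\|_{L^2}^2$, producing bounds uniform in $N$.

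Summing the four resulting pieces gives the claim. The only subtlety is that the decomposition of $\partial_tu$ must be read pathwise from the equation and then transferred to the Bochner norms of $X_k^1$ and $X_k^2$; after that the argument is a direct Fubini-plus-stationarity reduction to \eqref{Est:Recap}, with no new analytic input required.
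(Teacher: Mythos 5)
Your argument is correct and follows essentially the same route as the paper: split $u$ via the integral form of the Galerkin equation into a dispersive part measured in $H^1([0,k],H^{s-2\sigma})$ and a nonlinear part in $H^1([0,k],L^2)$, bound both (and the $L^2_tH^s$ part) by time integrals of $\|u\|_{H^s}^2$ and $\||u|^2u\|_{L^2}^2$, and reduce to \eqref{Est:Recap} by stationarity \eqref{Ms:Rest:ppt:N} and Fubini. Your version merely spells out the placement of $u(0)$ and the mild estimates slightly more explicitly than the paper does.
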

\begin{proof}[Proof of Proposition \ref{Prop:ST:bound}]
Let us write the fractional NLS \eqref{fNLS-Glk} in the integral form
\begin{align}
u^N(t)=u^N(0)-\i\int_0^t(-\Delta)^\sigma u^Nds-\i\int_0^t\Pi^N|u^N|^2u^Nds.
\end{align}
Now using \eqref{Est:Recap} we have that
\begin{align}
\E\|\int_0^t(-\Delta)^\sigma u^Nds\|^2_{H^1([0,k],H^{s-2\sigma})} &\leq \E\int_0^k\|u^N\|^2_{H^s}ds\leq Ck;\\
\E\|\int_0^t\Pi^N|u^N|^2u^Nds\|^2_{H^1([0,k],L^2)} &\leq \E\int_0^k\|\Pi^N|u^N|^2u^N\|^2_{L^2}ds\leq Ck.
\end{align}
We recall that,
\begin{align}
\E\int_0^k\|u^N\|_{H^s}^2ds\leq Ck.
\end{align}
Combining the above estimates we obtain Proposition \ref{Prop:ST:bound}.
\end{proof}

\begin{Prop}\label{Prop:Compact}
$u^N$ is compact in $Y_k$ $\P-$almost surely.
\end{Prop}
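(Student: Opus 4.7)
The plan is to turn the uniform moment bound of Proposition \ref{Prop:ST:bound} into almost sure relative compactness of $(u^N)$ in $Y_k$ (for a suitable subsequence, realized on a probability space via Skorokhod), using the compact embedding $X_k \subset\subset Y_k$ together with a Prokhorov-Skorokhod argument.

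First I would verify that $X_k$ embeds compactly into $Y_k$. On the radial unit ball the Rellich-Kondrachov theorem gives the compact embedding $H^s \subset\subset H^{s-}$, and $H^{s-} \hookrightarrow H^{s-2\sigma}$ continuously. Setting $Z := H^{s-2\sigma} + L^2$ (a Banach space under the sum norm), the Aubin-Lions-Simon lemma yields the compact embedding of
$$\{u \in L^2([0,k], H^s) \,:\, \partial_t u \in L^2([0,k], Z)\}$$
into $L^2([0,k], H^{s-}) \cap C([0,k], H^{(s-2\sigma)-}) = Y_k$. Writing $X_k = X_k^1 + X_k^2$ as in the definition, one sees that $X_k$ sits inside the space above, which delivers $X_k \subset\subset Y_k$.

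Second, Chebyshev applied to the bound $\int_{X_k}\|u\|_{X_k}^2\,\nu^N_k(du) \leq C$ yields, for every $\varepsilon > 0$, some $R_\varepsilon > 0$ with
$$\sup_N \P(\|u^N\|_{X_k} > R_\varepsilon) \leq \frac{C}{R_\varepsilon^2} < \varepsilon.$$
Since the closed ball of radius $R_\varepsilon$ in $X_k$ is relatively compact in $Y_k$ by the embedding just established, the family $\{\nu^N_k\}$ is tight in $\mathfrak{p}(Y_k)$. By Prokhorov a subsequence $\nu^{N_j}_k$ converges weakly, and Skorokhod's representation theorem (Theorem \ref{Thm:Skorokhod}) produces $Y_k$-valued random variables $\tilde u^{N_j}$ with the same laws, defined on a common probability space, converging almost surely in $Y_k$. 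Almost sure convergence entails almost sure relative compactness of $(\tilde u^{N_j})$ in $Y_k$, which is the content of Proposition \ref{Prop:Compact}.

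The main obstacle is the compact embedding step: one must formulate it so that the sum-type description $H^1([0,k], H^{s-2\sigma}) + H^1([0,k], L^2)$ of the time derivative in $X_k$ fits the Aubin-Lions-Simon hypotheses, which is handled by introducing the auxiliary space $Z$. Once this is in place, the rest is a standard tightness and Skorokhod argument.
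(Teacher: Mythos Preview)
Your proposal is correct and follows essentially the same route as the paper: uniform $X_k$-bound $\Rightarrow$ tightness in $Y_k$ via the compact embedding $X_k\subset\subset Y_k$ $\Rightarrow$ Prokhorov $\Rightarrow$ Skorokhod to obtain almost sure convergence (hence relative compactness) in $Y_k$. The paper merely asserts the compact embedding as ``standard embedding results'' and then invokes Prokhorov and Skorokhod in two lines; your write-up supplies the Aubin--Lions--Simon justification (with the auxiliary space $Z=H^{s-2\sigma}+L^2$) and the Chebyshev tightness step that the paper leaves implicit.
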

\begin{proof}[Proof of Proposition \ref{Prop:Compact}]
Combining Proposition \eqref{Prop:ST:bound} and the Prokhorov theorem (Theorem \ref{Thm:Prokh}) we obtain a weak compactness of $(\nu^N_k)$.
Thanks to the Skorokhod representation theorem (see Theorem \ref{Thm:Skorokhod}), there are random variables $\tilde{u}_k^N$ and $\tilde{u}_k$, defined on a same probability space that is denoted by $(\Omega,\P)$, such that
\begin{enumerate}
\item $\tilde{u}_k^N\to \tilde{u_k}$ weakly on $X_k$,
\item $\tilde{u}_k^N$ and $\tilde{u}_k$ are distributed by $\nu_k^N$ and $\nu_k$ respectively. 
\end{enumerate} 
The proof of Proposition \ref{Prop:Compact} is finished.
\end{proof}

\begin{Prop}\label{Prop:Conv:nl}
$\Pi^N|u^N|^2u^N$ converges to $|u|^2u$ in $L^1_{t}H^{-\frac{d}{2}-1}$ as $N\to\infty$. 
\end{Prop}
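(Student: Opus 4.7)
The plan is to reduce the statement to an $L^1_t L^1_x$ convergence and then deploy a Vitali-type argument combining almost-everywhere convergence with uniform integrability. First, since the Sobolev embedding $H^{d/2+1}(B^d) \hookrightarrow L^\infty(B^d)$ yields by duality $L^1(B^d) \hookrightarrow H^{-d/2-1}(B^d)$, and since the spectral projection $\Pi^N$ has operator norm one on every $H^r(B^d)$ (by its definition via the eigenfunction expansion), it suffices to prove convergence of the Skorokhod version $\tilde u^N \to \tilde u$ in $L^1([0,k]\times B^d)$ almost surely.

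Next I decompose
\[
\Pi^N |\tilde u^N|^2 \tilde u^N - |\tilde u|^2 \tilde u = \Pi^N\bigl(|\tilde u^N|^2 \tilde u^N - |\tilde u|^2 \tilde u\bigr) + (\Pi^N - I)|\tilde u|^2 \tilde u.
\]
The second piece will be handled first: from the uniform bound $\mathbb{E}\int_0^k \|\Pi^N|\tilde u^N|^2 \tilde u^N\|_{L^2}^2 dt \leq Ck$ furnished by \eqref{Est:Recap} together with the stationarity identity \eqref{Ms:Rest:ppt:N}, combined with the pointwise convergence $|\tilde u^N|^2 \tilde u^N \to |\tilde u|^2 \tilde u$ (established below) and Fatou's lemma, I obtain $|\tilde u|^2 \tilde u \in L^2([0,k] \times B^d)$ almost surely. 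Then $(\Pi^N - I)|\tilde u|^2 \tilde u \to 0$ in $L^2_{t,x}$ by the standard strong convergence of finite-dimensional Fourier truncations, a fortiori in $L^1_t L^1_x$ and in $L^1_t H^{-d/2-1}_x$.

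For the first piece, using the $\Pi^N$-norm-one property and $L^1 \hookrightarrow H^{-d/2-1}$, it reduces to showing $|\tilde u^N|^2 \tilde u^N \to |\tilde u|^2 \tilde u$ in $L^1([0,k]\times B^d)$ almost surely, which I obtain via Vitali's theorem. The almost-everywhere convergence follows by combining the almost-sure strong convergence $\tilde u^N \to \tilde u$ in $L^2_t H^{s-}_x$ (from Proposition \ref{Prop:Compact}) with the Sobolev embedding $H^{s-\epsilon}(B^d) \hookrightarrow L^{p_0}(B^d)$ for some $p_0 > 1$, extracting a subsequence converging a.e.\ on $[0,k]\times B^d$, so that $|\tilde u^N|^2 \tilde u^N \to |\tilde u|^2 \tilde u$ a.e.\ by continuity. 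For uniform integrability in $L^1([0,k]\times B^d)$, I exploit the a.s.\ control $\||\tilde u^N|^2\|_{L^2_t \dot H^\sigma_x} \leq C$ (a consequence of $\mathbb{E}\int_0^k \mathcal{E}(\tilde u^N)\, dt \leq Ck$ and Banach--Steinhaus along the weakly convergent subsequence in $X_k$), which via $H^\sigma(B^d) \hookrightarrow L^{q}(B^d)$ bounds $\||\tilde u^N|^2\|_{L^2_t L^q_x}$, and I pair it with $\tilde u^N \in L^2_t L^{r}_x$ coming from $H^s \hookrightarrow L^r$; choosing $q,r$ so that $1/q+1/r = 1/p$ with $p>1$ yields an $L^{1+\eta}_{t,x}$ bound on $|\tilde u^N|^2 \tilde u^N$, which suffices for uniform integrability on the finite measure space.

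The hard part is the last step, closing the uniform integrability bound in the low-regularity regime $s \in (0,\sigma]$ with $\sigma \in (1/2,1]$, where the direct Sobolev embedding of $H^s(B^d)$ is too weak to place $u^N$ in $L^3$. The remedy is to rely on the auxiliary control of $\||\tilde u^N|^2\|_{\dot H^\sigma}$ (which in effect supplies a gradient estimate on the nonlinearity itself), rather than on the regularity of $\tilde u^N$ alone, and to pick the Hölder exponents $q,r$ exploiting simultaneously the two available bounds; the restriction $s+\sigma$ being bounded below by a positive quantity in each scenario of Theorem \ref{asGWP:2} ensures one can find admissible $q,r$ with $1/q+1/r<1$ so that $p>1$.
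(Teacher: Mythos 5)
Your overall reduction (the embedding $L^1_x\subset H^{-\frac d2-1}_x$, a.e.\ convergence from Proposition \ref{Prop:Compact}, Vitali, with the extra integrability coming from the control of $\||u^N|^2\|_{\dot H^\sigma}$) is the same strategy as the paper's, but the step you yourself identify as the hard part is not closed. The almost sure, uniform-in-$N$ bound $\||\tilde u^N|^2\|_{L^2_t\dot H^\sigma_x}\le C$ does not follow from the sources you cite: Banach--Steinhaus applied to the $\P$-a.s.\ weakly convergent sequence in $X_k$ bounds only the $X_k$-norms of $\tilde u^N$, and $X_k$ contains no norm of $|\tilde u^N|^2$; while the moment bound $\E\int_0^k\mathcal{E}(\tilde u^N)\,dt\le Ck$ coming from \eqref{Est:Recap} and \eqref{Ms:Rest:ppt:N} gives, via Fatou, only $\liminf_N\||\tilde u^N|^2\|_{L^2_t\dot H^\sigma_x}<\infty$ almost surely, i.e.\ boundedness along an $\omega$-dependent subsequence, not the uniform almost sure bound your Vitali argument needs for the whole family. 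The paper obtains the needed pathwise control differently: the compactness/Skorokhod construction is arranged so that $|u^N|^2$ itself converges almost surely in $L^2_tH^{\sigma-}$, whence $\||u^N|^2\|_{L^2_{t,x}}=\|u^N\|_{L^4_{t,x}}^2$ is a.s.\ bounded; some such upgrade of the expectation estimate to an almost sure statement must be supplied, and it is exactly the missing ingredient in the low-regularity regime you flag.

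Second, even granting that bound, your uniform-integrability bookkeeping does not deliver what you claim: pairing $\||\tilde u^N|^2\|_{L^2_tL^q_x}$ with $\|\tilde u^N\|_{L^2_tL^r_x}$ by H\"older yields a bound in $L^1_tL^p_x$ with $p>1$, not in $L^{1+\eta}_{t,x}$, and an $L^1_tL^p_x$ bound is not sufficient for uniform integrability on $[0,k]\times B^d$ (mass may concentrate in time). The efficient fix is the paper's: the single bound on $\||\tilde u^N|^2\|_{L^2_{t,x}}$ gives $\||\tilde u^N|^2\tilde u^N\|_{L^{4/3}_{t,x}}=\|\tilde u^N\|_{L^4_{t,x}}^3$ bounded a.s., a joint space-time exponent larger than $1$, which does imply uniform integrability, and it also lets you treat your tail term $(\Pi^N-I)|\tilde u|^2\tilde u$ through $L^{4/3}_x\subset H^{-\frac d2}_x$ together with the Bernstein gain $z_N^{-1}$, instead of through the claim $|\tilde u|^2\tilde u\in L^2_{t,x}$, whose Fatou justification would anyway require a.e.\ convergence of $\Pi^N|\tilde u^N|^2\tilde u^N$ rather than of $|\tilde u^N|^2\tilde u^N$. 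For comparison, the paper writes $\Pi^N|u^N|^2u^N=|u^N|^2u^N-\Pi^{>N}|u^N|^2u^N$, estimates the difference of the cubic terms directly in $L^1_{t,x}$ by Cauchy--Schwarz using the a.s.\ $L^4_{t,x}$ bound, and kills the tail by Bernstein with the $L^{4/3}_{t,x}$ bound, thereby avoiding both of the difficulties above.
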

\begin{enumerate}
\item $u^N$ converges to $u$ almost surely in $L^2_tH^{s-}\subset L^2_{t,x}$
\item 
$|u^N|^2$ converges almost surely in $L^2_tH^{\sigma-}\subset L^2_{t,x}$. In particular $|u^N|^2$ is bounded in $L^2_{t,x}$ almost surely, i.e $\|u^N\|_{L^4_{t,x}}$ is bounded $\P-$almost surely.
\end{enumerate}
Now let us write, by using $\Pi^N|u^N|^2u^N=|u^N|^2u^N-\Pi^{> N}|u^N|^2u^N$, a Bernstein inequality and convergence and boundedness established above,
\begin{align}
\|\Pi^N|u^N|^2u^N-|u|^2u\|_{L^1_{t}H^{-\frac{d}{2}-1}} &\leq C\||u^N|^2u^N-|u|^2u\|_{L^1_{t,x}}+\|\Pi^{> N}|u^N|^2u^N\|_{L^1_{t}H^{-\frac{d}{2}-1}}\\
&\leq \|u^N-u\|_{L^2_{t,x}}(\||u^N|^2\|_{L^2_{t,x}}+\||u|^2\|_{L^2_{t,x}})+N^{-1}\||u^N|^2u^N\|_{L^1_tH^{-\frac{d}{2}}}\\
&\leq \tilde{C}(\omega)\|u^N-u\|_{L^2_{t,x}}+C(k)N^{-1}\||u^N|^2u^N\|_{L^\frac{4}{3}_{t}L^{\frac{4}{3}}_x}\\
&\lesssim_{\omega,k} \|u^N-u\|_{L^2_{t,x}}+N^{-1}\|u^N\|_{L^4_{t,x}}^3\\
&\lesssim_{\omega,k} \|u^N-u\|_{L^2_{t,x}}+N^{-1}.
\end{align}
Combining Propositions \ref{Prop:Compact} and \ref{Prop:Conv:nl} with a diagonal argument we obtain the almost sure existence of global solutions for FNLS with $\sigma\in (0,1)$, $0< s\leq 1+\sigma$ and $d\geq 2$.

For the invariance of the law of the constructed solution $u(t)$, let us denote by $\nu$ the law of the process $u=(u(t))_{t\in\R}$. From the subsequence in the $N$-parameter that produced the limiting measure $\nu$, we can extract a subsequence, using the Prokhorov theorem and \eqref{Est:Recap}, that produces a measure $\mu$ as a weak limit point of $(\mu^N={\nu^N}{\Big|_{t=t_0}})$. Passing to the limit along this subsequence in the relation \eqref{Ms:Rest:ppt:N}, we see that $\mu =\nu\Big|_{t=t_0}$ for any $t_0\in\R$. This implies that $\mu$ is an invariant law for $u$.
\subsection{Uniqueness and Continuity}
The uniqueness argument is formulated in the following observation.
\begin{Prop}\label{prop 6.4}
The space $\Lambda= C_tL^2_x \cap L^2_tH^{\frac{1+}{2}}(B^d)$ is a uniqueness class for the fractional $NLS$ \eqref{fNLS}.
\end{Prop}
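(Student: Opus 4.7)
The plan is to set $w=u-v$, run an $L^2$ energy estimate, and close the resulting differential inequality via Gronwall, handling the singularity of the radial Sobolev embedding at the origin through an approximation argument adapted from \cite{syyu}. First I would derive the difference equation
\begin{align*}
i\partial_t w + (-\Delta)^\sigma w = |u|^2 u - |v|^2 v, \qquad w(0)=0,
\end{align*}
and expand the nonlinearity via the algebraic identity $|u|^2 u - |v|^2 v = (|u|^2+|v|^2)w + uv\,\bar w$. Testing against $w$ in $L^2$ and taking the imaginary part, so that $\langle(-\Delta)^\sigma w,w\rangle$ and the real term $\int(|u|^2+|v|^2)|w|^2\,dx$ both drop out, gives
\begin{align*}
\frac{1}{2}\frac{d}{dt}\|w(t)\|_{L^2}^2 = \im\int_{B^d} uv\,\bar w^2\,dx,
\end{align*}
and therefore
\begin{align*}
\Bigl|\frac{d}{dt}\|w(t)\|_{L^2}^2\Bigr| \lesssim \int_{B^d}(|u|^2+|v|^2)|w|^2\,dx.
\end{align*}

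Next I would exploit radiality. For $s=\tfrac12+\epsilon$ coming from $\Lambda\subset L^2_tH^{\frac12+}$, Lemma \ref{lem radSob} gives $|f(r)|\lesssim r^{s-d/2}\|f\|_{H^s}$ for radial $f$. Splitting the ball at radius $\delta$, the outer region $\{|x|\geq\delta\}$ admits the uniform bound
\begin{align*}
\int_{|x|\geq\delta}(|u|^2+|v|^2)|w|^2\,dx \lesssim \delta^{2s-d}\bigl(\|u(t)\|_{H^s}^2+\|v(t)\|_{H^s}^2\bigr)\,\|w(t)\|_{L^2}^2,
\end{align*}
which, since $\|u\|_{H^s}^2+\|v\|_{H^s}^2\in L^1_t$, is exactly the form needed for a classical Gronwall argument on $\|w\|_{L^2}^2$ with $w(0)=0$, provided the inner contribution can be made negligible.

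The inner region $\{|x|\leq\delta\}$ is the delicate step and the main obstacle, because the radial Sobolev factor $r^{2s-d}$ is not integrable against the volume element $r^{d-1}$. Here I would invoke the approximation argument of \cite{syyu}: approximate $u,v$ in $L^2_tH^s$ by radial profiles $(u_\eta,v_\eta)$ that vanish in $\{|x|\leq\eta\}$, constructed by composing a smooth radial cutoff with a projection preserving the Dirichlet boundary condition, so that for $\eta<\delta$ the inner contribution vanishes identically when $(u_\eta,v_\eta)$ replaces $(u,v)$. The substitution error is controlled by the additional regularity of the \emph{nonlinearity}, namely $|u|^2\in\dot H^\sigma$ if $\sigma>\tfrac12$ or $|u|^2\in\dot H^{s-\sigma}$ if $s-\sigma>\tfrac12$, one of which holds by the hypotheses of Theorem \ref{asGWP:2}; this is precisely the extra control invested by the dissipation operator $\mathcal{L}(u)$ of Section \ref{Sect:Pr as GWP 1}. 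Passing $\eta\to 0$ at each fixed $\delta>0$ reduces the inequality to $\|w(t)\|_{L^2}^2\lesssim\int_0^t a_\delta(s)\|w(s)\|_{L^2}^2\,ds$ with $a_\delta\in L^1_t$, and Gronwall combined with $w(0)=0$ forces $w\equiv 0$. The subtlety is that a cutoff toward the origin does not commute with $(-\Delta)^\sigma$ and breaks the Dirichlet eigenbasis, so the approximation has to be performed at the level of the trilinear integral rather than within the PDE itself.
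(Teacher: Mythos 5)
Your opening moves (the difference equation, the identity $|u|^2u-|v|^2v=(|u|^2+|v|^2)w+uv\bar w$, the $L^2$ estimate, and the radial Sobolev bound away from the origin) are consistent with the paper's ingredients, but the way you close the argument leaves a genuine gap at exactly the point you identify as delicate. The proposition is a purely deterministic statement about the class $\Lambda=C_tL^2_x\cap L^2_tH^{\frac{1}{2}+}$, yet your treatment of the region $\{|x|\le\delta\}$ invokes the controls $\||u|^2\|_{\dot H^{\sigma}}$ or $\||u|^2\|_{\dot H^{s-\sigma}}$ coming from the fluctuation--dissipation construction; these are not part of the hypothesis of Proposition \ref{prop 6.4}, so even if completed your argument would prove a different, conditional statement rather than that $\Lambda$ is a uniqueness class. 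Moreover the key estimate is never produced: you would need the inner contribution to be simultaneously $o(1)$ in the approximation parameter and of Gronwall type (an $L^1_t$ coefficient times $\|w\|_{L^2}^2$), and no such bound is given. Within $\Lambda$ alone this cannot work, because $r^{2s-d}$ is unbounded at the origin (note the obstruction is unboundedness, not non-integrability: $r^{2s-d}\,r^{d-1}=r^{2s-1}$ is integrable for $s>\frac12$), and a H\"older estimate on $\{|x|\le\delta\}$ produces $\|w\|_{L^4}^2$, which Gronwall cannot absorb. A secondary issue: differentiating $\|w\|_{L^2}^2$ and pairing with $(-\Delta)^{\sigma}w$ is not justified for solutions that are merely in $C_tL^2\cap L^2_tH^{\frac12+}$ when $\sigma$ exceeds the available regularity; the paper works instead with the Duhamel formula and the $L^2$ isometry of $S_\sigma(t)$, which requires no extra smoothness.

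The paper's proof never needs to control the origin region inside the Gronwall inequality at all, and this is the idea you are missing. Setting $\chi_\epsilon=1_{\{r\ge\epsilon\}}$, one estimates the truncated quantity $\|w\chi_\epsilon\|_{L^2}$, using Lemma \ref{lem radSob} only on $\{r\ge\epsilon\}$, so the coefficient is $\epsilon^{1+-d}\left(\|u\|_{H^s}^2+\|v\|_{H^s}^2\right)\in L^1_t$ with a constant that blows up as $\epsilon\to0$ --- harmlessly, because the two solutions share the same datum, so the Gronwall inequality has zero inhomogeneous term and yields $\|w\chi_\epsilon\|_{L^2}\equiv 0$ exactly, for each fixed $\epsilon>0$, no matter how large the constant. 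Only afterwards does one send $\epsilon\to 0$, via $\|w\|_{L^2}\le\|w\chi_\epsilon\|_{L^2}+\|w(1-\chi_\epsilon)\|_{L^2}\le\|w\|_{L^4}\|1-\chi_\epsilon\|_{L^4}\to 0$ for a.e.\ $t$, and concludes for all $t$ by the $C_tL^2$ continuity. Thus no approximation of $u,v$ by profiles vanishing near the origin, and no use of the regularity of $|u|^2$, enters the proof of this proposition; the quantities $\||u|^2\|_{\dot H^{\sigma}}$, $\||u|^2\|_{\dot H^{s-\sigma}}$ are exploited elsewhere (e.g.\ in the stability estimate of Proposition \ref{prop 6.5} and in the compactness/limit arguments), not here. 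If you reorganize your argument so that the Gronwall is run on the truncated norm rather than on $\|w\|_{L^2}^2$ over the whole ball, the inner-region difficulty disappears and the proof closes within the class $\Lambda$ as stated.
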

\begin{proof}[Proof of Proposition \ref{prop 6.4}]
Let  $u,\ v\in\Lambda$ be two solutions to \eqref{fNLS}, both starting at $u_0$. Let us write the Duhamel formulation
\begin{align}
u(t)=S_{\sigma}(t) u_0-\i\int_0^tS_{\sigma}(t-s)|u(s)|^2u(s)d\tau,
\end{align}
and the same for $v$.\\
Set $w=u-v$ and $\chi_\epsilon=1_{\{r\geq \epsilon\}}$.
\begin{align}
\|w\chi_\epsilon\|_{L^2}\lleq \int_0^t\|(|u|^2+|v|^2)w\chi_\epsilon\|_{L^2}d\tau.
\end{align}
Now using the radial Sobolev inequality \eqref{Radial:Sob}, we obtain
\begin{align}
|u(r)|^2+|v(r)|^2\lleq r^{1^+-d}(\|u\|^2_{H^s}+\|v\|^2_{H^s}) \quad r>0.
\end{align}
Hence
\begin{align}
\|w\chi_\epsilon\|_{L^2}\lleq \epsilon^{1^+-d}\int_0^t(\|u\|_{H^s}^2+\|v\|_{H^s}^2)\|w\chi_\epsilon\|_{L^2}d\tau.
\end{align}
It follows from the Gronwall that $\|w\chi_\epsilon\|_{L^2}=0$.
Now, write
\begin{align}
\|w\|_{L^2}\leq \|w\chi_\epsilon\|_{L^2}+\|w(1-\chi_\epsilon)\|_{L^2}\leq \|w\|_{L^4}\|1-\chi_\epsilon\|_{L^4}.
\end{align}
First, since $u,v\in\Lambda_s$, we have that $w^2\in L^2$ almost surely in the $t$ variable (hence $w\in L^4$ almost surely in $t$). Second, the Lebesgue dominated convergence theorem shows that $\|1-\chi_\epsilon\|_{L^4}\to 0$ as $\epsilon\to 0$. We obtain $\|w\|_{L^2}=0$ almost surely in $t$. Now by continuity in $t$, we have that $\|w\|_{L^2}=0$ for all $t$.Then the proof of Proposition \ref{prop 6.4} is finished.
\end{proof}

As for the continuity, let us present the following result:
\begin{Prop}\label{prop 6.5}
For $u_0,\ v_0\in \text{supp}(\mu)$, let $u, \ v$ be the corresponding solutions to \eqref{fNLS}. Let us set $w=u-v$ and $w_0=u_0-v_0$, we have
\begin{align}
\|w\|_{L^2}^2\leq C(\||u|^2\|_{L^2_tH^\theta},\||v|^2\|_{L^2_tH^\theta})\|w_0\|_{L^2}+(\|u\|_{L^4}^2+\|v\|_{L^4}^2)\epsilon(\|w_0\|_{L^2}),
\end{align}
where $\epsilon(\|w_0\|_{L^2})\to 0$ as $\|w_0\|_{L^2}\to 0$.
\end{Prop}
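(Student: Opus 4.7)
My plan is to extend the cutoff strategy of Proposition~\ref{prop 6.4} to a nonzero initial difference $w_0$, tracking how $\|w\|_{L^2}$ depends on $\|w_0\|_{L^2}$. Subtracting the Duhamel formulas for $u$ and $v$ and using the $L^2$-isometry of $S_\sigma(t)$ together with the pointwise bound $||u|^2u-|v|^2v|\lesssim (|u|^2+|v|^2)|w|$, I would obtain the integral inequality
\[
\|w(t)\|_{L^2}\leq\|w_0\|_{L^2}+C\int_0^t\|(|u|^2+|v|^2)w(\tau)\|_{L^2}\,d\tau,
\]
equivalently the energy identity $\tfrac{d}{dt}\|w\|_{L^2}^2\lesssim \int(|u|^2+|v|^2)|w|^2\,dx$. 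I then split the spatial integrand using the radial cutoff $\chi_\delta=\mathbf 1_{\{r\geq\delta\}}$ into a far-field and a near-origin contribution (renaming the cutoff to $\delta$ to distinguish it from the modulus $\epsilon(\cdot)$ appearing in the conclusion).

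On the far field $r\geq\delta$, I apply Lemma~\ref{lem radSob} to $|u|^2$ and $|v|^2$, which lie in $H^\theta$ for some $\theta>1/2$ thanks to the dissipation bound \eqref{Est:Ener:low} (take $\theta=\sigma$ when $\sigma>1/2$, and $\theta=s-\sigma$ otherwise), to get the pointwise bound $(|u|^2+|v|^2)(r)\leq C\delta^{\theta-d/2}(\||u|^2\|_{H^\theta}+\||v|^2\|_{H^\theta})$. Inserted in the integrand this yields a contribution linear in $\|w\|_{L^2}^2$ suitable for Gronwall. For the near-origin part $r<\delta$, I use H\"older's inequality in the form
\[
\int(|u|^2+|v|^2)|w|^2(1-\chi_\delta)\,dx\leq(\|u\|_{L^4}^2+\|v\|_{L^4}^2)\|w(1-\chi_\delta)\|_{L^4}^2,
\]
and control $\|w(1-\chi_\delta)\|_{L^4}\leq\|u(1-\chi_\delta)\|_{L^4}+\|v(1-\chi_\delta)\|_{L^4}=:\tilde\eta(\delta)^{1/2}$, which tends to zero as $\delta\to 0$ by the dominated convergence theorem, since $u,v\in L^4$ on $\supp(\mu)$ via the control on the energy $E(u)$.

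Combining the two estimates, applying Gronwall (with Cauchy--Schwarz in time to absorb $\int_0^t\||u(\tau)|^2\|_{H^\theta}\,d\tau$ into $\||u|^2\|_{L^2_tH^\theta}$), and setting $K=\||u|^2\|_{L^2_tH^\theta}+\||v|^2\|_{L^2_tH^\theta}$, I arrive at a bound of the form
\[
\|w(t)\|_{L^2}^2\leq e^{CK\delta^{\theta-d/2}}\bigl[\|w_0\|_{L^2}^2+(\|u\|_{L^4}^2+\|v\|_{L^4}^2)\tilde\eta(\delta)\bigr].
\]
Finally I would calibrate $\delta=\delta(\|w_0\|_{L^2})$ so that $\delta\to 0$ as $\|w_0\|\to 0$ at a logarithmic rate balancing the blow-up of $e^{CK\delta^{\theta-d/2}}$ against the vanishing of $\tilde\eta(\delta)$; this recasts the first bracket as $C(K)\|w_0\|_{L^2}$ and defines the modulus $\epsilon(\|w_0\|_{L^2})$ so that it tends to zero with $\|w_0\|$. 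The main obstacle is precisely this calibration: the far-field Gronwall factor grows like $\exp(\delta^{\theta-d/2})$ while $\tilde\eta(\delta)$ decays only polynomially in $\delta$, so only a logarithmic modulus of continuity of the flow survives. Producing a quantitative decay rate for $\tilde\eta(\delta)$ of the form $\delta^\alpha$ (rather than an abstract vanishing) requires slightly more than $L^4$-integrability of $u,v$, which is available $\mu$-a.s.\ on $\supp(\mu)$ from the higher moments of the invariant measure discussed in Remark~\ref{Rmk:LargeD}.
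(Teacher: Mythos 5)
Your ingredients are the right ones (radial cutoff, the radial Sobolev bound of Lemma~\ref{lem radSob} away from the origin, H\"older plus $L^4$ near the origin, and a logarithmic calibration of the cutoff radius in terms of $\|w_0\|_{L^2}$), but the way you assemble them has a genuine gap: you put the near-origin error \emph{inside} the Gronwall inhomogeneity, so in your final bound it appears as $e^{CK\delta^{\theta-d/2}}\tilde\eta(\delta)$. To make the first term of order $\|w_0\|_{L^2}$ you are forced to take $e^{CK\delta^{\theta-d/2}}$ of size roughly $\|w_0\|_{L^2}^{-1}$, i.e.\ $\delta$ only logarithmically small in $\|w_0\|_{L^2}$; but then $\tilde\eta(\delta)$, which vanishes at best polynomially in $\delta$ (hence only like a negative power of $\ln(1/\|w_0\|_{L^2})$), cannot beat the exponential prefactor. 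In fact the function $\delta\mapsto e^{CK\delta^{\theta-d/2}}\tilde\eta(\delta)$ blows up as $\delta\to0$ and is bounded below by a positive constant uniformly in $\delta$, so \emph{no} choice of $\delta(\|w_0\|_{L^2})$ makes that term tend to $0$ as $\|w_0\|_{L^2}\to0$. Your own diagnosis at the end points in the wrong direction: even a quantitative rate $\tilde\eta(\delta)\lesssim\delta^{\alpha}$ (from higher moments of $\mu$) would not help, since $e^{C\delta^{\theta-d/2}}\delta^{\alpha}\to\infty$ as $\delta\to0$; only a super-exponential vanishing of $u,v$ at the origin could save this arrangement, and that is not available.

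The paper avoids this by keeping the near-origin piece \emph{outside} the exponential: one runs the Gronwall argument only for $\|w\chi_\gamma\|_{L^2}^2$, with the radial Sobolev bound applied on $\{r\ge\gamma\}$, obtaining $\|w\chi_\gamma\|_{L^2}^2\le\|w_0\|_{L^2}^2\,e^{\gamma^{2(2\theta-d)}C\int_0^t(\|u\|_{H^\theta}^2+\|v\|_{H^\theta}^2)d\tau}$; then the exponent is split via $ab\le a^2+b^2$ into $e^{\gamma^{4(2\theta-d)}}e^{C^2(\int_0^t\cdots)^2}$, and the explicit choice $\gamma=(-\ln\|w_0\|_{L^2})^{\frac{1}{4(2\theta-d)}}$ makes $e^{\gamma^{4(2\theta-d)}}\|w_0\|_{L^2}^2=\|w_0\|_{L^2}$, so the constant in front of $\|w_0\|_{L^2}$ depends only on the $L^2_tH^\theta$ norms. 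Only at the very end does one add $\|w(1-\chi_\gamma)\|_{L^2}^2\le\|1-\chi_\gamma\|_{L^4}^2\|w\|_{L^4}^2$, where $\|1-\chi_\gamma\|_{L^4}^2\sim\gamma^{d/2}\to0$ explicitly as $\|w_0\|_{L^2}\to0$ and $\|w\|_{L^4}^2\lesssim\|u\|_{L^4}^2+\|v\|_{L^4}^2$ gives exactly the prefactor in the statement; this term is never multiplied by the Gronwall exponential, and no dominated-convergence or extra-moment input is needed. Restructure your argument along these lines (cutoff on $w$ in the quantity you propagate, exponent-splitting trick, near-origin remainder added after the Gronwall step) and your proof closes.
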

\begin{proof}[Proof of Proposition \ref{prop 6.5}]
Let us take $\theta =\frac{1}{2}+$ and set 
\begin{align}
\gamma=\begin{cases}
(-\ln\|w_0\|_{L^2})^{\frac{1}{4(2\theta-d)}} &\text{if $0<\|w_0\|_{L^2}< 1$}\\
0 &\text{if $ w_0 = 0$}.
\end{cases}
\end{align}
Since $\theta<\frac{d}{2}$, we see that $\gamma\to 0$ as $\|w_0\|_{L^2}\to 0$. So we can consider $\gamma<<1$ and take, as above, the function $\chi_\gamma =1_{\{r\geq\gamma\}}$. Using the radial Sobolev inequality \eqref{Radial:Sob}, we have
\begin{align}
\|w\chi_\gamma\|_{L^2}^2\leq \|w_0\chi_\gamma\|_{L^2}^2+C \gamma^{2(2\theta-d)}\int_0^t(\|u\|_{H^\theta}^2+\|v\|_{H^\theta}^2)\|w\chi_\gamma\|_{L^2}^2d\tau.
\end{align}
By the Gronwall inequality we obtain
\begin{align}
\|w\chi_\gamma\|_{L^2}^2\leq \|w_0\chi_\gamma\|_{L^2}^2e^{\gamma^{2(2\theta-d)}C\int_0^t(\|u\|_{H^\theta}^2+\|v\|_{H^\theta}^2)d\tau} &\leq \|w_0\chi_\gamma\|_{L^2}^2e^{\gamma^{4(2\theta-d)}}e^{C^2\left(\int_0^t\|u\|_{H^\theta}^2+\|v\|_{H^\theta}^2d\tau\right)^2}\\
&\leq e^{C^2\left(\int_0^t\|u\|_{H^\theta}^2+\|v\|_{H^\theta}^2d\tau\right)^2}\|w_0\|_{L^2}.
\end{align}
Therefore
\begin{align}
\|w\|_{L^2}^2 &\leq e^{C^2\left(\int_0^t\|u\|_{H^\theta}^2+\|v\|_{H^\theta}^2d\tau\right)^2}\|w_0\|_{L^2}+\|w(1-\chi_\gamma)\|_{L^2}^2\\
&\leq e^{C^2\left(\int_0^t\|u\|_{H^\theta}^2+\|v\|_{H^\theta}^2d\tau\right)^2}\|w_0\|_{L^2}+\|1-\chi_\gamma\|_{L^4}^2\|w\|_{L^4}^2.
\end{align}
Since $\|1-\chi_\gamma\|_{L^4}^2\to 0$ as $\|w_0\|_{L^2}\to 0$ we arrive at the result of Proposition \ref{prop 6.5}. 
\end{proof}
Notice that Remark \ref{Rmk:LargeD} applies also in the present situation. This completes the proof of Theorem \ref{asGWP:2}.
\appendix

\section{Results on probability measures and stochastic processes}

In this section $(E,\|.\|)$ is a Banach space, $C_b(E)$ denotes the space of bounded continuous functions $f:E\to\R.$ We present some useful standard results from measure theory. 

\subsection{Convergence of measures}
\begin{Def}
Let $(\mu_n)_n$ be a sequence of Borel probability measures and $\mu$ a Borel probability measure on $E$. We say that $(\mu_n)_n$ converges weakly to $\mu$ if for all $f\in C_b(E)$
\begin{align*}
\lim_{n\to\infty}\int_Ef(x)\mu_n(dx)=\int_Ef(x)\mu(dx).
\end{align*}
We write $\mu_n\rightharpoonup\mu.$
\end{Def}

\begin{Def}
A family $\Lambda$ of Borel probability measures on $E$ is said to be tight if for all $\epsilon>0$ there is a compact set $K_\epsilon\subset E$ such that for all $\mu\in \Lambda$
\begin{align*}
\mu(K_\epsilon)\geq 1-\epsilon.
\end{align*} 
\end{Def}

\begin{Thm}[Portmanteau theorem, see Theorem 11.1.1 in \cite{dudley}]\label{Thm:Portm}
Let $(\mu_n)_n$ be a sequence of probability measures and $\mu$ a probability measure on $E$, the following are equivalent:
\begin{enumerate}
\item $\mu_n\rightharpoonup\mu$,
\item for all open sets $U$, $\liminf_{n\to\infty}\mu_n(U)\geq\mu(U)$,
\item for all closed sets $F$, $\limsup_{n\to\infty}\mu_n(F)\leq\mu(F)$.
\end{enumerate}
\end{Thm}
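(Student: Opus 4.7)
The plan is to prove the cycle $(1) \Rightarrow (3) \Leftrightarrow (2) \Rightarrow (1)$. The equivalence $(2) \Leftrightarrow (3)$ is immediate by complementation: if $F$ is closed, $U = E \setminus F$ is open, and $\mu_n(U) = 1 - \mu_n(F)$ interchanges a $\liminf$ lower bound for $U$ with a $\limsup$ upper bound for $F$ (and likewise for $\mu$). So only two non-trivial implications remain.

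For $(1) \Rightarrow (3)$, I would use a Urysohn-type continuous approximation. Given a closed set $F$, set $f_k(x) = \max(0, 1 - k \cdot \mathrm{dist}(x, F))$, which lies in $C_b(E)$, takes values in $[0,1]$, equals $1$ on $F$, and decreases pointwise to $\mathbf{1}_F$ as $k \to \infty$ (using that $F$ closed implies $\mathrm{dist}(x,F) > 0$ for $x \notin F$). For each fixed $k$, $\mathbf{1}_F \leq f_k$ gives $\mu_n(F) \leq \int f_k \, d\mu_n$; taking $\limsup_n$ and applying (1) yields $\limsup_n \mu_n(F) \leq \int f_k \, d\mu$. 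Letting $k \to \infty$ and invoking dominated convergence on the right-hand side produces $\mu(F)$, completing the implication.

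For $(2) \Rightarrow (1)$ (using also the already-equivalent (3)): take $f \in C_b(E)$, and by splitting $f = f^+ - f^-$ and rescaling, reduce to $0 \leq f \leq 1$. I would then invoke the layer-cake representation
\begin{align*}
\int f \, d\mu_n = \int_0^1 \mu_n(\{f > t\}) \, dt,
\end{align*}
and the same for $\mu$. Since $\{f > t\}$ is open, Fatou's lemma combined with assumption (2) gives $\liminf_n \int f \, d\mu_n \geq \int f \, d\mu$. Applying the analogous argument with closed superlevel sets $\{f \geq t\}$ and using (3) yields the reverse $\limsup$ inequality, and hence convergence.

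The main subtlety I would expect is in the last step: the sets $\{f > t\}$ and $\{f \geq t\}$ differ precisely at the $t$'s for which $\mu(\{f = t\}) > 0$. Such $t$'s form a countable (hence Lebesgue-null) subset of $[0,1]$, so they contribute nothing to the layer-cake integral for $\mu$, and one can freely pass between open and closed superlevel sets when matching the $\liminf$ and $\limsup$ bounds. Once this observation is in place the rest of the argument is routine.
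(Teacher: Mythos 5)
Your proof is correct. Note that the paper does not prove this statement at all: it is quoted as a standard fact with a citation to Dudley (Theorem 11.1.1), so there is no internal argument to compare against, and your cycle $(1)\Rightarrow(3)\Leftrightarrow(2)\Rightarrow(1)$ --- complementation for $(2)\Leftrightarrow(3)$, the Urysohn-type approximants $f_k(x)=\max\left(0,\,1-k\,\mathrm{dist}(x,F)\right)$ for $(1)\Rightarrow(3)$, and the layer-cake/Fatou argument for $(2)\Rightarrow(1)$ --- is exactly the classical textbook proof, valid here since $E$ is a Banach (in particular metric) space. The one point to phrase more carefully is the last step: the identity $\int f\,d\nu=\int_0^1\nu(\{f\ge t\})\,dt$ has to be invoked for each $\mu_n$, not only for $\mu$; the justification is the same for every finite measure $\nu$ (the levels $t$ at which $\{f>t\}$ and $\{f\ge t\}$ have different $\nu$-measure are the countably many atoms of the pushforward of $\nu$ under $f$, a Lebesgue-null set), so this is a matter of wording rather than a gap, and your reduction to $0\le f\le 1$ is harmless because you establish two-sided convergence for each reduced piece.
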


\begin{Thm}[Prokhorov theorem, see Theorem 2.3 in \cite{daz}]\label{Thm:Prokh}
A set $\Lambda$ of Borel probability measures is relatively compact in $E$ if and only if it is tight.
\end{Thm}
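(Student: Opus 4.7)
The plan is to prove the two implications of Prokhorov's theorem separately, assuming (as the stated applications implicitly require) that $E$ is a separable Banach space so the underlying metric structure is Polish.

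For the direction ``tight implies relatively compact'', given any sequence $(\mu_n) \subset \Lambda$, I would first use tightness to choose nested compacts $K_k$ with $\mu(K_k) \geq 1 - 2^{-k}$ for every $\mu \in \Lambda$; replacing $K_k$ by $\bigcup_{j \leq k} K_j$ makes the family increasing. Each $K_k$ is a compact metric space, so the unit ball of $C(K_k)^*$ is weak-$*$ sequentially compact by Banach-Alaoglu together with metrizability of the weak-$*$ topology on a separable predual. A diagonal extraction then produces a subsequence $(\mu_{n_j})$ whose restriction to each $K_k$ converges weak-$*$ to some finite Borel measure $\mu^{(k)}$ on $K_k$, and by construction $\mu^{(k+1)}\!\restriction_{K_k} = \mu^{(k)}$. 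Setting $\mu$ to be the increasing limit on $\bigcup_k K_k$ (extended by zero outside) gives a Borel measure on $E$, and uniform tightness forces $\mu(E) = 1$. Finally, to upgrade the compact weak-$*$ convergence to weak convergence against $C_b(E)$, I would split $\int f \, d\mu_{n_j}$ into contributions on $K_k$ and on $E \setminus K_k$, use the weak-$*$ limit on the first and the bound $\|f\|_\infty \cdot 2^{-k}$ on the second, then let $k \to \infty$.

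For the direction ``relatively compact implies tight'', I would reduce to a total boundedness claim: for every $\epsilon, \delta > 0$, there exist finitely many balls $B(x_1,\delta), \ldots, B(x_N,\delta)$ whose union captures uniformly mass $\geq 1 - \epsilon$ under each $\mu \in \Lambda$. If this failed for some $(\epsilon, \delta)$, then for a countable dense sequence $(y_i)$ one could find $\mu_N \in \Lambda$ with $\mu_N\!\bigl(\bigcup_{i \leq N} B(y_i,\delta)\bigr) \leq 1 - \epsilon$. Extracting a weak limit $\mu_{N_k} \rightharpoonup \mu$ and applying the open-set inequality of Theorem \ref{Thm:Portm} would give, for each fixed $M$, that $\mu\!\bigl(\bigcup_{i \leq M} B(y_i,\delta)\bigr) \leq \liminf_k \mu_{N_k}\!\bigl(\bigcup_{i \leq M} B(y_i,\delta)\bigr) \leq 1 - \epsilon$, and density of $(y_i)$ would then force $\mu(E) \leq 1 - \epsilon$, a contradiction. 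With total boundedness on scales $1/k$ and a summable error budget $\epsilon \cdot 2^{-k}$, the intersection of finite unions of closed balls produces a totally bounded closed set, and completeness of $E$ turns it into the desired compact witness of tightness.

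The main obstacle I anticipate is the diagonal-plus-gluing step in the first direction: one must verify that the assembled object $\mu$ is a genuine probability measure rather than a sub-probability, and that pointwise weak-$*$ convergence on each $K_k$ assembles into true weak convergence against all of $C_b(E)$. Both issues are handled by the same mechanism, namely the uniform tail bound $\mu(E \setminus K_k) \leq 2^{-k}$, which simultaneously pins the total mass to $1$ and drives the cutoff error in the $C_b(E)$ pairing to zero. The remaining bookkeeping is routine provided one keeps the compacts $K_k$ nested and remembers that the limit $\mu$ is automatically Radon because its support lies in the $\sigma$-compact set $\bigcup_k K_k$.
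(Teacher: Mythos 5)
The paper does not prove this statement at all: Prokhorov's theorem is imported as a black box, cited from Theorem 2.3 of Da Prato--Zabczyk, so there is no ``paper proof'' to compare against. Your argument is the standard textbook proof (essentially Billingsley's), and both directions are sound in outline: the sufficiency direction via nested compacts, Banach--Alaoglu plus separability of $C(K_k)$ for weak-$*$ sequential compactness, a diagonal extraction, and tail control by $2^{-k}$; the necessity direction via the total-boundedness dichotomy, a countable dense set, and the open-set half of the Portmanteau theorem, exactly as in Theorem \ref{Thm:Portm}. Your standing assumption that $E$ is separable (Polish) is the right one: it is what the necessity direction genuinely needs, it is what makes relative compactness equivalent to sequential extraction of weak limits, and it matches the setting in which the paper actually invokes the theorem.

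One step is stated too strongly. You claim that ``by construction'' $\mu^{(k+1)}\big|_{K_k}=\mu^{(k)}$. This exact consistency is false in general: the weak-$*$ limit of the restrictions $\mu_{n_j}\big|_{K_k}$ need not coincide with the restriction to $K_k$ of the limit obtained on $K_{k+1}$, because mass of the $\mu_{n_j}$ living in $K_{k+1}\setminus K_k$ can accumulate on the boundary of $K_k$ (think of point masses at $1+1/n$ with $K_k=[0,1]$). What does hold, by testing against nonnegative $f\in C(K_{k+1})$ and using inner regularity on compact metric spaces, is the monotone compatibility $\mu^{(k)}(B)\leq \mu^{(k+1)}(B)$ for Borel $B\subset K_k$, together with the bound $\mu^{(k+1)}(K_{k+1})-\mu^{(k)}(K_k)\leq 2^{-k}$ coming from the uniform tightness. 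This is not a fatal gap, because the rest of your construction already uses only what survives: defining $\mu(A)=\lim_k \mu^{(k)}(A\cap K_k)$ as an increasing limit still yields a countably additive measure, the uniform tail bound still forces $\mu(E)=1$, and in the final splitting the discrepancy $\big|\int_{K_k} f\,d\mu^{(k)}-\int_E f\,d\mu\big|\leq \norm{f}_{L^\infty}\,2^{-k}$ closes the $C_b(E)$ convergence. So replace the claimed equality of restrictions by this monotone compatibility and the proof is complete; everything else, including the necessity direction, is correct as written.
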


\begin{Thm}[Skorokhod representation theorem, see Theorem 2.4 in \cite{daz}]\label{Thm:Skorokhod}
For any sequence $(\mu_n)_n$ of Borel probability measures on $E$ converging weakly to $\mu$, there is a probability space $(\Omega_0,\mathcal{F}^0,\P_0)$, and random variables $X,\ X_1,\cdots$ on $(\Omega_0,\mathcal{F}^0,\P_0)$ such that
\begin{enumerate}
\item $\mathcal{L}(X_n)=\mu_n$ and $\mathcal{L}(X)=\mu$,
\item $\lim_{n\to\infty}X_n=X$, $\P_0-$almost surely.
\end{enumerate}
Here $\mathcal{L}(Y)$ stands for the law of the random variable $Y.$
\end{Thm}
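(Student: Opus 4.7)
The plan is to follow the classical construction, handling first the real-valued case via quantile functions and then extending to the separable metric (Banach) setting through partitioning.

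First I would reduce to the base case $E = \R$ on $\Omega_0 = (0,1)$ equipped with Lebesgue measure $\P_0$. Let $F_n(t) = \mu_n((-\infty, t])$ and $F(t) = \mu((-\infty, t])$ denote the distribution functions, and define the right-continuous quantile inverses $F_n^{-1}, F^{-1} : (0,1) \to \R$ by $F^{-1}(u) = \inf\{t : F(t) \ge u\}$. Setting $X_n(u) = F_n^{-1}(u)$ and $X(u) = F^{-1}(u)$ gives, by a standard change of variables, $\mathcal{L}(X_n) = \mu_n$ and $\mathcal{L}(X) = \mu$. The weak convergence $\mu_n \rightharpoonup \mu$ is equivalent to $F_n(t) \to F(t)$ at every continuity point $t$ of $F$, and a short monotonicity argument promotes this to $X_n(u) \to X(u)$ at every point $u$ where $F^{-1}$ is continuous. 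Since the monotone function $F^{-1}$ has at most countably many discontinuities, the exceptional set has Lebesgue measure zero, yielding $X_n \to X$ almost surely.

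Second, to handle general separable $E$, I would reduce to a discrete coupling through partition refinement. For each integer $k \ge 1$, using separability of $E$, I would construct a Borel partition $\{B_i^k\}_{i \ge 1}$ of $E$ with $\operatorname{diam}(B_i^k) < 1/k$ and $\mu(\partial B_i^k) = 0$ for every $i$; the latter is possible because boundaries of metric balls of generic radius have zero $\mu$-measure, and one can also arrange that the $(k{+}1)$-st partition refines the $k$-th. Select a representative $x_i^k \in B_i^k$. Since each $B_i^k$ is a $\mu$-continuity set, the Portmanteau theorem (Theorem \ref{Thm:Portm}) gives $\mu_n(B_i^k) \to \mu(B_i^k)$ as $n \to \infty$ for every $i, k$. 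On $((0,1), \P_0)$ I would then use the real-line quantile device above to construct discrete random variables $Y_n^k$ and $Y^k$, valued in $\{x_i^k\}_{i \ge 1}$, with laws $\sum_i \mu_n(B_i^k)\, \delta_{x_i^k}$ and $\sum_i \mu(B_i^k)\, \delta_{x_i^k}$ respectively, such that $Y_n^k \to Y^k$ almost surely as $n \to \infty$. Because the partitions are nested, the sequence $(Y^k)_k$ is Cauchy for $\P_0$-a.e.\ $u$ (as $Y^k$ and $Y^{k'}$ lie in a common set of diameter $\le 1/\min(k,k')$ for almost every $u$); completeness of $E$ then yields an almost sure limit $X$, and a transfer lemma shows $\mathcal{L}(X) = \mu$. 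A diagonal extraction $n = n(k) \to \infty$, together with an independent coupling device to convert the discrete $Y_n^{k(n)}$ into exact copies $X_n$ of law $\mu_n$ staying within $2/k(n)$ of $Y_n^{k(n)}$, produces the desired sequence with $X_n \to X$ almost surely.

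The main obstacle is the consistency of the coupling across refinement levels: the discrete variables $Y_n^k$ must be produced so that the marginal laws are \emph{exactly} $\mu_n$ pushed forward to the representatives, while simultaneously ensuring (i) almost sure convergence in $n$ for each fixed $k$, (ii) the nested Cauchy property in $k$, and (iii) a choice of approximating $X_n$ of law exactly $\mu_n$, not merely an atomic approximation. The nested quantile construction together with an auxiliary uniform random variable independent of everything else, used to redistribute mass within each cell $B_i^k$ according to the conditional law $\mu_n(\cdot \mid B_i^k)$, resolves this and closes the argument.
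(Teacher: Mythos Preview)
The paper does not prove this theorem: it is stated in the appendix purely as a cited result (``see Theorem 2.4 in \cite{daz}''), with no accompanying argument. There is therefore no paper proof to compare your proposal against.

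As for the proposal itself, the first step (the real case via quantile inverses on $(0,1)$ with Lebesgue measure) is the standard and correct construction. The extension to a general separable Banach space is in the right spirit but the write-up is loose at the key point. Your phrase ``a diagonal extraction $n=n(k)\to\infty$'' is misleading: you must produce \emph{every} $X_n$, not a subsequence, so what is really needed is a map $n\mapsto k(n)$ with $k(n)\to\infty$ chosen so that, at refinement level $k(n)$, the cell probabilities under $\mu_n$ are already close enough to those under $\mu$ (this uses Portmanteau on the $\mu$-continuity cells). The ``independent coupling device'' you allude to---an auxiliary uniform variable used to redistribute mass inside each cell according to $\mu_n(\cdot\mid B_i^{k(n)})$---is indeed the standard way to upgrade the discrete $Y_n^{k(n)}$ to a variable of exact law $\mu_n$ within distance $1/k(n)$, but this step deserves an explicit construction rather than a one-line mention, since the measurability and the exact-law claim are where errors typically creep in. Finally, note that your argument uses completeness of $E$ to pass from the Cauchy sequence $(Y^k)_k$ to a limit $X$; this is fine in the Banach setting of the paper, but you should also record that separability of $E$ is being used to build the partitions, a hypothesis the paper's statement leaves implicit.
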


\subsection{Stochastic processes}

Recall that if $(X_t)_t$ is an $E-$valued martingale, then since $\|.\|$ is a convex function, we have that $(\|X_t\|)_t$ is a submartingale (by Jensen inequality). Here is a version of the Doob maximal inequality (see Theorem 3.8 in \cite{KS98}): 
\begin{Thm}[Doob maximal inequality]\label{Thm:Doob}
We have that for $p>1.$
\begin{align*}
\E\left(\sup_{t\in [0,T]}\|X(t)\|\right)^p\leq \left(\frac{p}{p-1}\right)^p\E\|X(T)\|^p.
\end{align*}
\end{Thm}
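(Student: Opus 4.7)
The plan is to reduce to the Doob $L^p$-maximal inequality for non-negative real-valued submartingales, which the excerpt has already positioned us to do: by Jensen's inequality $Y_t := \|X_t\|$ is a non-negative real-valued submartingale, so it suffices to prove that for any such $(Y_t)_{t \in [0,T]}$ and any $p > 1$,
\begin{align*}
\E \bigl(\sup_{t \in [0,T]} Y_t\bigr)^p \leq \Bigl(\frac{p}{p-1}\Bigr)^p \E Y_T^p.
\end{align*}

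The first step is to establish Doob's weak-type maximal inequality: for all $\lambda > 0$,
\begin{align*}
\lambda\, \P\bigl(\sup_{t \in [0,T]} Y_t \geq \lambda\bigr) \leq \E \bigl[Y_T\, \mathbf{1}_{\{\sup_{t \in [0,T]} Y_t \geq \lambda\}}\bigr].
\end{align*}
In discrete time this follows from the optional sampling theorem applied to $\tau_\lambda := \inf\{t : Y_t \geq \lambda\} \wedge T$: decomposing $\E Y_T \geq \E Y_{\tau_\lambda} = \E[Y_{\tau_\lambda}; \tau_\lambda < T] + \E[Y_T; \tau_\lambda = T]$ and using $Y_{\tau_\lambda} \geq \lambda$ on $\{\sup Y \geq \lambda\}$ produces the claim after rearrangement. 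In continuous time one upgrades by right-continuity of paths, first taking a supremum over a countable dense subset of $[0,T]$ and then passing to the monotone limit.

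The second step is a layer-cake / Fubini argument. Writing $M := \sup_{t \in [0,T]} Y_t$ and first truncating $Y_t \mapsto Y_t \wedge n$ to ensure $\E M^p < \infty$, one computes
\begin{align*}
\E M^p = p \int_0^\infty \lambda^{p-1} \P(M \geq \lambda)\, d\lambda \leq p \int_0^\infty \lambda^{p-2} \E\bigl[Y_T\, \mathbf{1}_{\{M \geq \lambda\}}\bigr]\, d\lambda = \frac{p}{p-1} \E\bigl[Y_T M^{p-1}\bigr],
\end{align*}
where the middle inequality uses the weak-type bound from the previous step and the final equality is Fubini. H\"older's inequality with conjugate exponents $p$ and $p/(p-1)$ then gives
\begin{align*}
\E M^p \leq \frac{p}{p-1} \bigl(\E Y_T^p\bigr)^{1/p} \bigl(\E M^p\bigr)^{(p-1)/p},
\end{align*}
and dividing through (legitimate because of the truncation) yields the constant $\bigl(p/(p-1)\bigr)^p$. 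Releasing the truncation by monotone convergence concludes.

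The only real obstacle is the technical one of legitimating the division by $\bigl(\E M^p\bigr)^{(p-1)/p}$, which is what forces the truncation step; everything else is a clean combination of optional sampling with a layer-cake / H\"older argument. The passage from real-valued to $E$-valued martingales costs nothing beyond the initial Jensen reduction to $Y_t = \|X_t\|$.
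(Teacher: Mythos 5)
The paper itself contains no proof of this theorem: it only records the Jensen reduction (that $\|X_t\|$ is a nonnegative real submartingale because $\|\cdot\|$ is convex) and then cites Theorem 3.8 of \cite{KS98}. Your argument is correct and is exactly the classical proof that this citation points to: optional sampling at $\tau_\lambda=\inf\{t:Y_t\geq\lambda\}\wedge T$ yields Doob's weak-type maximal inequality, the layer-cake/Fubini computation followed by H\"older's inequality produces the constant $\left(\frac{p}{p-1}\right)^p$, and right-continuity lets you pass from a countable dense set of times to the full supremum. One small repair: the truncation should be applied to the maximal function rather than to the process, i.e.\ work with $M\wedge n$ (equivalently, cut the $\lambda$-integral at $n$). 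Truncating the process as $Y_t\mapsto Y_t\wedge n$ does not in general preserve the submartingale property, since $x\mapsto x\wedge n$ is nondecreasing but concave, so you could not invoke the weak-type inequality for the truncated process. This costs nothing: for $\lambda\leq n$ one has $\P(M\wedge n\geq\lambda)=\P(M\geq\lambda)\leq \lambda^{-1}\E\bigl[Y_T\,\mathbf{1}_{\{M\geq\lambda\}}\bigr]$ by the weak-type bound for the original process, which is exactly what the layer-cake step needs; the division by $\bigl(\E (M\wedge n)^p\bigr)^{(p-1)/p}\leq n^{p-1}<\infty$ is then legitimate, and monotone convergence in $n$ finishes as you say.
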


The following can be found in Chapters 3 and 4 in \cite{oks}.
\begin{Def}
An $N$-dimensional It\^o process is a proces $X=(X_t)_{t}$ of the form 
\begin{align}
X(t)=X(0)+\int_0^tu(s)  \, ds+\int_0^tv(s) \, dB(s)\label{Ito:proc}
\end{align}
where $B=(B_i)_{1\leq i\leq N}$ is an $N$-dimension Brownian motion, $u=(u_i)_{1\leq i\leq N}$ is an $N$-dimensional stochastic process and $v=(v_{ij})_{1\leq i,j\leq N}$ is a $N\times N$ matrix that both are adapted with respect to $(B_t)_t$ and satisfy the following
\begin{align*}
\P\left(\int_0^t|u(s)|+|v(s)|^2 \, ds<\infty\quad \forall t>0\right) &=1.
\end{align*}
\end{Def}

A short notation for \eqref{Ito:proc} is given by
\begin{align*}
dX=u \, dt+v \, dB.
\end{align*}
\begin{Thm}[$N$-dimensional It\^o formula]\label{Thm:Ito}
Let $X$ be a $N$-dimensional It\^o process as in \eqref{Ito:proc}. Let $f:\R^N\to\R$ be a $C^2$ function, then $f(X)$ is a $1$-dimensional It\^o process and satisfy
\begin{align*}
df(X)=\nabla f(X)\cdot dX+\frac{1}{2}\sum_{i,j}\partial^2_{x_ix_j}f(X) \, dX_idX_j,
\end{align*}
with the properties $dB_i \, dB_j=\delta_{ij} \, dt$, $dt \, dB_i=0$.
\end{Thm}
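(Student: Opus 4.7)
The plan is to follow the classical derivation of the It\^o formula: a Taylor expansion combined with the quadratic covariation identities for Brownian motion, followed by localization to remove boundedness assumptions.

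\emph{Reduction to the bounded regime.} First I would reduce to a setting where $X$, $u$, $v$, and the derivatives of $f$ up to order two are uniformly bounded. Introduce the stopping times
\[
\tau_n := \inf\Bigl\{t \geq 0 : |X_t| + \int_0^t \bigl(|u_s| + |v_s|^2\bigr)\,ds \geq n\Bigr\},
\]
so that on $[0,\tau_n]$ the process is uniformly bounded and, by the integrability assumption on $(u,v)$, $\tau_n \uparrow \infty$ almost surely; it then suffices to prove the formula on $[0,\tau_n]$ and pass to the limit. Similarly, truncating $f$ outside a large ball allows us to assume $f \in C^2_b(\R^N)$; the general case is recovered at the end since the formula is local.

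\emph{Taylor expansion along a partition.} Fix $t > 0$ and a sequence of partitions $\Pi_n = \{0 = t^n_0 < \cdots < t^n_{k_n} = t\}$ with mesh $|\Pi_n| \to 0$. Write the telescoping sum
\[
f(X_t) - f(X_0) = \sum_k \bigl[f(X_{t_{k+1}}) - f(X_{t_k})\bigr],
\]
and expand by Taylor's theorem with integral remainder:
\[
f(X_{t_{k+1}}) - f(X_{t_k}) = \nabla f(X_{t_k}) \cdot \Delta X_k + \tfrac{1}{2}\,(\Delta X_k)^T \nabla^2 f(X_{t_k})\,\Delta X_k + R_k,
\]
where $\Delta X_k = X_{t_{k+1}} - X_{t_k}$ and $|R_k| \lesssim \omega_f(|\Delta X_k|)\,|\Delta X_k|^2$ for a modulus of continuity $\omega_f$ of $\nabla^2 f$. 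The first-order sum $\sum_k \nabla f(X_{t_k})\cdot \Delta X_k$ converges in probability to $\int_0^t \nabla f(X_s)\cdot dX_s$ by the standard approximation of the drift integral by Riemann sums and of the It\^o integral by simple-process approximations.

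\emph{Identifying the quadratic covariation.} Writing $\Delta X_k \approx u(t_k)\Delta t_k + \sum_l v_{\cdot l}(t_k)\,\Delta B^l_k$, any term in $(\Delta X^i_k)(\Delta X^j_k)$ that carries a $\Delta t_k$ factor contributes $O(|\Pi_n|)$ in total and vanishes. It therefore remains to analyze
\[
S^n_{ij} := \sum_k \partial^2_{ij} f(X_{t_k}) \sum_{l,m} v_{il}(t_k)\,v_{jm}(t_k)\,\Delta B^l_k\,\Delta B^m_k.
\]
Split $\Delta B^l_k\,\Delta B^m_k = \delta_{lm}\Delta t_k + \eta^{lm}_k$ with $\eta^{lm}_k := \Delta B^l_k\Delta B^m_k - \delta_{lm}\Delta t_k$. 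The first piece gives a Riemann sum converging to $\sum_l \int_0^t \partial^2_{ij} f(X_s)\,v_{il}(s)\,v_{jl}(s)\,ds$, while for the second piece one uses independence of disjoint Brownian increments and $\E[(\eta^{lm}_k)^2] \lesssim (\Delta t_k)^2$ to obtain
\[
\E\Bigl|\sum_k \partial^2_{ij} f(X_{t_k})\,v_{il}(t_k)\,v_{jm}(t_k)\,\eta^{lm}_k\Bigr|^2 \lesssim \sum_k (\Delta t_k)^2 \to 0.
\]
Summing in $i,j$ and combining with the first-order limit yields the claimed formula, and the remainders $R_k$ are controlled by $\omega_f(\max_k|\Delta X_k|)\cdot \sum_k|\Delta X_k|^2$, which tends to $0$ in probability thanks to the uniform continuity of $\nabla^2 f$ on bounded sets and the finite quadratic variation of $X$.

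\emph{Main obstacle.} The technical heart is the $L^2$-vanishing of the martingale-difference part of the covariation sums, which is precisely what encodes the symbolic identity $dB_i\,dB_j = \delta_{ij}\,dt$; it requires the sharp orthogonality of Brownian increments on disjoint intervals together with the prior boundedness reduction. Once the identity is established on $[0,\tau_n]$ for $C^2_b$ functions, the general statement follows by letting $n \to \infty$ and removing the $C^2_b$ truncation via the standard continuity of the It\^o integral in its integrand.
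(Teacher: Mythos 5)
The paper does not prove this statement at all: Theorem \ref{Thm:Ito} is quoted as a standard result with a pointer to Chapters 3 and 4 of the cited stochastic-calculus reference, so there is no in-paper argument to compare with. Your proposal reconstructs the classical textbook proof (localization by stopping times and truncation of $f$, Taylor expansion along partitions, convergence of the first-order Riemann sums to the drift and It\^o integrals, and the $L^2$-vanishing of the martingale part $\sum_k a_k\eta_k^{lm}$ of the covariation sums, which is what encodes $dB_i\,dB_j=\delta_{ij}\,dt$), and it is correct in outline. The one step you gloss over is the line $\Delta X_k \approx u(t_k)\Delta t_k + \sum_l v_{\cdot l}(t_k)\Delta B^l_k$: for merely measurable adapted $u,v$ the left-endpoint evaluation is not meaningful pointwise, and the standard remedy is to first prove the formula for elementary (step) integrands and then pass to general $u,v$ by approximation and continuity of the It\^o integral --- a routine addition that matches the argument in the cited source rather than a genuine gap.
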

Using the properties above, we can remark that in the particular case where $v$ is diagonal (as in this paper), we have
\begin{align*}
\sum_{i,j}\partial^2_{x_ix_j}f(X) \, dX_idX_j=\sum_{i}\partial^2_{x_i}f(X)v_i^2 \, dt.
\end{align*}

\section{Some important lemmas}\label{Apx B}
In this section $(P_t)_{t \geq 0}$ be a Feller semi-group (a Markov semi-group satisfying the Feller property) on a Banach space $X$, $P_t^*$ is the adjoint operator of $P_t$.
\begin{Lem}[Krylov-Bogoliubov argument]\label{Lem:KBarg}
If there exists $t_n \to \infty$ and $\mu \in \mathfrak{P} (X)$ such that $\frac{1}{t_n} \int_0^{t_n} P_{t}^* \delta_0 \, dt \rightharpoonup \mu $ in $X$, then $P_t^* \mu = \mu$ for all $t \geq 0$. $\delta_0$ is the Dirac measure at 0.
\end{Lem}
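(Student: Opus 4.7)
The plan is to reduce the invariance statement $P_t^*\mu=\mu$ to the dual identity $\int_X P_t f\, d\mu = \int_X f\, d\mu$ for every $f\in C_b(X)$ and every $t\geq 0$. Throughout I would exploit only two structural facts: the semigroup property $P_{t+s}=P_t P_s$ and the Feller property $P_s C_b(X)\subset C_b(X)$. The strategy is classical: rewrite the Cesàro average as a time integral of point evaluations, telescope using the semigroup law, and pass to the limit using weak convergence.

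First I would unpack the hypothesis by testing the averages $\mu_n:=\frac{1}{t_n}\int_0^{t_n} P_t^*\delta_0\, dt$ against a generic $g\in C_b(X)$. By duality $(P_t^*\delta_0)(g)=(P_t g)(0)$, so
\begin{align*}
\int_X g\, d\mu_n \;=\; \frac{1}{t_n}\int_0^{t_n}(P_t g)(0)\, dt.
\end{align*}
Applying this formula once with $g=f$ and once with $g=P_s f$, and then using $P_t P_s=P_{t+s}$ to telescope, I would obtain
\begin{align*}
\int_X P_s f\, d\mu_n - \int_X f\, d\mu_n \;=\; \frac{1}{t_n}\int_0^{t_n}\bigl[(P_{t+s}f)(0)-(P_t f)(0)\bigr] dt \;=\; \frac{1}{t_n}\left(\int_{t_n}^{t_n+s}(P_t f)(0)\, dt - \int_0^{s}(P_t f)(0)\, dt\right).
\end{align*}
Since $P_t$ is Markov we have $|(P_t f)(0)|\leq\|f\|_\infty$, so the right-hand side is bounded by $2s\|f\|_\infty/t_n$ and vanishes as $n\to\infty$.

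Next I would pass to the limit. The Feller property guarantees $P_s f\in C_b(X)$, so the weak convergence $\mu_n\rightharpoonup\mu$ applies to both $f$ and $P_s f$, yielding $\int_X P_s f\, d\mu_n\to\int_X P_s f\, d\mu$ and $\int_X f\, d\mu_n\to\int_X f\, d\mu$. Combined with the telescoping estimate just established, this gives $\int_X P_s f\, d\mu=\int_X f\, d\mu$ for every $s\geq 0$ and every $f\in C_b(X)$, which is exactly $P_s^*\mu=\mu$.

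There is no real obstacle in this argument; it is linear and three lines long once the ingredients are named. The only subtlety worth flagging is that the Feller hypothesis is essential precisely at the limit passage — without it $P_s f$ need not be continuous and one could not apply the Portmanteau theorem (Theorem \ref{Thm:Portm}) to identify $\lim_n\mu_n(P_s f)$ with $\mu(P_s f)$. The measurability in $t$ of $t\mapsto(P_t g)(0)$ for $g\in C_b(X)$, needed to make sense of the time integrals defining $\mu_n$, is implicit in the Feller setup and is routine.
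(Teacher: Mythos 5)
Your proof is correct and follows essentially the same route as the paper: both arguments use the Feller property to ensure $P_sf\in C_b(X)$ so that weak convergence applies, shift the time integral via the semigroup law, and kill the two boundary pieces of length $s$ by the uniform bound $\|P_tf\|_\infty\leq\|f\|_\infty$ divided by $t_n$. The only cosmetic difference is that you telescope at the level of the averages $\mu_n$ before passing to the limit, while the paper starts from $\langle f,P_r^*\mu\rangle$ and passes to the limit first; the ingredients and estimates are identical.
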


\begin{proof}[Proof of Lemma \ref{Apx B}]
Note that $P_r f \in  C_b$. We have
\begin{align*}
\langle f, P_r^* \mu\rangle & = \langle P_r f , \mu\rangle = \lim_{t_n \to \infty} \langle P_r f , \frac{1}{t_n} \int_0^{t_n} P_t^* \delta_0 \, dt\rangle  = \lim_{t_n \to \infty} \langle f , P_r^* \frac{1}{t_n} \int_0^{t_n} P_t^* \delta_0 \, dt\rangle \\
& = \lim_{t_n \to \infty} \langle f ,  \frac{1}{t_n} \int_0^{t_n} P_{t+r}^* \delta_0 \, dt\rangle  = \lim_{t_n \to \infty} \langle f ,  \frac{1}{t_n} \int_{r}^{t_n+r} P_{t}^* \delta_0 \, dt\rangle  \\
& = \lim_{t_n \to \infty} \langle f ,  \frac{1}{t_n} \int_0^{t_n} P_{t}^* \delta_0 \, dt\rangle  +  \langle f ,  \frac{1}{t_n} \int_{t_n}^{t_n +r} P_{t}^* \delta_0 \, dt\rangle  - \langle f ,  \frac{1}{t_n} \int_0^{r} P_{t}^* \delta_0 \, dt\rangle  \\
& =\langle f ,\mu\rangle
\end{align*}
where $ \lim_{t_n \to \infty} \langle f ,  \frac{1}{t_n} \int_{t_n}^{t_n +r} P_{t}^* \delta_0 \, dt\rangle  =0$ and  $\lim_{t_n \to \infty} \langle f ,  \frac{1}{t_n} \int_0^{r} P_{t}^* \delta_0 \, dt\rangle =0$, since $\int_{t_n}^{t_n +r} P_{t}^* \delta_0 \, dt  $ and $\int_0^{r} P_{t}^* \delta_0 \, dt $ are  bounded. Therefore, $P_r^* \mu = \mu$. Now the proof of Lemma \ref{Apx B} is finished.
\end{proof}

\begin{Lem}\label{Lem:MeanMeas}
Set 
\begin{align*}
\nu_n = \frac{1}{t_n} \int_0^{t_n} P_t^* \delta_0 \, dt.
\end{align*}
Assume that $X$ is compactly embedded into a Banach space $X_0$ and 
\begin{align*}
\int_{X} f(\|u\|_X) \nu_n (du) \leq C
\end{align*}
for some function $f:\R\to\R$ such that $\lim_{x\to\infty}f(x)=\infty$, and $C$ does not depend on $n$. Then there exists $\mu \in \mathfrak{P} (X)$ such that $\nu_n \rightharpoonup \mu$ weakly on $X_0$.
\end{Lem}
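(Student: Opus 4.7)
The plan is to deduce tightness of $(\nu_n)$ from the uniform moment bound via the compact embedding $X \hookrightarrow X_0$, then invoke the Prokhorov theorem on $X_0$ and identify the limit as a probability measure supported in $X$.

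First, the hypothesis $\lim_{x \to \infty} f(x) = \infty$ lets me choose, for any $\varepsilon > 0$, a radius $R = R(\varepsilon) > 0$ with $\inf_{x \geq R} f(x) \geq C/\varepsilon$. A Chebyshev-type inequality then yields
\[
\nu_n\bigl(\{u \in X : \|u\|_X > R\}\bigr) \leq \frac{1}{\inf_{x > R} f(x)} \int_X f(\|u\|_X) \, \nu_n(du) \leq \varepsilon
\]
uniformly in $n$. Viewing each $\nu_n$ as a Borel probability measure on $X_0$ through the continuous inclusion $X \hookrightarrow X_0$, the bounded set $B_R := \{u \in X : \|u\|_X \leq R\}$ is relatively compact in $X_0$ by the compact embedding; let $K_\varepsilon$ denote its $X_0$-closure. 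Then $K_\varepsilon$ is compact in $X_0$ and $\nu_n(X_0 \setminus K_\varepsilon) \leq \varepsilon$ for all $n$, establishing tightness of $(\nu_n)$ on $X_0$.

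By the Prokhorov theorem (Theorem \ref{Thm:Prokh}) I extract a subsequence $(\nu_{n_k})$ and a Borel probability measure $\mu$ on $X_0$ with $\nu_{n_k} \rightharpoonup \mu$ weakly on $X_0$. Applying the Portmanteau theorem (Theorem \ref{Thm:Portm}) to the closed set $K_\varepsilon$ gives $\mu(K_\varepsilon) \geq \limsup_k \nu_{n_k}(K_\varepsilon) \geq 1 - \varepsilon$. Choosing $\varepsilon = 1/m$ and letting $m \to \infty$ forces $\mu\bigl(\bigcup_m K_{1/m}\bigr) = 1$. In the Hilbert (more generally reflexive) setting relevant to this paper, each $K_\varepsilon$ is contained in an $X$-ball of radius $R$: any $X_0$-limit of a sequence in $B_R$ admits, by Banach--Alaoglu, an $X$-weakly convergent subsequence whose weak $X$-limit must coincide with the $X_0$-limit and carries $X$-norm at most $R$. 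Thus $\mu$ is supported in $X$, giving $\mu \in \mathfrak{P}(X)$.

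The main---essentially the only---subtle point is the final identification $\mu \in \mathfrak{P}(X)$ rather than merely $\mu \in \mathfrak{P}(X_0)$. It rests on the lower semi-continuity of $\|\cdot\|_X$ with respect to the $X_0$-topology, which is automatic for the Hilbert Sobolev embeddings $H^s \hookrightarrow H^{s_0}$ used throughout the paper; in full Banach generality this would have to be imposed as an auxiliary hypothesis, but here it comes for free.
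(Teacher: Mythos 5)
Your proof is correct, and its first two steps (Chebyshev-type tightness through the compact embedding, then Prokhorov on $X_0$) coincide with the paper's. Where you diverge is the final identification $\mu\in\mathfrak{P}(X)$: you apply the Portmanteau theorem to the closed sets $K_\varepsilon=\overline{B_R(X)}^{X_0}$ and then argue, via Banach--Alaoglu and lower semicontinuity of $\|\cdot\|_X$ under $X_0$-convergence, that $K_\varepsilon$ is contained in an $X$-ball, so $\mu\bigl(\cup_m K_{1/m}\bigr)=1$ forces $\mu(X)=1$. The paper instead multiplies $f(\|u\|_X)$ by a cutoff $\chi_R(\|u\|_{X_0})$, passes the uniform bound to the limit measure, and lets $R\to\infty$ by Fatou to get $\int_{X_0} f(\|u\|_X)\,\mu(du)\leq C$, after which a second Chebyshev argument gives $\mu(X)=1$. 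The paper's route buys slightly more than the statement requires, namely the quantitative moment bound for the limit measure, which is the kind of estimate it repeatedly transports to limits elsewhere (compare \eqref{Est:exp:N}); your route is a bit more economical and, usefully, makes explicit the lower semicontinuity of the $X$-norm in the $X_0$-topology, a point the paper glosses over when it asserts that $f(\|u\|_X)\chi_R(\|u\|_{X_0})$ is continuous on $X_0$ (in general it is only lower semicontinuous, which still suffices). Note that both your Chebyshev step and the paper's Markov step tacitly use that $f$ is nonnegative (or bounded below), which holds in all applications in the paper; and, as you implicitly do, the conclusion holds along a subsequence, exactly as in the paper.
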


\begin{proof}[Proof of Lemma \ref{Lem:MeanMeas}]
Consider $B_R(X)$, by Markov inequality
\begin{align*}
\nu_n(X \setminus B_R) = \nu_n ( \norm{u}_X > R) \leq \frac{C}{f(R)},
\end{align*}
then $(\nu_n)$ is tight in $X_0$. Now, the Prokhorov theorem (see Theorem \ref{Thm:Prokh}) implies the existence of $\mu$ in $\mathfrak{P}(X_0)$.

Now, let us show that $\mu (X) =1$. It suffices to show that 
\begin{align*}
\int_{X_0} f(\norm{u}_{X}) \mu (du)  \leq  C < \infty
\end{align*}
since, with such property, we obtain
\begin{align*}
\mu(B_R^c) \leq \frac{\int_{X_0} f(\norm{u}_{X}) \mu (du)}{f(R)} \to 0 \quad \text{ as } R \to \infty.
\end{align*}
Now, let $\chi_R$ be a $C^{\infty}$ function on $[0, \infty)$ such that $\chi_R =1 $ on $[0,R]$ and $\chi_R =0$ on $[R+1 , \infty)$.
\begin{align*}
\int_{X_0} f(\norm{u}_{X})  \chi_R (\norm{u}_{X_0})  \nu_n (du) \leq \int_{X_0} f(\norm{u}_{X})   \nu_n (du) \leq C .
\end{align*}
Since $f(\norm{u}_{X})  \chi_R (\norm{u}_{X_0}) $ is bounded continuous  on $X_0$, by Fatou's lemma, we have 
\begin{align*}
\int_{X_0} f(\norm{u}_{X})  \mu (du) \leq C .
\end{align*}
The proof of Lemma \ref{Lem:MeanMeas} is finished. 
\end{proof}

\bibliographystyle{plain}
\bibliography{fnlsproba}
\end{document}